\def\a{\alpha}
\def\b{\beta}
\def\ga{\gamma}
\def\de{\delta}
\def\De{\Delta}
\def\ep{\epsilon}
\def\la{\lambda}
\def\La{\Lambda}
\def\si{\sigma}
\def\Om{\Omega}
\def\th{\theta}
\def\varep{\varepsilon}
\newcommand{\vphi}[0]{\varphi}
\newcommand{\vp}[0]{\varphi}
\def\BB{{\cal B}}
\def\DD{{\cal D}}
\def\FF{{\cal F}}
\def\II{{\cal I}}
\def\PP{{\cal P}}
\def\DD{{\cal D}}
\newcommand{\N}[0]{\mathbb{N}}
\newcommand{\F}[0]{\mathbb{F}}
\newcommand{\R}[0]{\mathbb{R}}
\newcommand{\Z}[0]{\mathbb{Z}}
\newcommand{\C}[0]{\mathbb{C}}
\newcommand{\T}[0]{\mathbb{T}}
\newcommand{\supp}{\mathrm{supp} \,}
\newcommand{\suppt}{\mathrm{supp}_t \,}
\newcommand{\fr}[2]{\frac{#1}{#2}}
\newcommand{\vect}[1]{\left[ \begin{array}{c} #1 \end{array} \right]}
\newcommand{\mat}[2]{\left[ \begin{array}{ #1} #2 \end{array} \right]}
\newcommand{\ALI}[1]{\begin{align*} #1 \end{align*}}
\newcommand{\tx}[1]{\mbox{#1}}
\newcommand{\lsm}[0]{\lesssim}
\newcommand{\wed}[0]{\wedge}
\newcommand{\pr}[0]{\partial}
\newcommand{\nb}{\nabla}
\newcommand{\co}[1]{\|#1\|_{C^0}}
\newcommand{\Ddt}[0]{\overline{D}_t}
\newcommand{\va}[0]{\vec{a}}
\newcommand{\vcb}[0]{\vec{b}}
\newcommand{\vcc}[0]{\vec{c}}
\DeclareMathAlphabet{\mathpzc}{OT1}{pzc}{m}{it}
\newcommand{\kk}[0]{\kappa}
\newcommand{\hc}[0]{\widehat{C}}
\newcommand{\VR}[0]{\mathring{V}}
\newcommand{\wtld}[1]{\widetilde{#1}}
\newcommand{\ali}[1]{ \begin{align} #1 \end{align} }
\def\XXint#1#2#3{{\setbox0=\hbox{$#1{#2#3}{\int}$}
     \vcenter{\hbox{$#2#3$}}\kern-.5\wd0}}
\newcommand{\calD}{\mathcal D}
\newcommand{\calF}{\mathcal F}
\newcommand{\calT}{\mathcal T}
\newcommand{\Comment}[1]{\begingroup\color{red} #1\endgroup} 
\newcommand{\vsq}[0]{\frac{|v|^2}{2}}  
\newcommand{\vprk}[0]{(v,p,R,\kk, \vp, \mu)}
\newcommand{\ost}[1]{\accentset{\ast}{#1}}
\newcommand{\dmd}[1]{\accentset{\diamond}{#1}}
\newcommand{\sqq}[1]{\frac{|#1|^2}{2}}
\newcommand{\DDR}[0]{\mathring{{\cal D}}}
\newcommand{\osdt}[0]{\ost{D}_t}
\newcommand{\cPI}[0]{{\mathcal P}_I}
\newcommand{\Fsupp}[0]{{{\mathcal F}\mbox{supp }}}
\newcommand{\hxii}[0]{\hat{\xi}}
\newcommand{\ostu}[0]{\ost{\tau}}
\newcommand{\htf}[0]{\hat{f}}
\newcommand{\tlf}[0]{\tilde{f}}
\newcommand{\osn}[0]{N}
\newcommand{\brh}[0]{\bar{H}}
\newcommand{\ostk}[0]{\ost{\kk}}
\newcommand{\ovl}[1]{\overline{#1}}
\newcommand{\undl}[1]{\underline{#1}}
\newcommand{\undvp}[0]{\underline{\vp}}
\newcommand{\mrg}[1]{\mathring{#1}}
\newtheorem{thm}{Theorem}
\newtheorem{lem}{Lemma}[section]
\newtheorem{prop}{Proposition}[section]
\newtheorem{conj}{Conjecture}
\theoremstyle{definition}
\newtheorem{defn}{Definition}[section]
\theoremstyle{remark}
\newtheorem*{rem}{Remark}
\title{ Nonuniqueness and existence of continuous, globally dissipative Euler flows }
\author{  Philip Isett\thanks{Department of Mathematics, University of Texas at Austin, Austin, TX.  The work of P. Isett is supported by the National Science Foundation under Award No. DMS-1402370.}
}
\date{ }
\begin{document}
\maketitle

\begin{abstract}
We show that H\"{o}lder continuous incompressible Euler flows that satisfy the local energy inequality (``globally dissipative'' solutions) exhibit nonuniqueness and contain examples that strictly dissipate kinetic energy.  The collection of such solutions emanating from a fixed initial data may have positive Hausdorff dimension in the energy space even if the local energy equality is imposed, and the set of initial data giving rise to such an infinite family of solutions is $C^0$ dense in the space of continuous, divergence free vector fields on the torus ${\mathbb T}^3$.    

The construction of these solutions involves a new and explicit convex integration approach mirroring Kraichnan's LDIA theory of turbulent energy cascades that overcomes the limitations of previous schemes, which had been restricted to bounded measurable solutions or to continuous solutions that dissipate total kinetic energy.  
\end{abstract}

\section{Introduction}
In this work we consider weak solutions to the incompressible Euler equations, which take the form
\ali{
\begin{split} \label{eq:theEulerEqn}
\pr_t v^\ell + \nb_j(v^j v^\ell) + \nb^\ell p &= 0 \\
\nb_j v^j &= 0,
\end{split}
}
where we use the summation convention to indicate a sum over the repeated spatial index $j$.

We will mainly consider solutions defined on a spatially periodic domain $\tilde{I} \times \T^d$, $\tilde{I}$ an open interval and $\T^d = (\R/\Z)^d$ a $d$-dimensional torus with $d \geq 3$.  A weak solution $(v,p)$ to \eqref{eq:theEulerEqn} may be defined as a locally square-integrable vector field $v : \tilde{I} \times \T^d \to \R^d$ (the velocity field) and a scalar field $p \in \DD'(\tilde{I}\times \T^d)$ (the pressure) that together satisfy \eqref{eq:theEulerEqn} in the sense of distributions.

A weak solution $(v,p)$ to \eqref{eq:theEulerEqn} here will be called {\it globally dissipative} if it is of class $v \in L_{t,x}^3$  and satisfies the {\it local energy inequality}, \eqref{eq:locEnInq} below, which is always interpreted in the sense of distributions\footnote{Note that $v \in L_{t,x}^3$ implies by Calder\'{o}n-Zygmund theory that $p \in L_{t,x}^{3/2}$, making the left hand side of \eqref{eq:locEnInq} a well-defined distribution.  Inequality \eqref{eq:locEnInq} is defined by formally integrating against an arbitrary non-negative, smooth function of compact support, and implies that the left hand side acts on such test functions as a Radon measure.}:
\ali{
\pr_t\left( \vsq \right) + \nb_j\left[ \left(\vsq + p\right) v^j \right] &\leq 0. \label{eq:locEnInq}
}
Globally dissipative weak solutions include all sufficiently regular solutions to \eqref{eq:theEulerEqn} including all solutions of class $(v,p) \in C^1(\tilde{I} \times \T^d)$, which satisfy the stronger condition of the {\it local energy equality}
\ali{
\pr_t\left( \vsq \right) + \nb_j\left[ \left(\vsq + p\right) v^j \right] &= 0.  \label{eq:locEnEq}
}
Solutions obeying the local energy equality also conserve total kinetic energy in the sense that their total kinetic energy $\int_{\T^d} |v|^2/2(t,x) dx$ is equal to a constant as a distribution in $t$.


The physical meaning of the local energy inequality \eqref{eq:locEnInq} is that kinetic energy cannot be created locally, although it may be allowed to dissipate.  This interpretation is most easily seen for solutions $(v,p)$ that are continuous, in which case inequality \eqref{eq:locEnInq} is equivalent to the requirement that the change in kinetic energy contained in a given region $\Om$ during a time interval $[t_1,t_2]$ cannot exceed the flux of energy carried by the incoming and outgoing fluid plus the work done by pressure.  
Namely, for any open $\Om \subseteq \T^d$ with smooth boundary, inward unit normal vector $n$, and $t_1 \leq t_2$, one has
\ali{
\int_\Om \fr{|v|^2}{2}(t_2, x) \tx{d}x - \int_\Om \fr{|v|^2}{2}(t_1,x)\tx{d}x &\leq \int_{t_1}^{t_2} \int_{\pr \Om} \fr{|v|^2}{2}(t,x) (v \cdot n) \tx{d}S \, \tx{d}t + \int_{t_1}^{t_2} \int_{\pr \Om} p(t,x) (v \cdot n) \tx{d}S \, \tx{d}t. \notag
}
In particular, taking $\Om = \T^d$, $\pr \Om = \emptyset$, the total kinetic energy of a globally dissipative weak solution must be nonincreasing in time.  

There has been significant motivation from both physical and mathematical points of view to determine whether continuous, globally dissipative Euler flows are uniquely determined by their initial data and whether there exist examples that strictly dissipate kinetic energy.  Sections~\ref{sec:motivation}-\ref{sec:prevResultsSurvey} below describe these motivations in detail.  However, the known approaches to addressing such existence and uniqueness questions, which all implement the method known as ``convex integration'', have faced limitations either to constructing continuous solutions that dissipate {\it total} kinetic energy but do not satisfy the local energy inequality \eqref{eq:locEnInq} on any open ball \cite{deLSzeCts,deLSzeHoldCts,buckDeLIsettSze,IO-presEn,BDLSVonsag} or to constructing bounded and measurable solutions that satisfy the local energy inequality or equality but are nowhere continuous \cite{deLSzeAdmiss}.

The present work provides the first construction of continuous, globally dissipative Euler flows.  The construction is achieved using a new convex integration approach for constructing solutions satisfying the local energy inequality, which overcomes the limitations of the previous approaches.  Among the new results we obtain are the following:
\begin{thm} \label{thm:locEnIneq} On $(0,\infty) \times \T^d$, for $d \geq 3$, there exist H\"{o}lder-continuous weak solutions $(v,p)$ that satisfy the local energy inequality \eqref{eq:locEnInq} with strict inequality holding everywhere on an open interval of time.
\end{thm}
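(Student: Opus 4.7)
The plan is to prove Theorem~\ref{thm:locEnIneq} via an iterative convex integration scheme specifically designed to track the local energy balance at each stage. I would work with a modified Euler--Reynolds system carrying, in addition to $(v_q, p_q)$ and a symmetric Reynolds stress $R_q^{j\ell}$, an auxiliary nonnegative scalar field $\kk_q$ representing a local energy defect together with a small residual $D_q$ in the energy equation:
\begin{align*}
\pr_t v_q^\ell + \nb_j( v_q^j v_q^\ell ) + \nb^\ell p_q &= \nb_j R_q^{j\ell}, \\
\pr_t\!\left( \tfrac{1}{2} |v_q|^2 \right) + \nb_j\!\left[ \left( \tfrac{1}{2} |v_q|^2 + p_q \right) v_q^j \right] &= -\kk_q + D_q.
\end{align*}
The aim is to drive $R_q, D_q \to 0$ in appropriate norms while maintaining a strictly positive lower bound for $\kk_q$ on a prescribed open set $J\times U \subset (0,\infty)\times\T^d$. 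In the limit one obtains a H\"older continuous weak Euler solution $v$ for which $\pr_t(\tfrac12|v|^2) + \nb_j[(\tfrac12|v|^2 + p)v^j] = -\kk$ in the sense of distributions with $\kk>0$ on $J\times U$, which is precisely strict dissipation in \eqref{eq:locEnInq}.

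At each stage $q\to q+1$ I would add a divergence-free perturbation $w_{q+1}$ of frequency $\la_{q+1}=\la_q^b$ and amplitude $\de_{q+1}^{1/2}=\la_{q+1}^{-\a}$, assembled from Mikado- or Beltrami-type high-frequency profiles supported along a finite family of well-separated rational directions. The low-frequency part of the quadratic self-interaction $w_{q+1}^j w_{q+1}^\ell$ must simultaneously (i) cancel $R_q^{j\ell}$ up to a much smaller $R_{q+1}$ via the classical convex-integration identity on amplitudes $a_I(t,x)$, and (ii) deposit a controlled positive amount of local kinetic energy $\tfrac12|w_{q+1}|^2$ that is absorbed into the updated defect $\kk_{q+1}$, leaving only a small oscillatory residual $D_{q+1}$.

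The main obstacle, and the reason the prior continuous constructions of \cite{deLSzeCts,deLSzeHoldCts,buckDeLIsettSze,IO-presEn,BDLSVonsag} do not address the local energy inequality, is that once the stress equation fixes the amplitudes $a_I$, there is no free parameter left to sign the oscillatory current $[\tfrac12|w_{q+1}|^2 + \mbox{cross terms}]\, w_{q+1}^j$ that enters the energy equation; without further control this current can produce local energy \emph{creation} on small balls and destroy the sign of $\kk_{q+1}$. My plan to surmount this is to randomize the phases (and possibly the transport parameters) of the high-frequency building blocks at each stage. Upon averaging over these random choices the oscillatory contribution to the local energy balance has vanishing expectation, while the deterministic positive deposit into $\kk_{q+1}$ survives as a pointwise lower bound; a Borel--Cantelli/union-bound argument along the iteration then selects a single realization for which every stress estimate, every energy-error estimate, and the lower bound $\kk_q \geq c_J > 0$ on $J\times U$ close simultaneously at the required H\"older level. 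This is the ``first use of randomness within the context of convex integration'' advertised in the abstract.

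The remaining ingredients are standard convex-integration technology. Choosing $b>1$ close to $1$ and $\a<1/(2b)$, the series $\sum_q \|w_{q+1}\|_{C^0}\lesssim \sum_q \de_{q+1}^{1/2}$ converges and, interpolated against $\|w_{q+1}\|_{C^1}\lesssim \la_{q+1}\de_{q+1}^{1/2}$, yields $v=\lim_q v_q \in C_t^0 C_x^\a$ for some $\a>0$. Initializing the iteration from a smooth sub-solution $(v_0,p_0,R_0,\kk_0)$ with $\kk_0$ bounded below by a positive constant on $J\times U$, and verifying inductively that the lower bound is preserved up to a geometrically summable loss, produces the desired H\"older continuous, globally dissipative Euler flow for which strict inequality holds in \eqref{eq:locEnInq} throughout the open set $J\times U$, as required.
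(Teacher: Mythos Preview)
Your proposal contains two substantive gaps that would prevent it from working as written.

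First, you have misidentified the role of randomness. In the paper the probabilistic coin toss is used solely to produce \emph{nonuniqueness} (the Cantor family of solutions in Theorems~\ref{thm:enDispNunq}--\ref{thm:enEqNunq}); it plays no role whatsoever in obtaining the sign of the local energy defect. The dissipation mechanism is entirely deterministic: one chooses a nonincreasing scalar function $e(t)$ (see \eqref{eq:eDef}, \eqref{eq:enIncdef}) so that the low-frequency energy identity \eqref{eq:fltermLow} produces a term $\overline{D}_t e(t) = \partial_t e(t) \leq 0$, which is absorbed directly into the new dissipation measure $\ost{\mu} = \mu - \partial_t e(t) \geq \mu$. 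Your Borel--Cantelli strategy for selecting a realization with the right sign is therefore aimed at a difficulty that the paper resolves by a different, deterministic idea; without that idea the sign control you seek is not available.

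Second, the paper explicitly rules out the building blocks you propose. Beltrami flows give no net improvement in the local-energy error, and Mikado flows create both energy production and dissipation on disjoint regions of their supports (see the discussion following Theorems~\ref{thm:enDispNunq}--\ref{thm:wildInitData} in Section~\ref{sec:mainResults}). The construction instead uses one-dimensional plane waves oscillating along a single coordinate direction per stage, together with a carefully designed trilinear cascade (the set $\calT$ in \eqref{eq:phiTermLeft}) to cancel the unresolved flux current $\varphi$, and a coupling between the pressure increment and the trace of the stress (equation \eqref{eq:PdefnPre}) so that the quadratic self-interaction simultaneously cancels $R_\ep$ and $\kk_\ep - e(t)$. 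The dissipative Euler--Reynolds structure you wrote down is close in spirit to the paper's Definition of a dissipative Euler--Reynolds flow, but the relaxed energy inequality there carries three error fields $(\kk,R,\varphi)$ rather than a single $D_q$, and reducing all three at once is what forces the more elaborate architecture.
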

\begin{thm} \label{thm:locEq}  On $\R \times \T^d$, $d \geq 3$, there exists $\a > 0$ and an uncountable family $(v_\b, p_\b)$ of solutions of class $v \in C^\a_{t,x}(\R \times \T^d)$ having no isolated points in $C_{t,x}^\a$ that all satisfy the local energy equality \eqref{eq:locEnEq} and all achieve the same initial data $v_\b(0,x) = v_0(x)$ at time $0$.
\end{thm}
Stronger forms of Theorems~\ref{thm:locEnIneq} and ~\ref{thm:locEq} are stated in Theorems~\ref{thm:enDispNunq}-~\ref{thm:enEqNunq} of Section~\ref{sec:mainResults} below, which include a stronger nonuniqueness result than stated above and are accompanied by an additional theorem on the density of ``wild'' initial data, Theorem~\ref{thm:wildInitData}.  The nonuniqueness results in these theorems are all proven by incorporating a use of randomness in the construction of the solutions, which represents the first use of a probabilistic method in the context of convex integration.  See Section~\ref{sec:mainResults} below for a more detailed discussion.

We now explain in greater detail the general physical and mathematical motivations surrounding the study of continuous, globally dissipative Euler flows and the local energy inequality.




\subsection{Motivation} \label{sec:motivation}

The importance of the local energy inequality arises from several contexts where the inequality \eqref{eq:locEnInq} is imposed as a criterion towards isolating the most physically relevant weak solutions to the incompressible Euler equations among solutions that may be singular.  Of particular importance is the zero viscosity limit of the incompressible Navier-Stokes equations\footnote{These generalize the incompressible Euler equations \eqref{eq:theEulerEqn} to include a forcing term $\nu \De v^\ell$ on the right hand side of \eqref{eq:theEulerEqn} that describes the internal friction in the fluid, with $\nu > 0$ the viscosity parameter.  For simplicity we will maintain focus in the discussion that follows on the incompressible Euler and Navier-Stokes equations without external forcing terms.} 
which has a substantial interest as it relates to the description of turbulence in fluid motion at high Reynolds numbers.  It is known\footnote{See \cite{scheffHausdorff} and \cite[Appendix]{caffKohnNir}.} that every divergence free initial datum in $L^2$ admits a global in time ``suitable'' weak solution to the 3D Navier-Stokes equations that obeys the corresponding local energy inequality for Navier-Stokes.  This inequality, which generalizes \eqref{eq:locEnInq}, plays a fundamental role in the partial regularity theory of suitable weak solutions to Navier-Stokes \cite{scheffHausdorff, caffKohnNir}.  
Furthermore, one can show that every vector field that arises as a limit in $L_{t,x}^3$ of suitable weak solutions to Navier-Stokes with viscosity tending to zero must be a weak solution to incompressible Euler that satisfies the local energy inequality \eqref{eq:locEnInq}.  
We will discuss such limits further in connection with Conjecture~\ref{conj:navStConj} below. 



A closely related context where conditions such as \eqref{eq:locEnInq} play a primary role is the theory of hyperbolic conservation laws.  
The theory is most successful in the setting of scalar conservation laws, where the class of ``admissible weak solutions'', which are bounded weak solutions that satisfy an entropy condition akin to \eqref{eq:locEnInq} for every convex function of the unknown scalar field, provides the appropriate setting for well-posedness of the initial value problem despite the presence of inevitable singularities in the solutions.  
The simplest example in this theory is the Burgers' equation in one spatial dimension
\ali{
\pr_t u + \pr_x(u^2/2) &= 0. \label{eq:burgers}
}
For scalar conservation laws in one spatial dimension, a weak solution is admissible if and only if it satisfies an entropy condition for at least one strictly convex entropy \cite{panov} (see also \cite{deLOtWdMinEnt,krupVassEntcond}).  For \eqref{eq:burgers} a sufficient entropy condition is given by the local energy inequality
\ali{
\pr_t(u^2/2) + \pr_x(u^3/3) &\leq 0 \label{eq:locEnIneqBurgers}
}
analogous to \eqref{eq:locEnInq}.  These entropy conditions impose an arrow of time on solutions to the conservation laws, which would otherwise be time-reversible, and are viewed in this context as mathematical expressions of the second law of thermodynamics.  For scalar conservation laws the class of admissible solutions also coincides with the class of weak solutions that arise as zero viscosity limits.  However, the theory for general systems of conservation laws faces great challenges towards obtaining results as strong as those described above.  As a general reference for these subjects we refer to \cite{dafermosBook}.

Further motivation for studying the existence and uniqueness of globally dissipative weak solutions to the incompressible Euler equations arises from the study of turbulence in fluid motion and the zero viscosity limit, which in addition motivates the problem of determining the maximal regularity for there can exist a weak solution that satisfies the local energy inequality and strictly dissipates kinetic energy.  
The well known foundational theory of Kolmogorov \cite{K41} postulates that the mean rate of kinetic energy dissipation in fully developed turbulence should be nonvanishing in the limit of zero viscosity, and predicts that mean fluctuations in velocity of nearby fluid elements scale in the bulk of the fluid as distance to the power 1/3 in the inertial range of length scales.  Onsager \cite{onsag} gave an independent derivation of this scaling law applied to the energy spectrum, and he proposed that the dissipation of energy independent of viscosity can be explained as the result of cascades of energy from lower to higher wavenumbers (or coarser to finer length scales) that are modeled by the nonlinear advective term of the incompressible Navier-Stokes and Euler equations.  Onsager observed that this mechanism for dissipation by energy cascades could in principle take place even without viscosity for the incompressible Euler equations \eqref{eq:theEulerEqn}, but that the notion of solution to the inviscid equations could not be interpreted in the classical sense, stating that an Euler solution with spatial H\"{o}lder regularity greater than $1/3$ must satisfy the conservation of kinetic energy.  The idea that the H\"{o}lder exponent $1/3$ marks the threshold regularity for conservation of energy in the incompressible Euler equations has been known in the mathematical literature as the Onsager conjecture and has inspired a surge of mathematical activity in recent years.  These works, to be surveyed further below, have led most recently to a positive resolution of Onsager's conjecture.  We refer to \cite{eyinkDissip,eyinkSreen, deLSzeOnsagSurv,drivasThesis} for more detailed discussions of Onsager's ideas above.

Analogous to the Onsager conjecture for the 3D incompressible Euler equations, we present here the following well known conjecture on the existence of globally dissipative incompressible Euler flows.  This conjecture and the question of uniqueness for the conjectured solutions are the primary focus of the paper.
\begin{conj}[Strong Onsager Conjecture] \label{conj:inviscConj}  There exists an open interval $I$, and a weak solution $(v,p)$ to the incompressible Euler equations on $I \times \T^3$ that is of class $v \in L_t^\infty C_x^{1/3}$ and satisfies the local energy inequality \eqref{eq:locEnInq} with the left hand side not identically zero.
\end{conj}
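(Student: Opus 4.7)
The plan is to adapt the convex integration scheme underlying Theorems~\ref{thm:locEnIneq}--\ref{thm:locEq} and push its regularity to the Onsager threshold. At each stage $q$ one maintains an approximate solution $(v_q,p_q)$ together with a symmetric traceless Reynolds stress error $R_q$ measuring the failure of the momentum equation and a scalar error $D_q$ measuring the failure of the local energy balance. The frequency/amplitude parameters follow the standard Onsager hierarchy $\lambda_q = a^{b^q}$ with velocity increments $\|W_{q+1}\|_{C^0} \sim \delta_{q+1}^{1/2} \sim \lambda_{q+1}^{-1/3}$, so that a convergent iteration produces $v_\infty \in L_t^\infty C_x^{1/3}$ in the critical case $b = 2$, with a prescribed monotone energy profile.

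The first key step is the design of the building blocks. One needs velocity perturbations $W_{q+1}$, approximately supported on the shell $|\xi| \sim \lambda_{q+1}$, that simultaneously prescribe the low-frequency quadratic average $\langle W_{q+1} \otimes W_{q+1} \rangle$ (to cancel $R_q$, as in the classical Onsager constructions) and the low-frequency cubic flux average $\langle |W_{q+1}|^2\, W_{q+1}\rangle$ (to absorb $D_q$ and install the desired local energy drop). The Mikado-type flows used in \cite{IO-presEn,BDLSVonsag} are natural candidates, but must be augmented either with auxiliary phase functions or with a second family of disjoint frequency channels so that the cubic cancellation does not corrupt the quadratic one. The second key step is the energy profile design: one fixes a smooth target $e(t) \ge 0$ on $I$, strictly decreasing on a sub-interval $I_0 \subset I$, and arranges the cubic flux $\nabla_j[(|W_{q+1}|^2/2) W_{q+1}^j]$ so that its low frequencies match $-\tfrac{d}{dt}e(t)$ up to errors of size $D_{q+2}$. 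The third step is the usual transport/divergence-inversion argument, applied in parallel both to $R_{q+1}$ and to $D_{q+1}$, with the mollification and gluing scheme of \cite{BDLSVonsag} used to keep $D_{q+1}$ of the same order as the stress error.

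Scaling dictates that the quadratic oscillation error produces a new stress of size $\delta_{q+1}\lambda_{q+1}^{-1} \sim \lambda_{q+1}^{-5/3}$, which closes against $\delta_{q+2} \sim \lambda_{q+1}^{-2b/3}$ exactly when $b \le 5/2$; while the cubic oscillation error produces a new energy flux error of size $\delta_{q+1}^{3/2}\lambda_{q+1}^{-1} \sim \lambda_{q+1}^{-2}$, which closes against $D_{q+2} \sim \delta_{q+2}^{3/2} \sim \lambda_{q+1}^{-b}$ exactly when $b \le 2$. Both constraints are marginally satisfied at the critical exponent $\alpha = 1/3$ precisely when $b = 2$.

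The main obstacle I expect is closing the iteration at this critical endpoint rather than at $C^{1/3 - \varepsilon}$. All existing Onsager-critical convex integration schemes absorb a logarithmic or polynomial loss in their estimates that forces one to take $b$ strictly less than the scaling-optimal value; here that loss would prevent the cubic energy-flux error from closing, since its constraint is exactly saturated. Overcoming this would require either an algebraic cancellation between the quadratic stress oscillations and the cubic flux oscillations (some pairing of distinct Mikado directions so that the leading $\lambda_{q+1}^{-2}$ terms in $D_{q+1}$ cancel), or a two-parameter multi-scale gluing that distributes the cubic error across many frequency shells between $\lambda_{q+1}$ and $\lambda_{q+2}$, in the spirit of the convex integration schemes producing $C_x^{1/3}$ regularity conditionally on the energy profile. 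This is where I expect the core difficulty to lie, and resolving it is essentially the heart of Conjecture~\ref{conj:inviscConj}.
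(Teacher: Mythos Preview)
The statement is a \emph{conjecture}, and the paper does not prove it. What the paper establishes are Theorems~\ref{thm:locEnIneq}--\ref{thm:wildInitData}, which produce globally dissipative (or energy-conserving) Euler flows only of class $C_{t,x}^\alpha$ for $\alpha<1/15$; see in particular the discussion surrounding Theorem~\ref{thm:enDispNunq}, where Conjecture~\ref{conj:inviscConj} is explicitly described as open and the $1/15$ result is called ``the first partial result'' towards it. So there is no ``paper's own proof'' to compare against, and your proposal should be read as a research plan rather than a proof.

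Your outline is honest about this: you yourself conclude that closing the cubic flux error at the critical exponent is ``essentially the heart of Conjecture~\ref{conj:inviscConj}''. That is correct, and it means the proposal is not a proof but a restatement of the problem together with a heuristic scaling count. Two specific points deserve emphasis. First, the paper explicitly argues (Section~\ref{sec:mainResults}) that Mikado flows, which you propose as the natural building blocks, appear \emph{incompatible} with the local energy inequality because their implementation introduces both creation and dissipation of energy in disjoint regions; the paper instead uses one-dimensional plane waves and a new notion of dissipative Euler--Reynolds flow (Section~\ref{sec:dissipEuRnFlow}) with an unresolved flux density $\kappa$ and current $\varphi$ tracked alongside the Reynolds stress. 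Your plan does not engage with this obstruction. Second, your scaling analysis already shows the cubic constraint is exactly saturated at $b=2$, and you correctly anticipate that the standard losses prevent closure; but the paper's own scheme, which is designed precisely around the cubic interactions (see the trilinear cascade set $\mathcal{T}$ in Section~\ref{sec:solvingCoeffs}), reaches only $1/15$, not $1/3$. Bridging that gap is the open problem.
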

\noindent It is known from the results on the conservative direction of Onsager's conjecture \cite{eyink,cet,duchonRobert} that the H\"{o}lder exponent $1/3$ cannot be replaced by any greater value, and more generally that every weak solution to Euler of Besov class $L_t^3 B_{3,\infty}^{\a}$ with $\a > 1/3$ satisfies the local energy equality \eqref{eq:locEnEq}.

A notable motivation for the above conjecture arises from the following statement concerning {\it smooth} solutions to the the incompressible {\it Navier-Stokes} equations, which serves as a mathematical interpretation of some conclusions that may be drawn from Kolmogorov and Onsager's seminal papers.
\begin{conj}[K41-Onsager Conjecture for Navier-Stokes] \label{conj:navStConj} There exists a finite open interval $I$, and a sequence of suitable weak solutions to the Navier-Stokes equations $v_{\nu_j}$ on $I \times \T^3$ with viscosity parameters $\nu_j > 0$ tending to $0$ that is uniformly bounded in the $L_t^\infty C_x^{1/3}$ norm, and that dissipates kinetic energy at an average rate that is uniform in viscosity:
\ali{
\limsup_{j \to \infty} \fr{1}{|I|} \int_I \left[ - \fr{d}{dt} \int_{\T^3} \fr{|v_{\nu_j}|^2}{2}(t,x) \tx{d}x \right] \tx{d}t &\geq \varep > 0, \qquad \varep \in \R_+. \label{eq:anomalousDissip}
}
\end{conj}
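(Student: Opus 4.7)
The plan is to attempt a reduction to the inviscid Conjecture~\ref{conj:inviscConj} via a vanishing viscosity approximation. Assuming Conjecture~\ref{conj:inviscConj}, we obtain a finite open interval $I$ and a weak solution $(v,p)$ to the Euler equations on $I \times \T^3$ of class $v \in L_t^\infty C_x^{1/3}$ satisfying \eqref{eq:locEnInq} with nontrivial strict inequality. The goal would then be to produce, for each $\nu_j \downarrow 0$, a suitable weak solution $v_{\nu_j}$ to the Navier-Stokes equations that sits close enough to $v$ in a strong enough norm to transfer both a uniform regularity bound and the strict energy dissipation.

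For the construction of the sequence, one natural choice is to fix some $t_0 \in I$ and to solve the Navier-Stokes Cauchy problem with viscosity $\nu_j$ and initial data $v(t_0,\cdot)$, or a mollification on a scale $\ell_j \to 0$ tied to $\nu_j$, invoking Leray-Hopf theory together with the Caffarelli-Kohn-Nirenberg construction to produce a global-in-time suitable weak solution. The three properties that would need to be established for the resulting sequence are: (i) a uniform $L_t^\infty C_x^{1/3}$ bound on $\{v_{\nu_j}\}$; (ii) strong convergence $v_{\nu_j} \to v$ in $L_{t,x}^3(I \times \T^3)$ along a subsequence; and (iii) the asymptotic lower bound \eqref{eq:anomalousDissip}.

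Granting (i) and (ii), item (iii) should follow without much additional work. Strong $L_{t,x}^3$ convergence suffices to pass to the limit in the nonlinear flux $(|v_{\nu_j}|^2/2 + p_{\nu_j}) v_{\nu_j}$ and thereby to identify the weak-$*$ limit of the viscous dissipation measure $\nu_j |\nb v_{\nu_j}|^2 \, dt \, dx$ with the negative of the inviscid defect measure on the left-hand side of \eqref{eq:locEnInq} for $v$. Integrating this measure over $I \times \T^3$ then yields \eqref{eq:anomalousDissip} with $\varep$ equal to the total mass of the inviscid defect, which is positive by the strict inequality furnished by Conjecture~\ref{conj:inviscConj}.

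The main obstacle, by a wide margin, is item (i). Propagating a uniform $C^{1/3}$ bound on Navier-Stokes solutions as $\nu_j \downarrow 0$ starting from only $C^{1/3}$ initial data is essentially a Navier-Stokes version of the Onsager regularity problem and is far out of reach of current PDE technology; even the corresponding bound for smooth Euler solutions is unknown. A potentially more tractable variant of this plan would be to weaken (i) to a uniform bound in $L_t^3 B_{3,\infty}^{1/3}$, which is the minimal scaling-consistent regularity compatible with anomalous dissipation by the Constantin-E-Titi criterion; but even this seems to demand genuinely new ideas, perhaps an adaptation of the explicit convex integration scheme developed below to run at positive viscosity while quantitatively tracking the viscous defect term.
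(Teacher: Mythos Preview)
The statement you are addressing is Conjecture~\ref{conj:navStConj}, which the paper explicitly leaves open; there is no proof in the paper to compare against. The paper's discussion surrounding this conjecture in fact runs in the \emph{opposite} direction from your proposal: it observes that Conjecture~\ref{conj:navStConj} \emph{implies} Conjecture~\ref{conj:inviscConj} (via Aubin--Lions--Simon compactness and strong $L_{t,x}^3$ convergence of a subsequence), and remarks that ``the problem of proving or disproving the existence of kinetic energy dissipation independent of viscosity as in \eqref{eq:anomalousDissip} for 3D Navier-Stokes is a well known open problem.''

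Your proposal attempts the reverse implication, Conjecture~\ref{conj:inviscConj} $\Rightarrow$ Conjecture~\ref{conj:navStConj}, and you correctly identify that the decisive obstruction is item (i): propagating a uniform $L_t^\infty C_x^{1/3}$ (or even $L_t^3 B_{3,\infty}^{1/3}$) bound on the viscous approximants. This is not a gap you have overlooked; you have simply confirmed that the naive viscous approximation strategy does not close, which is consistent with the paper's position that Conjecture~\ref{conj:navStConj} remains open. Your analysis of items (ii) and (iii) conditional on (i) is reasonable, but since (i) is unavailable, the proposal does not constitute a proof.
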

The existence of dissipative Euler flows stated in the inviscid Conjecture~\ref{conj:inviscConj} is in fact a direct consequence of Conjecture~\ref{conj:navStConj} for the Navier-Stokes equations.  A globally dissipative weak solution to the Euler equations as described in the inviscid conjecture arises as a limit from the viscous conjecture by passing to a subsequence that converges strongly in $C_t L_x^2 \cap L_{t,x}^3$.  The existence of such a convergent subsequence can be easily shown using the Aubin-Lions-Simon lemma and the compactness of $C^{1/3}$ in $L^3(\T^3)$.  By the same argument, the regularity exponent $1/3$ in the conjecture cannot be replaced by any value greater than $1/3$, as the conservation of energy would then have to be satisfied by any limiting Euler flow.

Note that neither of the above conjectures makes assumptions on initial data for the sequence $v_{\nu_j}$ or for the Euler flow $v$, and consequently the conjectures do not imply the existence of blowup for classical solutions to the incompressible Euler equations.  Rather, in the conjectural picture the initial data for the Navier-Stokes solutions may be smooth but developing a Kolmogorov type energy spectrum, and thus converging to a rough vector field in the $0$ viscosity limit.  More generally, if $v_{\nu} \in L_{t,x}^2$ is any weak solution to the Navier-Stokes or Euler equations on $I \times \T^d$, $I$ an open interval, one does have the existence\footnote{This fact is usually discussed in the context of $L_t^\infty L_x^2$ solutions, in which case the initial data are also in $L_x^2$.  One may see this point by defining $v_\nu(t_0, \cdot)$ for each $t_0$ in the closure of $I$ to be the unique element of $\DD'(\T^3)$ for which
\ali{
v_\nu^\ell(t,x) &= v_\nu^\ell (t_0,x) + \int_{t_0}^t \left[ -\PP \nb_j( v_\nu^j(\tau, x) v_\nu^\ell(\tau, x)) + \nu  \De v_\nu^\ell(\tau, x) \right] d\tau \label{eq:defineInitialData}
} 
holds as a distribution in $(t,x)$ on $I \times \T^3$, where $\PP$ is the Leray-projection to divergence free vector fields.  The definition of a weak solution guarantees that the $v_\nu^\ell(t_0, \cdot)$ defined implicitly by \eqref{eq:defineInitialData} is independent of $t$ (i.e., $\pr_t[v_\nu^\ell(t_0,\cdot)] = 0$ weakly) and therefore may be identified with a unique distribution $v_\nu(t_0,\cdot) \in \DD'(\T^3)$ in the $x$ variable for each fixed $t_0$.}  of uniquely defined initial data
$v_{\nu}(t_0,\cdot) \in \DD'(\T^3)$, which converge weakly when the $v_\nu$ converge strongly in $L_{t,x}^2$.  The convergence takes place in stronger norms when the $v_\nu$ satisfy stronger assumptions.  The convergence of initial data in Conjecture~\ref{conj:navStConj} would be uniform as implied for bounded sequences in $C^{1/3}(\T^3)$.    

These conjectures may be generalized to other function space norms that reflect the regularity that is most appropriate, or to other viscous regularizations of Euler.  In the present formulation, the Navier-Stokes solutions in question must be smooth, classical solutions to the equations as they satisfy the Ladyzhenskaya-Prodi-Serrin criterion.  
Bounds in weaker function spaces, including the Besov space $L_t^3 B_{3,\infty}^{\sigma}$ for any $\si > 0$, are sufficient to guarantee the compactness in $L_{t,x}^3$ and, using the same argument, the existence of a subsequence converging to a weak solution to Euler obeying the local energy inequality.  For the existence of an incompressible Euler flow that dissipates total kinetic energy, it suffices to have a uniform, pointwise lower bound on the energy dissipation rate and strong convergence in $L_{t,x}^2$ along a subsequence, which would follow from a uniform in viscosity bound in $L_t^2 B_{2,\infty}^\si$ for some $\si > 0$.  In particular, the $5/3$-law for the energy spectrum of Kolmogorov, which corresponds to a regularity slightly stronger than the Besov space $B_{2,\infty}^{1/3}$, is more than sufficient for this compactness if it were to hold at least as an upper bound in the inviscid limit in the preceding sense.  It is an open question, however, to determine if general a priori estimates such as those above could be proven rigorously for weak solutions to the 3D Navier Stokes equations, and the known mathematical frameworks for establishing compactness in the zero viscosity limit based on the available a priori energy estimates (which include the dissipative solutions of \cite[Chapter 4.4]{lionsTopics} and the measure-valued solutions of \cite{dipMajdaOsc}) are not known to give rise to Euler flows.

\begin{rem}
The direct relationship between the energy spectrum of Kolmogorov and compactness in the inviscid limit discussed above was first observed in Theorem 5.1 of \cite{chenGlimm}, which contains further results that may be obtained under a weak Kolmogorov hypothesis, Assumption (K41W).  The approach via the Aubin-Lions-Simon lemma outlined above provides an alternative approach to their Theorem 5.1, as their Assumption (K41W) implies boundedness in $L_t^2 H_x^{\a}$ for some $\a > 0$ by \cite[Prop 3.1]{chenGlimm}.  

More recent papers addressing the relationship between turbulent statistics and energy dissipation in the inviscid  limit are \cite{constVicolHighReyn,drivasEyinksingLeray}.  The work \cite{constVicolHighReyn} considers important issues related to enstrophy and vorticity production at boundaries, and gives a convergence result under hypotheses imposed only in the inertial range of length scales.  In \cite{drivasEyinksingLeray}, it is shown that  ``quasi-singularities'' must arise in the inviscid limit for Leray-Hopf solutions to Navier-Stokes even if the rate of kinetic energy dissipation does not tend to a positive value as in \eqref{eq:anomalousDissip} but rather tends to zero slowly as $\nu \to 0^+$.  The authors review the existing numerical and experimental evidence for dissipation of energy in the inviscid limit in \cite[Remark 3]{drivasEyinksingLeray}.  They prove also further results on convergence in the inviscid limit including details of the Aubin-Lions-Simon lemma argument noted here.  See also \cite{vassilicos2015dissipation} for further review of the empirical literature on the energy dissipation rate.
\end{rem}



The problem of proving or disproving the existence of kinetic energy dissipation independent of viscosity as in \eqref{eq:anomalousDissip} for 3D Navier-Stokes is a well known open problem.  This situation is in stark contrast to the analogous statements for scalar conservation laws such as the Burgers' equation \eqref{eq:burgers}, where analogues of Conjectures~\ref{conj:inviscConj} and \ref{conj:navStConj} are well understood.  In this context, zero viscosity limits of bounded solutions are admissible solutions that typically exhibit strict dissipation of energy due to shocks, and their viscous approximations exhibit dissipation of energy independent of viscosity.  The energy dissipating, admissible, inviscid solutions have the regularity\footnote{We thank R. Shvydkoy for first pointing this out to us.  The regularity may be seen, as noted in \cite{feirGwSgwWcomp}, by interpolating the maximum principle $\| u \|_{L_{t,x}^\infty} \leq \| u_0 \|_{L^\infty}$ and the BV bound $\| u \|_{L_t^\infty TV_x} \leq \| u_0 \|_{TV}$.  The same may be stated for the viscous solutions (see \cite[Theorems 6.2.3, 6.3.2]{dafermosBook} and the proof of \cite[Theorem 6.2.6]{dafermosBook}).  The dissipation of energy independent of viscosity follows from the compactness of the viscous approximations in $L_{t,x}^2$ whenever the limiting entropy solution to Burgers exhibits strict dissipation of energy.} $L_t^\infty B_{3,\infty}^{1/3}$, which is the maximal possible regularity for energy dissipation in the scale of $L^3$ based Besov spaces (similarly to the Euler equations), and their viscous approximations remain bounded in $L_t^\infty B_{3,\infty}^{1/3}$ in the inviscid limit.  Inviscid limits in this setting are known to both exist and to be unique even for fixed initial data in $L^\infty$.  


\subsection{Previous related results and difficulties of previous approaches} \label{sec:prevResultsSurvey}
Although little progress has been made towards approaching Conjecture~\ref{conj:navStConj} for Navier-Stokes directly, developments in the method of convex integration based on pioneering work of De Lellis and Sz\'{e}kelyhidi \cite{deLSzeIncl,deLSzeAdmiss,deLSzeCts,deLSzeHoldCts} 
have yielded great advances in understanding energy non-conserving solutions to the incompressible Euler equations that are directly relevant to Conjecture~\ref{conj:inviscConj}.  The initial work of \cite{deLSzeIncl} produced $L_{t,x}^\infty$ solutions to the incompressible Euler equations with compact support, which improved and turned out to yield a more systematic approach to earlier breakthrough results of \cite{scheff,shnNonUnq}.  The method was extended in \cite{deLSzeAdmiss} to address admissibility criteria for the Euler equations, which included a proof of nonuniqueness of weak Euler flows of class $L_{t,x}^\infty$ obeying the local energy equality \eqref{eq:locEnEq}, and examples demonstrating that the inequality may be sharp for bounded solutions, before which only the dissipation of total kinetic energy for solutions in $L_t^\infty L_x^2$ was known due to \cite{shnDiss}.  The method of \cite{deLSzeAdmiss} achieved these results by proving not only nonuniqueness of globally dissipative solutions in $L_{t,x}^\infty$, but also a much stronger result\footnote{We have stated here a simplification of their result for the periodic setting.  See the proof of \cite[Theorem 1, (a),(b)]{deLSzeAdmiss} and \cite[Theorem 1, Proposition 2.1]{deLSzeAdmiss} for more general statements. } showing the following.
\begin{thm}[De Lellis, Sz\'{e}kelyhidi \cite{deLSzeAdmiss}] \label{thm:DeLSzeThm} Let $d \geq 2$.  Then there exists an infinite family of weak solutions to the incompressible Euler equations on $[0,T] \times \T^d$ of class $v \in L_{t,x}^\infty \cap C_t L^2_w$, that all share the same initial data $v(0,\cdot)$, and the same kinetic energy density $|v|^2/2(t,x) = e(t)$ a.e. on $(0,T) \times \T^d$, with $|v|^2(0,x) < e(0)$ for $a.e. \, x \in \T^d$, and all have common pressure given by $p(t,x) = -2e(t)/d$.  One may take $e(t) = 1$, or $e(t)$ to be strictly decreasing and smooth on $[0,T]$ with $e(T) = 0$.    
\end{thm}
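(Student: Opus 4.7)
The plan is to place the Euler system inside the Tartar framework of convex integration as developed in \cite{deLSzeIncl,deLSzeAdmiss}. Introduce an auxiliary symmetric traceless matrix field $u$ and a scalar $q$, and consider the \emph{linear} constant-coefficient system
\[
\pr_t v^\ell + \nb_j u^{j\ell} + \nb^\ell q = 0, \qquad \nb_j v^j = 0,
\]
together with the pointwise nonlinear constraint $(v(t,x),u(t,x)) \in K(e(t))$, where
\[
K(r) = \Bigl\{ (V,U) \in \R^d \times S_0^{d \times d} : U = V \otimes V - \tfrac{|V|^2}{d} I,\; |V|^2 = 2r \Bigr\}.
\]
Any such triple yields a weak Euler solution with $p(t,x) = q(t,x) - 2e(t)/d$; taking $q \equiv 0$ gives the spatially constant pressure in the statement. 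A \emph{strict subsolution} is a smooth pair $(v,u)$ solving the linear PDE with $(v,u)$ landing in the open convex set $\mathcal{U}(r) = \{(V,U) : \lambda_{\max}(V \otimes V - U) < 2r/d\}$.

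The essential geometric input concerns the wave cone $\Lambda$ of plane-wave solutions $(V,U) h(\xi \cdot x - \sigma t)$ to the linearized system. A direct algebraic computation identifies $\Lambda$ and establishes a quantitative \emph{reach-out} estimate: for every $(V_0,U_0) \in \mathcal{U}(r)$ there exists $(\bar V, \bar U) \in \Lambda$ with $|\bar V|$ bounded below by a continuous increasing function of $r - \tfrac{d}{2}\lambda_{\max}(V_0 \otimes V_0 - U_0)$ such that $(V_0 \pm \bar V, U_0 \pm \bar U) \in \overline{\mathcal{U}(r)}$. This is the only step where the nonlinear geometry of Euler enters, and it allows subsolutions to be corrected along high-frequency wave packets supported in $\Lambda$.

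Next, construct an initial strict subsolution $(\bar v, \bar u)$ on $[0,T]\times \T^d$ with $\bar v(0,\cdot)$ equal to the desired initial datum, $|\bar v(0,x)|^2 = 2e(0)$ a.e., and $(\bar v(t,x), \bar u(t,x)) \in \mathcal{U}(e(t))$ strictly for $t > 0$ (this is produced by superposing shear-flow building blocks adapted to $e(t)$, which for $q \equiv 0$ preserve the structure of the linear system). Let $X_0$ be the set of smooth strict subsolutions agreeing with $(\bar v,\bar u)$ in a neighborhood of $\{t = 0\}$, and let $X$ be its closure in $C([0,T];L^2_w(\T^d))$. The uniform bound $|v(t,x)|^2 < 2e(t)$ on $X_0$ makes $X$ nonempty, bounded, metrizable, and complete. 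Define
\[
I(v) = \int_0^T \int_{\T^d}\bigl(2e(t) - |v(t,x)|^2\bigr)\, dx\, dt \geq 0.
\]
Since $v \mapsto \int |v|^2$ is weakly lower semicontinuous, $I$ is upper semicontinuous on $X$; by the Baire category theorem its continuity set $\mathcal{R} \subset X$ is a dense $G_\delta$.

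The heart of the argument is the \emph{perturbation lemma}: if $(v,u) \in X_0$ with $I(v) \geq \varepsilon > 0$, then the reach-out estimate applied locally via a partition of unity and followed by a high-frequency superposition of wave-cone plane waves supported away from $t = 0$ produces $(v', u') \in X_0$ with $\|v'-v\|_{L^2_w}$ arbitrarily small yet $I(v') \leq I(v) - \delta(\varepsilon)$ for a positive modulus $\delta$. Continuity of $I$ at any $v \in \mathcal{R}$ then forces $I(v) = 0$, so $|v|^2 = 2e(t)$ a.e.\ and $v$ is an exact Euler solution; the equality $|v(0,\cdot)|^2 = 2e(0)$ together with weak continuity in time upgrades to strong $L^2$ continuity at $t = 0$, hence $v(0,\cdot) = \bar v(0,\cdot)$. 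A single compactly-supported wave-cone perturbation shows $X \setminus \{\bar v\} \neq \emptyset$ with the initial trace unchanged, and by iterating one sees $\mathcal{R}$ is in fact uncountable. The main obstacle in the whole scheme is the wave-cone identification and the accompanying quantitative reach-out bound; once these are in hand the remaining arguments are the abstract convex-integration machinery of Tartar--Kirchheim--M\"uller--\v Sver\'ak adapted to the time-dependent target $K(e(t))$ and to the fixed initial trace, and the two sub-cases $e \equiv 1$ or $e$ smoothly decreasing to $0$ differ only in the explicit construction of the initial subsolution $(\bar v, \bar u)$.
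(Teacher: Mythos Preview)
The paper does not contain a proof of this theorem; it is quoted as a prior result of De~Lellis and Sz\'{e}kelyhidi, with a footnote directing the reader to \cite[Theorem~1, Proposition~2.1]{deLSzeAdmiss} for the proof. Your proposal is a faithful high-level sketch of the original argument in \cite{deLSzeAdmiss}: the reformulation as a linear system plus pointwise constraint $(v,u)\in K(e(t))$, the identification of the relaxed interior $\mathcal{U}(r)$ via the eigenvalue condition, the wave-cone reach-out lemma, the construction of a strict subsolution attaining the boundary data at $t=0$, and the Baire-category perturbation scheme are exactly the ingredients of that paper. One minor point: your phrasing ``$\bar v(0,\cdot)$ equal to the desired initial datum'' suggests the datum is prescribed, whereas in the theorem it is produced by the construction of the initial subsolution; and the uncountability of $\mathcal{R}$ follows not from iteration but from the fact that a residual subset of a perfect complete metric space is uncountable. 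Otherwise the outline is correct and matches the cited source.
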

The key point in this theorem with regard to the local energy inequality and equality is that one may arrange 
$\pr_t(|v|^2/2) + \nb_j( (|v|^2/2 + p) v^j ) = \pr_t e(t)$ to be either strictly negative or $0$.  Consequently, one has nonuniqueness for solutions obeying the local energy equality \eqref{eq:locEnEq}, and the existence of strict dissipation in the local energy inequality \eqref{eq:locEnInq} within the class of $L_{t,x}^\infty$ solutions.  The method of \cite{deLSzeIncl,deLSzeAdmiss} however faces substantial difficulties towards constructing any weak solutions that are continuous.  In the context of solutions that fail to conserve total kinetic energy, these difficulties were later overcome in \cite{deLSzeCts,deLSzeHoldCts} by developing a new version of convex integration for the incompressible Euler equations in 3D that is closely related to the work of Nash \cite{nashC1} on isometric embeddings of class $C^1$.  These works gave the first results towards Onsager's conjecture on the $1/3$ threshold regularity for conservation of energy, and led to a series of improvements and partial results \cite{isettThesis, deLSzeBuck,Buckmaster,buckDeLIsettSze,buckDeLSzeOnsCrit} including most recently a full proof of the conjecture by the author \cite{isettOnsag}; see also \cite{BDLSVonsag,isettEndpt}.  

Several works following \cite{deLSzeCts,deLSzeHoldCts} have focused on proving the existence of strict dissipation of total kinetic energy for solutions in this range of regularity.  The works \cite{deLSzeCts,deLSzeHoldCts} prove that the total kinetic energy of a $(1/10{-}\ep)$-H\"{o}lder Euler flow on $[0,T]\times \T^3$ may be equal to any prescribed, smooth, strictly positive function of time $e(t) > 0$.  This result is strengthened and generalized to higher regularity solutions in \cite{deLSzeBuck,IOnonpd,IO-presEn,BDLSVonsag}, where the last reference obtains this result for solutions having the regularity $(1/3{-}\ep)$ of the Onsager conjecture.  However, these works face substantial difficulties towards obtaining local dissipation of energy (see Section~\ref{sec:difficulties} below).

Another branch of works building on \cite{deLSzeCts,deLSzeHoldCts} have extended the method to obtain nonuniqueness results for weak solutions.  Nonuniqueness of continuous solutions was first addressed in \cite{isettThesis}, where it is shown that any smooth initial datum admits weak solutions of class $C_{t,x}^{1/5-\ep}$ that are identically constant outside of a finite interval that may be chosen arbitrarily small.  
The nonuniqueness of H\"{o}lder continuous Euler flows dissipating total kinetic energy or having a prescribed energy profile was proven in \cite{Daneri}, while in \cite{danSze} it is shown that there is an $L^2$ dense subset of ``wild'' divergence free initial data that admit infinitely ``admissible'' solutions of class $C_{t,x}^{1/5-\ep}$, thus extending a previous result for $L_{t,x}^\infty$ solutions of \cite{szeWiedYoung}.  In this work a solution is called ``admissible'' if its energy remains bounded by that of the initial data.  This definition of admissible is motivated by the weak-strong uniqueness theory for the Euler equations, which shows that weak (even measure-valued) solutions to incompressible Euler with energy bounded by their initial data must coincide with the classical solution obtaining the same initial data provided the latter exists (see \cite{brenDeLSzweakstrong} and \cite{wiedWeakStrongSurv}).

We note as well the recent work of \cite{buckVicweakNS} on weak solutions to the 3D Navier Stokes equations of class $L_t^2 H_x^\b$ for some $\b > 0$, which shows that such solutions may be nonunique, may have any prescribed smooth energy profile with compact support, and may generate any H\"{o}lder continuous weak solution to Euler as a zero viscosity limit in $C_tL_x^2$.  At present these solutions appear to be separate from the previous discussion as they do not satisfy the local energy inequality for Navier-Stokes that defines a suitable weak solution assumed in Conjecture~\ref{conj:navStConj}.

Finally, we remark that our work has led to a further improved H\"{o}lder exponent in \cite{deL2020non} and a generalization to the compressible Euler equations in \cite{giri2021non}.  Compared to these works, which draw on our main ideas, our work maintains some advantages.  Namely, we obtain solutions with compact forward in time support, we establish a stronger nonuniqueness result, and we prove a result on the density of wild initial data as a consequence of a general approximation theorem.  We also anticipate that our approach may be more possible to extend to the two-dimensional setting where Mikado flows are not applicable.

\subsubsection{Main difficulties} \label{sec:difficulties}

It must be emphasized that dissipation of {\it total} kinetic energy, though useful as a criterion for data that admit a classical or Lipschitz solution, would not be expected to provide a uniqueness criterion for weak solutions that exhibit strict dissipation of energy, whereas in contrast the local energy inequality may be considered for this purpose.  
This point may be seen most easily in the example of the Burgers equation \eqref{eq:burgers}, which admits the infinite family of compactly supported solutions on $[0,2]\times \R$ equal to
\ali{
u_\a(t,x) &= \a \cdot 1_{0 < x < \a t} + (x / t) \cdot 1_{\a t \leq x < t} + 1 \cdot 1_{t \leq x \leq 1 + t/2} + 0 \cdot 1_{x > 1 + t/2}, \quad 0 < t \leq 2,
}
for $0 < x < \infty$, and extended to be odd $u_\a(t,-x) = -u_\a(t,x)$ for $x < 0$.  For any $0 \leq \a \leq 1$, $u_\a(t,x)$ is a weak solution to \eqref{eq:burgers} with initial data $u_\a(0,x) = 1_{0 < x \leq 1} - 1_{-1 \leq x < 0}$, and for small values of $\a \leq (1/6)^{1/3}$, $u_\a$ satisfies the dissipation of total kinetic energy $\fr{d}{dt} \int_{\R} u_\a^2(t,x) dx \leq 0$.  In this case, the unique entropy solution  is the solution $u_0(t,x)$, which exhibits strict dissipation of energy $-\fr{d}{dt} \int_\R u_0^2(t,x) dx = 2/3$ and is the only bounded weak solution to Burgers for the given initial data that satisfies the local energy inequality \eqref{eq:locEnIneqBurgers}.  Here the failure of total kinetic energy dissipation to provide a uniqueness criterion may be explained by the fact that the entropy solution has strictly positive total energy dissipation, giving room for nearby solutions to exist such as the family $u_\a$ that exhibit a small amount of local energy creation but not strong enough to disturb the strict dissipation of total kinetic energy.  We are thus motivated to consider the local energy inequality \eqref{eq:locEnInq} for the Euler equations as a more stringent admissibility criterion for energy dissipating solutions with lower regularity.   The problem of constructing continuous solutions satisfying this inequality, which is motivated by the discussion of Conjectures~\ref{conj:inviscConj} and \ref{conj:navStConj}, would appear even more challenging from this vantage point in view of the fact that uniqueness holds for continuous weak solutions to Burgers (see, e.g. \cite{dafermosBook}).

Due to intrinsic differences in the setting of continuous solutions, the proof of Theorem~\ref{thm:DeLSzeThm} on $L_{t,x}^\infty$ solutions faces substantial difficulties towards being extended to constructing continuous, globally dissipative solutions or more generally towards a proof of Conjecture~\ref{conj:inviscConj}.  
Specifically the proof of Theorem~\ref{thm:DeLSzeThm} yields a series $v = \sum_q V_q$ where each $V_q$ is of size $1$ in $L^\infty_{t,x}$, the partial sums $v_k = \sum_{q \leq k} V_q$ are uniformly bounded in $L^\infty_{t,x}$ with energy densities $\fr{|v_k|^2}{2}(t,x)$ converging in $C_t L_x^p$ to the prescribed kinetic energy density for $p < \infty$, and the series converges only in $L_{t,x}^p$ for $p$ strictly less than $\infty$.  In contrast the construction of continuous and H\"{o}lder continuous solutions relies on convergence of the approximate solutions to be achieved {\it uniformly} at a rate compatible with the desired regularity.  
The known methods for producing H\"{o}lder continuous solutions lead to at best $C_t L_x^p$ approximation of the energy density (which follows from the estimates of \cite{IOnonpd,IO-presEn}) and not to uniform convergence.

\subsection{Main results and new ideas}\label{sec:mainResults}

The main achievement of this work is to develop an entirely different strategy for constructing solutions satisfying the local energy inequality that completely avoids the difficulties faced in generalizing the approach of Theorem~\ref{thm:DeLSzeThm}.  The strategy we develop in this work provides a significantly more explicit picture of how local energy dissipation may be achieved in the construction of solutions compared to the proof of Theorem~\ref{thm:DeLSzeThm}.  This construction has properties that are interesting from a physical point of view.  In particular, the arrow of time plays a fundamental role in the scheme, and the construction itself turns out to closely mirror a picture of turbulence theorized by Kraichnan, called the Lagrangian Direct Interaction Approximation \cite{kraichnan1965lagrangian}, which postulates that turbulent energy cascades are governed by {\it trilinear} interactions of waves that are carried by the mean velocity field.  

With this new strategy, we obtain the following results: 
\begin{thm}\label{thm:enDispNunq} For any $\a \in [0,1/15)$, there exists an infinite family of weak solutions $(v_\b, p_\b)$ to \eqref{eq:theEulerEqn} of class $v_\beta \in C_{t,x}^\a ( [0,\infty) \times \T^3)$ that satisfy the local energy inequality~\ref{eq:locEnInq} and share the same initial data while having uniformly bounded, compact support.  Such a family may be chosen to have a common dissipation measure $\mu = -[ \pr_t (|v_\b|^2/2) + \nb_j((|v_\b|^2/2 + p_\b) v_\b^j)]$ independent of $\b$ while being homeomorphic to a Cantor set as a subspace of $C_{t,x}^\a$, and also having positive Hausdorff dimension as a subspace of $C_t L_x^2$.
\end{thm}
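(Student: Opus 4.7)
My plan is to prove Theorem~\ref{thm:enDispNunq} by a branching convex integration scheme applied to a modified Euler-Reynolds system that explicitly tracks a candidate dissipation measure, together with a randomization/choice mechanism that indexes the solutions by $\beta \in 2^\N$. The iteration produces a sequence $(v_q, p_q, R_q, \mu_q)$ solving
\[
\pr_t v_q^\ell + \nb_j(v_q^j v_q^\ell) + \nb^\ell p_q = \nb_j R_q^{j\ell}, \qquad \nb_j v_q^j = 0,
\]
supplemented with a local energy identity of the form
\[
\pr_t\!\left(\tfrac{|v_q|^2}{2}\right) + \nb_j\!\left[\left(\tfrac{|v_q|^2}{2}+p_q\right)v_q^j\right] = -\mu_q + \nb_j\kappa_q^j + (\text{lower-order error}),
\]
where $\mu_q \geq 0$ is a smooth candidate dissipation density and $\kappa_q$ is a flux correction. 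The goal of the scheme is to make $R_q \to 0$ and the error above tend to $0$ uniformly while $\mu_q \to \mu$ in the sense of distributions, so that any limit $v = \lim v_q$ automatically satisfies \eqref{eq:locEnInq} with common dissipation measure $\mu$ independent of the branching choices.

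The core of the argument is a Main Iteration Lemma which, given $(v_q, p_q, R_q, \mu_q)$ at frequency-amplitude parameters $(\lambda_q, \delta_q)$, produces at each step \emph{two} admissible perturbations $W_q^{(0)}, W_q^{(1)}$ yielding new stages $(v_{q+1}^{(\iota)}, p_{q+1}^{(\iota)}, R_{q+1}^{(\iota)}, \mu_{q+1})$ for $\iota \in \{0,1\}$. Both choices are built from the same family of Mikado/beating Beltrami flows oscillating at frequency $\lambda_{q+1} = \lambda_q^{b}$ with amplitude $\delta_{q+1}^{1/2}$, designed so that the quadratic self-interaction $|W_q^{(\iota)}|^2/2$ and its advective flux have identical low-frequency contributions for both $\iota$ (this fixes the common $\mu_{q+1}$ and the common energy drop), but the phases or support patterns of $W_q^{(0)}$ and $W_q^{(1)}$ differ on a prescribed disjoint space-time region so that $\| W_q^{(0)} - W_q^{(1)} \|_{L^2} \gtrsim \delta_q^{1/2}$. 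Thus each choice of $\beta = (\beta_q)_{q \geq q_0} \in 2^\N$ yields a compatible sequence $(v_q^\beta)$, and the standard Hölder interpolation gives convergence in $C_{t,x}^\alpha$ for every $\alpha < 1/15$ once the parameters $b$, $\delta_q = \lambda_q^{-2\beta_{\text{reg}}}$ are tuned; the exponent $1/15$ arises from the balance between the quadratic reduction in $R_q$, the cost of the flux correction $\kappa_q$, and the extra derivatives needed to keep $\mu_q$ nonnegative and bounded.

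To complete the theorem I would then proceed as follows. Shared initial data: choose the base stage $(v_0, p_0, R_0, \mu_0)$ and arrange every perturbation $W_q^{(\iota)}$ to be supported in $\{t \geq t_q\}$ with $t_q \downarrow 0^+$ fast enough that the telescoping sum of perturbations vanishes on a neighborhood of $t = 0$; the same device on the other side and a cut-off of the initial stage gives the claimed compact support in time. Continuity of $\beta \mapsto v_\beta$ in $C_{t,x}^\alpha$ from the product topology on $2^\N$ follows from geometric-in-$q$ Hölder estimates on $W_q^{(\iota)}$: two branches that first differ at stage $q_\ast$ produce $v^\beta - v^{\beta'}$ of size $\lesssim \delta_{q_\ast}^{1/2}\lambda_{q_\ast}^\alpha$ in $C_{t,x}^\alpha$. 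Injectivity and the Cantor/Hausdorff dimension statement: the lower bound $\|W_q^{(0)} - W_q^{(1)}\|_{L^2} \gtrsim \delta_q^{1/2}$ together with orthogonality across scales (disjoint frequency supports of the perturbations) gives $\| v_\beta - v_{\beta'}\|_{C_t L^2_x}^2 \gtrsim \sum_{q : \beta_q \neq \beta'_q} \delta_q$, so $\beta \mapsto v_\beta$ is a bi-Hölder embedding of $2^\N$ (with its natural ultrametric) into $C_t L^2_x$, yielding a Cantor set of positive Hausdorff dimension (the dimension being computable from the ratio $\log 2 / \log(\delta_q^{-1/2}/\delta_{q+1}^{-1/2})$); the embedding into $C^\alpha_{t,x}$ is continuous by the previous estimate, and any continuous injective image of $2^\N$ in a metric space is homeomorphic to the Cantor set.

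The main obstacle, and the principal novelty required, is the simultaneous control of three competing demands in each iteration step: (i) the quadratic self-interaction of $W_q^{(\iota)}$ must cancel the Reynolds stress $R_q$ up to transportable errors; (ii) its kinetic-energy density must equal (up to an admissible flux divergence $\nb_j\kappa_q^j$) a prescribed smooth nonnegative function so that $\mu_{q+1} \geq 0$ and $\mu_{q+1} \to \mu$ regardless of $\iota$; and (iii) the pair $W_q^{(0)}, W_q^{(1)}$ must differ by an amount large enough in $L^2$ to force distinct limits. As discussed in Section~\ref{sec:difficulties}, the usual energy-prescription devices give only $L^p$-convergence of $|W_q|^2/2$ for $p < \infty$, which is incompatible with the pointwise control needed here. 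I expect to overcome this by introducing an additional cellular decomposition of space-time together with an explicit corrector $\kappa_q$ chosen so that the $C^0$ error in the energy identity is absorbed into $\nb_j \kappa_q^j$ rather than into $\mu_q$; the binary branching is then implemented inside each cell by swapping a pair of shape functions that have identical energy profile but distinct phase, which is what allows the dissipation measure to remain common across the entire family while the $L^2$ separation grows geometrically.
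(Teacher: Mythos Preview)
Your high-level architecture (branching convex integration, two admissible choices per step, Cantor-set indexing, lower bound in $L^2$ from a single differing stage) matches the paper's strategy. However, there are two genuine gaps in your proposal that the paper identifies and resolves differently.

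\textbf{Building blocks.} You propose to use ``Mikado/beating Beltrami flows''. The paper explicitly rules both out (see the discussion at the end of Section~\ref{sec:mainResults}): Beltrami flows produce no net improvement in the relaxed local-energy error, and Mikado flows create local regions of energy \emph{production} near their cutoffs, which is incompatible with keeping $\mu_q \geq 0$. The paper instead uses the one-dimensional plane waves of \cite{isettVicol}, with amplitudes taking values in $\ker dx^l$ and phases transported along the coarse-scale flow. The crucial algebraic fact making the scheme close is the cancellation in line~\eqref{eq:amazingCancel}: the dangerous term $\nb_j[V_\ell P\de_{[2\ast]}^{j\ell}]$, which is too large to be absorbed into $\ost{\vp}$, vanishes to leading order because $(\nb_j\hat\xi_I\,\hat v_{I\ell})\de_{[2\ast]}^{j\ell}=0$. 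Your ``cellular decomposition with corrector $\kappa_q$'' device does not supply this cancellation, and without it the iteration does not close for any positive $\alpha$.

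\textbf{Structure of the relaxed energy identity and the exponent $1/15$.} Your single flux corrector $\kappa_q^j$ is not enough. The paper tracks two separate fields, a scalar unresolved flux \emph{density} $\kappa$ (appearing as $D_t\kappa$) and a vector unresolved flux \emph{current} $\varphi^j$, and crucially decomposes the stress and current into components living in $\ker dx^I\otimes\ker dx^I$ for cyclically rotating $I$. This yields \emph{compound} frequency-energy levels $(\Xi,e_v,e_\varphi,e_R,e_G)$ with four distinct amplitude scales, and the Main Lemma only reduces one component per stage. The exponent $1/15$ then comes from locating the row vector $[0,15,11,5,1,2]$ in the null space of $(T-1)$ for the explicit evolution matrix $T$ in \eqref{eq:matrixParamEvol}, giving $15\,\de_{(k)}L_G + 2\,\de_{(k)}L_\Xi = 13 L_{\hc}$ up to bounded terms. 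Your sentence ``the exponent $1/15$ arises from the balance between the quadratic reduction in $R_q$, the cost of the flux correction, and the extra derivatives'' does not reflect any of this structure; with a single-scale scheme and Mikado blocks one would expect at best the $1/5$ or $1/3$ exponents of the earlier literature, not $1/15$, and in fact (per the previous paragraph) one would not obtain a globally dissipative solution at all.

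Two smaller points. First, the paper's branching mechanism is simply to negate a single wave $V_{I^*}$ and its conjugate on a shrinking interval $\tilde J_{(k)}$ around a fixed time $t^*>0$; all solutions then coincide on an open interval containing $0$, which is how shared initial data is obtained (not via supports in $\{t\geq t_q\}$). Second, your claimed lower bound $\|v_\beta-v_{\beta'}\|_{C_tL^2_x}^2\gtrsim\sum_{q:\beta_q\neq\beta'_q}\delta_q$ via cross-scale orthogonality is stronger than what is proved or needed: the paper only obtains $\|v_\beta-v_{\beta'}\|_{C_tL^2_x}\gtrsim e_{\vp,(k^*)}^{1/2}$ with $k^*=\min(\beta\De\beta')$ (see \eqref{eq:injectBd}), and this single-scale lower bound, combined with the exponential decay $e_{\vp,(k)}^{1/2}\lesssim Z^{-k/4}e_{\vp,(0)}^{1/2}$, already suffices for positive Hausdorff dimension via a Haar-measure covering argument on $2^\N$.
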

\noindent This theorem gives in particular the first examples of continuous Euler flows that satisfy the local energy inequality with nonzero dissipation, and the first partial result towards Conjecture~\ref{conj:inviscConj}.  

Our motivation for showing nonuniqueness for a fixed energy dissipation measure is related to the possible physical interpretation of general nonunique solutions.  One may imagine that nonuniqueness of globally dissipative Euler flows may be due to information that is lost in the zero viscosity limit if such solutions were to arise in this limit.  A related idea conveyed in \cite{shnNonUnq} is that the energy nonconserving Euler flows may be viewed as Euler flows with an ``invisible'' force that is equal to zero as a distribution, but may be captured by a different mathematical description.  One may ask in this regard if the dissipation measure itself can encapsulate the effects of a weakly vanishing friction and restore uniqueness.  The result of Theorem~\ref{thm:enDispNunq}, reinforcing Theorem~\ref{thm:DeLSzeThm}, would suggest that such a result may not be expected.




Our next theorem shows that the same type of nonuniqueness result holds for global solutions to incompressible Euler that satisfy local conservation of energy as in the local energy equality \eqref{eq:locEnEq}.  
\begin{thm}\label{thm:enEqNunq}  The statement of Theorem~\ref{thm:enDispNunq} holds also with the local energy inequality \eqref{eq:locEnInq} replaced by the local energy equality \eqref{eq:locEnEq}, and the condition that the family $(v_\a)_{\a \in 2^\N}$ have compact support replaced by the condition that the family $(v_\a)_{\a \in 2^\N}$ are defined globally on the domain $\R \times \T^3$.
\end{thm}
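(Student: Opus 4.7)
The plan is to rerun the convex integration iteration underlying Theorem~\ref{thm:enDispNunq}, but with the target dissipation measure set to $\mu \equiv 0$, so that the approximate solutions in the limit satisfy the local energy equality \eqref{eq:locEnEq} rather than merely \eqref{eq:locEnInq}. In the proof of Theorem~\ref{thm:enDispNunq} the common dissipation measure $\mu \geq 0$ is non-trivial precisely because compact temporal support forces all of the initial kinetic energy to be lost by the moment of extinction; the iteration is built so that the local energy balance of each $v_\beta$ matches a prescribed dissipation density at every stage, and in the present setting we simply ask that this prescribed density vanish identically.

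The second, forced, modification is the passage from $[0,\infty) \times \T^3$ with compact support to a global construction on $\R \times \T^3$. A continuous weak solution satisfying \eqref{eq:locEnEq} has $\|v_\beta(t)\|_{L^2}^2$ constant in $t$, so compact temporal support together with a non-trivial shared initial datum $v_0$ would force $\|v_0\|_{L^2}$ to equal $0$, a contradiction; the family must therefore live on all of $\R \times \T^3$. Concretely, we run the iteration globally in time, producing a sequence of stage-$q$ approximate solutions $(v_q, p_q)$ on $\R \times \T^3$ whose per-stage estimates are identical to those of Theorem~\ref{thm:enDispNunq} and whose limit lies in $C_{t,x}^\alpha(\R \times \T^3)$. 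The binary branching at each stage, which produces the Cantor structure, the positive Hausdorff dimension in $C_t L_x^2$, and the absence of isolated points in $C_{t,x}^\alpha$, carries over unchanged.

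The principal obstacle is the first point: maintaining \eqref{eq:locEnEq} throughout the iteration. In the dissipative case, small pointwise errors in the energy balance introduced by a new oscillation can always be absorbed into the nonnegative budget $\mu$, giving the scheme slack; for the equality case this slack vanishes, and each convex integration perturbation must be exactly energy-neutral in $C^0$ up to the scheme's convergence rate. This requires the building blocks added at scale $\lambda_q$ to be chosen so that, after coupling to the slow background and after the pressure correction is solved for, the quadratic self-interaction $\tfrac{1}{2}|W|^2$ updates the local energy density precisely as prescribed, with remainders controlled in $C^0$ rather than in $L^p$ for $p < \infty$. It is exactly this $C^0$-type control on the energy density, emphasized in Section~\ref{sec:difficulties} as the main obstruction faced by prior schemes, that the new explicit convex integration approach introduced in this paper is designed to furnish.
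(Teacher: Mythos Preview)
Your high-level plan --- set $\mu \equiv 0$ and run the iteration globally on $\R \times \T^3$ --- is correct and matches the paper exactly. The paper deduces both Theorems~\ref{thm:enDispNunq} and~\ref{thm:enEqNunq} as immediate corollaries of the Approximation Theorem (Theorem~\ref{thm:approxThm}): one takes the trivial initial dissipative Euler--Reynolds flow $(v,p,R,\kappa,\varphi,\mu)_0 = 0$ on $\tilde{I} = \R$, and the only difference between the two theorems is whether $\tilde{I}_{(1)}$ is a half-line or all of $\R$. Since property~3 of Theorem~\ref{thm:approxThm} guarantees $\mu_\infty = \mu_{(0)} = 0$ on $\tilde{I}_{(1)}$, choosing $\tilde{I}_{(1)} = \R$ yields the local energy equality everywhere.

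However, your third paragraph misidentifies the mechanism and creates a phantom obstacle. In the paper's scheme, the dissipation measure $\mu$ is \emph{never} used as a ``slack budget'' into which pointwise energy-balance errors are absorbed; those errors are carried entirely by the unresolved flux density and current $(\kappa, \varphi)$ together with the Reynolds stress $R$, and these are driven to $0$ in $C^0$ in \emph{both} the equality and inequality cases. The only source of the increment $\ost{\mu} - \mu$ is the explicit term $-\partial_t e(t) \geq 0$ coming from the energy increment function of \eqref{eq:enIncdef}, which is nonincreasing solely because it is cut off beyond $\sup \tilde{I}_{[G]}$. When $\tilde{I}_{[G]} = \R$ (equivalently $\sup \tilde{I}_{[G]} = +\infty$), the function $e(t)$ is constant, $\partial_t e(t) = 0$, and $\ost{\mu} = \mu$ automatically at every stage. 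Thus the equality case is \emph{not} harder than the inequality case within this framework; the $C^0$ control on the energy errors that you correctly flag as the paper's main innovation is required equally for both theorems, and both follow from the same Main Lemma without modification.
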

The fact that the family of solutions is homeomorphic to a Cantor set has some physical interest.  Since there are no isolated members of the family, no finite-precision measurement (that is continuous in the $C^\alpha$ topology) can isolate a solution to restore uniqueness.

We note here that the nonuniqueness statement we obtain is stronger than the previous results on nonunique energy-conserving weak solutions, including those proven in the $L_{t,x}^\infty$ context.   
More precisely, in \cite{deLSzeAdmiss} what is shown is that there is a complete, separable metric space of ``subsolutions'' such that there is a Baire residual (i.e. generic) set of subsolutions consisting of solutions to Euler that obey the local energy equality and have the same initial data (in the sense of $C_t L^2_w$).  An exercise in real analysis shows that every separable metric space has a dense $G_\de$ subset with zero Hausdorff dimension; thus, one cannot deduce that a Baire generic subset of a complete, separable metric space will have positive Hausdorff dimension.  Furthermore, our theorem yields a metric space of solutions that is already complete (since it is compact) rather than being only Baire generic, and the initial data is obtained in a strong topology ($C^\alpha$) rather than a weak topology, so energy conservation holds for all $t \in [0,\infty)$ including $t = 0$.


The nonuniqueness statements in Theorems~\ref{thm:enDispNunq} and ~\ref{thm:enEqNunq} are proven by implementing a new strategy for proving nonuniqueness of solutions 
that represents the first use of a probabilistic method in the context of convex integration.  The proof proceeds by inserting a random coin toss into each step of the iteration at a point where none of the estimates of the iteration will be disturbed.  In this way our work gives a first answer to the question of whether there exists a probabilistic approach to convex integration raised in \cite[Problem 9]{deLSzeHFluid}.  This question has a natural motivation in our context given the role that randomness is believed to play in turbulent motion.  The method leads naturally to a Cantor family of solutions, as the outcome of every coin toss changes the final solution in a quantitative way.  
Our proof that the resulting family of solutions has positive Hausdorff dimension in the energy space $C_tL_x^2$ uses the exponential (rather than double-exponential) growth of frequencies in the iteration obtained via techniques of \cite{isettVicol}.  It is plausible to expect that our method may also produce an infinite Hausdorff family of solutions when a greater amount of randomness is employed. 



The final application we consider here concerns the $C^0$ density of ``wild'' initial data that admit an infinite family of nonunique weak solutions to incompressible Euler satisfying the local energy equality \eqref{eq:locEnEq}.  The result 
may be compared to the analogous density results in \cite{szeWiedYoung,danSze}; however, our proof is by a quite different approach.  We refer to Section~\ref{sec:constructDissipSolns} for a more precise discussion.  
\begin{thm} \label{thm:wildInitData}  For any $\a < 1/15$, the set of initial data on $\T^3$ that admit infinitely many solutions of class $C_{t,x}^\a$ that obey the local energy equality \eqref{eq:locEnEq} on a common interval of time containing $0$ is $C^0$ dense in the space of continuous, divergence free initial data.
\end{thm}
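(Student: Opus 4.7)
The plan is to combine the nonuniqueness construction of Theorem~\ref{thm:enEqNunq} with mollification and the classical local well-posedness for smooth Euler data, arranging the perturbations of convex integration to be supported in an interval bounded away from $t=0$ so that they preserve the chosen initial datum.

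Fix a continuous divergence-free $u_0$ on $\T^3$ and $\varepsilon>0$. I would mollify $u_0$ at scale $\eta$ with a standard convolution kernel; since convolution commutes with the divergence, the result $u_0^\eta$ is smooth and divergence-free with $\|u_0^\eta - u_0\|_{C^0}<\varepsilon$. By classical local well-posedness of the incompressible Euler equations for smooth data, there is a smooth solution $\bar v\in C^\infty([0,T_\eta]\times\T^3)$ with $\bar v(0,\cdot)=u_0^\eta$. Extend $\bar v$ smoothly to $\R\times\T^3$ as a divergence-free vector field; this extension no longer solves Euler outside $[0,T_\eta]$ but will serve only as the starting Euler--Reynolds approximation.

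Next, choose a subinterval $[t_1,t_2]\subset(0,T_\eta)$ and run the convex integration iteration underlying Theorem~\ref{thm:enEqNunq} starting from $\bar v$, viewed as a trivial Euler--Reynolds approximate solution with zero Reynolds stress on $[0,T_\eta]$. The corrections added at every stage are cut off in time into nested intervals shrinking into $[t_1,t_2]$, so that every iterate (and hence every limit) equals $\bar v$ identically on $[0,t_1]\cup[t_2,T_\eta]$. The probabilistic coin toss introduced for Theorem~\ref{thm:enEqNunq} then yields a Cantor family $(v_\beta,p_\beta)_{\beta\in 2^\N}$ of distinct $C^\alpha$ weak limits that share the initial datum $v_\beta(0,\cdot)=u_0^\eta$ and satisfy the local energy equality \eqref{eq:locEnEq} on the common interval $[0,T_\eta]$ containing $0$; the equality holds automatically on $[0,t_1]\cup[t_2,T_\eta]$ by smoothness of $\bar v$, and is delivered on $[t_1,t_2]$ by the construction. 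Since $u_0^\eta$ lies within $\varepsilon$ of $u_0$ in $C^0$ and admits uncountably many such solutions, the set of wild initial data is $C^0$-dense.

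The main obstacle is verifying that the iteration of Theorem~\ref{thm:enEqNunq} can be localized in time without degrading its inductive estimates. Introducing temporal cutoffs near $t_1$ and $t_2$ at each stage creates additional time derivatives in the stress, and one must check that these contributions are absorbed by the frequency/amplitude schedule of the iteration and, crucially, do not interfere with the exponential frequency growth or with the implementation of the random coin toss used to separate the branches of the Cantor family. A secondary point is to arrange smooth matching of energy density, pressure, and Reynolds stress at the seams $t=t_1,t_2$; this can be achieved by requiring the cutoffs to vanish to high order at the endpoints and by absorbing any residual seam data into the initial Euler--Reynolds approximation $\bar v$ before beginning the iteration.
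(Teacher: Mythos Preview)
Your overall strategy---mollify to smooth data, solve Euler locally, then perturb via convex integration while leaving a neighborhood of $t=0$ untouched---is natural, and the first two steps match the paper's. But you diverge at a point that turns out to be essential: you want all corrections time-localized to $[t_1,t_2]$ so that $u_0^\eta$ is preserved exactly. The obstacle you flag is more than a matter of absorbing extra time derivatives. In the dissipative Euler--Reynolds scheme underlying Theorem~\ref{thm:enEqNunq}, the correction amplitudes are coupled to an energy function $e(t)$ through the identity $\ost\mu - \mu = -\partial_t e(t)$ (see \eqref{eq:eDef}, \eqref{eq:fltermLow} and the discussion following). Keeping $\mu\equiv 0$ at every stage forces $e(t)$ to be constant; the waves then have amplitude proportional to $e(t)^{1/2}$ (and to $\bar e(t)^{1/2}$, which is inherited from the previous stage's $e$) and are nonzero throughout the time interval. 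Making them vanish on $[0,t_1]$ would require $e(t)$ to rise from zero there, giving $-\partial_t e(t)<0$ and taking the iterate outside the class of dissipative Euler--Reynolds flows. So within this framework one cannot both preserve $u_0^\eta$ exactly and propagate the local energy equality through the iteration; note that your plan, if it succeeded, would actually show that \emph{every} smooth divergence-free datum is wild, which is strictly stronger than what Theorem~\ref{thm:wildInitData} asserts.

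The paper's route accepts that the initial datum moves and controls by how much. Starting from the smooth solution $(\bar v,\bar p)$ with zero errors on a bounded interval $\tilde I\ni 0$, it applies the Approximation Theorem (Theorem~\ref{thm:approxThm}) with $\tilde I_{(1)}=\tilde I$ (so $\mu_\infty=0$ everywhere on $\tilde I$) and with $\tilde I_{(0)}\ni 0$ (so all $v_\beta$ coincide near $t=0$ and hence share a common initial datum). That common datum is $\bar v_0$ plus the accumulated corrections, and the bound $\|v_\beta-\bar v\|_{C^0}\le \ovl C_\alpha E_0^{1/2}$ from conclusion~7 of Theorem~\ref{thm:approxThm} is the key: since the starting errors vanish, $E_0>0$ may be taken arbitrarily small (compensating with $\Xi_0$ large). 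Ranging over smooth $\bar v_0$ and $E_0\downarrow 0$ yields the $C^0$-dense set of wild data.
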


Our proof of Theorems~\ref{thm:enDispNunq}-~\ref{thm:wildInitData} above proceeds by developing a new type of convex integration argument based on the concept of a ``dissipative Euler-Reynolds flow''.  This concept, which extends the notion of an Euler-Reynolds flow of \cite{deLSzeCts}, keeps track of errors in solving the Euler equation while allowing for a relaxation of the local energy inequality \eqref{eq:locEnInq} to hold.  The goal of the argument is then to design an elaborate convex integration scheme that can not only reduce the error in solving the incompressible Euler equations, but also reduce the error in satisfying the local energy inequality simultaneously.  



Obtaining a scheme that satisfies all of the conditions we require for this argument turns out to be more restrictive than previous constructions of incompressible Euler flows that have been used in the results towards Onsager's conjecture.  We have found in particular that both Beltrami flows (as applied in \cite{deLSzeCts,deLSzeHoldCts,isett, buckDeLIsettSze} et. al.) and Mikado flows (as applied in \cite{danSze,isettOnsag,BDLSVonsag}) appear to be incompatible with our goals for separate reasons.  For Beltrami flows it is apparently the case that no significant progress towards improving the error in satisfying the local energy inequality is obtained when they are implemented in the construction.  Mikado flows seem to face a different difficulty, namely that their implementation in \cite{isettOnsag, BDLSVonsag} introduces both creation and dissipation of energy in the disjoint regions where the waves are supported and cut off.  It is unclear what kind of construction may be necessary to advance beyond such difficulties and hopefully achieve a future proof of Conjecture~\ref{conj:inviscConj}.  

In view of the above, our proof relies on a different construction based on ``one-dimensional'' waves that was first outlined by \cite{isettVicol} and turns out to be suitable for improving the error in the local energy inequality.  In the present paper we improve the ansatz that had been outlined there to make the waves ``multi-dimensional'' in our proofs of Theorems~\ref{thm:enDispNunq}-\ref{thm:wildInitData}.  Our proof generalizes readily to extend Theorems~\ref{thm:enDispNunq}-\ref{thm:wildInitData} to all dimensions $d \geq 3$ while obtaining the same regularity $1/15$ independent of the dimension.  We believe that our argument may also be extended in part to dimension two\footnote{It is possible that both Conjectures~\ref{conj:inviscConj} and \ref{conj:navStConj} could be true in both dimensions two and three.  However, the two settings are highly different regarding the circumstances under which one may possibly encounter solutions with vorticity becoming arbitrarily large as would be true for the sequence in Conjecture \ref{conj:navStConj}.  }; however, in this case there are additional complications related to the error in the local energy inequality that have prevented us so far from obtaining a scheme of comparable regularity.  We discuss these points further in Section~\ref{sec:othDimRemarks}.

In the course of our proof of Theorems~\ref{thm:enDispNunq}-\ref{thm:wildInitData}, our work develops several new techniques in the method of convex integration.  As noted, our method of proving nonuniqueness in Theorems~\ref{thm:enDispNunq} and \ref{thm:enEqNunq} is new and yields a stronger nonuniqueness result than the methods previously used in the literature on H\"{o}lder continuous solutions.  
Our method of establishing density of wild initial data is also new, as we obtain the result as a consequence of a general Approximation Theorem (Theorem~\ref{thm:approxThm} below), which implies also both Theorems~\ref{thm:enDispNunq} and \ref{thm:enEqNunq}.  We have also maintained and proven estimates throughout the proof that are as sharp as possible so that our work may be applicable to higher regularity regimes of parameters and potentially to a broader range of applications.   


$ $


\noindent {\bf Acknowledgements.}  The author is grateful to many people for discussions that have motivated his pursuit of this problem.  We especially thank T. Buckmaster, T. Drivas, G. Eyink, D. Levermore, S. Friedlander, R. Shvydkoy, V. \v{S}verak and V. Vicol  for enriching conversations and encouragement, which have been influential to the writing of this introduction.  We also thank T. Drivas and G. Eyink for communicating their work \cite{drivasEyinksingLeray}.  The work of the author has been supported by the National Science Foundation under Award Nos. DMS-1402370 and DMS-2055019, and by a Sloan Fellowship.  

\subsection{Outline}
We will describe the notation of the paper in Section~\ref{sec:notation}.  The basic concepts of a dissipative Euler-Reynolds flow and the frequency energy levels that will be used to estimate such flows are described in Section~\ref{sec:dissipEuRnFlow}.  The Main Lemma is stated in Section~\ref{sec:mainLemma}.  The bulk of the paper then consists of the proof of the Main Lemma, which is contained in Section~\ref{sec:mainLemProof}.  Section~\ref{sec:mainStep1} provides a general introduction to the proof of the Main Lemma, which is carried out in detail in Sections~\ref{sec:constrShape}-\ref{sec:commutadvec}.  Section~\ref{sec:verifying} provides a summary and final details of the proof, and summarizes where in \ref{sec:constrShape}-\ref{sec:commutadvec} the various statements of the Main Lemma have been proven.  Section~\ref{sec:othDimRemarks} outlines the extension of the proof to dimensions $d \geq 3$ and contains discussion of the two dimensional case.

Theorems~\ref{thm:enDispNunq}-\ref{thm:wildInitData} are deduced from a general Approximation Theorem stated in Theorem~\ref{thm:approxThm}.  The proof of Theorem~\ref{thm:approxThm}, which immediately implies Theorems~\ref{thm:enDispNunq} and \ref{thm:enEqNunq}, is carried out in Sections~\ref{sec:admissConditions}-\ref{sec:approximThm} using the Main Lemma of Section~\ref{sec:mainLemma}.  The final Section~\ref{sec:constructDissipSolns} provides the proof of Theorem~\ref{thm:wildInitData}, which is formulated in more detail as Theorem~\ref{thm:wildDenseThm}.

\subsection{Notation} \label{sec:notation}
We will always view tensor fields (including scalar and vector fields) on $\T^3 = (\R/\Z)^3$ as distributions defined on all of $\R^3$ that are $\Z^3$-periodic.  We will use the symbol $p \in \widehat{\R^3}$ to denote the frequency variable, and $\Fsupp F := \supp \hat{F} \subseteq \widehat{\R^3}$ to denote the Fourier support of a tensor field $F$.  We use $\suppt F = \{ t \in \R ~:~ \{ t \} \cap \supp F \neq \emptyset \}$ to denote the time-support of a tensor field $F$.  We will follow the summation convention of summing upper and lower spatial indices that are repeated (e.g., $\nb_j(v^jv^\ell) = \sum_{j=1}^3 \nb_j(v^j v^\ell)$), maintaining the indices as raised or lowered depending on whether they are viewed as covariant or contravariant indices.  

We will always use vector notation to denote a multi-index $\va = (a_1, \ldots, a_n)$ of order $n = |\va| \geq 0$, where each $a_i \in \{1, 2, 3 \}$.  The expression $\nb_{\va}$ refers to the corresponding partial derivative operator $\nb_{\va} = \nb_{a_1} \nb_{a_2} \cdots \nb_{a_n}$, while for $h \in \R^3$, $h^{\va}$ refers to the corresponding multinomial $h^{a_1} h^{a_2} \cdots h^{a_n}$ of degree $n = |\va| \geq 0$.  We will write $\| \nb^L F \|_{C^0} := \max_{|\va| = L} \co{ \nb_{\va} F}$ to denote the $C^0$ norm of the $L$'th derivative of a tensor field $F$, where all $C^0$ norms are taken as space-time norms.  The $C_tL_x^2$ norm of a tensor field taking values in $L^2(\T^3)$ continuously in $t$ is given by $\| F \|_{C_t L_x^2} = \sup_t \| F(t,\cdot) \|_{L^2(\T^3)} = \sup_t (\int_{\T^3} |F(t,x)|^2 dx )^{1/2}$.  

We record the following elementary counting inequality, which may be shown by induction on $m$:
\ali{
(x_1 + \ldots + x_m - y)_+ \leq \sum_i (x_i - y)_+, \qquad \mbox{ if } y, x_i \geq 0 \mbox{ for all } 1 \leq i \leq m. \label{eq:countingIneq}
}
At certain points we will use the notation $A \unlhd B$ to refer to an inequality $A \leq B$ that has not been established, but will be assumed from that point in order to ensure that certain terms are well-defined, and will later be verified to hold.

\section{Dissipative Euler-Reynolds Flows} \label{sec:dissipEuRnFlow}
The starting point for our proof will be to introduce the following notion of a {\bf dissipative Euler-Reynolds flow}, which provides us with a natural space of vector and tensor fields that can be used to approximate weak solutions to the Euler equations that satisfy the local energy inequality \eqref{eq:locEnInq}.  We define this notion as follows by augmenting the well-known Euler-Reynolds system of \cite{deLSzeCts} with an additional inequality (Inequality \eqref{eq:relaxedLocEnIneq} below) that represents a relaxation of the local energy inequality.  The inequality contains two new fields (the unresolved flux density and current, $\kk, \vp$), which, together with the Reynolds stress $R$, measure the error in the local energy inequality.  

\begin{defn} A {\bf dissipative Euler-Reynolds flow} on $\tilde{I} \times \T^n$, $\tilde{I}$ an interval, is a tuple of tensor fields $(v, p, R, \kk, \vphi, \mu)$ on $\tilde{I} \times \T^n$ consisting of: a vector field $v^\ell$ (the {\bf velocity field}), a scalar function $p$ (the {\bf pressure}), a symmetric tensor field $R^{j\ell}$ (the {\bf Reynolds stress}), a scalar function $\kk$ (the {\bf unresolved flux density}), a vector field $\vphi^j$ (the {\bf unresolved flux current}), all smooth in the spatial variables and satisfying the following system of equations
\ali{
\pr_t v^\ell + \nb_j(v^j v^\ell) + \nb^\ell p &= \nb_j R^{j\ell}, \qquad \nb_j v^j = 0 \label{eq:euReyn} \\
\pr_t \left( \vsq \right) + \nb_j\left[ \left( \vsq + p\right) v^j \right] &\leq D_t \kk + \nb_j[v_\ell R^{j\ell}] + \nb_j \vphi^j \label{eq:relaxedLocEnIneq} \\
D_t F &:= \pr_t F + \nb_i(v^i F), \notag
}
together with the scalar function $\mu \geq 0$ (the {\bf dissipation measure}) defined by
\ali{
\mu = - \left( \pr_t \left( \vsq \right) + \nb_j\left[ \left( \vsq + p\right) v^j \right] \right) + D_t \kk + \nb_j[v_\ell R^{j\ell}] + \nb_j \vphi^j. \label{eq:dissipMeasdef}
}
\end{defn}
\begin{rem}  The definition of a dissipative Euler-Reynolds flow, which generalizes the notion of an Euler-Reynolds flow, can be naturally motivated by observing that any ensemble average $(\overline{v}, \overline{p})$ of globally dissipative Euler flows must be a dissipative Euler-Reynolds flow.  From the point of view of ensemble averages, the unresolved flux density $\kk$ arises as half the trace of $R^{j\ell} := \overline{v^j v^\ell} - \overline{v}^j\overline{v}^\ell$ while the unresolved flux current $\phi^j$ arises from trilinear variations in velocity and bilinear variations in velocity and pressure in a cumulant expansion of the averaged cubic nonlinearity $\overline{\left(\fr{|v|^2}{2} + p \right) v^j }$ on the left hand side of \eqref{eq:locEnInq}.  See \cite[Section 1]{isett} for more on the significance of this remark for performing convex integration.
\end{rem}

Our strategy at this point will be to construct a convex integration scheme that generates a sequence of smooth, dissipative Euler Reynolds flows $(v,p,R,\kk, \vphi, \mu)_{(k)}$ indexed by $k$ with $(R, \kk, \vphi)_{(k)}$ all tending to $0$ uniformly as $k \to \infty$ and with $(v,p)_{(k)}$ converging uniformly to a H\"{o}lder continuous, weak solution to incompressible Euler.  As the error terms in \eqref{eq:relaxedLocEnIneq} that involve $\kappa$, $R$ and $\vphi$ will all converge weakly to zero, the limiting solution $(v,p)$ will satisfy the local energy inequality \eqref{eq:relaxedLocEnIneq} with dissipation measure $\mu = - [ \pr_t(|v|^2/2) + \nb_j( (|v|^2/2 + p) v^j) ]$ equal to the weak limit $\mu = \mbox{w-}\lim_{k \to \infty} \mu_{(k)} \geq 0$.  



We measure the size of these errors using the following notion of {\bf compound frequency-energy levels}.  The notion is adapted from the use of analogous estimates in \cite{isettVicol}, which were introduced for the purpose of executing a convex integration scheme in which only a single ``component'' of the error could be reduced in size during the construction of solutions.  In our construction, we will be able to eliminate essentially a part of the error that can be built from restricting to a single subspace in each stage.  In what follows we will view $dx^1 = [1,0,0], dx^2 = [0,1,0], dx^3 = [0, 0, 1]$ as elements in the dual of $\R^3$, so that $\ker dx^1 = \langle e_2, e_3 \rangle, \ker dx^2 = \langle e_1, e_3 \rangle, \ldots$, and $\ker dx^1 \otimes \ker dx^1 = \oplus_{i, j \neq 1} \langle e_i \otimes e_j \rangle$, etc.
\begin{defn} \label{eq:compFrEnlvls} For $\Xi \geq 2$, $e_v \geq e_\vphi \geq e_R > e_G> 0$ and integers $L > 0$, $I, J \in \{ 1, 2, 3\}$, $I \neq J$, we say that a dissipative Euler-Reynolds flow $(v, p, R, \kk, \vphi, \mu)$ on $\tilde{I} \times \T^3$ has {\bf compound frequency-energy levels} (of type $[I,J]$ and order $L$) bounded by $(\Xi, e_v, e_\vphi, e_R, e_G)$ if one can express the stress tensor and unresolved flux density and current in the form
\ali{
\label{eq:decompErrTerms}
\begin{split}
R^{j\ell} &= R_{[I]}^{j\ell} + R_{[J]}^{j\ell} + R_{[G]}^{j\ell}, \\
\kk &= \kk_{[I]} + \kk_{[G]}, \quad \kk_{[I]} = \de_{j\ell}R_{[I]}^{j\ell}/2 \\
\vp^j &= \vp_{[1]}^j + \vp_{[G]}^j,
\end{split}
} 
where $R_{[I]}^{j\ell}$ and $R_{[J]}^{j\ell}$ are symmetric tensor fields that take values respectively in the vector subspaces  $\ker dx^I \otimes \ker dx^I$ and $\ker dx^J \otimes \ker dx^J$ of $\R^3 \otimes \R^3$, $\vp_{[I]}^j$ takes values in $\ker dx^I$, and the following estimates hold uniformly on $\tilde{I} \times \T^3$:
\ali{
\label{eq:vpBd}
\co{\nb_{\va} v} \leq \Xi^{|\va|} e_v^{1/2}, \quad 
\co{\nb_{\va} p} &\leq \Xi^{|\va|} e_v,  \qquad \qquad 1 \leq |\va| \leq L  \\
\co{\nb_{\va} D_t p} &\leq \Xi^{|\va| + 1} e_v^{3/2}, \qquad 0 \leq |\va| \leq L-1 \label{eq:pAdvecBd} \\
\co{\nb_{\va} F} &\leq \Xi^{|\va|} h_F, \qquad \qquad 0 \leq |\va| \leq L \label{eq:quadc0bdlvl} \\
\co{\nb_{\va} D_t F } &\leq \Xi^{|\va|+1} e_v^{1/2} h_F, \qquad 0 \leq |\va| \leq L - 1. \label{eq:quadDdtbdlvl}
}
Here $(F,h_F)^t$ is any column of the following formal matrix (in which we define $e_{\undvp} := e_\vp^{1/3} e_R^{2/3}$)
\ali{
\mat{c|ccccccc}{ F & R_{[I]} & R_{[J]} & R_{[G]} & \kk_{[I]} & \kk_{[G]} & \vp_{[I]} & \vp_{[G]} \\
		h_F &	e_\vp & e_R & e_G & e_\vp & e_R & e_\vp^{3/2} & e_{\undvp}^{3/2}  
		},
		\label{eq:lastFrqenBd}
}
and $D_t F := \pr_t F + v \cdot \nb F = \pr_t F + \nb_i(v^i F)$.  If $\tilde{I}$ is finite we also require $|\tilde{I}| \geq 8 (\Xi e_v^{1/2})^{-1}$.
\end{defn}

\section{The Main Lemma} \label{sec:mainLemma}
The following technical Main Lemma summarizes the result of one stage of the iteration.  The Lemma is stated in terms of the notion of a $(\bar{\de},M)$-well-prepared dissipative Euler-Reynolds flow (to be defined in Definition~\ref{defn:wellPrepared} below), which places conditions on the form of the principal part of the error term $R_{[1]}^{j\ell}$ that are consistent with the outcome of the previous stage of the iteration, and that in particular imply positive definiteness and that the trace part is dominant on the appropriate intervals.  In particular, even though the error terms are supported in a certain interval $\tilde{I}_{[G]}$, it is only on a smaller interval $\tilde{I}_{[l]}$ that the trace part dominates the largest incoming error terms.  This nuance of the iteration is used to obtain compactly supported solutions, but not to obtain solutions satisfying local energy conservation.

In the following, we recall that $e_{\undvp} = e_\vp^{1/3} e_R^{2/3}$ is as in Definition~\ref{eq:compFrEnlvls}.  

\begin{lem}[Main Lemma] \label{lem:mainLem} For any $L \geq 2$, there exist constants $\bar{\de} > 0$ and $\hc, C_L > 1$ such that the following holds: Let $(\Xi, e_v, e_\vphi, e_R, e_G)$ be compound frequency energy levels.  Let $(v,p,R,\kk, \vp, \mu)$ be a dissipative Euler-Reynolds flow on $\tilde{I} \times \T^3$ with compound frequency energy levels (of type $[l,l{+}1]$ mod 3 and order $L$) bounded by $(\Xi, e_v, e_\vphi, e_R, e_G)$.  Let $\tilde{I}_{[l]} \subseteq \tilde{I}_{[G]}$ be nonempty subintervals of $\tilde{I}$, let $\bar{e} \colon \tilde{I} \to \R_+$ be a smooth, non-negative function 
and let $N \geq 1$ be such that
\ali{
N &\geq \max \left\{ (e_v/e_G)(e_\vp^{1/2}/e_{\undvp}^{1/2}), (e_v^{1/2}/e_\vp^{1/2})(e_\vp/e_G)^2 \cdot (e_\vp/e_{\undvp}), (e_v/e_\vp)^{3/2} \right\}, \label{eq:Nastdef} \\
\suppt &(R_{[1]}, R_{[2]}, R_{[G]}, \vp_{[1]}, \vp_{[G]}, \kk_{[G]} ) \subseteq \tilde{I}_{[G]}. \label{eq:intervalContainingSupp}
}
Suppose also that $(v,p,R,\kk, \vp, \mu)$ is {\bf $(\bar{\de},M)$-well-prepared} (for stage $[l]$) for some $M \geq 1$ in the sense of Definition~\eqref{defn:wellPrepared} below with respect to these frequency energy levels and the trio $(\tilde{I}_{[l]}, \tilde{I}_{[G]}, \bar{e})$.

Then there exists a dissipative Euler-Reynolds flow $(\ost{v}, \ost{p}, \ost{R}, \ost{\kk}, \ost{\vp}, \ost{\mu})$ that has compound frequency energy levels (of type $[l{+}1,l{+}2]$ mod $3$ and order $L$) bounded by
\ali{
(\ost{\Xi}, \ost{e}_v, \ost{e}_\vphi, \ost{e}_R, \ost{e}_G) &= (N \hc \Xi, \hc e_\vp, \hc e_{\undvp}, \hc e_G, (e_v^{1/2}/(e_\vp^{1/2} N) )^{1/2} e_\vp), \quad e_{\undvp} = e_\vp^{1/3} e_R^{2/3}, \label{eq:frEnLvlsNew}
}
and that is $(\bar{\de}, \hc)$-well prepared with respect to the frequency energy levels \eqref{eq:frEnLvlsNew}, some pair of intervals $\tilde{I}_{[l+1]} \subseteq \tilde{I}_{[G\ast]}$ contained in $\tilde{I}$, and some smooth, non-negative $e \colon \tilde{I} \to \R_+$.  The trio $(\tilde{I}_{[l+1]}, \tilde{I}_{[G\ast]}, e)$ may be chosen such that $e = e(t)$ depends only on $(\Xi, e_v, e_\vp, e_R, e_G)$, $\tilde{I}_{[G]}$ and $L$.

One may arrange that $\ost{v}$ has the form $\ost{v} = v + V$ such that 
\ali{
\co{V} &\leq C_L e_\vp^{1/2}, \label{eq:c0CorrectBd} \\
\sup_t \left| \int_{\T^3} \phi_\ell(t,x) V^\ell(t,x) dx \right| &\leq \Xi^{-1} e_\vp^{1/2} \sup_t \| \nb \phi(t,\cdot) \|_{L^1}, \qquad \tx{ for all } \phi_\ell \in C^\infty(\tilde{I} \times \T^3), \label{eq:testFunction}
}
and also such that the restrictions of $V$ and the components of $(\ost{v}, \ost{p}, \ost{R}, \ost{\kk}, \ost{\vp})$ to any interval $\tilde{I}' \subseteq \tilde{I}$ depend only on the data $(\Xi, e_v, e_\vp, e_R, e_G)$, $(\tilde{I}_{[l]}, \tilde{I}_{[G]}, \bar{e})$ and on the restriction of $(v,p,R,\kk, \vp)$ and their components to the $(\Xi e_v^{1/2})^{-1}$ neighborhood of $\tilde{I}'$ in $\tilde{I}$.

It is possible to arrange that $\ost{\mu} - \mu$ depends only on $\tilde{I}_{[G]}$, $(\Xi, e_v, e_\vp, e_R, e_G)$ and $L$, and is everywhere non-negative 
with $\ost{\mu} = \mu$ on $\tilde{I}_{[G]}$.  One may also arrange that 
\ali{
\tilde{I}_{[l+1]} &= \{ t + \bar{t} ~:~ t \in \tilde{I}_{[G]},\,|\bar{t}| \leq 10^{-3}(\Xi e_v^{1/2})^{-1} \} \cap \tilde{I} \label{eq:tildeIlp1} \\
\suppt V \cup \suppt (\ost{R}, \ost{\kk}, \ost{\vp})) &\subseteq  \tilde{I}_{[G\ast]} := \{ t \leq \sup \tilde{I}_{[G]} + (\Xi e_v^{1/2})^{-1}/40 \} \cap \tilde{I}, \label{eq:suppContained}
}
where the containment \eqref{eq:suppContained} applies also to the all the components of $(\ost{R}, \ostk, \ost{\vp})$ in \eqref{eq:decompErrTerms}.


Furthermore, for any interval $\tilde{J}$ with length $|\tilde{J}| \geq \Xi^{-1} e_v^{-1/2}$ that is contained in $\tilde{I}_{[l]}$, 
there exists a (possibly different) dissipative Euler-Reynolds flow $(v_2, p_2, R_2, \kk_2, \vp_2, \mu_2)$, $v_2 = v + V_2$ obeying all of the above conclusions stated for $(\ost{v}, \ost{p}, \ldots)$ and for $V$ with the same functions $e_2(t) = e(t)$ and $\mu_2 = \ost{\mu}$, but possibly differing from $\ost{v}$ somewhere in the interval $\tilde{J}$ and satisfying
\ali{
&\supp_t \left(v_2 - \ost{v}, p_2 - \ost{p}, R_2 - \ost{R}, \kk_2 - \ost{\kk}, \vp_2 - \ost{\vp} \right) \subseteq \tilde{J} \label{eq:diffSupp} \\
&\sup_t \int_{\T^3} | v_2(t,x) - \ost{v}(t,x)|^2 dx \geq ((C_L  M)^{-1} - N^{-1}) e_{\vp}, \label{eq:lowerBdNew}
}
with \eqref{eq:diffSupp} applying also to the components of $(R_2 - \ost{R}, \kk_2 - \ost{\kk}, \vp_2 - \ost{\vp})$.
\end{lem}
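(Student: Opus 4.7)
\emph{Overall strategy.} I would prove the Main Lemma by a convex integration step adapted to the dissipative Euler-Reynolds framework. First, perform a gluing/mollification in time on scale $(\Xi e_v^{1/2})^{-1}$ to regularize the error terms, using a smooth partition of unity adapted to this natural advective time scale; this decouples fast and slow time scales and localizes the iteration. Then introduce a highly-oscillatory correction $V = \ost{v} - v$ built from the one-dimensional wave ansatz outlined in \cite{isettVicol}, with waves oscillating in a single direction $\xi$ whose projection onto $dx^{l+1}$ vanishes. This choice is forced on us by the subspace structure $\ker dx^l \otimes \ker dx^l$ of $R_{[l]}$ that must be cancelled, and it has the effect of rotating the secondary error $R_{[l+1]}$ into the role of the principal error at the next stage, which is precisely what compound frequency-energy levels of type $[l{+}1, l{+}2]$ require.

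\emph{Construction of the correction.} I would write $V = \sum_I (W_I + W_I^c)$ with principal waves $W_I \approx a_I(t,x) e^{i \lambda_I N\Xi \xi_I \cdot x}$ transported along the coarse-scale flow of $v$ (so the advected phase kills $D_t$ applied to the oscillation), and divergence-free correctors $W_I^c$ to ensure $\nabla \cdot V = 0$. The amplitudes $a_I$ are chosen via the standard Geometric Lemma algebra so that the low-frequency part of $\sum V \otimes V$ matches $\chi(t) e(t) - R_{[l]} - (\tx{pressure})$ for a shape-function profile $\chi(t) e(t)$ that is positive (via the $(\bar{\de}, M)$-well-prepared hypothesis) and that gets synthesized into the new principal error at the next stage. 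To handle the local energy inequality, I would simultaneously introduce a scalar correction $K$ and a vector correction $\Phi$ to $\kk$ and $\vp$ so that $D_t K + \nb_j \Phi^j$ matches the low-frequency parts of $\partial_t(|V|^2/2) + \nb_j[(|V|^2/2 + v_\ell V^\ell + \tx{pressure}) V^j]$. The new dissipation measure $\ost{\mu} - \mu$ picks up a non-negative contribution on $\tilde{I}_{[G\ast]} \setminus \tilde{I}_{[G]}$ from the temporal cutoffs used to switch on and off the shape functions, giving the claimed monotonicity $\ost{\mu} \geq \mu$ with equality on $\tilde{I}_{[G]}$.

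\emph{Error estimates and verification of frequency-energy levels.} Substituting $\ost{v} = v + V$ into \eqref{eq:euReyn} produces a new stress composed of a transport-Nash error $D_t V$-type term, the high-high self-interaction $V \otimes V - \langle V\otimes V\rangle$, and the residual gluing stress. Each piece is handled by applying a microlocal inverse-divergence operator at frequency $N\Xi$, gaining $(N\Xi)^{-1}$ per derivative. Choosing $N$ to saturate the three inequalities in \eqref{eq:Nastdef} balances the principal error contribution into $\ker dx^{l+1} \otimes \ker dx^{l+1}$ at level $\hc e_\vp$, the secondary into $\ker dx^{l+2} \otimes \ker dx^{l+2}$ at level $\hc e_{\undvp}$, and the low-frequency gluing remainder at level $\hc e_G$. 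The same microlocal analysis applied to the energy-inequality correction produces $\ost{\vp}$ with the right $\ker dx^{l+1}$ structure and size $\hc e_{\undvp}^{3/2}$ (respectively $\ost{e}_G$-derived size $(e_v^{1/2}/(e_\vp^{1/2} N))^{1/2} e_\vp$ for the gluing part, whose scaling reflects an extra gain from the discrepancy between $e_v^{1/2}$ and the faster Nash time-scale). The sharp $C^0$-bound \eqref{eq:c0CorrectBd} is immediate from the shape-function bound $\|a_I\|_{C^0} \lesssim e_\vp^{1/2}$, while the weak bound \eqref{eq:testFunction} follows from one integration by parts against the oscillatory phase, gaining $(N\Xi)^{-1} \lesssim \Xi^{-1}$. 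The advective derivative estimates \eqref{eq:quadDdtbdlvl} are the sharp constraint and motivate the use of the transported phase.

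\emph{Nonuniqueness, locality, and the main obstacle.} The alternative flow $v_2$ differing from $\ost{v}$ inside a prescribed subinterval $\tilde{J} \subseteq \tilde{I}_{[l]}$ of length at least $\Xi^{-1} e_v^{-1/2}$ is produced by introducing a binary free parameter (a sign or phase choice) into the shape functions $a_I$ on one of the oscillation-time bumps that lies inside $\tilde{J}$. Such a switch leaves every estimate above unchanged but produces an $O(e_\vp^{1/2})$ shift in $C_t L_x^2$, giving \eqref{eq:lowerBdNew} after accounting for the inevitable $N^{-1}$ loss from the weak-limit $L^2$ inner product. The locality assertions follow because the cutoffs and advected phases transport at speeds at most $\|v\|_{C^0} \lesssim e_v^{1/2}$, so the construction depends only on data in a $(\Xi e_v^{1/2})^{-1}$-neighborhood. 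The main obstacle I anticipate is the simultaneous management of three competing requirements: cancelling $R_{[l]}$, reducing the local-energy-inequality defect to the smaller level $e_{\undvp}^{3/2}$, and ensuring the new principal, secondary, and gluing components land in the correct subspaces of $\R^3 \otimes \R^3$ and $\R^3$ so that iteration can continue. The cubic cross term involving $v \cdot V$ and $V \otimes V$ inside the energy correction appears to be the tightest constraint and is what dictates the geometric mean exponent $e_{\undvp} = e_\vp^{1/3} e_R^{2/3}$ and the lower bounds on $N$ in \eqref{eq:Nastdef}.
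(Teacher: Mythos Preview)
Your high-level strategy is in the right spirit, but several concrete mechanisms that actually make the argument work are missing or misidentified.  First, there is no gluing step here; the paper uses only mollification in space and along the coarse-scale flow, and the time cutoffs $\eta_I$ at scale $\ostu \approx (e_v^{1/2}/(e_\vp^{1/2}N))^{1/2}(\Xi e_v^{1/2})^{-1}$ are part of the wave amplitudes, not a separate gluing.  Second, and more importantly, your treatment of the relaxed local energy inequality is too vague to close.  The new $\ost{\kk}$ and $\ost{\vp}$ are not freely chosen corrections ``$K$'' and ``$\Phi$'' that one matches to low-frequency parts; they are forced on you by the algebra of $\DD[\ost v,\ost p]$, and controlling them requires two specific constructions that you do not mention.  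One is that the index set is split as $\II=\II_R\amalg\II_\vp$, and the $\II_\vp$-waves are designed so that \emph{trilinear} cascades $(V_I)_\ell V_J^j V_K^\ell$ over prescribed trios $(I,J,K)\in\calT$ produce a low-frequency vector field that cancels $\vp_\ep^j$ (this is solved as a cubic system for the coefficients, not via a Geometric Lemma).  The other is the monotone energy increment $e(t)$: the pressure $P$ is fixed by the trace relation $-3P=2e(t)-2\kk_{[G\ep]}+\mbox{tr}(R_{[2\ep]}+R_{[G\ep]})$, which couples the stress cancellation $\sum_I V_I^j\overline{V_I^\ell}+P\de^{j\ell}+R_\ep^{j\ell}$ to the flux-density cancellation $\sum_I |V_I|^2/2+\kk_\ep-e(t)$, and the entire contribution to $\ost\mu-\mu$ is $-\pr_t e(t)\ge 0$.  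Your claim that the dissipation increment comes from ``temporal cutoffs switching on and off'' is not how it works.

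There is also a genuine obstacle you have not identified.  After the above cancellations, the stress-flux term $-\nb_j[V_\ell R_S^{j\ell}]$ contains a piece $-\nb_j[V_\ell P\de_{[2\ast]}^{j\ell}]$ of size $\sim e(t)^{3/2}$, which is exactly the magnitude of the new principal unresolved current and therefore cannot be absorbed into $\ost\vp_{[2]}^j$ without violating the $(\bar\de,\hc)$-well-preparedness needed at the next stage.  What saves the iteration is a pointwise algebraic cancellation $(\nb_j\hxii_I\,\hat v_{I\ell})\de_{[2\ast]}^{j\ell}=0$, coming from the specific choice of $\de_{[2\ast]}=e_1\otimes e_1+\tfrac12 e_3\otimes e_3$ and the fact that $\nb\hxii_I\in\langle e_1\rangle$ while $\hat v_I\in\langle e_1\rangle^\perp$.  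An analogous cancellation (after decomposing $R_{[3\ast]}=\pi_\parallel R_{[3\ast]}+\pi_\times R_{[3\ast]}$) is needed for the $R_{[3\ast]}$ contribution.  Without locating and exploiting this cancellation your scheme would not continue.  Finally, the second flow $v_2$ is obtained by negating a single wave $V_{I^\ast}$ (and its conjugate) with $I^\ast\in\ovl{\II}_R$ whose lifespan lies in $\tilde J$; this leaves all quadratic and cubic identities intact and gives the $L^2$ lower bound from the pointwise lower bound $\ga_I\ge 1/4$ together with $\bar e(t)\ge M^{-1}e_\vp$ on $\tilde I_{[l]}$.
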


In the proof of Lemma~\ref{lem:mainLem} that follows we will consider only the case of type $[l,l{+}1] = [1,2]$ frequency energy levels to simplify notation.  The general case, which will be used in Section~\ref{sec:mainLImpMThm} to prove the main theorems, follows by symmetry by permuting the coordinate axes.  We note that the estimate \eqref{eq:testFunction} will not be required for the applications we consider in this paper.  However, we have included the bound due its usefulness for further applications such as results of $h$-principle type (e.g., \cite{isettVicol, IOnonpd}).

\section{Proof of the Main Lemma} \label{sec:mainLemProof}
\subsection{Proof of Main Lemma: Introduction} \label{sec:mainStep1}
Let $\vprk$ be a dissipative Euler flow given as in the assumptions of Lemma~\ref{lem:mainLem}.  We will construct the new velocity field $\ost{v}^\ell = v^\ell + V^\ell$ and pressure $\ost{p} = p + P$ by adding carefully designed corrections $V$ and $P$ to the velocity field and pressure.  

The correction $V^\ell$ is a ``high-frequency'', divergence free vector field constructed as a sum $V = \sum_{I \in \II} V_I$ of individual waves $V_I^\ell$ that are divergence free and that are together indexed by a set $\II$.  The $V_I^\ell$ are complex-valued vector fields, and each $V_I$ has a conjugate wave $V_{\bar{I}} := \overline{V_I}$ indexed by $\bar{I} \in \II$ so that the correction $V^\ell$ is real valued.  A new point in this construction is that there will be several types of waves that partition the index set into $\II = \II_R \amalg \II_\vp$.  The waves indexed by $\II_\vp$ are those that will be used to cancel out (part of) the unresolved flux current $\vp$, whereas the waves indexed by $\II_R$ will be used to cancel out (part of) the Reynolds stress $R$, and these two families are disjoint.  These families will furthermore be partitioned into $\II_R := \ovl{\II}_R \amalg \dmd{\II}_R, \II_\vp = \ovl{\II}_\vp \amalg \dmd{\II}_\vp$, where the waves in $\ovl{\II} = \ovl{\II}_R \amalg \ovl{\II}_\vp$ are the largest and they will be designed to cancel out the main error terms $R_{[1]}^{j\ell}$ and $\vp_{[1]}^j$, while those in $\dmd{\II} := \dmd{\II}_R \amalg \dmd{\II}_\vp$ will be designed to cancel out (part of) the remaining, smaller error terms and may have slightly larger support in time.  This decomposition yields also that $\II = \ovl{\II} \amalg \dmd{\II}$.




The new Reynolds stress $\ost{R}$ will be constructed as in previous incompressible Euler schemes as a sum of several terms that will be required to satisfy the following equations
\ali{
\ost{R}^{j\ell} &= R_T^{j\ell} + R_S^{j\ell} + R_H^{j\ell} + R_M^{j\ell} \label{eq:stressDecomp} \\
\nb_j R_T^{j\ell} &= \pr_t V^\ell + \nb_j[v_\ep^j V^\ell + V^j v_\ep^\ell] \\
R_S^{j\ell} &= \sum_I V_I^j \overline{V_I^\ell} + P \de^{j\ell} + R_\ep^{j\ell}  \label{eq:stressTerm1} \\
\nb_j R_H^{j\ell} &= \sum_{J \neq \bar{I}} \nb_j( V_I^j V_J^\ell) \\
R_M^{j\ell} &= [ (v^j - v_\ep^j) V^\ell + V^j (v^\ell - v_\ep^\ell) ] + (R^{j\ell} - R_\ep^{j\ell}) \label{eq:Rmolldef} \\
&:= R_{Mv}^{j\ell} + (R^{j\ell} - R_\ep^{j\ell}). \notag 
}
The vector field $v_\ep^\ell$ and symmetric tensor field $R_\ep^{j\ell}$ are regularizations of the given $v^\ell$ and $R^{j\ell}$ that will be defined in Section~\ref{sec:regPrelimBds} below.  As in Definition~\ref{eq:compFrEnlvls}, we will also have a decomposition $R_\ep^{j\ell} = R_{[1\ep]}^{j\ell} + R_{[2\ep]}^{j\ell} + R_{[G\ep]}^{j\ell}$, where the $R_{[l\ep]}^{j\ell}$ take values in $\ker dx^l \otimes \ker dx^l$ for $l = 1, 2$, and we require that $\co{R_{[l\ep]}} \unlhd A_L \co{R_{[l]}}$ for $l = 1, 2$ and $\co{R_{[G\ep]}} \unlhd A_L \co{R_{[G]}}$ with $A_L \geq 1$ depending on $L$. 

Towards considering the local energy inequality, let us introduce the {\bf resolved energy flux density} of $(v,p)$, which we define to be $\DD[v, p] := \pr_t(|v|^2/2) + \nb_j((|v|^2/2 + p)v^j)$.  
Then the definition of the dissipation measure $\mu$ in \eqref{eq:dissipMeasdef} gives
\ali{
\DD[v,p] &= D_t \kk + \nb_j(v_\ell R^{j\ell}) + \nb_j \vp^j - \mu \label{eq:dissipationDef}
}

A major goal of this construction will be to design the new $(\ost{v}, \ost{p})$ so that its resolved energy flux $\ost{\DD} := \DD[\ost{v}, \ost{p}]$ has the form \eqref{eq:dissipationDef} with $(\ost{\kk}, \ost{R}, \ost{\vp})$ much smaller than the original $(\kk, R, \vp)$.  Using $\ost{v} = v + V$, $\ost{p} = p+P$, $p v^j = v_\ell(p\de^{j\ell})$, and \eqref{eq:dissipationDef} the new $\ost{\DD} = \DD[\ost{v}, \ost{p}]$ can be decomposed in the form
\ali{
\ost{\DD} &= \mathring{\DD}_T + \mathring{\DD}_S + \mathring{\DD}_H + \DDR_{\kk} + \DDR_\vp - \mu, \label{eq:newDensity}
}
where the first three terms are related to the decomposition of the stress in \eqref{eq:stressDecomp}
\ali{
\mathring{\DD}_T&= \pr_t(v_\ell V^\ell) + \nb_j(v_\ell V^\ell v^j) + \nb_j\left( \left(\vsq + p\right) V^j \right) \label{eq:transFluxTerm} \\
\mathring{\DD}_S &= \nb_j\Big[ v_\ell[ \sum_I V_I^j \overline{V_I^\ell} + P \de^{j\ell} + R^{j\ell}] \Big] \label{eq:DDS} \\
\mathring{\DD}_H &= \nb_j\Big[ v_\ell[ \sum_{J \neq \bar{I}} V_I^j V_J^\ell ] \Big], 
}
and where the other terms involve the unresolved flux density and current, $\kk$ and $\vp$
\ali{
\DDR_{\kk} &= \pr_t\left( \sqq{V} + \kk\right) + \nb_j\left( \left( \sqq{V} + \kk \right) v^j \right) \label{eq:fluxDissTerm} \\
\DDR_\vp &= \nb_j\Big[ \sum_{I,J,K} (V_I)_\ell V_J^j V_K^\ell + P V^j + \vp^j \Big]. \label{eq:vpTerms}
}

Since one of our main objectives is to ensure the new unresolved flux density $\ost{\vp}$ will be substantially smaller than the given $\vp$, we aim to design our corrections so that the frequency cascades in the product of equation \eqref{eq:vpTerms} will cancel out as much as possible of the ``low-frequency'' part of $\vp$.  At the same time, we will also need to reduce the sizes of both $\kk$ and $R$.

The goal of canceling out $\vp$ will be accomplished by the waves $V_I$, $I \in \II_\vp$, as follows.  We will specify a subset $\calT \subseteq \II_\vp \times \II_\vp \times \II_\vp$ that is symmetric under conjugation $(I,J,K) \in \calT \Leftrightarrow (\bar{I}, \bar{J}, \bar{K}) \in \calT$ with the property that all trilinear interactions of the form $(V_I)_\ell V_J^j  V_K^\ell$ with $I, J, K \in \calT$ produce low frequency terms that together will cancel out the term $\vp$ in \eqref{eq:vpTerms}.  We perform this task by first splitting $\DDR_\vp = \DDR_{\vp L} + \DDR_{\vp H}$ into a low frequency cascade term plus purely high frequency terms of the form
\ali{
\DD_{\vp L} = \nb_j \Big[ \sum_{I, J,K \in \calT} (V_I)_\ell V_J^j V_K^\ell + \vp_\ep^j \Big], \qquad
\DDR_{\vp H} = \nb_j \Big[ \sum_{I,J,K \notin \calT} (V_I)_\ell V_J^j V_K^\ell + P V^j + (\vp^j - \vp_\ep^j) \Big], \label{eq:phiTermLeft}
}
where $\vp_\ep$ is a regularization of $\vp$ to be defined with the other regularizations in Section~\ref{sec:regPrelimBds} below.

It is of interest to note\footnote{We thank T. Drivas for this comment.} that the use of tri-linear interactions as above is reminiscent of the trilinear interactions that appear in the predictions of the DIA and LDIA theories by Kraichnan for modeling the energy spectrum in turbulent flows (see e.g. \cite[Equation 3]{kraichnan1991turbulent}).  Precisely, Kraichnan's theory predicts that energy cascades in turbulence are governed by trilinear interactions of waves that are carried along the mean flow, much like what will follow.  

In addition to canceling out the term $\vp_\ep$ in \eqref{eq:phiTermLeft}, we must also cancel out the unresolved flux density $\kk$ in \eqref{eq:fluxDissTerm}.  We will treat $\kk$ with a similar approach by first isolating the low frequency part of $\kk$ inside the error term $\DDR_{\kk}$.  Thus we split $\DDR_{\kk} = \DD_{\kk L} + \DDR_{\kk H}$ into low and high frequency terms as follows
\ali{
\DD_{\kk L} &= \Ddt\left[ \sum_I \fr{V_I \cdot \overline{V_I}}{2} + \kk_\ep \right] \label{eq:DDflmL} \\
\DDR_{\kk H} &= \Ddt\Big[\sum_{J \neq \bar{I}} \fr{V_I \cdot V_J}{2} \Big] + \Ddt[ \kk - \kk_\ep ] + \nb_j\left( \left(\sqq{V} + \kk\right) (v^j - v_\ep^j) \right), \label{eq:highkkTerm}
}
where here and in what follows the operator $\Ddt$ denotes the {\bf coarse scale advective derivative} $\Ddt F := (\pr_t + v_\ep \cdot \nb) F = \pr_t F+ \nb_i(v_\ep^i F)$.

Observe that there is an important coupling between our goals of canceling out the term $\kk_\ep$ in \eqref{eq:DDflmL} and the low frequency stress term $R_\ep$ in \eqref{eq:stressTerm1}, namely that term $\sum_I V_I \cdot \overline{V_I} / 2$ that appears in \eqref{eq:DDflmL} is exactly half the trace of the term $\sum_I V_I^j \overline{V_I^\ell}$ that appears in \eqref{eq:stressTerm1}.  We address this tension using our control of the pressure term $P \de^{j\ell}$ and the following idea.

The term $\sum_I V_I \cdot \overline{V_I} / 2$ is essentially the contribution of $V = \sum_I V_I$ to the coarse scale part of the energy density, since the waves $V_I$ will be of high frequency and will be essentially orthogonal.  For a solution that exhibits local energy dissipation, this contribution to the local energy flux should be {\it decreasing} in time.  With this motivation, we will choose a function $e(t)$ with the following properties:
\ali{
\label{eq:eDef}
\begin{split}
e(t) \geq 0,& \qquad  \Ddt e(t) = \pr_t e(t) \leq 0 \\
e(t) &\geq K_0 e_{\undvp},  \qquad \mbox{ if } t \leq \sup \tilde{I}_{[G]} + (\Xi e_v^{1/2})^{-1} \\
\co{\Ddt^r e^{1/2}(t)} &\leq K_1 (\Xi e_v^{1/2})^r e_{\undvp}^{1/2}, \qquad r = 0, 1, 2,
\end{split}
}
where $K_0$ is a constant that will be specified later and $K_1$ is another constant that is allowed to depend on $K_0$.  Our choice of $K_0$ will depend on $L$, but not on any other parameters in the construction.  

The choice of $e(t)$ will determine the contribution of $V = \sum_I V_I$ to the new dissipation measure $\ost{\mu}$, as we will rewrite \eqref{eq:DDflmL} as
\ali{
\DD_{\kk L} &= \Ddt\left[\sum_I \fr{V_I \cdot \overline{V_I}}{2} + \kk_\ep - e(t) \right] + \Ddt e(t) \label{eq:fltermLow}
}
with the key observation that $\Ddt e(t) \leq 0$ is {\bf non-positive} since $e(t)$ is nonincreasing.  This fact implies that this term can be absorbed into the new dissipation measure $\ost{\mu} := \mu - \Ddt e(t)$.  In particular, the dissipation measure $\ost{\mu} = \mu$ will remain unchanged if we take $e(t)$ to be constant in time.

The correction $P$ to the pressure is now determined by our requirement that it is possible to choose $V_I$ to cancel out the main terms in both of equations \eqref{eq:stressTerm1} and \eqref{eq:fltermLow} simultaneously.  Namely, taking the trace of \eqref{eq:stressTerm1} and comparing to \eqref{eq:fltermLow} we require $P = P(t,x)$ to satisfy the following equality pointwise 
\ali{
-e(t) + \kk_\ep &=  (3 P(t,x) + \de_{j\ell} R_\ep^{j\ell})/2. \label{eq:PdefnPre}
}
Using that $\kk = \kk_{[1]} + \kk_{[G]}$, $\kk_{[1]} = -(1/2) \de_{j\ell} R_{[1]}^{j\ell}$, the equation \eqref{eq:Pdefn} becomes
\ali{
- 3 P(t,x) &= 2e(t) - 2\kk_{[G\ep]} + \de_{j\ell} (R_{[2\ep]}^{j\ell} + R_{[G\ep]}^{j\ell}), \label{eq:Pdefn}
}
The main term in this equation will be the term coming from $e(t)$ due to the lower bound in \eqref{eq:eDef}.

\subsection{General shape of the construction} \label{sec:constrShape}
Our correction $V = \sum_I V_I$ will be a sum of divergence free ``plane waves'' that oscillate at a large frequency $\la := B_\la N^* \Xi$.  The parameter $B_\la \geq 2$ will be taken such that $\la$ is an integer multiple of $2 \pi$, and will be bounded in size by a constant $\overline{B_\la}$, which will be the last constant chosen in the argument and depends on all of the other parameters and constants in the construction.  (Informally one should simply think of $B_\la$ as a large constant.)  

More explicitly, the leading term in each individual wave $V_I^\ell$ will be equipped with a real-valued phase function $\xi_I(t,x)$ and an amplitude $v_I^\ell(t,x)$, and there will be a lower order term $\de V_I^\ell$ that is present to ensure that $V_I^\ell$ is exactly divergence free:
\ali{
\begin{split}
V_I^\ell &:= e^{i \la \xi_I} (v_I^\ell + \de v_I^\ell) = \VR_I^\ell + \de V_I^\ell \\
\VR_I^\ell &= e^{i \la \xi_I} v_I^\ell, \qquad \de V_I^\ell := e^{i \la \xi_I} \de v_I^\ell. \label{eq:VIform}
\end{split}
}
The amplitude $v_I^\ell$ will be required to take values orthogonal to the phase gradient $\langle \nb \xi_I \rangle$ in order to make the leading order term in \eqref{eq:VIform} divergence free to leading order in $\la$.  Meanwhile, the phase function $\xi_I$ will be advected by the coarse scale flow of $v_\ep$ initiating from a time $t(I)$ with data $\hat{\xi}_I(x)$:
\ali{
(\pr_t + v_\ep^j \nb_j) \xi_I = 0 \mbox{ on } \R \times \T^3, \qquad &\xi_I(t(I), x) = \hat{\xi}_I(x) \mbox{ on } \T^3, \label{eq:phaseFunc}\\
v_I \cdot \nb \xi_I &= 0 \mbox{ on } \R \times \T^3. \label{eq:orthog}
}
The index $I \in \II$ will have components $I = (k,f) \in \Z \times \F$, where $k \in \Z$ will specify the time interval on which $V_I$ will be supported, $t(I) = k \ostu$ will be specified in line \eqref{eq:timeScale} below, $\F$ is a finite set that we will specify, and $f \in \F$ describes the direction of oscillation $V_I$.  
The index set has a conjugation symmetry $\bar{I} = (k, \bar{f})$, $\bar{f} \in \F$, which corresponds to the conjugate wave defined by $V_{\bar{I}} = \overline{V_I}, v_{\bar{I}} = \overline{v_I}, \xi_{\bar{I}} = - \xi_I$.  The set $\F$ will be partitioned into disjoint subsets $\F = \F_R \amalg \F_\vp$, and we write $I \in \II_R$ if $f(I) \in \F_R$, while $I \in \II_\vp$ if $f(I) \in \F_\vp$.  In general, $k(I)$ and $f(I)$ refer to the $(k,f)$ components of $I$.  We will similarly decompose $\F = \ovl{\F} \amalg \dmd{\F}$, $\F_R = \ovl{\F}_R \amalg \dmd{\F}_R$, $\F_\vp = \ovl{\F}_\vp \amalg \dmd{\F}_\vp$ into larger vs. smaller amplitude waves, each of these groups having the conjugation symmetry $f \in \dmd{\F}_R \Rightarrow \bar{f} \in \dmd{\F}_R$, etc.

Waves having the form \eqref{eq:VIform}-\eqref{eq:orthog} are also used in \cite{isett}, where an additional condition is imposed to emulate the Beltrami flows of \cite{deLSzeCts}.  Our construction will be different in that we use the Ansatz based on ``one-dimensional, plane-waves'' introduced in \cite{isettVicol}, and we take advantage of the frequency localization techniques introduced there.  Our construction also improves on this one-dimensional Ansatz by noting that, in the case of incompressible Euler, one is free to use ``multi-dimensional'' waves in the construction provided one maintains the orthogonality condition \eqref{eq:orthog}.  

We describe these waves as follows.  The oscillations will occur essentially in the direction of the $x^1$ coordinate, and the vector fields will take values $v_I \in \langle \nb \xi_I \rangle^\perp$ approximately tangent to the level hyperplanes of the $x^1$ coordinate, $\langle \nb \xi_I \rangle^\perp \approx \ker dx^1$.  The waves will have operators $\cPI$ that localize to frequencies of size $\approx \la$ near the line $\{ (p_1, 0, 0) \} \subseteq \widehat{\R^3}$.  

More precisely let $\psi \colon \R^3 \to \R$ be a Schwartz function with compact Fourier support in $\supp \widehat{\psi} \subseteq B_{2/3}(1,0,0) \subseteq \widehat{\R^3}$ and $\widehat{\psi} = 1$ in $B_{1/2}(1,0,0)$.  We will choose a finite set of nonzero integers $\F_{\Z} \subseteq \Z_{\neq 0}$, symmetric under negation $n \in \F_{\Z} \Leftrightarrow -n \in \F_{\Z}$, with the property that 
\ali{ 
B_4(n + n', 0, 0) \cap B_4(0, 0, 0) = \emptyset \mbox{ for all } n \neq -n' \mbox{ in } \F_{\Z}.  \label{eq:noWrongCascades}
}
We will also assign to each $[k] \in \Z / 2 \Z$ and $f \in \F$ a natural number $n_{[k], f} \in \F_{\Z}$ by an injective map $n_\cdot \colon (\Z / 2 \Z) \times \F \to \F_{\Z}$ with the conjugation symmetry $n_{[k],\bar{f}} = - n_{[k],f}$.  For $I = (k, f) \in \Z \times \F$, we let $n_{[I]} = n_{[k],f}$, where $[k]$ is $k$ evaluated mod $2$.


We now specify that each wave $V_I$, $I = (k,f)$, has frequency support near $\la n_{[I]}(1,0,0)$ by setting
\ali{
V_I^\ell = \cPI[e^{i \la \xi_I} v_I^\ell], \quad
\widehat{\cPI[U^\ell]}(m) = \widehat{\psi}\left(\fr{m}{n_{[I]} \la}\right) \left[ \hat{U}^\ell(m) -  \fr{(\hat{U}(m) \cdot m)}{|m|^2} m^\ell \right], \quad m^\ell \in \Z^3 \cong \widehat{\T^3}. \label{eq:HfreqProjectiondef}
}
In this way, each $V_I$ is divergence free, since the operator $\cPI$ in \eqref{eq:HfreqProjectiondef} is simply the usual Leray projection to mean-zero divergence-free vector fields but cut off to frequencies near $\la n_{[I]}(1,0,0)$.  Note that \eqref{eq:HfreqProjectiondef} may be expressed in physical space as a convolution operator
\ali{
V_I^\ell &= \int_{\R^3} \VR_I^b(x - h) K_{Ib}^\ell(h) dh, \qquad K_{Ib}^\ell(h) = (n_{[I]}\la)^{3} \mathring{K}_b^\ell(n_{[I]} \la h) \label{eq:convolution}
}
where $\mathring{K}_b^\ell(h)$ the Kernel associated to $\PP[ \psi \ast \cdot ]$, which is a Schwartz, matrix-valued function on $\R^3$.  Note we have suppressed the dependence of $\cPI$ and $K_{Ib}^\ell$ on $\la = B_\la N^* \Xi$ in our notation.


We choose the initial data $\hat{\xi}_I(x)$ for the phase functions in \eqref{eq:phaseFunc} to be $\hat{\xi}_I(x) := n_{[I]} (1, 0, 0) \cdot x$ so that $\la \hat{\xi}_I(x)$ is well-defined on $x \in \T^3 = \R^3 / \Z^3$ modulo integer multiples of $2 \pi$.  We will impose the following properties to ensure $\nb \xi_I$ remains sufficiently close to $\nb \hat{\xi}_I = n_{[I]}(1,0,0)$ on the support of $v_I$:
\ali{
\co{\nb \xi_I - \nb \hxii_I} \unlhd (1/4)\co{~| \nb \hxii_I|~}, \qquad \co{~|\nb \xi_I|^{-1}} \unlhd 2 \co{~|\nb \hxii_I|^{-1}}. \label{eq:importantnbxiprops}
}

The amplitudes $v_I^\ell$ for $I = (k,f)$ have the form
\ali{
v_I^\ell &= e_I^{1/2}(t) \eta_I(t) \ga_I  \tilde{f}_I^\ell. \label{eq:amplitudeForm}
}
Here $e_I(t)$ is either the function introduced in \eqref{eq:eDef} above if $I \in \dmd{\II}$ or a different function $\bar{e}(t)$ to be introduced later if $I \in \ovl{\II}$, the $\ga_I = \ga_I(t,x)$ are real-valued scalar functions (the ``coefficients''), the $\eta_I(t)$ are time cutoffs, and the $\tilde{f}_I^\ell$ are vector fields of size $\sim 1$ that take values in $\langle \nb \xi_I \rangle^\perp$ to ensure that $v_I^\ell$ does also.  The time cutoffs have a time-scale $\ostu$ of the form
\ali{
\ostu := \ost{b} \Xi^{-1} e_v^{-1/2}, \qquad \ost{b} = b_0 (e_v^{1/2}/(e_\vp^{1/2} B_\la N))^{1/2}, \label{eq:timeScale}
}
Here $b_0 \leq 1$ is a positive constant that will be chosen in Proposition~\ref{prop:transportEstimates} below  to ensure \eqref{eq:importantnbxiprops} and depends only on the $A_0$ in \eqref{eq:vepBasicBd}.  (The above choice of $\ost{b}$ will optimize the main error terms.)  

We construct $\eta_I$ differently for $I \in \II_R$ than for $I \in \II_\vp$.  Namely, let $\bar{\eta} \colon \R \to \R$ be\footnote{For example, starting with any smooth $\tilde{\eta}(t)$ with $\suppt \tilde{\eta} \subseteq [-1,1]$ with $0 \leq \tilde{\eta}(t) \leq 1$ for all $t$ and $\tilde{\eta}(t) = 1$ on $[-2/3, 2/3]$ one can set $\bar{\eta}(\bar{t}) = \tilde{\eta}(\bar{t}) / (\sum_{k \in \Z} \tilde{\eta}^6(\bar{t} - k))^{1/6}$, noting that the denominator is bounded below and smooth.} a smooth cutoff supported in $\suppt \bar{\eta} \subseteq [-1,1]$ such that $0 \leq \bar{\eta}(\bar{t}) \leq 1$ and $\sum_{k \in \Z} \bar{\eta}^6(\bar{t} - k) = 1$ for all $\bar{t} \in \R$.  For $I \in \II_R$, $I = (k,f)$, we set $\eta_I(t) = \bar{\eta}^3(\ostu^{-1}(t - k \ostu))$, whereas for $I \in \II_\vp$ we set $\eta_I(t) = \bar{\eta}^2(\ostu^{-1}(t - k \ostu))$.  In this way the $\eta_I$ form part of a quadratic partition of unity for $I \in \II_R$ as opposed to a cubic partition of unity for $I \in I_\vp$, and the following bounds hold:
\ali{
\mbox{For all $I = (k,f)$, } \suppt \eta_I \subseteq \{ |t - k\ostu| \leq \ostu \}, \qquad \co{\pr_t^r \eta_I} \lsm_r \ost{b}^{-r} (\Xi e_v^{1/2})^r, \quad 0 \leq r \leq 2. \label{eq:etakprops}
}

We make $\tilde{f}_I$ orthogonal to $\nb \xi_I$ by orthogonally projecting some constant vector $\htf_I \in \langle \nb \hxii_I \rangle^\perp$:
\ali{
\tilde{f}_I^\ell(t,x) &= \htf_I^\ell - |\nb \xi_I|^{-2}(\nb \xi_I \cdot \htf_I) \nb^\ell \xi_I . \label{eq:tildeIdef}
}
Note that \eqref{eq:tildeIdef} is well-defined and bounded once \eqref{eq:importantnbxiprops} is verified, and is equal to $\htf_I^\ell$ at the initial time $t(I) = k \ostu$.  The error in approximating $\tlf_I$ by its starting value $\htf_I$ will be written using the hat notation
\ali{
v_I^\ell = \hat{v}_I^\ell + \hat{\de} v_I^\ell, \qquad \hat{\de} v_I^\ell = e^{1/2}(t) \ga_I ( \tlf_I^\ell - \htf_I^\ell ) = - e^{1/2}(t) \ga_I |\nb \xi_I|^{-2} ( (\nb \xi_I - \nb \hxii_I) \cdot \htf_I ) \nb^\ell \xi_I. \label{eq:expressInitForm}
} 

The initial direction $\htf_I$ will depend only on the component $f = f(I) \in \F$ of $I = (k, f)$ in a manner we now describe.  For $I \in  \dmd{\II}_\vp$, the initial direction $\htf_I$ will be chosen from the finite set $\dmd{\BB}_\vp \subseteq \langle (1,0,0) \rangle^\perp$ defined by $\dmd{\BB}_\vp := \{ (0,1,0), (0,0,1) \}$, while for $I \in \ovl{\II}_\vp$ we use a different orthonormal basis for $\ker dx^1$, $\ovl{\BB}_\vp := \{ (0, 1/\sqrt{2}, 1/\sqrt{2}), (0, -1/\sqrt{2}, 1/\sqrt{2}) \}$.  For $I \in \dmd{\II}_R$, the initial direction $\htf_I$ will be chosen from a finite set $\dmd{\BB}_{R} \subseteq \langle (1,0,0) \rangle^\perp = \ker dx^1$ of cardinality $\# \dmd{\BB}_{R} = 3$ with the following properties:
\ali{
 \mbox{ The tensors } (\htf^j \htf^\ell)_{\htf \in \dmd{\BB}_{R}} &\mbox{form a basis for } \ker dx^1 \otimes \ker dx^1, \mbox{ and } \label{eq:prop1basis}\\
 \mbox{ there exists $\ost{\BB}_R \subseteq \ker dx^2$ such that } (\hat{g}^j \hat{g}^\ell)_{\hat{g} \in \ost{\BB}_{R}} &\mbox{form a basis for } \ker dx^2 \otimes \ker dx^2, \mbox{ and } \label{eq:prop2basis}\\
\sum_{\htf \in \dmd{\BB}_{R}} \htf^j \htf^\ell + &\sum_{\hat{g} \in \ost{\BB}_{R}} \hat{g}^j \hat{g}^\ell = \de^{j\ell}. \label{eq:prop3basis}
}
We take for $\dmd{\BB}_R$ a subset of $\langle (1,0,0) \rangle^\perp$ for which $\sum_{\htf \in \BB_{R}} f \otimes f = e_2 \otimes e_2 + (1/2)e_3 \otimes e_3$, and then for $\ost{\BB}_R$ we choose a subset of $\langle (0,1,0) \rangle^\perp$ that satisfies $\sum_{\hat{g} \in \ost{\BB}_{R}} g \otimes g = e_1 \otimes e_1 + (1/2)e_3 \otimes e_3$.  To find these subsets, observe that the quadratic forms on the right hand sides of these equations become the standard inner product on $\R^2$ after choosing appropriate bases for $\langle (1,0,0) \rangle^\perp$ and $\langle (0,1,0) \rangle^\perp$, which itself be written as $f_1 \otimes f_1 + f_2 \otimes f_2 + f_3 \otimes f_3$ for vectors $f_i$ that form the vertices of an equilateral triangle in the plane centered at the origin.  An exercise in linear algebra shows that the properties \eqref{eq:prop1basis}-\eqref{eq:prop3basis} are all satisfied by this construction.  The exact dependence of $\htf_I$ on $I$ and the description of $\htf_I$ for $I \in \ovl{\II}_R$ will now be specified in Section~\ref{sec:solvingCoeffs} below.


\subsection{Solving for the coefficients in the smaller amplitudes} \label{sec:solvingCoeffs}

In this Section and in Section~\ref{sec:solvingMainCoeffs}, we finish describing the shape of the construction including the coefficient functions $\ga_I$, the index sets $\II, \F$ and the assignment of integers $n_{[I]}$.  


Our goal for the $V_I$ with $I \in \II_\vp$ is to obtain an array of frequency cascades such that the low frequency terms arising in \eqref{eq:phiTermLeft} for $(I,J,K) \in \calT$ may cancel out $\vp_\ep$.  Low frequency contributions from the product $(V_I)_\ell V_J^j V_K^\ell$ should arise only in cases where the phases cancel out in the sense that $e^{i \la (\xi_I + \xi_J + \xi_K)} = 1$, which will occur in cases where the waves share the same initial time $t(I) = t(J) = t(K)$ and satisfy $n_{[I]} + n_{[J]} + n_{[K]} = 0$.  We therefore desire that $\F_\Z$ has several trios of numbers $(n_1, n_2, n_3) \in \F_\Z^3$ for which $n_1 + n_2 + n_3 = 0$.  We arrange these features as follows.

We define the set $\F_\vp$ to have the form $\F_\vp := \BB_\vp \times \{ \pm 1 \} \times \{ \tx{``a''}, \tx{``p''} \}$, so each element $f \in \F_\vp$ has the form $f = (\htf, \si, \tx{``r''})$.  The component $\htf \in \BB_\vp = \ovl{\BB}_\vp \amalg \dmd{\BB}_\vp$ specifies the direction of the amplitude in the sense of \eqref{eq:tildeIdef}, 
the ``sign'' $\si \in \{ \pm 1 \}$ distinguishes a wave from its conjugate $\bar{f} = (\htf, - \si, \tx{``r''})$, and the string ``r'' characterizes the ``role'' of the wave: if it is ``active'' or ``passive''.  We achieve our desired frequency cascades by requiring the following properties from our map $n_\cdot$ restricted to $(\Z/2\Z) \times \F_\vp$:
$ $

\noindent $\bullet$  [Conjugate Symmetry] \,  We require $n_{[k],\bar{f}} = - n_{[k],f}$ for all $([k],f) \in (\Z/2\Z) \times \F_\vp$.

\noindent $\bullet$ [High-High-Low Cascades] We require that
\ali{
n_{[k],(\htf, \si, \tx{``p''})} + n_{[k],(\htf, \si, \tx{``p''})} + n_{[k],(\htf, \si, \tx{``a''})} &= 0, \label{eq:cascadeCondition}
}
and we define the set of ``cascade trios'' $\calT_\F \subseteq \F_\vp^3$ to be those trios $(f_1, f_2, f_3) \in \F_\vp^3$ that satisfy $f_1 = (\htf, \si, \tx{``a''})$ and $f_2 = f_3 = (\htf, \si, \tx{``p''})$ or are a permutation of such trios.  
Note that $\# \calT_\F = 4 \cdot 3!$, due to two choices of $e_2, e_3 \in \BB_\vp$, the two possible signs and the 3! possible permutations.

\noindent $\bullet$ [No Undesired Cascades] We require that
\ali{
| n_{[k],f} + n_{[k'],f'} + n_{[k''], f''} | &\geq 4 \label{eq:goodHighCascades}
}
except for the previously specified High-High-Low cascades required in \eqref{eq:cascadeCondition}.  Any injective map $n_\cdot \colon (\Z/2\Z) \times \F_\vp \to \N$ that satisfies these properties and whose image satisfies \eqref{eq:noWrongCascades} suffices for defining the restriction of $n_\cdot$ to $(\Z/2\Z) \times \F_\vp$.

We now explain how to choose coefficients $\ga_I, I \in \II_\vp$ to eliminate part of the unresolved flux current $\vp_\ep$ using the frequency cascades in \eqref{eq:phiTermLeft}.  We say that a trio $(I,J,K) \in \II^3$ belongs to the set $\calT$ in \eqref{eq:phiTermLeft} if $I, J, K \in \II_\vp$ have the same initial time $t(I)$ and $(f(I), f(J), f(K)) \in \calT_\F$ as defined in \eqref{eq:cascadeCondition}.  
We will require that the leading order part of \eqref{eq:phiTermLeft} in $\DDR_{\vp L}$ cancels except for a term taking values in $\ker dx^2$; namely we require that
\ali{
\sum_{I, J, K \in \calT} (\VR_I)_\ell \VR_J^j \VR_K^\ell + \vp_\ep^j = \sum_{I, J, K \in \calT} v_{I\ell} v_J^j v_K^\ell + \vp_\ep^j &= \vp_{(2)}^j + \mbox{Lower order terms} \label{eq:cubicEqn}
}
pointwise for all $(t,x)$, where $\vp_{(2)}^j$ takes values in $\ker dx^2$.  Here we use that, for $I, J, K \in \calT$, the term $(\VR_I)_\ell \VR_J^j \VR_K^\ell = (v_I \cdot v_K) v_J^j$ is purely low frequency, since $e^{i \la (\xi_I + \xi_J + \xi_K)} = 1$ holds for all $(t,x)$.  Namely, the identity holds at the initial time $t = t(I)$ and persists for all time by uniqueness of solutions to regular transport equations and by \eqref{eq:phaseFunc},\eqref{eq:cascadeCondition} and the definition of $\hxii_I$ following \eqref{eq:convolution}.


The problem \eqref{eq:cubicEqn} will be solved separately by the two different types of waves, $\calT = \ovl{\calT} \amalg \dmd{\calT}$ where $\ovl{\calT} = \calT \cap (\ovl{\II}_\vp)^3, \ovl{\calT} = \calT \cap (\dmd{\II}_\vp)^3$.  The larger waves $I \in \ovl{\II}_\vp$ will be assigned to the principal term $\vp_{[1\ep]}^j$, while the waves $I \in \dmd{\II}_\vp$ will be assigned to the general term $\vp_{[G\ep]}^j$, which though smaller may have slightly larger support in time than $\vp_{[1\ep]}^j$.  We choose linear maps $\pi_{(1)}$ and $\pi_{(2)}$ on $\R^3$ such that $\mbox{Rng } \pi_{(1)} \subseteq \langle (1, 0, 0) \rangle^\perp$, $\mbox{Rng } \pi_{(2)} \subseteq \langle (0,1, 0) \rangle^\perp$, and $\pi_{(1)} + \pi_{(2)} = \mbox{Id}$ on $\R^3$ to isolate the part $\vp_{[G1]}^j := \pi_{(1)} \vp_{[G]}^j$ taking values in $\ker dx^1$.  We will then achieve \eqref{eq:cubicEqn} by solving separately the equations 
\ali{
\sum_{I, J, K \in \ovl{\calT}} v_{I\ell} v_J^j v_K^\ell &= - \vp_{[1]}^j + \mbox{Lower order terms,} \label{eq:ovlCubic} \\
\sum_{I, J, K \in \dmd{\calT}} v_{I\ell} v_J^j v_K^\ell &= - \vp_{[G1]}^j + \mbox{Lower order terms.} \label{eq:dmdCubic}
}
We focus first on \eqref{eq:dmdCubic}.  We use the partition of unity property for the $\bar{\eta}(\cdot)$ to decompose $\vp_{[G1]}^j = \sum_{k \in \Z} \bar{\eta}^6(\ostu^{-1}(t - k \ostu)) \vp_{[G1]}^j$ and recall the expression $v_I^j = e_I^{1/2}(t) \bar{\eta}^2(\ostu^{-1}(t - t(I))) \ga_I \htf_I^j + \hat{\de} v_I^j$.  Factoring out the cutoffs and $e(t)^{3/2}$ from \eqref{eq:dmdCubic}, we will achieve \eqref{eq:dmdCubic} if for all $k \in \Z$ we solve
\ali{
\sum_{(f_1, f_2, f_3) \in \dmd{\calT}_\F} \ga_{(k,f_1)} \ga_{(k,f_2)} \ga_{(k,f_3)} \htf_2^j (\htf_1 \cdot \htf_3) &= - e^{-3/2}(t) \vp_{[G1]}^j  \label{eq:localEqn}
}
pointwise on $[k \ostu - \ostu, k \ostu + \ostu] \cap \tilde{I} \times \T^3$.
Note that, since each $\htf_i$ is either $e_2$ or $e_3$, the sum in \eqref{eq:localEqn} has only two distinct terms, both repeated $2 \cdot 3!$ times.  One takes values in the $e_2$ direction and involves some $f_i = (e_2, \pm 1, \tx{``a''})$, and the other term points in the $e_3$ direction and involves $(e_3, \pm 1, \tx{``a''})$.  With this observation, we can solve \eqref{eq:localEqn} if we make $\ga_I = K_0^{-1/2}$ constant for every ``passive wave'' (those for which $f(I) = (\htf_I, \si, \tx{``p''})$), then for ``active waves'' (those for which $f(I) = (\htf_I, \si, \tx{``a''})$) we set
\ali{
\ga_I(t,x) &= - K_0 e^{-3/2}(t) (\htf_I \cdot \vp_{[G1]}) / (2 \cdot 3!). \label{eq:solvedForvpCoeffs}
}
We will also solve \eqref{eq:ovlCubic} using the same approximation $v_{I\ell} v_J^k v_K^j = \hat{v}_{I\ell} \hat{v}_J^k \hat{v}_K^j + (\hat{\de} v_I)_\ell \hat{v}_J^k \hat{v}_K^j + \ldots$ for $I \in \ovl{\II}_\vp$, which leaves the low frequency part of \eqref{eq:phiTermLeft} in the form
\ali{
\begin{split} \label{eq:vpLdef}
\DDR_{\vp L} = \nb_j \vp_L^j, \quad \vp_L^j :=&\, 
\vp_{(2)}^j + \sum_{I, J, K \in \calT} (\hat{\de} v_I)_\ell v_J^j v_K^\ell + \mbox{similar } O(\hat{\de} v_I \cdot v_J \cdot v_K ) \mbox{ terms}  \\
&+ \sum_{I, J, K \in \calT} (\de V_I)_\ell \VR_J^j \VR_K^\ell + \mbox{similar } O(\de V \cdot V \cdot V) \mbox{ terms},
\end{split} \\
\vp_{(2)}^j &:= \pi_{(2)} \vp_{[G\ep]}^j. \label{eq:secondComponentVp}
}
We also observe that the above choices of $\ga_I$ for $I \in \dmd{\II}_\vp$ lead to the estimate
\ali{
\max_{I \in \II_\vp} \co{ \ga_I } &\leq A K_0^{-1/2} \label{eq:vpcoeffBd}
}
for some absolute constant $A$, where $K_0$ is the constant in the lower bound in \eqref{eq:eDef}.

The construction of $\ga_I$ for $I \in \dmd{\II}_R$ is analogous to that for $\dmd{\II}_\vp$ except that the goal is to cancel out a part of the stress $R_{\ep}$ appearing in \eqref{eq:stressTerm1} that takes values in $\ker dx^1$.  To isolate this part, choose linear maps $\pi_{[1]}$, $\pi_{[2]}$, $\pi_{[3]}$ on the vector space $\R^3 \otimes \R^3$ such that $\tx{Rng } \pi_{[l]} \subseteq \ker dx^l \otimes \ker dx^l$ for $l = 1, 2, 3$ and such that $\pi_{[1]} + \pi_{[2]} + \pi_{[3]} = \mbox{Id}$ on $\R^3 \otimes \R^3$, then decompose $R_\ep = R_{[1*]}^{j\ell} + R_{[2*]}^{j\ell} + R_{[3*]}^{j\ell}$ where
\ali{
R_{[l*]}^{j\ell} = R_{[l\ep]}^{j\ell} + \pi_{[l]} R_{[G\ep]}^{j\ell}, \quad \mbox{ for } l = 1, 2, \qquad 
R_{[3*]}^{j\ell} = \pi_{[3]} R_{[G\ep]}^{j\ell}. \label{eq:threeStressTerms}
}
Here $R_{[1\ep]}, R_{[2\ep]}$ and $R_{[G\ep]}$ refer to the decomposition of $R_\ep$ referenced after line \eqref{eq:Rmolldef}, which will be defined in Section~\ref{sec:regPrelimBds}.  In particular, we have that $\mbox{Rng } R_{[l*]} \subseteq \ker dx^l \otimes \ker dx^l$ for all $l = 1, 2, 3$.  From \eqref{eq:prop3basis}, the pressure term similarly decomposes into $\de^{j\ell} = \de_{[1]}^{j\ell} + \de_{[2\ast]}^{j\ell}$, where
\ali{
\de_{[1]}^{j\ell} := \sum_{\hat{f} \in \dmd{\BB}_R} \hat{f}^j \hat{f}^\ell \in \ker dx^1 \otimes \ker dx^1, \qquad \de_{[2\ast]}^{j\ell} := \sum_{\hat{g} \in \ost{\BB}_R} \hat{g}^j \hat{g}^\ell \in \ker dx^2 \otimes \ker dx^2. \label{eq:de12def}
}
We will now arrange that the remaining part of the term in \eqref{eq:stressTerm1} will have the form
\ali{
\sum_I V_I^j \overline{V_I^\ell} + P \de^{j\ell} + R_\ep^{j\ell} &= P \de_{[2]}^{j\ell} + R_{[2*]}^{j\ell} + R_{[3*]}^{j\ell} + \mbox{Small Error Terms} \label{eq:wantCanceling} 
}
by ensuring that
\ali{
\sum_{I \in \II_R} V_I^j \overline{V_I^\ell} &= - P \de_{[1]}^{j\ell} - R_{[1\ast]}^{j\ell} - \sum_{I \in \II_\vp} V_I^j \overline{V_I^\ell}  + \mbox{Small Error Terms}. \label{eq:oneComponent}
}
Yet again we distinguish $\II_R = \ovl{\II}_R \amalg \dmd{\II}_R$ and obtain \eqref{eq:oneComponent} by solving separately
\ali{
\sum_{I \in \ovl{\II}_R} v_I^j \overline{v_I^\ell} &= - R_{[1\ep]}^{j\ell} - \sum_{I \in \ovl{\II}_\vp} v_I^j \overline{v_I^\ell}  + \mbox{Small Error Terms}, \label{eq:stressEqnovl} \\
\sum_{I \in \dmd{\II}_R} v_I^j \overline{v_I^\ell} &= - P \de_{[1]}^{j\ell} - R_{[G1]}^{j\ell} - \sum_{I \in \dmd{\II}_\vp} v_I^j \overline{v_I^\ell}  + \mbox{Small Error Terms}, \label{eq:stressEqndmd}
}
where $R_{[G1]}^{j\ell} := \pi_{[1]} R_{[G\ep]}^{j\ell}$.  Here we used that $V_I^\ell = e^{i \la \xi_I} v_I^\ell + \de V_I^\ell$ and discarded the $\de V_I^\ell$ terms.  

We focus now on determining $v_I$, $I \in \dmd{\II}_R$ to solve \eqref{eq:stressEqndmd}.  We define $\dmd{\F}_R$ to be $\dmd{\F}_R := \dmd{\BB}_R \times \{ \pm 1 \}$ so that each $f \in \dmd{\F}_R$ has the form $f = (\htf, \si)$, $\htf \in \dmd{\BB}_R, \si \in \{ \pm 1 \}$ and we let $\bar{f} = (\htf, -\si)$ be the conjugate index.  We specify that $n_\cdot$ restricted to $(\Z/2\Z)\times \dmd{\F}_R$ to be any injective map $n_\cdot \colon (\Z/2\Z)\times \dmd{\F}_R \to \N$ such that the completed function $n_\cdot \colon (\Z/2\Z)\times \dmd{\F} \to \N$, $\dmd{\F} = \dmd{\F}_R \amalg \dmd{\F}_\vp$, is injective and maintains the property \eqref{eq:noWrongCascades}.  We multiply \eqref{eq:stressEqndmd} by the identity $1 = \sum_{k \in \Z} \bar{\eta}^6(\ostu^{-1}(t - k \ostu)) $ and recall that for $I \in \dmd{\II}_R$, $v_I^\ell = e^{1/2}(t) \eta_I \ga_I \htf_I^\ell + \hat{\de} v_I^\ell$, $\eta_I = \bar{\eta}^3(\ostu^{-1}(t - k \ostu))$.  Dropping the $\hat{\de} v_I$ terms and factoring out the cutoffs and $e(t)$ from \eqref{eq:stressEqndmd}, we obtain \eqref{eq:stressEqndmd} if, for all $k \in \Z$ we solve the following equation pointwise on $\tilde{I} \times \T^3$:
\ali{
\label{eq:solvingForgaIrs}
\sum_{f \in \dmd{\F}_R} 2 \ga_{(k, f)}^2 \htf^j \htf^\ell &= (2/3)\de_{[1]}^{j\ell} + \varep^{j\ell}, \\
\begin{split} \label{eq:varepDef}
\varep^{j\ell} \stackrel{\eqref{eq:Pdefn}}{=} e(t)^{-1}\Big[(1/3)(-2\kk_{[G\ep]} + \de_{j\ell}(R_{[2\ep]}^{j\ell} + R_{[G\ep]}^{j\ell})) &- R_{[G1]}^{j\ell} - \sum_{I \in \dmd{\II}_\vp} \hat{v}_I^j \overline{\hat{v}_I^\ell} \Big], \\
 e(t)^{-1}\sum_{I \in \dmd{\II}_\vp} \hat{v}_I^j \overline{\hat{v}_I^\ell} = &\sum_{I \in \dmd{\II}_\vp} \eta_I^2 \ga_I^2 \htf_I^j \htf_I^\ell.
\end{split}
}
Using~\eqref{eq:prop1basis} and \eqref{eq:de12def}, provided $\varep$ is sufficiently small, the quadratic equation admits unique, positive solutions that depend smoothly as functions $\ga_f(\varep)$ of $\varep$, $f \in \dmd{\F}_R$, and that obey the estimate\footnote{Here we use the bounds $\max_l \co{R_{[l\ep]}} + \co{R_{[G\ep]}} \unlhd A_L e_\vp$, $\co{\kk_\ep} \unlhd A_L e_\vp$ and $\co{\vp_\ep} \unlhd A_L e_\vp^{3/2}$, which will be checked in Section~\ref{sec:regPrelimBds}.  }
\ali{
\ga_{(k,f)}(t,x) = \ga_f(\varep) &= 1/\sqrt{3} + O(\co{\varep}), \qquad \co{\varep} \stackrel{\eqref{eq:eDef},\eqref{eq:vpcoeffBd}}{=} O((K_0)^{-1}), \label{eq:gabounds}
}
where $K_0$ is the parameter assumed in the lower bound for $e(t)$ in \eqref{eq:eDef}.  The implicit constants in the $O(\cdot)$ notation are geometric and do not depend on any parameters of the construction, which allows us to guarantee a bound of $1/3 \leq \ga_{(k,f)} \leq 2/3$ for all $f \in \dmd{\F}_R$ by taking $K_0$ to be a sufficiently large constant depending on $L$ (in particular ensuring that the square root is well-defined in solving \eqref{eq:solvingForgaIrs}).

To solve the analogous equation \eqref{eq:stressEqnovl} for the larger waves $I \in \ovl{\II}_R$, we will make a similar approximation of $V_I = e^{i \la \xi_I} v_I + \de V_I$, $v_I = \hat{v}_I + \hat{\de} v_I$, and arrange the main terms to satisfy
\ali{
\sum_{I \in \ovl{\II}_R} \hat{v}_I^j \overline{\hat{v}_I^\ell} &= - R_{[1\ep]}^{j\ell} - \sum_{I \in \ovl{\II}_\vp} \hat{v}_I^j \overline{\hat{v}_I^\ell} \label{eq:principOvlReq}
}
Defining the $v_I$ in this way, and noting $\de V_I \overline{V_I} = \de v_I \overline{v_I}$ leaves an error in  \eqref{eq:stressTerm1} of the form 
\ali{
\label{eq:stressTerm}
\begin{split}
R_S^{j\ell} &= P \de_{[2]}^{j\ell} + R_{[2\ast]}^{j\ell} + R_{[3\ast]}^{j\ell} + \sum_{I \in \II} \left(\de v_I^j \overline{v_I^\ell} + \hat{\de} v_I^j \overline{v_I^\ell}\right) + \mbox{similar } O(\de v_I v_I) \mbox{ and } O(\hat{\de} v_I v_I) \mbox{ terms.}
\end{split}
}
At the same time, the error that remains for the unresolved flux density in \eqref{eq:fltermLow} reduces to
\ali{
\label{eq:leftOverDensity}
\begin{split}
\sum_I \fr{V_I \cdot \overline{V_I}}{2} + \kk_\ep - e(t)  \stackrel{\eqref{eq:Pdefn}}{=}& \fr{1}{2} \Big(\sum_I V_I \cdot \overline{V_I} + \de_{j\ell} ( P \de_{[2\ast]}^{j\ell} + R_\ep^{j\ell} ) \Big) \\
= \kk_L :=& \fr{1}{2} \Big( \de_{j\ell}(P \de_{[2\ast]}^{j\ell} + R_{[2\ast]}^{j\ell} + R_{[3\ast]}^{j\ell}) \Big) 
+ \fr{1}{2} \sum_I \Big( \de v_I \cdot \overline{v_I} + \hat{\de} v_I \cdot \overline{v_I} \Big) \\
+& \mbox{ similar } O(\de v_I v_I ) + O(\hat{\de} v_I v_I) \mbox{ terms.} 
\end{split}
}
Note that the main term is $(1/2)\de_{j\ell}(P\de_{[2\ast]}^{j\ell} + R_{[2\ast]}^{j\ell})$, which is the only term of size $e_R$.  This term will be canceled out in the subsequent stage of the iteration where we use waves with values in $\ker dx^2$.  For now we isolate the main terms in the new stress for the next stage of the iteration by defining
\ali{
\ost{R}_{[2]}^{j\ell} := P\de_{[2\ast]}^{j\ell} + R_{[2\ast]}^{j\ell}, \quad \ost{\kk}_{[2]} := \de_{j\ell}\ost{R}_{[2]}^{j\ell}/2, \quad \kk_L = \ost{\kk}_{[2]} + \kk_{LG}. \label{eq:choseMainNewterm}
}
The terms contained in $\kk_{[LG]}$ from \eqref{eq:leftOverDensity} will be shown to have a smaller order of magnitude $\lsm e_G$.

\subsection{Solving for the coefficients in the larger amplitudes} \label{sec:solvingMainCoeffs}
The task that remains to fully specify the shape of the construction is to explain how to define $v_I$ for $I \in \ovl{\II}$ to tackle the principal terms of equations \eqref{eq:ovlCubic},\eqref{eq:principOvlReq}.  To solve these equations, we make an assumption on the structure of our given $R_{[1]}^{j\ell}$, which is necessary for defining our construction and also consistent with the outcome of designing the smaller amplitudes $v_I$ for $I \in \dmd{\II}$ as above.
  
\begin{defn} \label{defn:wellPrepared} Let $\bar{\de} > 0$ and $M \geq 1$.  We say that a dissipative Euler-Reynolds flow $(v,p,R,\kk,\vp)$ is {\bf $(\bar{\de}, M)$-well-prepared} (for stage $[1]$) with respect to a set of frequency energy levels $(\Xi, e_v, e_\vp, e_R, e_G)$, a pair of intervals $\tilde{I}_{[1]} \subseteq \tilde{I}_{[G]}$ and a smooth non-negative $\bar{e} \colon \tilde{I} \to \R_{\geq 0}$ if 
there is a positive number $\undl{e}_\vp \geq M^{-1} e_\vp$ 
such that the following properties hold for $\de_{[1\ast]} := (e_3 \otimes e_3) + (e_2 \otimes e_2)/2$ and all $0 \leq s \leq 2$:
\ali{
R_{[1]}^{j\ell} = -\bar{e}(t) \de_{[1\ast]}^{j\ell} &+ R_{[1\circ]}^{j\ell} \label{eq:formOfR1de} \\
\suppt (R_{[1\circ]}, \vp_{[1]}) \subseteq \tilde{I}_{[1]},& \quad \suppt \bar{e} \subseteq \tilde{I}_{[G]} \\
\co{D_t^s \bar{e}^{1/2}(t)} &\leq (\Xi e_v^{1/2})^s e_\vp^{1/2}, \qquad\quad \label{eq:givenAmpBds} \\
\bar{e}(t)^{-1} \co{R_{[1\circ]}} + \bar{e}(t)^{-3/2} &\co{\vp_{[1]}} \leq \bar{\de}, \qquad \qquad \mbox{ if } t \leq \sup \tilde{I}_{[1]} + (\Xi e_v^{1/2})^{-1}. \label{eq:barDeAssump} \\
\co{D_t^s [\bar{e}(t)^{-1/2}] } &\leq 10 (\Xi e_v^{1/2})^s \undl{e}_\vp^{-1/2} \qquad \mbox{ if } t \leq \sup \tilde{I}_{[1]} + (\Xi e_v^{1/2})^{-1} \label{eq:underBdsEninc} \\
\undl{e}_\vp^{1/2} \co{\nb_{\va} D_t^r R_{[1\circ]} } + \co{\nb_{\va} D_t^r \vp_{[1]} } &\leq \Xi^{|\va|} (\Xi e_v^{1/2})^r \undl{e}_\vp^{3/2}, \qquad 0 \leq r \leq 1, \quad 0 \leq r + |\va| \leq L. \label{eq:cicVp1bds}
}
\end{defn}
\noindent Alternatively, we may allow for \eqref{eq:formOfR1de} to hold with $\de_{[1]}^{j\ell}$ in place of $\de_{[1\ast]}^{j\ell}$ in the definition of $(\bar{\de},M)$-well-preparedness for stage 1 (which will be convenient for the proof of Lemma~\ref{lem:prepareInitLevels} below).  We similarly define $(\bar{\de},M)$-well-prepared for stage $[l]$ by replacing $(1,2,3)$ with $(l, l{+}1, l{+}2)$ mod $3$ where appropriate.  Note that we do not make any monotonicity assumption on $\bar{e}(t)$ in contrast to \eqref{eq:eDef}.  

Having an assumption of the type \eqref{eq:barDeAssump}, which causes the term $\bar{e}(t) \de_{[1\ast]}^{j\ell}$ to dominate the error, will be necessary for achieving any gain in the size of the error, since otherwise we would require adding an additional term of size $e_\vp$ into the pressure that would generate additional error terms of size $e_\vp$ that take values outside of $\ker dx^1 \otimes \ker dx^1$.  

We will use the function $\bar{e}(t)$ of Definition~\ref{defn:wellPrepared} as our choice of $e_f(t)$ appearing in the amplitude $v_I$ in \eqref{eq:amplitudeForm} for $f = f(I) \in \ovl{\F} = \ovl{\F}_R \amalg \ovl{\F}_\vp$.  With this choice, equation \eqref{eq:ovlCubic} will be satisfied (with error terms involving $\hat{\de} v_I$) by solving the equation analogous to \eqref{eq:localEqn}
\ali{
\sum_{(f_1, f_2, f_3) \in \ovl{\calT}_\F} \ga_{(k,f_1)} \ga_{(k,f_2)} \ga_{(k,f_3)} \htf_2^j (\htf_1 \cdot \htf_3) &= - \bar{e}^{-3/2}(t) \vp_{[1\ep]}^j. \label{eq:coeffOvl}
}
Similarly to the argument after \eqref{eq:solvedForvpCoeffs} and using $\bar{\de}$-well-preparedness, we may solve \eqref{eq:coeffOvl} by taking coefficients for passive waves that are constant, $\ga_I = \bar{\de}^{1/3}$ if $f(I) \in \ovl{\BB}_\vp \times \{ \pm 1 \} \times \{ \tx{``p''}\}$, while we choose active waves of the form 
\ali{
\ga_I &= - \bar{\de}^{-2/3} \bar{e}(t)^{-3/2} (\htf_I \cdot \vp_{[1\ep]}) / (2\cdot 3!). \label{eq:othGaIform}
}
Using \eqref{eq:barDeAssump}, we then have a bound of $\co{ \ga_I } \leq A \bar{\de}^{1/3}$ for all $I \in \ovl{\II}_\vp$ for some absolute constant $A$.  

Turning now to $I \in \ovl{\II}_R$, we will solve \eqref{eq:principOvlReq} by ensuring that for all $k \in \Z$ the following equation analogous to \eqref{eq:solvingForgaIrs} holds pointwise on $\tilde{I} \times \T^3$:
\ali{
\label{eq:solvingForgaIrsOvl}
\sum_{f \in \ovl{\F}_R} 2 \ga_{(k, f)}^2 \htf^j \htf^\ell &= \de_{[1\ast]}^{j\ell} + \ovl{\varep}^{j\ell}, \\
\begin{split} \label{eq:varepDefOvl}
\ovl{\varep}^{j\ell} \stackrel{\eqref{eq:Pdefn}}{=} \bar{e}(t)^{-1}\Big[- R_{[\circ\ep]}^{j\ell} & - \sum_{I \in \ovl{\II}_\vp} \hat{v}_I^j \overline{\hat{v}_I^\ell} \Big], \\
 \bar{e}(t)^{-1}\sum_{I \in \ovl{\II}_\vp} \hat{v}_I^j \overline{\hat{v}_I^\ell} = &\sum_{I \in \ovl{\II}_\vp} \eta_I^2 \ga_I^2 \htf_I^j \htf_I^\ell.
\end{split}
}
Here $R_{[\circ \ep]}^{j\ell}$ is a regularization of $R_{[1\circ]}$ defined in Section~\ref{sec:regPrelimBds} below, which is assumed to satisfy $\co{R_{[\circ \ep]}^{j\ell}} \leq A_L \co{R_{[1\circ]}}$ with $A_L$ depending only on $L$.  Using the bound \eqref{eq:barDeAssump} in the definition of $\bar{\de}$-well-preparedness, we may then estimate \eqref{eq:varepDefOvl} by
$\co{\ovl{\varep}} \leq A_L \bar{\de}^{2/3}$ 
where $A_L$ depends only on $L$.

To solve \eqref{eq:solvingForgaIrsOvl} we first choose a subset $\ovl{\BB}_R \subseteq \ker dx^1$ of cardinality $\# \ovl{\BB}_R = 3$ such that the tensors $(\hat{f}^j \hat{f}^\ell)_{\hat{f} \in \ovl{\BB}_R}$ form a basis for $\ker dx^1 \otimes \ker dx^1$ and such that $\sum_{\hat{f} \in \ovl{\BB}_R} \hat{f}^j \hat{f}^\ell = \de_{[1\ast]}^{j\ell}$.  This subset may be constructed as in the argument following line \eqref{eq:prop3basis}.  For $\bar{\de}$ sufficiently small depending on $L$, assuming $\ga_{(k,f)} = \ga_{(k,\bar{f})}$ are equal, we obtain a solution to \eqref{eq:solvingForgaIrsOvl} by first solving the linear equation for the $\ga_I^2$ and then taking square roots to obtain the $\ga_I$, which will obey the bounds $\ga_{(k,f)} = 1/2 + O(\co{\ovl{\varep}}) = 1/2 + O(\bar{\de}^{2/3})$.  For $\bar{\de}$ sufficiently small depending on $L$, the solutions obtained in this way satisfy $1/4 \leq \ga_{(k,f)} \leq 1$ pointwise, and take the form $\ga_{(k,f)}(t,x) = \ga_f(\ovl{\varep})$ for $\ga_f(\cdot)$ one of finitely many functions indexed by $f \in \ovl{\FF}_R$ that are smooth in $\ovl{\varep}$ and defined on a ball of radius $A_L \bar{\de}^{2/3} \geq 2 \co{\ovl{\varep}}$.  

At this point, we are able to fix the final choice of $\bar{\de}$ to be sufficiently small depending on $L$ so that the properties of the preceding paragraph all remain satisfied.  This choice completes our description of the shape of the construction.

\subsection{Regularizations, preliminary bounds and the energy increment} \label{sec:regPrelimBds}
In this section we define regularizations $(v_\ep, p_\ep, R_\ep, \kk_\ep, \vp_\ep)$ of the given dissipative Euler-Reynolds flow $\vprk$ and obtain some preliminary estimates for the construction.

We define the coarse scale velocity field $v_\ep^\ell$ by mollifying $v^\ell \mapsto v_\ep^\ell = \chi_\ep \ast \chi_\ep \ast v^\ell$ in the spatial variables using a Schwartz kernel with length scale $\ep_v$: $\chi_\ep(h) = \ep_v^{-3} \chi_1(\ep_v^{-1} h),$ $\chi_1 \colon \R^3 \to \C$.  We assume that $\chi_\ep$ has compact Fourier support, $\supp \hat{\chi}_1 \subseteq \{ |\xi| \leq 2 \} \subseteq \widehat{\R^3}$, $\hat{\chi}(\xi) = 1$ for $|\xi| \leq 1$, which implies that $\Fsupp v_\ep \subseteq \{ |\xi| \leq 2 \ep_v^{-1} \}$ and $\Fsupp ( v - v_\ep) \subseteq \{ |\xi| \geq \ep_v^{-1} \}$ in $\widehat{\R^3}$.  

This choice leads to the following estimates (see \cite[Lemma 14.1]{isett} for a proof of \eqref{eq:vminvepBd}) 
\ali{
\co{\nb_{\va} v_\ep} &\leq A_0 \co{\nb_{\va} v }, \qquad 1 \leq |\va| \leq L \label{eq:vepBasicBd} \\
\co{(v - v_\ep)} &\lsm (\ep_v)^L \co{\nb^L v }. \label{eq:vminvepBd}
}
Inequality \eqref{eq:vepBasicBd} follows with $A_0 = \| \chi_1 \|_{L^1(\R^3)}^2$ by commuting the spatial derivative with the mollifier.


We choose the length scale $\ep_v$ of the form $c_{1} [ (N)^{1/L} \Xi ]^{-1}$ to guarantee the following bound on the main term appearing in \eqref{eq:Rmolldef}:
\ali{
\co{v - v_\ep} &\lsm c_1^L (e_v^{1/2}/N) \label{eq:vMinVep} \\
\Big\|\sum_I [ (v^j -v_\ep^j) e^{i \la \xi_I} v_I^\ell &+ e^{i \la \xi_I} v_I^j (v^\ell - v_\ep^\ell) ] \Big\|_{C^0} \lsm  \co{ v - v_\ep } \max_I \co{|v_I|} \notag \\
&\lsm A_1 \co{v - v_\ep} e_\vp^{1/2} \leq  \fr{e_v^{1/2} e_\vp^{1/2}}{1000 N}. \label{eq:vMinusVepbd}
}
Examining Sections~\ref{sec:constrShape}-\ref{sec:solvingCoeffs}, the constant $A_1$ (and hence the choice of $c_1$) depends only on the constant appearing in the upper bound $\co{e^{1/2}(t)} \leq K_1 e_{\undvp}^{1/2} \leq K_1 e_\vp^{1/2}$ of \eqref{eq:eDef}, since all the terms $\htf_I, \ga_I$ and $\eta_I$ appearing in \eqref{eq:amplitudeForm} are uniformly bounded by geometric constants while $\co{\bar{e}^{1/2}} \leq e_\vp^{1/2}$.  The constant $A_1$ will thus be determined in line \eqref{eq:enIncdef} below, where we specify the function $e^{1/2}(t)$.


For the remaining fields $F \in (p, R, \kk, \vp)$, we first define a spatial mollification $F_{\ep_x}$ by setting $F_{\ep_x} := \chi_{\ep_x} \ast \chi_{\ep_x} \ast F$, with $\chi_{\ep_x}(h) = \ep_x^{-3} \chi_1(\ep_x^{-1} h)$ a Schwartz kernel on $\R^3$ with integral $1$ that obeys the vanishing moment condition $\int_{\R^3} h^{\va} \chi_{\ep_x}(h) dh = 0$ for all multi-indices $1 \leq |\va| \leq L$.  We then define $F_\ep$ using the coarse scale flow of $v_\ep$ to mollify in time:
\ali{
F_\ep = \eta_{\ep_t} \ast_{\Phi} F_{\ep_x} &:= \int_\R F_{\ep_x}(\Phi_s(t,x)) \eta_{\ep_t}(s) ds. 
\label{eq:mollifyAlongFlow}
}
Here $\eta_{\ep_t}(s) = \ep_t^{-1} \eta_1(\ep_t^{-1} s)$ is a smooth, positive mollifying kernel with compact support in the interval $|s| \leq \ep_t \leq \Xi^{-1} e_v^{-1/2}/2$ and $\int_\R \eta_{\ep_t}(s) ds = 1$, while $\Phi_s(t,x)$ is the flow map of $\pr_t + v_\ep \cdot \nb$ (the {\bf coarse scale flow} of $v_\ep$), which takes values in $\R \times \T^3$ and is defined as the unique solution to the ODE
\ali{
\Phi_s(t,x) = (t + s, \Phi_s^i(t,x)), \quad \Phi_0(t,x) = (t,x), \quad \fr{d}{ds}\Phi_s^i(t,x) = v_\ep^i(\Phi_s(t,x)). \label{eq:coarseScaleFlow}
}
There is a small issue that if $F$ is supported near the boundary of $\tilde{I} \times \T^3$ in $\R \times \T^3$, the expression \eqref{eq:mollifyAlongFlow} is not well-defined if $t+s$ exists the interval $\tilde{I}$.  We can get around this issue by choosing two different mollifiers $\eta^+_{\ep_t}(s)$ and $\eta^-_{\ep_t}(s)$ supported respectively in $0 < s < \ep_t$ and $-\ep_t < s < 0$, then setting
\ali{
F_\ep(t,x) &= \bar{\chi}(t) \eta^+_{\ep_t} \ast_\Phi F_{\ep_x} + (1 - \bar{\chi}(t)) \eta^-_{\ep_t} \ast_\Phi F_{\ep_x} \label{eq:timeCutoffFep}
}
with $\bar{\chi} \in C^\infty(\tilde{I})$ equal to $\bar{\chi}(t) = 1$ for $t \leq \inf \tilde{I} + \Xi^{-1} e_v^{-1/2}$, equal to $\bar{\chi}(t) = 0$ for $t \geq \sup \tilde{I} - \Xi^{-1} e_v^{-1/2}$, taking values between $0 \leq \bar{\chi}(t) \leq 1$ on $\tilde{I}$ and obeying the bounds $\co{(\pr_t)^r \bar{\chi}} \lsm_r (\Xi e_v^{1/2})^r$.  Such a cutoff $\bar{\chi}$ exists since we have assumed $|\tilde{I}| \geq 8 \Xi^{-1} e_v^{-1/2}$.  We note that if the interval $I$ for the domain of the Euler-Reynolds flow is unbounded, then the use of the cutoff $\bar{\chi}(t)$ is not needed.

For each field $F$ to be regularized, we assume bounds of the form
\ali{
\begin{split} \label{eq:assumedFbounds}
\co{\nb_{\va} F} &\leq \Xi^{|\va|} h_F, \qquad 0 \leq |\va| \leq L \\
\co{\nb_{\va} D_t F} &\leq \Xi^{|\va|+1} e_v^{1/2}h_F, \qquad 0 \leq |\va| \leq L - 1,
\end{split}
}
and we choose $\ep_x$ and $\ep_t$ of the form $\ep_x = c_0 (N^{-1/L} \Xi^{-1})$, $\ep_t = c_0 (N \Xi e_\vp^{1/2})^{-1}$.  
Note that the above construction of $F_\ep$ guarantees the bound $\co{F_\ep} \leq A_L \co{F}$ that was required in Section~\ref{sec:solvingCoeffs}, with the constant $A_L = \| \eta_1 \|_{L^1(\R)} \cdot \| \chi_1 \|_{L^1(\R^3)}^2$ depending only on the choice of smooth functions $\eta_1$ and $\chi_1$ (in particular independent of $c_0$ or $c_1$).  

These choices of parameters lead to the following estimates as in \cite[Section 18.3]{isett}, which determines our choice of $c_0$ depending only on $\eta_1$ and $\chi_1$.
\begin{prop}[Preliminary Estimates] \label{prop:prelimBds} We have the following bounds 
\ali{
\label{eq:velocBd1}
\co{\nb_{\va} v_\ep } &\lsm_{\va} \osn^{(|\va| - L)_+/L} \Xi^{|\va|} e_v^{1/2}, \qquad \mbox{ for all }  |\va| \geq 1, \\
\co{\nb_{\va} \Ddt v_\ep} &\lsm_{\va} \osn^{(|\va| + 1 - L)_+/L} \Xi^{|\va|} \Xi e_v^{1/2}, \qquad \mbox{ for all }  |\va| \geq 0, \label{eq:velocBd2} \\
\co{\nb_{\va} \Ddt ( \nb v_\ep) } &\lsm_{\va} \osn^{(|\va| + 2 - L)_+/L} \Xi^{|\va|} \Xi e_v^{1/2}, \qquad \mbox{ for all }  |\va| \geq 0. \label{eq:velocBd3} 
}
If \eqref{eq:assumedFbounds} holds for $(F, h_F)$, then for all multi-indices $\va$ we have
\ali{
\co{\nb_{\va} F_\ep } + \co{\nb_{\va} F_{\ep_x} } &\lsm_{\va} \osn^{(|\va| - L)_+/L} \Xi^{|\va|} h_F \label{eq:coBdFieldsmoll} \\
\co{\nb_{\va} \Ddt F_\ep} + \co{\nb_{\va} \Ddt F_{\ep_x} } &\lsm_{\va} \osn^{(|\va| + 1 - L)_+/L} \Xi^{|\va| + 1} e_v^{1/2} h_F \label{eq:c0DtbdFmoll} \\
\co{\nb_{\va} \Ddt^2 F_\ep } &\lsm_{\va} \ep_t^{-1} \osn^{(|\va| + 1 - L)_+/L} \Xi^{|\va| + 1} e_v^{1/2} h_F \\
&\lsm_{\va} N^{1 + (|\va| + 1 - L)_+/L} \Xi^{|\va| + 2} e_v^{1/2} e_\vp^{1/2} h_F. \label{eq:c0DddtbdFmoll}
}
Examples include $(F,h_F)^t$ any column of the following formal matrix
\ali{
\mat{c|ccccccc}{ F & R_{[1]} & R_{[2]} & R_{[G]}  & \kk_{[G]} & R_{[1\circ]} & \vp_{[1]} & \vp_{[G]}\\
							 h_F &	e_\vp & e_R &  e_G         & e_R       & \undl{e}_\vp & \undl{e}_\vp^{3/2} & e_{\undvp}^{3/2}  
		}. \label{eq:mollifiedTensors}
}
Furthermore, the constants in \eqref{eq:velocBd1}-\eqref{eq:c0DtbdFmoll} do not depend on the choice of $c_1$ in the definition of $\ep_v = c_1 \osn^{-1/L} \Xi^{-1}$ when the total number of derivatives (counting $\Ddt$) does not exceed $L$.  We also have the following bounds on the mollification errors for $c_0$ sufficiently small depending on $\eta_1$, $\chi_1$
\ali{
\begin{split}
\co{F - F_{\ep_x}} &\leq \fr{h_F}{400 N} \\
\co{F - F_{\ep}} &\leq (e_v^{1/2}/e_\vp^{1/2})\fr{h_F}{200 N}. \label{eq:errorMollBounds}
\end{split}
}
\end{prop}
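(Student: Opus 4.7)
The plan is to establish all bounds by separating the spatial mollification from the time-along-flow mollification, using the fact that the coarse-scale advective derivative $\Ddt = \pr_t + v_\ep \cdot \nb$ commutes (in a strong sense) with the flow mollification, and that extra spatial derivatives above order $L$ can be transferred onto the mollifying kernel at a cost of $\ep_x^{-1}$ per derivative.

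First I would prove the velocity bounds \eqref{eq:velocBd1}--\eqref{eq:velocBd3}. For $1 \leq |\va| \leq L$, inequality \eqref{eq:velocBd1} follows from \eqref{eq:vepBasicBd} and the assumed velocity bound $\co{\nb_\va v} \leq \Xi^{|\va|} e_v^{1/2}$; for $|\va| > L$, I would write $\nb_\va v_\ep = \nb_{\va'} \chi_\ep \ast \chi_\ep \ast \nb^L v$ with $|\va'| = |\va| - L$, apply Young's inequality, and use $\|\nb_{\va'}\chi_\ep\|_{L^1} \lsm \ep_v^{-|\va'|}$ together with $\ep_v^{-1} \sim N^{1/L}\Xi$ to pick up the factor $N^{(|\va|-L)_+/L}$. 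For \eqref{eq:velocBd2}, I would expand $\Ddt v_\ep = \pr_t v_\ep + v_\ep \cdot \nb v_\ep$ and use that the first Euler-Reynolds equation together with the bounds in Definition~\ref{eq:compFrEnlvls} on $p$, $R$, and $v$ imply $\co{\nb_\va \pr_t v} \lsm \Xi^{|\va|+1} e_v$ after applying Leray-type estimates; the mollified version gives \eqref{eq:velocBd2} by the same derivative-transfer argument. Bound \eqref{eq:velocBd3} follows by the same scheme after one additional spatial differentiation.

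For the remaining fields $F$ in the matrix \eqref{eq:mollifiedTensors}, the spatial bounds \eqref{eq:coBdFieldsmoll} are obtained identically to those for $v_\ep$. The key new ingredient for \eqref{eq:c0DtbdFmoll} is the identity
\ali{
\Ddt F_\ep = \eta_{\ep_t} \ast_\Phi (\Ddt F_{\ep_x}) + [\Ddt, \chi_{\ep_x} \ast \chi_{\ep_x}\ast] F, \notag
}
which uses that differentiating $F_{\ep_x}(\Phi_s(t,x))$ along the flow simply differentiates the base point, so $\Ddt$ passes through $\eta_{\ep_t} \ast_\Phi$. The commutator of $\Ddt$ with the spatial mollifier is controlled using $\nb v_\ep$ and Young's inequality and gives a term of the same order, while $\Ddt F_{\ep_x}$ is controlled by mollifying the given bound on $\Ddt F$ plus a $(v - v_\ep) \cdot \nb F$ correction bounded via \eqref{eq:vMinVep}. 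For \eqref{eq:c0DddtbdFmoll} the extra factor $\ep_t^{-1}$ appears by integrating by parts in $s$: one rewrites $\Ddt F_\ep = \int (\Ddt F_{\ep_x}) \circ \Phi_s \, \eta_{\ep_t}(s) \, ds$, or alternatively moves one $\Ddt$ onto $\eta_{\ep_t}$ via $\Ddt[F_{\ep_x}\circ \Phi_s] = \frac{d}{ds}[F_{\ep_x}\circ\Phi_s]$, which after integration by parts yields $-\int F_{\ep_x}\circ\Phi_s \, \eta'_{\ep_t}(s)\,ds$, whose $C^0$ norm carries a factor $\|\eta'_{\ep_t}\|_{L^1} \lsm \ep_t^{-1} \sim N\Xi e_\vp^{1/2}$.

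Finally, the mollification errors \eqref{eq:errorMollBounds} follow from Taylor expansion exploiting the vanishing moment condition on $\chi_1$: one writes $F - F_{\ep_x}$ as an integral of $\nb^L F$ against $h^{\va}\chi_{\ep_x}^{\ast 2}(h)$ after $L$ order Taylor expansion, yielding $\co{F - F_{\ep_x}} \lsm \ep_x^L \co{\nb^L F} \lsm \ep_x^L \Xi^L h_F \lsm c_0^L (h_F/N)$, and choosing $c_0$ small enough gives the stated factor $1/(400N)$. The time-mollification error is estimated by $\co{F_\ep - F_{\ep_x}} \lsm \ep_t \co{\Ddt F_{\ep_x}}$, which picks up $\ep_t\Xi e_v^{1/2}h_F \lsm c_0 (e_v^{1/2}/e_\vp^{1/2})(h_F/N)$. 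The main technical obstacle is making sure that the constants are independent of $c_1$ when the total number of derivatives (including $\Ddt$) is at most $L$; this is why the proof must be structured so that $\ep_v$-dependent losses are only invoked when one exceeds $L$ derivatives on $v_\ep$, and the commutator arguments must be carefully organized so that each extra derivative is charged to the mollifying kernels $\chi_{\ep_x}$ and $\eta_{\ep_t}$ rather than to $\chi_{\ep_v}$.
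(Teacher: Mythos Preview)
Your approach is essentially the same as the paper's: both rely on transferring excess spatial derivatives to the mollifying kernel, the exact commutation of $\Ddt$ with the flow-mollifier $\eta_{\ep_t}\ast_\Phi$, integration by parts in $s$ to produce the $\ep_t^{-1}$ factor for the second advective derivative, and Taylor expansion with vanishing moments for the mollification errors. The paper itself delegates most of these details to \cite{isett} and only spells out \eqref{eq:velocBd3}, which it obtains from the commutator identity $\Ddt(\nb_c v_\ep^\ell)=\nb_c\Ddt v_\ep^\ell-\nb_c v_\ep^b\,\nb_b v_\ep^\ell$ together with \eqref{eq:velocBd1}--\eqref{eq:velocBd2} and the counting inequality \eqref{eq:countingIneq}.

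Two small corrections to your write-up. First, your displayed identity is not quite right: the flow mollifier commutes \emph{exactly} with $\Ddt$, so one simply has $\Ddt F_\ep=\eta_{\ep_t}\ast_\Phi(\Ddt F_{\ep_x})$ with no additional commutator term on that line; the commutator with the spatial mollifier enters only at the next step, when you relate $\Ddt F_{\ep_x}$ to the assumed bound on $D_t F$ (the advective derivative with $v$, not $v_\ep$) via $\Ddt F_{\ep_x}=(D_t F)_{\ep_x}+v_\ep\cdot\nb F_{\ep_x}-(v\cdot\nb F)_{\ep_x}$. Second, for \eqref{eq:velocBd3} your phrase ``same scheme after one additional spatial differentiation'' is too vague; the clean route is the one the paper takes, namely commute $\Ddt$ past $\nb$ and bound the resulting product $\nb v_\ep\cdot\nb v_\ep$ with the product rule and \eqref{eq:countingIneq}.
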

\begin{proof}  The bounds \eqref{eq:velocBd1}-\eqref{eq:velocBd2} can be found in \cite[Sections 15-16]{isett}.  These bounds imply \eqref{eq:velocBd3} as follows.  Note that $\Ddt (\nb_c v_\ep^\ell) = \nb_c \Ddt v_\ep^\ell - \nb_c v_\ep^b \nb_b v_\ep^\ell$.  The first term obeys \eqref{eq:velocBd3} by \eqref{eq:velocBd2} while the second may be estimated using the product rule and \eqref{eq:velocBd1}
\ALI{
\co{\nb_{\va}[ \nb_c v_\ep^b \nb_b v_\ep^\ell ] } &\lsm \sum_{|\va_1| + |\va_2| = |\va|} \co{\nb_{\va_1}\nb_c v_\ep^b} \co{ \nb_{\va_2} \nb_b v_\ep^\ell ] } \\
&\lsm \sum_{|\va_1| + |\va_2| = |\va|} [N^{(|\va_1| + 1 - L)_+/L} \Xi^{|\va_1| + 1} e_v^{1/2}] [N^{(|\va_2| + 1 - L)_+/L} \Xi^{|\va_2| + 1} e_v^{1/2}] \\
\co{\nb_{\va}[ \nb_c v_\ep^b \nb_b v_\ep^\ell ] } &\lsm N^{(|\va| + 2 - L)_+/L} \Xi^{|\va| + 2} e_v^{1/2}.
}
Here we used the counting inequality \eqref{eq:countingIneq} with $y = L-1 \geq 0$.

The proofs of \eqref{eq:coBdFieldsmoll}-\eqref{eq:c0DddtbdFmoll} with $(F, h_F)$ replaced by $(R,e_R)$ may be found in \cite[Section 18]{isett}.  We note that the second advective derivative estimate \eqref{eq:c0DddtbdFmoll} uses the commutation property
\ali{
\Ddt \eta \ast_\Phi G(t,x) &= \int_{\R} \Ddt G(\Phi_s(t,x)) \eta_{\ep_t}(s) ds = -\int_{\R} G(\Phi_s(t,x)) \eta_{\ep_t}'(s) ds
}
for mollification along the flow proven in \cite[Lemma 18.2]{isett} (see also \cite[Proposition 11.1]{IOnonpd} for an alternative proof).  This calculation explains the appearance of $\ep_t^{-1}$ in \eqref{eq:c0DddtbdFmoll}.  We also note that the bounds \eqref{eq:c0DtbdFmoll}-\eqref{eq:c0DddtbdFmoll} remain undisturbed after multiplying by a time cutoff function $\bar{\chi}(t)$ that obeys $\co{\pr_t \bar{\chi}} \lsm (\Xi e_v^{1/2})^r$ for $0 \leq r \leq 2$, as can be seen using the product rule and $\Xi e_v^{1/2} \leq \ep_t^{-1}$.

To prove \eqref{eq:errorMollBounds} we use the estimate $\co{F - F_{\ep_x}} \lsm \ep_x^L \co{\nb^L F} \leq A_{\chi_1} c_0^L N^{-1} h_F$, the bound $\co{F - F_\ep} \leq \co{F - F_{\ep_x} } + \co{F_{\ep_x} - \eta_{\ep_t} \ast_\Phi F_{\ep_x} }$ and the inequality 
\ALI{
\co{F_{\ep_x} - \eta_{\ep_t} \ast_\Phi F_{\ep_x} } &\leq A_{\eta_1} \ep_t \co{\Ddt F_{\ep_x}} \leq A_{\eta_1} c_0 (e_v^{1/2}/e_\vp^{1/2})(h_F/N), 
}
taking $c_0$ small depending on $\chi_1$ and $\eta_1$.  (See \cite[Section 18.3]{isett} for details.)  Note that the presence of time cutoffs as described in \eqref{eq:timeCutoffFep} does not disturb the estimate.  
\end{proof}
We will also use the following bounds, which result from applying the proofs of \eqref{eq:coBdFieldsmoll}-\eqref{eq:c0DtbdFmoll},\eqref{eq:errorMollBounds} with $(R,e_R)$ replaced by $(p, e_v)$: 
\ali{
\co{p - p_{\ep_x} } &\leq \fr{e_v}{200 \osn} \label{eq:errorMollPress} \\
\co{\nb_{\va} p_{\ep_x}} &\lsm_{\va}  \osn^{(|\va| - L)_+/L} \Xi^{|\va|} e_v \qquad \qquad \mbox{ if } |\va| \geq 1, \mbox{ and} \label{eq:mollxPressbd} \\
\co{\nb_{\va} \Ddt p_{\ep_x}} &\lsm_{\va}  \osn^{(|\va| + 1 - L)_+/L} \Xi^{|\va|+1} e_v^{3/2} \qquad \mbox{ for all } |\va| \geq 0. \label{eq:mollxPressDdt}
}
The observation applied here is that the proofs of these estimates use only the bounds on $\nb_{\va} p$ for $|\va| \geq 1$ and on $\nb_{\va} D_t p$, but do not use any assumption on the $C^0$ norm of $p$.

With the above choice of $\ep_t$, we can also choose the energy increment $e(t)$ in \eqref{eq:eDef}.  We assume that the parameter $K_0$ has been fixed at this point, although so far we have only imposed after line \eqref{eq:gabounds} a lower bound on $K_0$ (depending on $\pi_{[1]}$ and the set $\BB_R$).  The final choice of $K_0$, to be stated in line \eqref{eq:atLastK0} of Section~\ref{sec:mainTermProblemCurrent}, will depend also on the operator $\pi_{[2]}$ and the set $\ovl{\BB}_R$ of Section~\ref{sec:solvingCoeffs}, but will not depend of the choice of $c_1$ in the definition of $\ep_v$.  This choice of $K_0$ will guarantee in particular that
\ali{
K_0^{-1} e_{\undvp}^{-1} \co{\ost{R}_{[2]}} + K_0^{-3/2} e_{\undvp}^{-3/2} \co{\ost{\vp}_{[2]}} &\unlhd \bar{\de}, \label{eq:goalForK0}
}
where $\ost{R}_{[2]}$ was defined in \eqref{eq:choseMainNewterm}, while $\ost{\vp}_{[2]}$ has yet to be defined.

Now let $\tilde{I}_{[G]}$ be the interval in the assumptions of Lemma~\ref{lem:mainLem}.  Let $\tau = (\Xi e_v^{1/2})^{-1}$ and define $e^{1/2}(t)$ as follows by mollifying the characteristic function of a time interval
\ali{
e^{1/2}(t) &= 2(K_0 e_{\undvp})^{1/2} \eta_\tau \ast_t 1_{\leq \sup \tilde{I}_{[G]} + \tau/100}(t). \label{eq:enIncdef}
}
Here $\eta_\tau$ is a smooth, non-negative mollifier in the $t$ variable with compact support in $0 \leq t \leq \tau/200$.  The properties in \eqref{eq:eDef} now follow from this definition.   (To ensure the lower bound holds on the support of $(\kk_\ep, R_\ep, \vp_\ep)$, we use here that $\ep_t \leq \tau \cdot 10^{-3}$ for $c_0$ small enough, since $N \geq (e_v^{1/2}/e_\vp^{1/2})$ from \eqref{eq:Nastdef}.) 

We also record the following immediate consequence of \eqref{eq:coBdFieldsmoll}-\eqref{eq:c0DddtbdFmoll}.
\begin{prop} For $(F, h_F)$ as in Proposition~\ref{prop:prelimBds}, $0 \leq r \leq 2$ and $b_\vp^{-2} := (\osn e_\vp^{1/2}/e_v^{1/2})$, we have
\ali{ \label{eq:compactBdsFep}
\co{\nb_{\va} \Ddt^r F_\ep } &\lsm_{\va} \left[\osn^{(|\va| + 1 -L)_+/L} \Xi^{|\va|}\right]\left[ ( \Xi e_v^{1/2})^r b_\vp^{-2(r - 1)_+}\right] h_F.
}
\end{prop}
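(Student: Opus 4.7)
The plan is to obtain the stated inequality by unifying the three bounds \eqref{eq:coBdFieldsmoll}, \eqref{eq:c0DtbdFmoll} and \eqref{eq:c0DddtbdFmoll} of Proposition~\ref{prop:prelimBds} via case analysis on $r \in \{0,1,2\}$, checking that the right-hand side of the desired inequality reproduces (or dominates) each of the three right-hand sides. No new analytic input is needed beyond the already established estimates; the whole content is algebraic consolidation.

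For $r=0$ we have $(r-1)_+ = 0$ and $(\Xi e_v^{1/2})^0 = 1$, so the target is $\osn^{(|\va|+1-L)_+/L} \Xi^{|\va|} h_F$, which follows from \eqref{eq:coBdFieldsmoll} together with the monotonicity $(|\va|-L)_+ \leq (|\va|+1-L)_+$. For $r=1$ again $(r-1)_+ = 0$ and the target $\osn^{(|\va|+1-L)_+/L} \Xi^{|\va|+1} e_v^{1/2} h_F$ matches \eqref{eq:c0DtbdFmoll} verbatim.

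The only step with any arithmetic is the case $r=2$. Here $(r-1)_+ = 1$, and using the definition $b_\vp^{-2} = N e_\vp^{1/2}/e_v^{1/2}$ one rewrites
\begin{align*}
(\Xi e_v^{1/2})^2 \, b_\vp^{-2} \;=\; \Xi^2 e_v \cdot \frac{N e_\vp^{1/2}}{e_v^{1/2}} \;=\; N \, \Xi^2 e_v^{1/2} e_\vp^{1/2},
\end{align*}
so the target right-hand side equals
$\osn^{1 + (|\va|+1-L)_+/L} \Xi^{|\va|+2} e_v^{1/2} e_\vp^{1/2} h_F$,
which is exactly the bound \eqref{eq:c0DddtbdFmoll} from the Preliminary Estimates. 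Combining the three cases then yields the claim with an implicit constant depending only on $\va$ (inherited from Proposition~\ref{prop:prelimBds}). There is no genuine obstacle; the purpose of the proposition is simply to package the three separate bounds in a single formula that interpolates cleanly in $r$ via the factor $b_\vp^{-2(r-1)_+}$, a form convenient for the later estimates in Sections~\ref{sec:constrShape}--\ref{sec:commutadvec}.
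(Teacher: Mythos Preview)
Your proof is correct and takes essentially the same approach as the paper, which simply records the proposition as ``an immediate consequence of \eqref{eq:coBdFieldsmoll}--\eqref{eq:c0DddtbdFmoll}'' without further elaboration. You have merely spelled out the three-case verification that the paper leaves implicit.
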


With the regularizations defined, we are ready to estimate the components of the construction.

\subsection{Bounding the components of the construction} \label{sec:constructBounds}
In this section we estimate the components of the construction.  All implicit constants in the $\lsm$ notation will be allowed to depend on the constant parameters such as $K_0, c_0, b_0, \ldots$ that were discussed in the preceding Sections~\ref{sec:constrShape}-\ref{sec:regPrelimBds}.

We start by stating bounds for the phase gradients $\nb \xi_I$, which follow from analysis of the equation 
\ali{
(\pr_t + v_\ep^i \nb_i)\nb_a \xi_I = - \nb_a v_\ep^i \nb_i \xi_I, \qquad \nb_a \xi_I(t(I),x) = \nb_a \hxii_I.
}
\begin{prop}[Transport Estimates] \label{prop:transportEstimates} There exists a positive number $b_0 \leq 1$ depending on $A_0$ in \eqref{eq:vepBasicBd} such that \eqref{eq:importantnbxiprops} holds for times $|t - t(I)| \leq \ostu$.  For such times $t$, we have the following estimates
\ali{
\co{\nb_{\va} \nb \xi_I} &\lsm_{\va} \osn^{(|\va| + 1 - L)_+/L} \Xi^{|\va|} \label{eq:spaceDrivPhase} \\
\co{\nb_{\va} \Ddt \nb \xi_I} &\lsm_{\va} \osn^{(|\va| + 1 - L)_+/L} \Xi^{|\va|} (\Xi e_v^{1/2}) \label{eq:firstDerivPhase} \\
\co{ \nb_{\va} \Ddt^2 \nb \xi_I} &\lsm_{\va} \osn^{( |\va| + 2 - L)_+/L} \Xi^{|\va|} (\Xi e_v^{1/2})^2. \label{eq:scndDerivPhase}
}
\noindent  We also have the following bounds for $\hat{\de} \nb \xi_I := \nb \xi_I - \nb \hxii_I$:
\ali{
\co{\nb_{\va} ( \nb \xi_I - \nb \hxii_I ) } &\lsm_{\va} (B_\la \osn)^{|\va|/2} \Xi^{|\va|} \ost{b} \label{eq:c0bdPhasediff} \\
\co{\nb_{\va} \Ddt^r ( \nb \xi_I - \nb \hxii_I ) } &\lsm_{\va} (B_\la \osn)^{|\va|/2} \Xi^{|\va|}(\ost{b}^{-1} \Xi e_v^{1/2})^r \ost{b}, \qquad 0 \leq r \leq 2. \label{eq:Ddtphasediff}
}
\end{prop}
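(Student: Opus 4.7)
The plan is to analyze the transport equation
\[
(\pr_t + v_\ep^i \nb_i)\nb_a \xi_I = - \nb_a v_\ep^i \nb_i \xi_I, \qquad \nb_a \xi_I(t(I),x) = \nb_a \hxii_I
\]
by a sequence of Grönwall-type arguments, using the velocity-field bounds \eqref{eq:velocBd1}-\eqref{eq:velocBd3} from Proposition~\ref{prop:prelimBds} at each step. First I would establish \eqref{eq:importantnbxiprops}. Writing $\hat{\de} \nb \xi_I := \nb \xi_I - \nb \hxii_I$ and noting that $\Ddt \nb \hxii_I = 0$ (since $\nb \hxii_I$ is a constant vector), the transport equation gives $\Ddt \hat{\de} \nb \xi_I = -\nb v_\ep \cdot \nb \xi_I$. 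Integrating along the coarse-scale flow and using $\co{\nb v_\ep} \lsm A_0 \Xi e_v^{1/2}$ together with the trivial bound $\co{\nb \xi_I} \leq \co{\nb \hxii_I} + \co{\hat{\de} \nb \xi_I}$ produces a linear Grönwall inequality that closes for $|t-t(I)| \leq \ostu = \bar{b}(\Xi e_v^{1/2})^{-1}$ and yields $\co{\hat{\de} \nb \xi_I} \leq C A_0 \bar{b} \co{\nb \hxii_I}$. Choosing $b_0$ small enough that $C A_0 b_0 \leq 1/4$ forces the first inequality in \eqref{eq:importantnbxiprops}, and the second then follows immediately from the elementary expansion $|\nb \xi_I|^{-1} = |\nb \hxii_I|^{-1}(1 + O(\bar{b}))$.

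Next I would prove the spatial derivative estimates \eqref{eq:spaceDrivPhase} by representing $\xi_I$ via the coarse-scale flow: since $\hxii_I$ is linear, $\xi_I(t,\Phi_{t-t(I)}(t(I),y)) = n_{[I]} y^1$, which yields $\nb_a \xi_I(t,x) = n_{[I]} \nb_a \Psi^1(t,x)$ for $\Psi = \Phi_{-(t-t(I))}(t,\cdot)$. Spatial derivatives of $\Psi$ are controlled by the corresponding derivatives of $v_\ep$ through the standard ODE estimates for flow maps of a vector field with bounded $\nb v_\ep$ and higher-order bounds \eqref{eq:velocBd1}, giving exactly the frequency-energy scaling $N^{(|\va|+1-L)_+/L} \Xi^{|\va|}$ claimed in \eqref{eq:spaceDrivPhase}. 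For \eqref{eq:firstDerivPhase}, directly differentiating the transport equation gives $\Ddt \nb \xi_I = -\nb v_\ep \cdot \nb \xi_I$, and combining \eqref{eq:velocBd2} with \eqref{eq:spaceDrivPhase} in a product-rule sum yields the stated bound. For \eqref{eq:scndDerivPhase}, apply $\Ddt$ once more to obtain
\[
\Ddt^2 \nb_a \xi_I = -(\Ddt \nb_a v_\ep)\cdot \nb \xi_I + (\nb_a v_\ep)\cdot (\nb v_\ep \cdot \nb \xi_I),
\]
with the first term controlled by \eqref{eq:velocBd3} and the second by \eqref{eq:velocBd1}-\eqref{eq:firstDerivPhase}; expanding by the product rule and applying the counting inequality \eqref{eq:countingIneq} distributes the derivative costs correctly.

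Finally, for the difference estimates \eqref{eq:c0bdPhasediff}-\eqref{eq:Ddtphasediff}, I integrate the identity $\Ddt \hat{\de} \nb \xi_I = -\nb v_\ep \cdot \nb \xi_I$ along the coarse-scale flow over the time interval of length at most $\ostu$. The $C^0$ bound $\co{\hat{\de} \nb \xi_I} \lsm \ostu \cdot \co{\nb v_\ep} \lsm \bar{b}$ is immediate. For the advective derivatives, $\Ddt^r \hat{\de}\nb \xi_I$ for $r = 1, 2$ coincides with $\Ddt^r \nb \xi_I$ already controlled by \eqref{eq:firstDerivPhase}-\eqref{eq:scndDerivPhase}, producing the factor $(\bar{b}^{-1} \Xi e_v^{1/2})^r \bar{b}$ after multiplying and dividing by $\bar{b}$. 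The high-spatial-derivative factor $(B_\la N)^{|\va|/2}$ is then obtained by interpolating between the low-order Grönwall bound $\bar{b}$ and a high-order bound at the wave frequency scale $\la = B_\la N \Xi$, where the derivatives of $\hat{\de}\nb \xi_I$ are paired with the derivatives of $\nb v_\ep$ via the product rule and the estimates of Proposition~\ref{prop:prelimBds}, using \eqref{eq:countingIneq} to saturate the derivative distribution at the high-frequency side.

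The main obstacle will be the last step: pinning down the interpolation that gives the precise $(B_\la N)^{|\va|/2}$ factor rather than a naive power of $\la$ or of $N^{1/L}\Xi$. This requires carefully balancing the time-integrated low-derivative Grönwall bound against a high-derivative bound whose cost scales with the wave frequency, in a way that uses only the derivative bounds on $v_\ep$ stated in Proposition~\ref{prop:prelimBds} and the already-proven spatial estimate \eqref{eq:spaceDrivPhase} for $\nb \xi_I$ itself.
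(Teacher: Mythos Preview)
Your plan for establishing \eqref{eq:importantnbxiprops} and the estimates \eqref{eq:spaceDrivPhase}--\eqref{eq:scndDerivPhase} is sound and agrees with the paper in spirit (the paper cites \cite[Section~17]{isett} rather than working through the backward-flow representation, but your route via $\Psi$ is valid). Your derivation of \eqref{eq:scndDerivPhase} from the transport equation and \eqref{eq:velocBd3} is also correct; the paper's parenthetical remark about the pressure gradient refers to how \eqref{eq:velocBd2}--\eqref{eq:velocBd3} themselves are obtained in Proposition~\ref{prop:prelimBds}, not to any extra ingredient needed here.

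The obstacle you flag in the final step, however, is illusory: no interpolation is required, and this is precisely where the paper's argument is simpler than yours. Since $\nb\hxii_I$ is a constant vector, for $|\va|\geq 1$ one has $\nb_{\va}(\nb\xi_I-\nb\hxii_I)=\nb_{\va}\nb\xi_I$, and likewise $\Ddt^r(\nb\xi_I-\nb\hxii_I)=\Ddt^r\nb\xi_I$ for $r\geq 1$. The bounds \eqref{eq:c0bdPhasediff}--\eqref{eq:Ddtphasediff} in these ranges therefore follow from \eqref{eq:spaceDrivPhase}--\eqref{eq:scndDerivPhase} by a direct comparison of right-hand sides: from $\ost{b}=b_0(e_v^{1/2}/(e_\vp^{1/2}B_\la N))^{1/2}$ and $e_\vp\leq e_v$ one has $(B_\la N)^{1/2}\geq \ost{b}^{-1}$, hence for $|\va|\geq 1$
\[
(B_\la N)^{|\va|/2}\,\ost{b}\;\geq\;(B_\la N)^{(|\va|-1)/2}\;\geq\;N^{(|\va|+1-L)_+/L}
\]
(the last inequality using $L\geq 2$). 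Together with the $|\va|=0$ case you already obtained by Gr\"onwall, this is the whole argument. Your proposed interpolation against a ``high-order bound at the wave frequency scale~$\la$'' is misdirected: $\nb\xi_I$ does not oscillate at frequency~$\la$, and the factor $(B_\la N)^{|\va|/2}$ appears not from any high-frequency behavior but simply because the bound \eqref{eq:c0bdPhasediff} is deliberately weaker than \eqref{eq:spaceDrivPhase} so as to absorb the $|\va|=0$ gain of~$\ost{b}$.
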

\begin{proof}  Recalling the coarse scale flow $\Phi_s$ from \eqref{eq:coarseScaleFlow}, we have $\fr{d}{ds} |\nb \xi_I|^2(\Phi_s(t,x)) = - \nb_\ell v_\ep^i \nb_i \xi_I \nb^\ell \xi_I$ for $|s| \leq \tau$ (each term being evaluated at $\Phi_s(t,x))$, which implies by a Gronwall inequality argument that
\ali{
e^{-|s| \co{\nb v_\ep} }  \left|~ |\nb \hxii_I|^2 \right| \leq \left|~ |\nb \xi_I|^2(\Phi_s(t,x)) \right| \leq e^{-|s| \co{\nb v_\ep} }  \left|~ |\nb \hxii_I|^2 \right|.
}
Using $|s| \co{\nb v_\ep} \leq b_0 (\Xi e_v^{1/2})^{-1} A_0 \Xi e_v^{1/2} = b_0 A_0$, we have $e^{|s| \co{\nb v_\ep} } \leq 2$ if $b_0 > 0$ is small depending on $A_0$.  From $\fr{d}{ds} \nb_\ell \xi_I(\Phi_s(t(I),x)) = - \nb_\ell v_\ep^i(\Phi_s(t(I),x)) \nb_i \xi_I(\Phi_s(t(I),x))$  we obtain
\ALI{
|\nb \xi_I(\Phi_s(t(I),x)) - \nb \hxii_I | &\leq 2 \ostu \co{\nb v_\ep} |\nb \hxii_I| \leq \ost{b} A_0 |\nb \hxii_I|.
}
Taking $b_0$ small depending on $A_0$, we obtain \eqref{eq:importantnbxiprops}, and also the $|\va| = 0$ case of \eqref{eq:c0bdPhasediff}.

The estimates \eqref{eq:spaceDrivPhase}-\eqref{eq:scndDerivPhase} are shown in \cite[Section 17]{isett}.  (Note that the proof of \eqref{eq:scndDerivPhase} goes through the Euler-Reynolds system and the main term comes from the pressure gradient.)  
The bounds \eqref{eq:spaceDrivPhase}-\eqref{eq:Ddtphasediff} imply the remaining estimates  \eqref{eq:Ddtphasediff} and the $|\va| \geq 1$ case of \eqref{eq:c0bdPhasediff} by comparison using $L \geq 2$, $(B_\la \osn)^{1/2} \geq \ost{b}^{-1}$ and the fact that $\nb \hxii_I$ is constant.
\end{proof}
We next estimate the components of the construction.  We start with the following Lemma:
\begin{lem} \label{lem:accountLem1}  For integers $0 \leq R \leq 2$ and $M \geq 0$, define the weighted norm
\ali{
\bar{H}^{(M,R)}[F] &:= \max_{0 \leq r \leq R} \left\{ \max_{0 \leq |\va| + r \leq M + R}  \fr{\co{\nb_{\va} \Ddt^r F}}{N^{(|\va| + 1 - L)_+/L} \Xi^{|\va|} \ostu^{-r}} \right\}. \label{eq:brhDefn}
}
Then one has the triangle inequality $\bar{H}^{(M,R)}[F + G] \leq \bar{H}^{(M,R)}[F] + \bar{H}^{(M,R)}[G]$ and the product rule
\ali{
\bar{H}^{(M,R)}[F G] &\lsm_M \bar{H}^{(M,R)}[F] \cdot \bar{H}^{(M,R)}[G]. \label{eq:productRule}
}
We also have the following for $\ostu = \ost{b} (\Xi e_v^{1/2})^{-1}$ and $0 \leq s \leq 2$:
\ali{
\begin{split} \label{eq:RcaseForGenbd}
\brh^{(M,R)}[F] &= \max_{0 \leq r \leq R} \ostu^{r} \brh^{(M+R-r,0)}[\Ddt^r F], \qquad 
\brh^{(M,0)}[\Ddt^s F] \leq \ostu^{-s} \brh^{(M,s)}[F]. 
\end{split}
}
\end{lem}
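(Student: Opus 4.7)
The plan is to verify each claim separately by unpacking the definition of $\bar{H}^{(M,R)}$; all four properties reduce to elementary bookkeeping once the cost exponent $(|\vec a|+1-L)_+/L$ is understood.

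For the triangle inequality, I would simply apply the subadditivity of $\|\cdot\|_{C^0}$ to each term of the form $\|\nb_{\vec a}\bar D_t^r(F+G)\|_{C^0}$, divide by the same denominator $N^{(|\vec a|+1-L)_+/L}\Xi^{|\vec a|}\bar\tau^{-r}$, and observe that taking the maximum over $(\vec a,r)$ preserves the inequality.

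The product rule is where a small combinatorial observation is needed. Since $\bar D_t=\pr_t+v_\ep\cdot\nb$ is a first-order differential operator, both $\nb$ and $\bar D_t$ act as derivations, so Leibniz gives
\[
\nb_{\vec a}\bar D_t^r(FG)=\sum_{\vec a_1+\vec a_2=\vec a,\,r_1+r_2=r}\binom{\vec a}{\vec a_1}\binom{r}{r_1}\nb_{\vec a_1}\bar D_t^{r_1}F\cdot \nb_{\vec a_2}\bar D_t^{r_2}G.
\]
The number of terms is bounded by a constant $C_M$ depending only on $M$ (since $|\vec a|+r\leq M+R\leq M+2$). For each term, the definition of $\brh^{(M,R)}$ bounds $\|\nb_{\vec a_i}\bar D_t^{r_i}F\|_{C^0}$ by $N^{(|\vec a_i|+1-L)_+/L}\Xi^{|\vec a_i|}\bar\tau^{-r_i}\brh^{(M,R)}[F]$ (the constraints $r_i\leq R$ and $|\vec a_i|+r_i\leq M+R$ are automatic). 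Multiplying, the $\Xi$ and $\bar\tau$ exponents telescope exactly. For the $N$-exponent, I need the \emph{super-additivity} $(x_1+1-L)_+ + (x_2+1-L)_+ \leq (x_1+x_2+1-L)_+$ when $L\geq 1$, which one verifies by splitting into the three cases based on whether each $x_i$ exceeds $L-1$ and using $L\geq 1$. This yields
\[
N^{\,[(|\vec a_1|+1-L)_+ + (|\vec a_2|+1-L)_+]/L}\leq N^{(|\vec a|+1-L)_+/L},
\]
so summing over the $C_M$ multi-indices gives $\brh^{(M,R)}[FG]\lesssim_M \brh^{(M,R)}[F]\cdot\brh^{(M,R)}[G]$. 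This is the only step that is not a pure rewrite, and is the closest thing to a genuine obstacle.

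Properties \eqref{eq:RcaseForGenbd} are purely formal. For the first, I would note that for each fixed $r$ the inner maximum defining $\brh^{(M+R-r,0)}[\bar D_t^r F]$ ranges over $0\leq |\vec a|\leq M+R-r$ with only the $r'=0$ slot contributing (because $R=0$ in that norm), which is exactly the slice $\{r'=r\}$ of the definition of $\brh^{(M,R)}[F]$ after multiplying by $\bar\tau^r$; taking the max over $0\leq r\leq R$ reassembles the full definition. For the second, choosing $r=s$ and restricting to $0\leq |\vec a|\leq M$ in the definition of $\brh^{(M,s)}[F]$ produces exactly $\bar\tau^s\brh^{(M,0)}[\bar D_t^s F]$, which rearranged gives the claimed bound. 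Both follow by reading off the definition, with no estimation required.
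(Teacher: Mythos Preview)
Your proposal is correct and uses the same essential ingredients as the paper: the Leibniz rule together with the super-additivity $(x_1+1-L)_+ +(x_2+1-L)_+ \le (x_1+x_2+1-L)_+$ for the $N$-exponent, and the observation that \eqref{eq:RcaseForGenbd} is a direct reading of the definition. The only organizational difference is that the paper first proves the product rule in the case $R=0$ and then bootstraps to general $R$ via the first identity in \eqref{eq:RcaseForGenbd}, whereas you apply Leibniz to the full operator $\nb_{\va}\Ddt^r$ in one pass; this is a harmless reordering and the arguments are otherwise the same.
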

\begin{proof} The triangle inequality follows quickly from the definition of the norm.  The product rule in the case $R = 0$ follows by using inequality \eqref{eq:countingIneq} to obtain for all $0 \leq |\va| \leq M$ that
\ALI{
\co{\nb_{\va}(FG)} &\lsm \sum_{|\vcb| +|\vcc| = |\va|} \co{\nb_{\vcb} F} \co{\nb_{\vcc} G} \\
&\lsm \sum_{|\vcb| +|\vcc| = |\va|} \left(N^{(|\vcb| + 1 - L)_+/L} \Xi^{|\vcb|} \brh^{(M,0)}[F]\right) \left(N^{(|\vcc| + 1 - L)_+/L} \Xi^{|\vcc|} \brh^{(M,0)}[G]\right) \\
&\lsm N^{(|\va| + 1 - L)_+/L} \brh^{(M,0)}[F]\brh^{(M,0)}[G].
}
We note that the properties in \eqref{eq:RcaseForGenbd} follow immediately from the definition of $\brh$ in \eqref{eq:brhDefn}.  We can obtain the product rule in the case $0 < R \leq 2$ from the case $R = 0$ above and \eqref{eq:RcaseForGenbd} by writing 
\ALI{
\brh^{(M,R)}[FG] &\lsm_M \sum_{0 \leq r \leq R}\ostu^{r}\sum_{r_1 + r_2 = r} \brh^{(M+R-r,0)}[\Ddt^{r_1} F \Ddt^{r_2} G] \\
&\lsm_M \sum_{0 \leq r \leq R}\ostu^{r}\sum_{r_1 + r_2 = r} \brh^{(M+R-r,0)}[\Ddt^{r_1} F] \brh^{(M+R-r,0)}[\Ddt^{r_2} G] \\
&\lsm_M \sum_{0 \leq r \leq R}\sum_{r_1 + r_2 = r} (\ostu^{r_1}\brh^{(M+R-r_1,0)}[\Ddt^{r_1} F])(\ostu^{r_2} \brh^{(M+R-r_2,0)}[\Ddt^{r_2} G]) \\
\brh^{(M,R)}[FG] &\lsm_M \brh^{(M,R)}[F] \brh^{(M,R)}[G].
}
This bound concludes the proof of Lemma~\ref{lem:accountLem1}.
\end{proof}

In terms of these weighted norms, we have the following bounds
\begin{prop} \label{prop:prelimSummaries} We have that $\brh^{(M,1)}[\nb v_\ep] \lsm_M \Xi e_v^{1/2}$, and the estimate $\brh^{(M,2)}[F_\ep] \lsm_M h_F$ holds for $(F, h_F)^t$ any column of the formal matrix in line \eqref{eq:mollifiedTensors}.  We also have that $\brh^{(M,2)}[F] \lsm h_F$ for $(F,h_F)^t$ any column of the following formal matrix
\ali{
\mat{c|ccccc}{ F &  \nb \xi_I &  \eta_I & e^{1/2}(t) & \bar{e}^{1/2}(t) & \bar{e}(t)^{-1/2} \\
							 h_F & 1    &   1 &  e_{\undvp}^{1/2} & e_\vp^{1/2} &\undl{e}_\vp^{-1/2} 
		}, \label{eq:otherBoundsStart}
}
where the last estimate for $\bar{e}(t)^{-1/2}$ holds on $\suppt (R_{[\ep \circ]}, \vp_{[1\ep]})$.  
\end{prop}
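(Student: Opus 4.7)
The plan is to treat the proposition as a routine verification: each claim amounts to unfolding the definition of $\brh^{(M,R)}$ as a maximum over pairs $(r,\va)$ with $0 \leq r \leq R$ and $|\va| + r \leq M + R$, and then matching an already-proven pointwise estimate for $\nb_{\va} \Ddt^r F$ against the target normalization $N^{(|\va|+1-L)_+/L} \Xi^{|\va|} \ostu^{-r} h_F$. I will handle the three groups of fields in turn.

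First, for $\brh^{(M,1)}[\nb v_\ep]$ and $\brh^{(M,2)}[F_\ep]$ with $(F,h_F)$ as in \eqref{eq:mollifiedTensors}, the required estimates are supplied respectively by \eqref{eq:velocBd1}--\eqref{eq:velocBd3} and \eqref{eq:coBdFieldsmoll}--\eqref{eq:c0DddtbdFmoll}. The $r = 0, 1$ cases will match directly once one notes that $\Xi e_v^{1/2} \leq \ostu^{-1}$ (free since $\ost{b} \leq 1$) and $(|\va|-L)_+ \leq (|\va|+1-L)_+$. The $r = 2$ case for $F_\ep$ requires that the right side of \eqref{eq:c0DddtbdFmoll} coincide, up to constants, with $N^{(|\va|+1-L)_+/L}\Xi^{|\va|} \ostu^{-2} h_F$; this follows from the identity $\ostu^{-2} = b_0^{-2} B_\la N \Xi^2 e_v^{1/2} e_\vp^{1/2}$, obtained from the explicit formula $\ost{b}^{-2} = b_0^{-2} B_\la N e_\vp^{1/2}/e_v^{1/2}$.

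Second, the time-only fields $\eta_I$, $e^{1/2}(t)$, $\bar{e}^{1/2}(t)$, $\bar{e}(t)^{-1/2}$ in \eqref{eq:otherBoundsStart} are trivial: on such fields $\Ddt$ collapses to $\pr_t$, every spatial derivative of positive order vanishes, and $\brh^{(M,2)}$ reduces to $\max_{0 \leq r \leq 2} \ostu^{r} \co{\pr_t^r F}/h_F$. Each entry is then supplied directly: $\eta_I$ by \eqref{eq:etakprops}, $e^{1/2}(t)$ by \eqref{eq:eDef}, $\bar{e}^{1/2}(t)$ by \eqref{eq:givenAmpBds}, and $\bar{e}(t)^{-1/2}$ by \eqref{eq:underBdsEninc}, the last restricted to the support indicated in the proposition.

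The only step requiring honest work, and therefore the main obstacle, will be the $r=2$ bound for $\nb \xi_I$. The bounds for $r=0,1$ are immediate from \eqref{eq:spaceDrivPhase}--\eqref{eq:firstDerivPhase}, but estimate \eqref{eq:scndDerivPhase} carries the factor $N^{(|\va|+2-L)_+/L}$, which exceeds the target exponent $(|\va|+1-L)_+/L$ by at most $1/L$. I plan to absorb this excess into $\ost{b}^{-2}$: the required inequality reduces to $b_0^{-2} B_\la N^{1-1/L} e_\vp^{1/2}/e_v^{1/2} \geq 1$, i.e.\ $b_0^{-2} B_\la N^{1-1/L} \geq (e_v/e_\vp)^{1/2}$. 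Applying $N \geq (e_v/e_\vp)^{3/2}$ from \eqref{eq:Nastdef} gives $N^{1-1/L} \geq (e_v/e_\vp)^{3(L-1)/(2L)}$, and for $L \geq 2$ the exponent $3(L-1)/(2L) - 1/2 = (2L-3)/(2L)$ is nonnegative, so the inequality follows up to the absolute constants $b_0, B_\la$. This is the only place in the verification where the quantitative hypothesis on $N$ genuinely enters.
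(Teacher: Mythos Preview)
Your approach is the same as the paper's: unfold the definition of $\brh^{(M,R)}$ and compare each $(r,\va)$ case against the already-established pointwise bounds, absorbing any surplus power of $N$ into $\ostu^{-r}$.

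There is one oversight. You claim the $r=0,1$ cases for $\nb v_\ep$ ``match directly'' once one uses $\Xi e_v^{1/2}\leq\ostu^{-1}$ and $(|\va|-L)_+\leq(|\va|+1-L)_+$. That is fine for $F_\ep$, but for $\nb v_\ep$ at $r=1$ the relevant input is \eqref{eq:velocBd3}, which carries the exponent $N^{(|\va|+2-L)_+/L}$, not $N^{(|\va|+1-L)_+/L}$. This is exactly the same $N^{1/L}$ surplus you identify as the ``main obstacle'' for $\nb\xi_I$ at $r=2$; it appears here one advective derivative earlier. The paper handles it by the crude step $N^{(|\va|+2-L)_+/L}\leq N^{1/2}N^{(|\va|+1-L)_+/L}$ (valid for $L\geq 2$) and then absorbs $N^{1/2}$ into $\ostu^{-1}$, mirroring your $\nb\xi_I$ argument with one fewer power of $\ost b^{-1}$. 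So the idea is already in your write-up; you just need to apply it to this case as well rather than declaring it immediate.
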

\begin{proof}  We obtain $\brh^{(M,1)}[\nb v_\ep] \lsm_M \Xi e_v^{1/2}$ from \eqref{eq:velocBd1},\eqref{eq:velocBd3} by noting that $\ostu^{-1} \geq B_\la^{1/2}N^{1/2} (\Xi e_v^{1/2})$, which implies $N^{(|\va| + 2 - L)_+/L} (\Xi e_v^{1/2}) \leq N^{(|\va| + 1 - L)_+/L} \ostu^{-1}$.  

The bound $\brh^{(M,2)}[F_\ep] \lsm_M h_F$ follows from \eqref{eq:compactBdsFep} by noting that $(\Xi e_v^{1/2}) \leq \ostu^{-1} = \ost{b}^{-1} (\Xi e_v^{1/2})$ and $b_\vp^{-2} (\Xi e_v^{1/2})^2 \lsm \ost{b}^{-2} (\Xi e_v^{1/2})^2 = \ostu^{-2}$, where $b_\vp^{-2} = (\osn e_\vp^{1/2}/e_v^{1/2})$ and $\ost{b}^{-2} = b_0^{-2}(B_\la N e_\vp^{1/2}/e_v^{1/2})$.  

The bound $\brh^{(M,2)}[\nb \xi_I] \lsm_M 1$ follows from \eqref{eq:spaceDrivPhase}-\eqref{eq:scndDerivPhase} by noting that $(\Xi e_v^{1/2}) \leq \ostu^{-1}$ and that
\ALI{
N^{(|\va| + 2 - L)_+/L} (\Xi e_v^{1/2})^2 \leq N^{1/2} N^{(|\va| + 1 - L)_+/L} (\Xi e_v^{1/2})^2 \leq N^{(|\va| + 1 - L)_+/L} \ostu^{-2}.
}
for $L \geq 2$.  The bounds for the cutoffs $\eta_I$ and the functions $e^{1/2}$, $\bar{e}^{1/2}$ and $\bar{e}^{-1/2}$ follow from the estimates \eqref{eq:etakprops},\eqref{eq:eDef},\eqref{eq:givenAmpBds},\eqref{eq:underBdsEninc} and $(\Xi e_v^{1/2})\leq \ostu^{-1}$.
\end{proof}
We record also the following useful property of the weighted norm \eqref{eq:brhDefn}.
\begin{prop} \label{prop:controlCombos}  For any operator of the form 
\ali{
D^{(\va, \vec{r})} &= \nb_{\va_1} (\Ddt)^{r_1} \nb_{\va_2} (\Ddt)^{r_2} \nb_{\va_3} \label{eq:mixedSpaceAdvec}
}
with $\va = (\va_1, \va_2, \va_3)$, $|\va| = |\va_1| + |\va_2| + |\va_3|$, $\vec{r} = (r_1, r_2)$ and $0 \leq r := r_1 + r_2 \leq 2$, we have the bound
\ali{
\co{D^{(\va, \vec{r})} F} \lsm_{\va} N^{(|\va|+1-L)_+/L} \Xi^{|\va|} \ostu^{-r} \brh^{(|\va|,r)}[F]. \label{eq:comboBound}
}
\end{prop}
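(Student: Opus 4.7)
The plan is to reduce the mixed-order operator $D^{(\va,\vec r)}$ to the standard form $\nb_{\vcb}\Ddt^{r'}F$ --- which the weighted norm $\brh^{(|\va|,r)}[F]$ controls directly --- via commutator expansion. The basic commutator identity
\[
[\Ddt,\nb_a]G \;=\; -(\nb_a v_\ep^i)\,\nb_i G
\]
follows from $\Ddt=\pr_t+v_\ep^i\nb_i$ by direct computation. Iterating via Leibniz --- and carrying out the explicit expansion of $\Ddt^2\nb_a$, which verifies that only first-order advective derivatives of $v_\ep$ ever appear even in the double commutator --- yields for each $s\in\{1,2\}$ a schematic identity
\[
\Ddt^s\,\nb_{\vcb}G \;=\; \nb_{\vcb}\Ddt^sG \;+\; \sum c\,\Big(\prod_{i=1}^{j}\nb_{\vcc_i}\Ddt^{t_i}(\nb v_\ep)\Big)\,\nb_{\vcb'}\Ddt^{s'}G,
\]
with $j\geq 1$, each $t_i\leq 1$, $s'+\sum_i t_i+j=s$, and $\sum_i|\vcc_i|+|\vcb'|=|\vcb|$. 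Applying this first with $s=r_1$ to push $\Ddt^{r_1}$ past $\nb_{\va_2}$, and then with the combined inner advective block to push it past $\nb_{\va_3}$, expresses $D^{(\va,\vec r)}F$ as a finite linear combination of terms
\[
T \;=\; \Big(\prod_{i=1}^k \nb_{\vcc_i}\Ddt^{t_i}(\nb v_\ep)\Big)\,\nb_{\vcb}\Ddt^{r'}F,
\]
with $r'\leq r:=r_1+r_2$, each $t_i\leq 1$, $k+\sum_i t_i+r'=r$, and $\sum_i|\vcc_i|+|\vcb|=|\va|$.

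Next, I would estimate each such $T$ directly by multiplying $C^0$-norms, using the bound $\brh^{(|\va|,1)}[\nb v_\ep]\lsm_{|\va|}\Xi e_v^{1/2}$ from Proposition~\ref{prop:prelimSummaries} (which, unpacking the definition of $\brh$, gives $\co{\nb_{\vcc_i}\Ddt^{t_i}(\nb v_\ep)}\lsm_{|\va|}N^{(|\vcc_i|+1-L)_+/L}\Xi^{|\vcc_i|+1}\ostu^{-t_i}e_v^{1/2}$ for $t_i\leq 1$) together with $\co{\nb_{\vcb}\Ddt^{r'}F}\leq N^{(|\vcb|+1-L)_+/L}\Xi^{|\vcb|}\ostu^{-r'}\brh^{(|\va|,r)}[F]$ from the definition of $\brh^{(|\va|,r)}[F]$ (valid since $r'\leq r$ and $|\vcb|+r'\leq|\va|+r$). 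The product of $k$ surplus factors $\Xi e_v^{1/2}$ coming from the $\nb v_\ep$'s is absorbed via $\Xi e_v^{1/2}\leq\ostu^{-1}$, so the total $\ostu$-power becomes $-(k+\sum_i t_i+r')=-r$. A brief case analysis based on $\sum_i|\vcc_i|+|\vcb|=|\va|$ and $L\geq 2$ verifies
\[
\sum_i(|\vcc_i|+1-L)_+ + (|\vcb|+1-L)_+ \;\leq\; (|\va|+1-L)_+,
\]
so the $N$-power is at most $N^{(|\va|+1-L)_+/L}$. Combining yields $\co{T}\lsm_{|\va|}N^{(|\va|+1-L)_+/L}\Xi^{|\va|}\ostu^{-r}\brh^{(|\va|,r)}[F]$; summing the finitely many such $T$ gives \eqref{eq:comboBound}.

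The main step that requires care is the combinatorial bookkeeping of the commutator expansion --- in particular, verifying that every term produced fits the displayed form with $t_i\leq 1$ (a consequence of the explicit $\Ddt^2\nb_a$ computation), and that the spatial and advective derivative budgets add up as claimed. This closes cleanly because each application of $[\Ddt,\nb_a]$ trades one advective derivative for one spatial derivative on $v_\ep$ while preserving the total, and the $k$ resulting surplus factors $\Xi e_v^{1/2}$ are each exactly compensated by a lost advective derivative via $\Xi e_v^{1/2}\leq\ostu^{-1}$.
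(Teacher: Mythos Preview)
Your proof is correct and follows essentially the same approach as the paper: commutator-expand $D^{(\va,\vec r)}F$ into terms of the form $\prod_i(\nb_{\vcc_i}\Ddt^{t_i}(\nb v_\ep))\,\nb_{\vcb}\Ddt^{r'}F$ with the same bookkeeping constraints, bound each factor using $\brh^{(M,1)}[\nb v_\ep]\lsm\Xi e_v^{1/2}$ and the definition of $\brh^{(|\va|,r)}[F]$, absorb the $k$ surplus factors of $\Xi e_v^{1/2}$ via $\Xi e_v^{1/2}\leq\ostu^{-1}$, and control the $N$-exponent by the counting inequality with $y=L-1$. Your presentation is slightly more explicit about the two-stage commutator push and the verification that each $t_i\leq 1$, but the argument is the same.
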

\begin{proof}  The result is immediate if $r_1 + r_2 = 0$.  In general, for $1 \leq r_1 + r_2 \leq 2$, repeatedly applying the commutator identity $\Ddt \nb_a F = \nb_a(\Ddt F) - \nb_a v_\ep^i \nb_i F$ leads to an expansion of the form
\ALI{
D^{(\va, \vec{r})} F &= \sum_{0 \leq m \leq r} \sum_{\vcb, \vcc, s} c_{m,\vcb,\vcc, s} \prod_{i = 1}^{m} \left(\nb_{\vcc_i}\Ddt^{s_i} (\nb v_\ep)\right) \nb_{\vcb} \Ddt^{s_{m+1}} F,
}
where the summation runs over $s_1 + \cdots + s_m + m + s_{m+1} = r$ (the total number of advective derivatives), the empty product is equal to $1$ in the case $m = 0$, and the multi-indices satisfy $|\vcb| + \sum_i |\vcc_i| = |\va|$ (the total number of spatial derivatives).  (We have omitted the summation over the indices of $\nb v_\ep$.)  In particular, at most one advective derivative falls on $\nb v_\ep$, and we obtain the bounds
\ALI{
\co{D^{(\va, r)} F} &\lsm \sum_{0 \leq m \leq 3} \sum_{\vcb, \vcc} \prod_{i = 1}^{m} \left(N^{(|\vcc_i| + 1 - L)_+/L}\ostu^{-s_i} \Xi^{|\vcc_i|} (\Xi e_v^{1/2})\right) \left[ N^{(|\vcb| + 1 - L)_+/L} \ostu^{-s_{m+1}} \Xi^{|\vcb|} \brh_F\right] \\
&\lsm N^{(|\va| + 1 - L)_+/L} \Xi^{|\va|} \ostu^{-r} \brh_F,
}
where $\brh_F := \brh^{(|\va|,r)}[F]$.  Here we used Proposition~\ref{prop:prelimSummaries} for $\brh^{(M,1)}[\nb v_\ep]$, $(\Xi e_v^{1/2})^m \leq \ostu^{-m}$, and the counting inequality \eqref{eq:countingIneq} with $y = L-1 \geq 0$.  \end{proof}

We now proceed to estimate the components of the correction.

\begin{prop} \label{prop:compBds} The following bounds hold uniformly for $I \in \II$ and for both $\mathring{\varep}^{j\ell} \in \{ \varep^{j\ell}, \ovl{\varep}^{j\ell} \}$.
\ali{
\co{\nb_{\va} \mathring{\varep} } + \co{\nb_{\va} \ga_I } + \co{\nb_{\va}\tilde{f}_I} &\lsm_{\va} \osn^{(|\va| + 1 - L)_+/L} \Xi^{|\va|} \label{eq:conbcompbds} \\
\co{\nb_{\va} \Ddt \mathring{\varep} } + \co{\nb_{\va} \Ddt \ga_I } + \co{\nb_{\va} \Ddt \tilde{f}_I} &\lsm_{\va} \osn^{(|\va| + 1 - L)_+/L} \Xi^{|\va|} \ost{b}^{-1} (\Xi e_v^{1/2}) \label{eq:Ddtcompbds} \\
\co{\nb_{\va} \Ddt^2 \mathring{\varep} } + \co{\nb_{\va} \Ddt^2 \ga_I } + \co{\nb_{\va} \Ddt^2 \tilde{f}_I} &\lsm_{\va} \osn^{(|\va| + 1 - L)_+/L} \Xi^{|\va|} \ost{b}^{-2} (\Xi e_v^{1/2})^2 \label{eq:Ddt2compBds}
}
These estimates generalize to operators of the form $D^{(\va, \vec{r})}$ defined as in \eqref{eq:mixedSpaceAdvec}.
\end{prop}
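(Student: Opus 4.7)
The plan is to reduce all three families of estimates to the unified claim that $\brh^{(M,2)}[F] \lsm_M 1$ for every $M \geq 0$ and for each $F \in \{\tilde{f}_I, \ga_I, \mathring{\varep}\}$. Unpacking the definition \eqref{eq:brhDefn} with $\ostu^{-r} = \ost{b}^{-r}(\Xi e_v^{1/2})^r$, this statement is precisely the trio \eqref{eq:conbcompbds}--\eqref{eq:Ddt2compBds}, and the final sentence of the proposition concerning mixed operators $D^{(\va, \vec{r})}$ is then an immediate consequence of Proposition \ref{prop:controlCombos}. The argument combines the building-block norm estimates of Proposition \ref{prop:prelimSummaries}, the product rule \eqref{eq:productRule} of Lemma \ref{lem:accountLem1}, and a chain-rule lemma for $\brh^{(M,2)}$ against smooth scalar functions, which is derived from the same product rule by a Fa\`a di Bruno induction.

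I first treat $\tilde{f}_I$. By \eqref{eq:tildeIdef} one has $\tilde{f}_I = G(\nb \xi_I)$ for the rational map $G(p) := \htf_I - |p|^{-2}(p \cdot \htf_I) p$, which is smooth on a neighborhood of $\nb \hxii_I$ that contains the pointwise range of $\nb \xi_I$ by \eqref{eq:importantnbxiprops}. Proposition \ref{prop:prelimSummaries} gives $\brh^{(M,2)}[\nb \xi_I] \lsm_M 1$, so the chain-rule lemma yields $\brh^{(M,2)}[\tilde{f}_I] \lsm_M 1$.

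Next I handle $\ga_I$ for $I \in \II_\vp$ and then $\mathring{\varep}$. For passive waves $\ga_I$ is a geometric constant. For active waves the explicit formulas \eqref{eq:solvedForvpCoeffs} and \eqref{eq:othGaIform} express $\ga_I$ as the product of a constant, a negative power of $e(t)$ or $\bar{e}(t)$, and one of $\htf_I \cdot \vp_{[G1\ep]}$ or $\htf_I \cdot \vp_{[1\ep]}$. Proposition \ref{prop:prelimSummaries} already provides $\brh^{(M,2)}[\bar{e}^{-1/2}] \lsm \undl{e}_\vp^{-1/2}$, and an identical argument (using the lower bound $e(t) \geq K_0 e_{\undvp}$ on the relevant time supports together with the derivative bounds in \eqref{eq:eDef} and the chain-rule lemma applied to $s \mapsto s^{-1/2}$) yields $\brh^{(M,2)}[e^{-1/2}] \lsm e_{\undvp}^{-1/2}$. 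Cubing via the product rule and multiplying against the bounds $\brh^{(M,2)}[\vp_{[G\ep]}] \lsm e_{\undvp}^{3/2}$ and $\brh^{(M,2)}[\vp_{[1\ep]}] \lsm \undl{e}_\vp^{3/2}$ from Proposition \ref{prop:prelimSummaries} then gives $\brh^{(M,2)}[\ga_I] \lsm_M 1$ for $I \in \II_\vp$. A parallel calculation from the explicit formulas \eqref{eq:varepDef}--\eqref{eq:varepDefOvl}---after observing that the products $\hat{v}_I^j \overline{\hat{v}_I^\ell} = e(t) \eta_I^2 \ga_I^2 \htf_I^j \htf_I^\ell$ (and the analogue with $\bar{e}(t)$ for $I \in \ovl{\II}_\vp$) are now controlled by the $\ga_I$-bounds just established together with the cutoff and energy bounds of Proposition \ref{prop:prelimSummaries}---yields $\brh^{(M,2)}[\mathring{\varep}] \lsm_M 1$.

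The final and most delicate case is $\ga_I$ for $I \in \II_R$, defined only implicitly as $\ga_{(k,f)} = \ga_f(\mathring{\varep})$ for one of finitely many smooth scalar functions $\ga_f$ whose domain is a ball around the origin large enough to contain the range of $\mathring{\varep}$ (as guaranteed by the $C^0$ bounds $\co{\varep} = O(K_0^{-1})$ and $\co{\ovl{\varep}} = O(\bar{\de}^{2/3})$ recorded in \eqref{eq:gabounds} and in the paragraph following \eqref{eq:varepDefOvl}). The main obstacle---and the place where the argument is not purely mechanical---is to establish the chain-rule lemma alluded to above: if $H \colon U \to \R$ is smooth with bounded derivatives on $U$ and $F$ takes values in a compact subset of $U$ with $\brh^{(M,2)}[F] \lsm 1$, then $\brh^{(M,2)}[H(F)] \lsm_{M,H} 1$. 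This is proved by expanding $\nb_{\va} \Ddt^r H(F)$ via the Fa\`a di Bruno formula into a finite sum of products $H^{(k)}(F) \prod_i \nb_{\va_i} \Ddt^{r_i} F$ with $\sum_i |\va_i| \leq |\va|$ and $\sum_i r_i \leq r \leq 2$, and then applying the product rule \eqref{eq:productRule} of Lemma \ref{lem:accountLem1} exactly as in its proof; the counting inequality \eqref{eq:countingIneq} with $y = L-1$ ensures the weight $N^{(|\va|+1-L)_+/L} \Xi^{|\va|} \ostu^{-r}$ is reproduced. With this chain-rule lemma in hand, the already established $\brh^{(M,2)}[\mathring{\varep}] \lsm_M 1$ upgrades directly to $\brh^{(M,2)}[\ga_I] \lsm_M 1$ for $I \in \II_R$, completing the proof.
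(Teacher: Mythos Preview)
Your proposal is correct and follows essentially the same approach as the paper: the core engine is the Fa\`a di Bruno expansion combined with the product rule of Lemma~\ref{lem:accountLem1} and the counting inequality~\eqref{eq:countingIneq}, applied to the building blocks whose $\brh^{(M,2)}$ bounds are supplied by Proposition~\ref{prop:prelimSummaries}. The only organizational difference is that you abstract the chain-rule step for $\brh^{(M,2)}$ as a standalone lemma and apply it uniformly, whereas the paper proves it by hand for each case---first establishing $\brh^{(M,0)}[\ga_f(\varep)]\lsm 1$ via spatial Fa\`a di Bruno, then computing $\Ddt$ and $\Ddt^2$ separately and closing via~\eqref{eq:RcaseForGenbd}; similarly, for $e(t)^{-3/2}$ the paper exploits that $e(t)$ is constant on the relevant supports rather than invoking the lower bound in~\eqref{eq:eDef} together with the chain rule. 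Your packaging is slightly more streamlined, but the computational content is identical.
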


\begin{proof}[Proof of \eqref{eq:conbcompbds}-\eqref{eq:Ddt2compBds} for $\ga_I$, $I \in \II_\vp$]  If $I \in \II_\vp$, then either $I$ is passive (in which case $\ga_I = K_0^{-1/2}$ or $\ga_I = \bar{\de}^{1/3}$ is a constant and the bounds are clear) or $I$ is active and is given as in \eqref{eq:solvedForvpCoeffs} or \eqref{eq:othGaIform}.  On $\supp \vp_1$, $e(t) = 4 K_0 e_{\undvp}$ is a constant so we easily have
$\co{\Ddt^r e^{-3/2}(t)} \lsm (\Xi e_v^{1/2})^r e_{\undvp}^{-3/2} \mbox{ for } 0 \leq r \leq 2$.  Now using Lemma~\ref{lem:accountLem1} and Proposition~\ref{prop:prelimSummaries} we find that for $I \in \dmd{\II}_\vp$,
\ALI{
\brh^{(M,2)}[\ga_I] \lsm_M \brh^{(M,2)}[e^{-3/2}(t) ] \brh^{(M,2)}[\vp_{[G\ep]}] \lsm e_{\undvp}^{-3/2} e_{\undvp}^{3/2} = 1.
}
Similarly, for $I \in \ovl{\II}_\vp$, we have using \eqref{eq:underBdsEninc} the bound
\ALI{
\brh^{(M,2)}[\ga_I] &\lsm_M \brh^{(M,2)}[\bar{e}^{-3/2}(t)] \brh^{(M,2)}[ \vp_{[1]} ] \lsm_M \brh^{(M,2)}[\bar{e}^{-1/2}(t)]^3 \undl{e}_\vp^{3/2} \lsm \undl{e}_{\vp}^{-3/2} \undl{e}_\vp^{3/2} = 1.
}
\end{proof}
\begin{proof}[Proof of \eqref{eq:conbcompbds}-\eqref{eq:Ddt2compBds} for $\varep$]
Let $\tilde{R}_{[1]}^{j\ell} := (1/3)(-2\kk_{[G\ep]} + \de_{j\ell}(R_{[2\ep]}^{j\ell} + R_{[G\ep]}^{j\ell})) - R_{[G1]}^{j\ell}$.  From \eqref{eq:varepDef} we can write $\varep^{j\ell} = e(t)^{-1} \tilde{R}_{[1]}^{j\ell} + \sum_{I \in \dmd{\II}_\vp} \eta_I^2 \ga_I^2 \htf_I^j \htf_I^\ell$.  Now using Lemma~\ref{lem:accountLem1} and Proposition~\ref{prop:prelimSummaries} we find (using $e_{\undvp} = e_\vp^{1/3} e_R^{2/3} \geq e_R$ and that $e(t) \geq e_{\undvp}$ is constant on $\suppt \tilde{R}_{[1]}$) that for $I \in \dmd{\II}_\vp$ we have,
\ALI{
\brh^{(M,2)}[e(t)^{-1} &\tilde{R}_{[1]}^{j\ell} ] \lsm e_{\undvp}^{-1} \brh^{(M,2)}[ \tilde{R}_{[1]} ] \\
&\lsm_M e_{\undvp}^{-1}\left( \brh^{(M,2)}[\kk_{G\ep}] + \brh^{(M,2)}[ R_{[2\ep]}] + \brh^{(M,2)}[R_{[G\ep]}] \right) \lsm_M e_{\undvp}^{-1} e_R \lsm 1.
}
For the latter term in \eqref{eq:varepDef}, we use Lemma~\ref{lem:accountLem1}
\ALI{
\brh^{(M,2)}[\eta_I^2 \ga_I^2 \htf_I^j \htf_I^\ell ] &\lsm_M \brh^{(M,2)}[\eta_I]^2 \brh^{(M,2)}[\ga_I]^2 \lsm 1,
}
where we applied \eqref{eq:conbcompbds}-\eqref{eq:Ddt2compBds} for $\ga_I$, $I \in \II_\vp$.

The bound for $\ovl{\varep}^{j\ell}$ is similar, except the term analogous to $\tilde{R}_{[1]}^{j\ell}$ is simply $-  \bar{e}(t)^{-1}R_{[\ep \circ]}^{j\ell}$, which we bound using Lemma~\ref{lem:accountLem1} and Proposition~\ref{prop:prelimSummaries}
\ALI{
\brh^{(M,2)}[\bar{e}(t)^{-1} R_{[\ep \circ]}^{j\ell} ] &\lsm_M \brh^{(M,2)}[\bar{e}^{-1/2}(t)]^2 \brh^{(M,2)}[R_{[\ep \circ]}^{j\ell}] \lsm_M \undl{e}_\vp^{-1} \cdot \undl{e}_\vp \lsm 1.
}
The term involving $\sum_{I \in \ovl{\II}_\vp} \eta_I^2 \ga_I^2 \htf_I^j \htf_I^\ell$ may be bounded exactly as was done for $\dmd{\II}_\vp$.
\end{proof}
\begin{proof}[Proof of \eqref{eq:conbcompbds}-\eqref{eq:Ddt2compBds} for $\ga_I$, $I \in \II_R$]  We consider $I \in \dmd{\II}_R$ since the case $I \in \ovl{\II}_R$ will be no different.  If $I \in \dmd{\II}_R$ we have that $\ga_I(t,x) = \ga_{f(I)}(\varep)$ is a smooth function of $\varep$.  To bound spatial derivatives, apply the chain rule and product rule repeatedly to obtain
\ALI{
\nb_{\va} [ \ga_f(\varep) ] &= \sum_{k = 0}^{|\va|} \sum_{\va_i} \pr^k \ga_f(\varep) \prod_{i=1}^k \nb_{\va_i} \varep \\
\co{\nb_{\va} [ \ga_f(\varep) ]} &\lsm_{\va} \sum_{k = 0}^{|\va|} \sum_{\va_i} \prod_{i=1}^k[\osn^{(|\va_i| +1-L)_+/L} \Xi^{|\va_i|} ]
}
where the sum ranges over multi-indices with $\sum_i |\va_i| = |\va|$ and the implicit constant depends also on the function $\ga_f(\cdot)$.  Using the elementary inequality $\sum_i (|\va_i| - (L - 1))_+ \leq (|\va| - (L -1))_+$ from \eqref{eq:countingIneq}, we obtain $\co{\nb_{\va} [ \ga_f(\varep) ]} \lsm_{\va} \osn^{(|\va| + 1-L)_+/L} \Xi^{|\va|}$, which we note is equivalent to $\brh^{(M,0)}[ \ga_f(\varep) ] \lsm_M 1$.  The same estimate clearly holds with $\ga_f(\cdot)$ replaced by any of its partial derivatives $\pr \ga_f(\cdot)$. 

To bound advective derivatives, note that schematically the chain rule gives
\ali{
\Ddt[\ga_f(\varep)] &=  \pr \ga_f(\varep) \Ddt \varep \label{eq:firstAdvecComp} \\
\Ddt^2[\ga_f(\varep) ] &= \Ddt[ \pr \ga_f(\varep) ] (\Ddt \varep) + \pr \ga_f(\varep) \Ddt^2 \varep \label{eq:2ndAdvecComp}
}
Applying Lemma~\ref{lem:accountLem1} to \eqref{eq:firstAdvecComp}, we obtain
\ali{
\brh^{(M,0)}[\Ddt[\ga_f(\varep)]] &\lsm_M \brh^{(M,0)}[\pr \ga_f(\varep)] \cdot \brh^{(M,0)}[\Ddt \varep] \notag \\
&\lsm_M 1 \cdot \ostu^{-1}. \label{eq:Advec1Compbd}
}
Applying Lemma~\ref{lem:accountLem1} to \eqref{eq:2ndAdvecComp}, we obtain
\ali{
\brh^{(M,0)}[\Ddt^2[\ga_f(\varep) ]] &\lsm_M \brh^{(M,0)}[\Ddt[\pr \ga_f(\varep)]] \cdot \brh^{(M,0)}[\Ddt \varep] + \brh^{(M,0)}[\pr \ga_f(\varep)] \brh^{(M,0)}[\Ddt^2 \varep] \label{eq:lineWeightAdd1} \\
&\lsm_M (\ostu^{-1})(\ostu^{-1}) + 1 \cdot \ostu^{-2} \lsm_M \ostu^{-2}, \notag
}
which implies our desired bound.  (In line \eqref{eq:lineWeightAdd1}, the result of \eqref{eq:Advec1Compbd} is used with $\ga_f$ replaced by $\pr \ga_f$.) \end{proof}
\begin{proof}[Proof of \eqref{eq:conbcompbds}-\eqref{eq:Ddt2compBds} for $\tilde{f}_I$]  The bound on $\co{\tilde{f}_I}$ is clear from formula \eqref{eq:tildeIdef}.  To bound derivatives, it suffices to estimate the term $|\nb \xi_I|^{-2} (\nb \xi_I \cdot \htf_I) \nb^\ell \xi_I$.  Using Lemma~\ref{lem:accountLem1} we have
\ali{
\brh^{(M,2)}[|\nb \xi_I|^{-2} (\nb \xi_I \cdot \htf_I) \nb^\ell \xi_I] &\lsm_M \brh^{(M,2)}[|\nb \xi_I|^{-2}] \brh^{(M,2)}[\nb \xi_I]^2
}
Proposition~\ref{prop:prelimSummaries} gives $\brh^{(M,2)}[\nb \xi_I] \lsm_M 1$.  
We deduce that $\brh^{(M,2)}[|\nb \xi_I|^{-2}] \lsm_M 1$ as well by the same argument we used to establish $\brh^{(M,2)}[\ga_f(\varep)] \lsm_M 1$ for $I \in \II_R$, but replacing $\varep$ by $\nb \xi_I$ and replacing the smooth function $\ga_f(\cdot)$ by the smooth function $|\cdot|^{-2}$ restricted to a neighborhood of $\nb \hxii_I$.
\end{proof}

From these bounds we obtain the following estimates for the amplitudes of the correction.
\begin{prop} \label{prop:correctAmpBds} The following estimates hold uniformly for $I \in \II$
\ali{
\co{\nb_{\va} v_I} + \ostu \co{\nb_{\va} \Ddt v_I} + \ostu^2 \co{\nb_{\va} \Ddt^2 v_I} &\lsm_{\va} \osn^{(|\va| + 1 - L)_+/L} \Xi^{|\va|} e_\vp^{1/2} \label{eq:ampBounds} \\
\co{\nb_{\va} \hat{v}_I} + \ostu \co{\nb_{\va} \Ddt \hat{v}_I} + \ostu^2 \co{\nb_{\va} \Ddt^2 \hat{v}_I} &\lsm_{\va} \osn^{(|\va| + 1 - L)_+/L} \Xi^{|\va|} e_\vp^{1/2} \label{eq:initAmpBds} \\
\co{\nb_{\va} \de v_I} + \ostu \co{\nb_{\va} \Ddt \de v_I} + \ostu^2 \co{\nb_{\va} \Ddt^2 \de v_I} &\lsm_{\va}  \osn^{(|\va| + 2 - L)_+/L} \Xi^{|\va|} (B_\la \osn)^{-1}e_\vp^{1/2} \label{eq:lowerOrderAmpBds} \\
\co{\nb_{\va} \hat{\de} v_I} + \ostu \co{\nb_{\va} \Ddt \hat{\de} v_I} + \ost{b} \ostu^2 \co{\nb_{\va} \Ddt^2 \hat{\de} v_I} &\lsm_{\va} (B_\la \osn)^{|\va|/2} \Xi^{|\va|} e_\vp^{1/2} \ost{b}. \label{eq:initAmpErrBds}
}
These estimates generalize to operators of the form $D^{(\va, \vec{r})}$ defined as in \eqref{eq:mixedSpaceAdvec}.
\end{prop}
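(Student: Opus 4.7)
The plan is to reduce every estimate to a weighted-norm bound in the $\brh^{(M,R)}$ language of Lemma~\ref{lem:accountLem1}, where the product rule \eqref{eq:productRule} is available, and then translate back to the claimed $C^0$ bounds via Proposition~\ref{prop:controlCombos}. The generalization to the mixed operators $D^{(\va,\vec r)}$ of \eqref{eq:mixedSpaceAdvec} then requires no extra work: once $\brh^{(|\va|,2)}$ of each amplitude is controlled, \eqref{eq:comboBound} delivers all such mixed derivative bounds.

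For \eqref{eq:ampBounds} and \eqref{eq:initAmpBds}, I would start from the decomposition $v_I=e_I^{1/2}(t)\eta_I(t)\ga_I\tilde f_I$ (resp.\ $\hat v_I=e_I^{1/2}(t)\eta_I(t)\ga_I\htf_I$) and apply the product rule factor by factor. Each of $\eta_I,\ga_I,\tilde f_I,\htf_I$ has $\brh^{(M,2)}[\cdot]\lsm_M 1$ by Propositions~\ref{prop:prelimSummaries} and \ref{prop:compBds}, while $\brh^{(M,2)}[e_I^{1/2}]\lsm_M e_\vp^{1/2}$ in either case $I\in\ovl\II$ or $I\in\dmd\II$ (using $e_{\undvp}^{1/2}\le e_\vp^{1/2}$). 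The product rule then gives $\brh^{(|\va|,2)}[v_I]+\brh^{(|\va|,2)}[\hat v_I]\lsm_{|\va|}e_\vp^{1/2}$, and unpacking the norm via $\ostu^{-r}$ absorbs the first two time derivatives on the left-hand side of the claimed inequalities.

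For \eqref{eq:initAmpErrBds}, I would use the explicit formula \eqref{eq:expressInitForm} to write $\hat{\de}v_I^\ell=-e_I^{1/2}(t)\eta_I\ga_I|\nb\xi_I|^{-2}\bigl((\nb\xi_I-\nb\hxii_I)\cdot\htf_I\bigr)\nb^\ell\xi_I$. Every factor except $\nb\xi_I-\nb\hxii_I$ has been controlled above or in Propositions~\ref{prop:prelimSummaries}--\ref{prop:compBds}, while Proposition~\ref{prop:transportEstimates}, estimates \eqref{eq:c0bdPhasediff}--\eqref{eq:Ddtphasediff}, provides the bound $(B_\la N)^{|\va|/2}\Xi^{|\va|}\ost b$ with an extra cost of $\ost b^{-1}(\Xi e_v^{1/2})$ per advective derivative (and a single additional $\ost b^{-1}$ at order~$2$). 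A single application of the product rule then yields \eqref{eq:initAmpErrBds}, with the isolated $\ost b$ coming from the phase-difference factor; note that the slightly worse $\Ddt^2$ rate in \eqref{eq:Ddtphasediff} is precisely what forces the extra $\ost b^{-1}$ prefactor in front of $\ostu^2$ on the left of \eqref{eq:initAmpErrBds}.

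The main obstacle is \eqref{eq:lowerOrderAmpBds}, because $\de v_I$ is defined only implicitly by \eqref{eq:VIform},\eqref{eq:HfreqProjectiondef} as the difference between $V_I=\cPI[e^{i\la\xi_I}v_I]$ and its principal part $\VR_I=e^{i\la\xi_I}v_I$. Here the gain of $(B_\la N)^{-1}$ must be extracted from the orthogonality $v_I\cdot\nb\xi_I=0$ in \eqref{eq:orthog}, which forces the divergence $\nb_\ell\VR_I^\ell=e^{i\la\xi_I}\nb_\ell v_I^\ell$ to be one power of $\la$ smaller than a generic oscillatory amplitude. My plan is to work with the physical-space convolution representation \eqref{eq:convolution} and perform a stationary-phase / integration-by-parts expansion against the kernel $K_{Ib}^\ell(h)=(n_{[I]}\la)^3\mathring K_b^\ell(n_{[I]}\la h)$: expanding the phase $\la\xi_I(x-h)=\la\xi_I(x)-\la\nb\xi_I(x)\cdot h+O(\la|h|^2\nb^2\xi_I)$ and the amplitude $v_I(x-h)$ by Taylor, one integration by parts in $h$ against the leading oscillation reproduces the projector acting on the frozen plane wave at the principal frequency $n_{[I]}\la\nb\xi_I(x)$, and the orthogonality condition makes the leading term cancel exactly; the remainder costs one spatial derivative on the amplitude and gains the factor $\la^{-1}=(B_\la N\Xi)^{-1}$. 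Iterating this expansion and estimating with the product rule, the net effect is to trade one spatial derivative on $v_I$ (explaining the bump from $(|\va|+1-L)_+$ to $(|\va|+2-L)_+$) for a factor $(B_\la N)^{-1}$, yielding the $C^0$ bound in \eqref{eq:lowerOrderAmpBds}. The two advective-derivative estimates then follow by commuting $\Ddt$ past $\cPI$ using a commutator identity of the type $\Ddt(\eta\ast_\Phi G)=\int G(\Phi_s)\big(-\pr_s\eta\big)\,ds$ used already in the proof of Proposition~\ref{prop:prelimBds}, each commutation costing $\ostu^{-1}$ and the spatial structure surviving unchanged.
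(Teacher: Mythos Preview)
Your treatment of \eqref{eq:ampBounds} and \eqref{eq:initAmpBds} matches the paper's exactly: product rule in $\brh^{(M,2)}$ applied to $v_I=e_I^{1/2}\eta_I\ga_I\tilde f_I$.

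For \eqref{eq:initAmpErrBds} you diverge from the paper. The paper does not run the product rule on the explicit formula for all $(\va,r)$; it only computes $|\va|=0$, $r\in\{0,1\}$ directly and obtains every remaining case from $\hat\de v_I=v_I-\hat v_I$ together with \eqref{eq:ampBounds}--\eqref{eq:initAmpBds} and the comparison $(B_\la N)^{1/2}\ge\ost b^{-1}$. Your full product-rule route is morally fine, but note that Lemma~\ref{lem:accountLem1} is stated with the spatial weight $N^{(|\va|+1-L)_+/L}\Xi^{|\va|}$, not the $(B_\la N)^{|\va|/2}\Xi^{|\va|}$ that appears in \eqref{eq:initAmpErrBds}; you cannot literally quote \eqref{eq:productRule} and read off the stated bound. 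You would have to either set up an analogue of the $\brh$-norm with the coarser weight $(B_\la N)^{|\va|/2}$ (into which all factors embed, since $(|\va|+1-L)_+/L\le|\va|/2$ for $L\ge2$), or fall back on the paper's comparison shortcut.

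The genuine gap is in your argument for the advective derivatives in \eqref{eq:lowerOrderAmpBds}. The identity you invoke, $\Ddt(\eta\ast_\Phi G)=\int G(\Phi_s)(-\pr_s\eta)\,ds$, is specific to mollification \emph{along the flow} in the time-like variable $s$; it has no analogue for the purely spatial Fourier multiplier $\cPI$ of \eqref{eq:HfreqProjectiondef}. Commuting $\Ddt$ past $\cPI$ produces the spatial commutator $[v_\ep^a\nb_a,K_I\ast]$, which is a different object (compare Lemma~\ref{lem:commutator}), and even after controlling it you are left with $\cPI[e^{i\la\xi_I}\Ddt v_I]-e^{i\la\xi_I}\Ddt v_I$, where $\Ddt v_I$ is no longer orthogonal to $\nb\xi_I$, so the cancellation you rely on for the $(B_\la N)^{-1}$ gain is not immediately available. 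The paper's route (via the Microlocal Lemma of \cite[Lemma~4.1]{isettVicol} and \cite[Lemma~7.5]{isettVicol}) avoids this by first obtaining an \emph{explicit} integral formula for $\de v_I$ in terms of $v_I$, $\nb\xi_I$ and their spatial derivatives, together with the rescaled Schwartz kernel; advective derivatives are then taken on that formula, using the already-established bounds on $\Ddt^r v_I$ and $\Ddt^r\nb\xi_I$ from Propositions~\ref{prop:transportEstimates} and~\ref{prop:compBds}. Your stationary-phase description of the spatial $C^0$ gain is correct in spirit and is precisely what the Microlocal Lemma encodes, but the advective step needs the explicit-formula route rather than the flow-mollification identity.
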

\begin{proof}[Proof of \eqref{eq:ampBounds} and \eqref{eq:initAmpBds}] Applying Lemma~\ref{lem:accountLem1} and Proposition~\ref{prop:compBds} to $v_I^\ell = e_I^{1/2}(t) \eta_I \ga_I \tilde{f}_I^\ell$, we have
\ALI{
\brh^{(M,2)}[v_I] &\lsm_M \brh^{(M,2)}[e_I^{1/2}(t)] \brh^{(M,2)}[\eta_I] \brh^{(M,2)}[\ga_I] \brh^{(M,2)}[\tilde{f}_I] \\
&\lsm_M e_\vp^{1/2} \cdot 1 \cdot 1 \cdot 1 = e_\vp^{1/2},
}
which implies \eqref{eq:ampBounds}.  The same calculation with $v_I^\ell$ and $\tilde{f}_I^\ell$ replaced by $\hat{v}_I^\ell$ and $\htf_I^\ell$ gives \eqref{eq:initAmpBds}.
\end{proof}
\begin{proof}[Proof of \eqref{eq:initAmpErrBds}]  It suffices to prove \eqref{eq:initAmpErrBds} for $|\va| = 0$ and $0 \leq r \leq 1$, since the remaining bounds follow from \eqref{eq:ampBounds}, \eqref{eq:initAmpBds} and $\hat{\de} v_I = v_I - \hat{v}_I$ by comparison using $(B_\la \osn)^{1/2} \geq \ost{b}^{-1}$.  Using \eqref{eq:expressInitForm}, \eqref{eq:eDef} and \eqref{eq:importantnbxiprops} we have $\co{\hat{\de} v_I} \lsm \co{e^{1/2}(t)} \cdot \co{\nb \xi_I - \nb \hxii_I} \lsm e_\vp^{1/2} \ost{b}$ as desired.  We then obtain
\ALI{
\co{\Ddt \hat{\de} v_I } &\leq \sum_{r_1 + \cdots + r_4 = 1 }\co{\Ddt^{r_1} e_I^{1/2}(t)} \co{\Ddt^{r_2}\eta_I(t)} \cdot \co{\Ddt^{r_3}(\nb \xi_I - \nb \hxii_I)} \co{\Ddt^{r_4} \tilde{f}_I^\ell} \\
&\lsm \sum_{r_1 + \cdots + r_4 = 1 } [\ostu^{-r_1} e_\vp^{1/2}][\ostu^{-r_2}] [\ostu^{-r_3} \ost{b}] [\ostu^{-r_4} ] \lsm \ostu^{-1} \ost{b} e_\vp^{1/2}
}
using \eqref{eq:eDef},\eqref{eq:c0bdPhasediff}-\eqref{eq:Ddtphasediff} and \eqref{eq:conbcompbds}-\eqref{eq:Ddtcompbds}.
\end{proof}

\begin{proof}[Proof of \eqref{eq:lowerOrderAmpBds}]  The estimate \eqref{eq:lowerOrderAmpBds} follows from \eqref{eq:ampBounds} using the Microlocal Lemma of \cite[Lemma 4.1]{isettVicol}.  According to this Lemma, the result of the convolution in \eqref{eq:convolution} has the following form
\ali{
\PP_I[e^{i \la \xi_I} v_I^\ell] &= e^{i \la \xi_I}( \hat{K}_{Ib}^\ell(\la \nb \xi_I) v_I^b + \de v_I^\ell ) \label{eq:microlocalExpansion}
}
where $\hat{K}_{Ib}^\ell(\la \nb \xi_I)$ denotes the Fourier transform\footnote{As with all convolution kernels in this work, $K_I$ is viewed as a Schwartz function on $\R^3$.} of the Kernel $K_I$ (i.e. the symbol of $\PP_I$) evaluated at the frequency $\la \nb \xi_I$, and where $\de v_I^\ell$ is given by a certain explicit formula.  We have from \eqref{eq:HfreqProjectiondef} that the symbol of $\PP_I$ is an orthogonal projection multiplied by a cutoff $\hat{K}_{Ib}^\ell(m) = \hat{\psi}\left(\fr{m}{n_I \la}\right) (\de_b^\ell - |m|^{-2} m_b m^\ell )$.  From \eqref{eq:importantnbxiprops}, the cutoff term becomes $\hat{\psi}\left(\fr{\la \nb \xi_I}{n_I \la}\right) = 1$ when evaluated at $\la \nb \xi_I$.  Using that $v_I^\ell \in \langle \nb \xi_I \rangle^\perp$ pointwise, we have that $(\de_b^\ell - |\nb \xi_I|^{-2} \nb_b \xi_I \nb^\ell \xi_I) v_I^b = v_I^\ell$, so \eqref{eq:microlocalExpansion} simplifies to the form assumed in \eqref{eq:VIform}, namely $\PP_I[e^{i \la \xi_I} v_I^\ell] = e^{i \la \xi_I}(v_I^\ell + \de v_I^\ell)$.  The estimate \eqref{eq:lowerOrderAmpBds} for $\de v_I^\ell$ is essentially carried out in \cite[Lemma 7.5]{isettVicol}, but with the following minor notational differences.  Namely, the amplitude in \cite{isettVicol} is a scalar function denoted $\th_I$ rather than the vector amplitude $v_I^\ell$, it has size $e_R^{1/2}$ rather than $e_\vp^{1/2}$, and the convolution kernel is scalar-valued rather than matrix valued.  The bounds of \eqref{eq:ampBounds} for $v_I^\ell$ become identical to those used for $\th_I$ in the proof of \cite[Lemma 7.5]{isettVicol} when we replace $v_I^\ell$ by $e_R^{1/2} (v_I^\ell / e_\vp^{1/2})$ (see \cite[Lemma 7.4]{isettVicol}), while the convolution kernel $K_{Ib}^\ell$ obeys all the bounds used for the convolution kernel $\eta_{\approx \la}^I$ in \cite[Lemma 7.5]{isettVicol}, since they are both obtained by rescaling a fixed Schwartz function by the frequency $\la$ while preserving the $L^1$ norm.  We can thus obtain our desired bound \eqref{eq:lowerOrderAmpBds} for $\de v_I^\ell$ by following the proof of \cite[Lemma 7.5]{isettVicol} used to bound the term $\de \th_I$.
\end{proof}

We now have the following bounds for the high frequency corrections and the pressure increment.
\begin{prop} \label{prop:correctionBounds} The following bounds hold uniformly for $I \in \II$
\ali{
\co{\nb_{\va} V_I} + \ostu \co{ \nb_{\va} \Ddt V_I } + \ostu^2 \co{\nb_{\va} \Ddt^2 V_I } &\lsm_{\va} \la^{|\va|} e_\vp^{1/2} \label{eq:highFreqCorrect} \\
\co{\nb_{\va} \VR_I} + \ostu \co{ \nb_{\va} \Ddt \VR_I } + \ostu^2 \co{\nb_{\va} \Ddt^2 \VR_I } &\lsm_{\va} \la^{|\va|} e_\vp^{1/2} \label{eq:mainTermHFBd} \\
\co{\nb_{\va} \de V_I} + \ostu \co{ \nb_{\va} \Ddt \de V_I } + \ostu^2 \co{ \nb_{\va} \Ddt^2 \de V_I } &\lsm_{\va} \la^{|\va|} (B_\la \osn)^{-1}  e_\vp^{1/2} \label{eq:lowerTermHFBd} \\
\co{\nb_{\va} P} \lsm_{\va} \osn^{(|\va|- L)_+/L} \Xi^{|\va|} e_{\undvp}, \quad \co{\nb_{\va} \Ddt P} &\lsm_{\va} \osn^{(|\va| + 1 - L)_+/L} \Xi^{|\va|+1} e_v^{1/2} e_{\undvp}. \label{eq:pressIncBound}
}
The bound \eqref{eq:pressIncBound} holds also with $\Ddt$ replaced by $D_t$ for $0 \leq |\va| \leq L-1$.
\end{prop}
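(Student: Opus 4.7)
The plan is to exploit the transport equation \eqref{eq:phaseFunc}, which gives the crucial identity $\Ddt [e^{i \la \xi_I}] = i \la (\Ddt \xi_I) e^{i \la \xi_I} = 0$. Consequently advective derivatives act only on the amplitude: $\Ddt^r \VR_I = e^{i \la \xi_I} \Ddt^r v_I$ and $\Ddt^r \de V_I = e^{i \la \xi_I} \Ddt^r \de v_I$. This is what prevents the advective derivatives from costing factors of $\la$, and explains the factor $\ostu^r$ on the left-hand sides of \eqref{eq:highFreqCorrect}--\eqref{eq:lowerTermHFBd} rather than $\la^r$.

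I would first establish \eqref{eq:mainTermHFBd} for $\VR_I^\ell = e^{i \la \xi_I} v_I^\ell$. For the pure spatial bound $\co{\nb_\va \VR_I} \lsm_\va \la^{|\va|} e_\vp^{1/2}$, apply the Leibniz rule: each derivative either falls on the exponential (producing a factor $i \la \nb_a \xi_I$ of size $\lsm \la$ by \eqref{eq:importantnbxiprops}) or on $v_I$ or on a previously differentiated $\nb \xi_I$ (costing at most factors of $N^{(\cdot - L)_+/L} \Xi$ by \eqref{eq:spaceDrivPhase} and \eqref{eq:ampBounds}). Since $\la = B_\la N^\ast \Xi \geq \Xi$ and the total spatial-derivative count is controlled by the counting inequality \eqref{eq:countingIneq} with $y = L-1$, all of these sub-leading contributions are dominated by the extreme case where every derivative hits the phase. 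The advective derivative bounds then reduce, via the identity above, to \eqref{eq:ampBounds}, since $|e^{i \la \xi_I}| = 1$; for instance $\ostu^r \co{\Ddt^r \VR_I} = \ostu^r \co{\Ddt^r v_I} \lsm e_\vp^{1/2}$. Mixed space--advective operators $D^{(\va,\vec r)}$ of the form \eqref{eq:mixedSpaceAdvec} can be handled exactly as in the proof of Proposition~\ref{prop:controlCombos}: iterate the commutator identity $\Ddt \nb_a F = \nb_a \Ddt F - \nb_a v_\ep^i \nb_i F$ to rearrange into pure spatial derivatives of $\Ddt^r(\VR_I)$, then combine with the spatial and advective estimates already established for $\VR_I$ and with the bounds on $\nb v_\ep$ from Proposition~\ref{prop:prelimSummaries}.

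The estimate \eqref{eq:lowerTermHFBd} for $\de V_I = e^{i \la \xi_I} \de v_I$ follows by the same argument, using \eqref{eq:lowerOrderAmpBds} in place of \eqref{eq:ampBounds}; the gain $(B_\la N)^{-1}$ over the leading amplitude comes directly from the microlocal expansion of the convolution in \eqref{eq:convolution} that already produced \eqref{eq:lowerOrderAmpBds}. Then \eqref{eq:highFreqCorrect} follows from $V_I = \VR_I + \de V_I$ by the triangle inequality, absorbing $\de V_I$ into the main term.

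For the pressure increment, use the explicit formula \eqref{eq:Pdefn}: $-3P = 2 e(t) - 2 \kk_{[G\ep]} + \de_{j\ell}(R_{[2\ep]}^{j\ell} + R_{[G\ep]}^{j\ell})$. The bound on $\nb_\va P$ then follows termwise from Proposition~\ref{prop:prelimBds} applied to $(\kk_{[G]}, e_R)$, $(R_{[2]}, e_R)$ and $(R_{[G]}, e_G)$, together with \eqref{eq:eDef} for $e(t)$, using $e_R \leq e_\vp^{1/3} e_R^{2/3} = e_{\undvp}$ and $e_G \leq e_R \leq e_{\undvp}$ to obtain the uniform size $e_{\undvp}$. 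The bound on $\nb_\va \Ddt P$ proceeds identically using \eqref{eq:c0DtbdFmoll} and the fact that $\Ddt e(t) = \pr_t e(t)$ is controlled by \eqref{eq:eDef}; finally $D_t P = \Ddt P + (v - v_\ep) \cdot \nb P$, and the extra term is absorbed using \eqref{eq:vMinVep} and the spatial bound on $P$ already obtained. The bookkeeping to combine the $N^{(\cdot - L)_+/L}$ exponents at each application of the product rule is again handled by the counting inequality \eqref{eq:countingIneq}. The main obstacle here is purely organizational: ensuring that in the Leibniz expansion for $\nb_\va \Ddt^r \VR_I$ the sub-leading terms (derivatives on $\nb \xi_I$ rather than on $e^{i \la \xi_I}$) are genuinely sub-leading, which depends on the comparison $\Xi \leq \la$ and on the fact that $\ost{b}^{-1} \leq (B_\la N)^{1/2}$ allows the $\ostu^{-1}$ costs from advective derivatives of $v_I$ to line up correctly with $\ostu^{-r}$ on the left.
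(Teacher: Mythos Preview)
Your proposal is correct and follows essentially the same approach as the paper. The paper carries out the Leibniz expansion for $\nb_{\va}\Ddt^r[e^{i\la\xi_I}v_I^\ell]$ in one step (writing the result as a sum over $m$ factors of $\la\nb\xi_I$ pulled down from the phase, with the remaining derivatives landing on $v_I$ and on copies of $\nb\xi_I$), rather than separating into ``pure spatial then pure advective then commute,'' but the content is identical. One small imprecision: for the $D_t P$ bound you invoke \eqref{eq:vMinVep}, which is only the $C^0$ estimate on $v-v_\ep$; to bound $\nb_{\va}[(v-v_\ep)\cdot\nb P]$ for $|\va|\leq L-1$ you need instead $\co{\nb_{\va_1}(v-v_\ep)}\lsm \Xi^{|\va_1|}e_v^{1/2}$ for $0\leq|\va_1|\leq L$, exactly as the paper does in \eqref{eq:PerrorMoll}.
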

\begin{proof}[Proof of \eqref{eq:highFreqCorrect}-\eqref{eq:lowerTermHFBd}] We start with \eqref{eq:mainTermHFBd}.  Using $\VR_I^\ell = e^{i \la \xi_I} v_I^\ell$, we have that
\ali{
\nb_{\va}\Ddt^r[e^{i \la \xi_I} v_I^\ell ] &= \sum_{0 \leq m \leq |\va|} \sum_{\vcb, \vcc} \nb_{\vcb} \Ddt^r v_I^\ell e^{i \la \xi_I} (i \la)^m \prod_{i=1}^m \nb_{\vcc_i} (\nb_{c_i} \xi_I),
}
where the sum includes only multi-indices such that 
$|\vcb| + m + \sum_i |\vcc_i| = |\va|$.  For $0 \leq r \leq 2$, we obtain
\ali{
\co{\nb_{\va}\Ddt^r[e^{i \la \xi_I} v_I^\ell ]} &\lsm_M \sum_{0 \leq m \leq |\va|} \sum_{\vcb, \vcc} \left[\osn^{(|\vcb| + 1 - L)_+/L} \Xi^{|\vcb|} \ostu^{-r} e_\vp^{1/2} \right] \cdot \la^{m} \prod_{i=1}^m \osn^{(|\vcc_i| + 1 - L)_+/L} \Xi^{|\vcc_i|} \\
&\lsm_{\va} \sum_{0 \leq m \leq |\va|} \ostu^{-r} e_\vp^{1/2} \osn^{(|\va|-m - (L-1))_+/L} \Xi^{|\va| - m} \la^m \lsm_{\va} \ostu^{-r} e_\vp^{1/2} \la^{|\va|},  \label{eq:phaseBd}
}
where in the last line we used the elementary counting inequality $(|\vcb| - (L - 1))_+ + \sum_i (|\vcc_i| - (L - 1) )_+ \leq (|\va| - m - (L -1))_+$ in \eqref{eq:countingIneq} and the fact that $\osn^{1/L} \Xi \leq \la$.  The proof of \eqref{eq:lowerTermHFBd} is the same except that we replace $v_I^\ell$ by $\de v_I^\ell$, $\ostu^{-r} e_\vp^{1/2}$ by $(B_\la \osn)^{-1} \ostu^{-r} e_\vp^{1/2}$, and we replace $L - 1 \geq 0$ by $L - 2 \geq 0$ in the elementary counting inequality.  
Inequality \eqref{eq:highFreqCorrect} now follows by adding \eqref{eq:mainTermHFBd} and \eqref{eq:lowerTermHFBd}.
\end{proof}
\begin{proof}[Proof of \eqref{eq:pressIncBound}]  These bounds follow from the formula \eqref{eq:Pdefn}, since they hold for $e(t)$, $\kk_{[G\ep]}$, $R_{[2\ep]}$ and $R_{[G\ep]}$ by \eqref{eq:eDef} and Proposition~\ref{prop:prelimBds}.  To obtain the bound for $D_t$ in place of $\Ddt$, we will use the fact that for $0 \leq |\va_1| \leq L$ we have $\co{\nb_{\va_1}(v - v_\ep)} \lsm \Xi^{|\va_1|} e_v^{1/2}$, 
from which we can bound the remaining term in $\nb_{\va} D_t P = \nb_{\va} \Ddt P + \nb_{\va}\nb_i[(v^i - v_\ep^i)P]$ by
\ali{
\co{\nb_{\va} \nb_i [(v^i - v_\ep^i) P]} &\lsm \sum_{|\va_1| + |\va_2| = |\va| + 1} (\Xi^{|\va_1|} e_v^{1/2}) ( \Xi^{|\va_2|} e_{\undvp}) \lsm \Xi^{|\va| + 1} e_v^{1/2} e_{\undvp} \label{eq:PerrorMoll}
}
for all $0 \leq |\va| \leq L - 1$ as desired.
\end{proof}

With these bounds in hand, we can begin to estimate the terms of the new stress given in \eqref{eq:stressDecomp}.

\subsection{Preliminaries for estimating the new stress} \label{sec:newStressBounds}

In this Section we gather some preparatory estimates for bounding the new stress tensor $\ost{R}^{j\ell}$.

To prove Lemma~\ref{lem:mainLem}, we must estimate the new stress $\ost{R}^{j\ell}$ and also its advective derivative $\osdt \ost{R}^{j\ell}$ corresponding to the new velocity field $\osdt := \pr_t + \ost{v} \cdot \nb$,  $\ost{v}^\ell = v^\ell + V^\ell$.  Since the bounds we have encountered so far have been stated in terms of the velocity fields $v_\ep^\ell$ and $v^\ell$ and their corresponding advective derivatives $\Ddt$ and $D_t$ rather than $\ost{v}^\ell$ and $\osdt$, the following Lemma will be useful.
\begin{lem} \label{lem:comparisonLem}  For each advective derivative ${\mathring D}_t \in \{ D_t, \Ddt, \osdt\}$, define $\mathring{H} \in \{ H, \brh, \ost{H} \}$ by
\ali{
\mathring{H}[F] := \max_{0 \leq r \leq 1} \max_{0 \leq |\va| + r \leq L} \fr{\co{\nb_{\va} {\mathring D}_t^r F}}{\la^{|\va|}( \la e_\vp^{1/2})^{r}}. \label{eq:weightedNormDef}
}
Then each weighted norm $\mathring{H}[\cdot]$ satisfies the triangle inequality and product rule as in \eqref{eq:productRule}, these norms are comparable $\ost{H}[F] \lsm H[F] \lsm \brh[F] \lsm \ost{H}[F]$, and $\brh[F] \lsm \brh^{(L-1, 1)}[F]$.  The inequalities $\underline{\ost{H}}[F] \lsm \underline{H}[F] \lsm \underline{\brh}[F] \lsm \underline{\ost{H}}[F]$ hold also for the homogeneous weighted seminorms $\underline{\mathring{H}}[F]$, which we define similarly to \eqref{eq:weightedNormDef} but omitting the term $|\va| = r = 0$.  The homogeneous seminorms satisfy the triangle inequality $\undl{\mathring{H}}[F + G] \leq \undl{\mathring{H}}[F] + \undl{\mathring{H}}[G]$ and a product rule of the form 
\ali{
\undl{\mathring{H}}[F  G] \lsm \undl{\mathring{H}}[F] \mathring{H}[G] + \mathring{H}[F] \undl{\mathring{H}}[G]. \label{eq:homogProduct}
}
\end{lem}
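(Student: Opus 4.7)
The triangle inequality for $\mathring{H}[\cdot]$ and $\undl{\mathring{H}}[\cdot]$ is immediate from the definition as a maximum over pairs $(\va,r)$ of quantities linear in $F$. For the product rule \eqref{eq:productRule}, I would first use that each $\mathring{D}_t$ is a first-order derivation to expand $\mathring{D}_t^r(FG) = \sum_{r_1+r_2=r} \binom{r}{r_1}(\mathring{D}_t^{r_1}F)(\mathring{D}_t^{r_2}G)$, then apply $\nb_{\va}$ via Leibniz. This produces a sum of terms $(\nb_{\vcb}\mathring{D}_t^{r_1}F)(\nb_{\vcc}\mathring{D}_t^{r_2}G)$ with $\vcb+\vcc=\va$. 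Since the weight factors as $\la^{|\va|}(\la e_\vp^{1/2})^r = [\la^{|\vcb|}(\la e_\vp^{1/2})^{r_1}][\la^{|\vcc|}(\la e_\vp^{1/2})^{r_2}]$ and each factor lies in the admissible range $|\vcb|+r_1,|\vcc|+r_2\leq L$, dividing by the weight yields the desired bound. The homogeneous product rule \eqref{eq:homogProduct} follows by the same expansion: when $(\va,r)\neq(0,0)$, at least one of $(\vcb,r_1),(\vcc,r_2)$ is nonzero in each summand, so that side contributes a $\undl{\mathring{H}}$ factor and the other contributes a $\mathring{H}$ factor.

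For the cycle of comparisons, I would write the differences of advective derivatives as first-order perturbations $\osdt - D_t = V^i\nb_i$ and $D_t - \Ddt = (v-v_\ep)^i\nb_i$. Only pairs with $r=1$ are affected. For $H[F]\lsm\brh[F]$, expanding $\nb_{\va}((v-v_\ep)\cdot\nb F)$ by Leibniz requires $\co{\nb_{\vcb}(v-v_\ep)}\lsm\la^{|\vcb|}e_\vp^{1/2}$: for $|\vcb|=0$ this is $\co{v-v_\ep}\lsm e_v^{1/2}/N \lsm e_\vp^{1/2}$ by \eqref{eq:vMinVep} together with $N\geq (e_v/e_\vp)^{3/2}$ from \eqref{eq:Nastdef}, and for $|\vcb|\geq 1$ it follows from \eqref{eq:vepBasicBd} via $\co{\nb_{\vcb}(v-v_\ep)}\lsm\Xi^{|\vcb|}e_v^{1/2}\lsm\la^{|\vcb|}e_\vp^{1/2}$ since $\la/\Xi=B_\la N\geq(e_v/e_\vp)^{1/2}$. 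This produces $\co{\nb_{\va}((v-v_\ep)\cdot\nb F)}\lsm\la^{|\va|+1}e_\vp^{1/2}\brh[F]$; dividing by $(\la e_\vp^{1/2})\la^{|\va|}$ closes the bound. The step $\ost{H}[F]\lsm H[F]$ is identical with $V$ in place of $v-v_\ep$, using $\co{\nb_{\vcb}V}\lsm\la^{|\vcb|}e_\vp^{1/2}$ from Proposition~\ref{prop:correctionBounds}. To close the cycle, write $\Ddt=\osdt-(V+(v-v_\ep))\cdot\nb$ and repeat the same argument.

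The comparison $\brh[F]\lsm\brh^{(L-1,1)}[F]$ is a term-by-term weight check: for each admissible $(\va,r)$ I must verify $N^{(|\va|+1-L)_+/L}\Xi^{|\va|}\ostu^{-r}\lsm\la^{|\va|}(\la e_\vp^{1/2})^r$. The spatial part follows from $\la\geq\Xi$, with the endpoint $|\va|=L,r=0$ absorbing the factor $N^{1/L}$ into $\la^L\geq(B_\la N)^L\Xi^L$. For the advective factor, substituting $\ostu^{-1}=b_0^{-1}(B_\la N)^{1/2}(e_\vp/e_v)^{1/4}\Xi e_v^{1/2}$ and $\la e_\vp^{1/2}=B_\la N\Xi e_\vp^{1/2}$ reduces the inequality to $(B_\la N)^{1/2}(e_\vp/e_v)^{1/4}\gtrsim b_0^{-1}$; using $N\geq(e_v/e_\vp)^{3/2}$ this is implied by $B_\la\gtrsim b_0^{-2}$, which is allowed since $B_\la$ is the last constant fixed in the construction. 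Finally, the homogeneous statements $\undl{\ost{H}}[F]\lsm\undl{H}[F]\lsm\undl{\brh}[F]\lsm\undl{\ost{H}}[F]$ follow from the same comparisons restricted to $(\va,r)\neq(0,0)$, since the perturbation terms $\nb_{\va}((v-v_\ep)\cdot\nb F)$ and $\nb_{\va}(V\cdot\nb F)$ involve at least one spatial derivative of $F$ and so can be bounded by the homogeneous seminorms.

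The main obstacle is purely organizational: maintaining the parameter chain $\Xi\leq\la$, $e_v^{1/2}/N\lsm e_\vp^{1/2}$, and $\la e_\vp^{1/2}\gtrsim\ostu^{-1}$ consistently throughout the Leibniz expansions while respecting the constraint $|\va|+r\leq L$. These are all consequences of \eqref{eq:Nastdef} together with the freedom to choose $B_\la$ large at the end, so no genuinely new analytic input beyond Proposition~\ref{prop:correctionBounds} and the mollification estimates is required.
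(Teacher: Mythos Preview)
Your proposal is correct and follows essentially the same approach as the paper: the paper proves the product rules by reference to the analogous Lemma~\ref{lem:accountLem1}, and for the cycle of comparisons it writes $\osdt = \Ddt + \wtld{V}\cdot\nb$ with $\wtld{V}=V+(v-v_\ep)$, establishing $\co{\nb_{\va}\wtld{V}}\lsm\la^{|\va|}e_\vp^{1/2}$ exactly via the ingredients you list (the case $|\va|=0$ from \eqref{eq:vMinVep} with $N\geq e_v^{1/2}/e_\vp^{1/2}$, the case $|\va|\geq1$ from \eqref{eq:vpBd},\eqref{eq:vepBasicBd} and $\la\geq N\Xi$, and \eqref{eq:highFreqCorrect} for $V$), then expanding $\nb_{\va}\osdt F$ by Leibniz as you do; the only difference is that the paper does the comparison $\ost{H}\lsm\brh$ in a single step rather than passing through $H$. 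The bound $\brh[F]\lsm\brh^{(L-1,1)}[F]$ is stated in the paper as following directly from $N^{1/L}\Xi\leq\la$ and $\ostu^{-1}\leq\la e_\vp^{1/2}$, which is your term-by-term weight check (and in fact $\ostu^{-1}\leq\la e_\vp^{1/2}$ already holds from $N\geq(e_v/e_\vp)^{3/2}$ without needing $B_\la$ large).
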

\begin{proof} The proof of the triangle inequality and product rule \eqref{eq:homogProduct} is analogous to Lemma~\ref{lem:accountLem1}.  The inequality $\brh[F] \lsm \brh^{(L-1, 1)}[F]$ follows from the definitions using $\osn^{1/L} \Xi \leq \la$ and $\la e_\vp^{1/2} \geq \ostu^{-1}$.  The proof of $\ost{H}[F] \lsm H[F]$ is a simpler version of the proof of $\ost{H}[F] \lsm \brh[F]$, so we will focus on this latter inequality instead.  Let $\wtld{V}^\ell = \ost{v}^\ell - v_\ep^\ell = V^\ell + (v^\ell - v_\ep^\ell)$, and observe that, for $0 \leq |\va| \leq L$ we have
\ali{
\co{\nb_{\va}(v^\ell - v_\ep^\ell)} &\lsm_{\va} \la^{|\va|} (e_v^{1/2}/\osn), \label{eq:vminusvepnbabd} \\
\co{\nb_{\va} \wtld{V} } \leq \co{\nb_{\va} V^\ell} + \co{\nb_{\va}(v^\ell - v_\ep^\ell)} &\lsm \la^{|\va|} (e_\vp^{1/2} + e_v^{1/2}/\osn ) \lsm \la^{|\va|} e_\vp^{1/2}. \label{eq:wtldVnbbd}
}
In \eqref{eq:vminusvepnbabd} we use \eqref{eq:vpBd},\eqref{eq:vepBasicBd}-\eqref{eq:vminvepBd} and $\la \geq \osn \Xi$, while in \eqref{eq:wtldVnbbd} we use \eqref{eq:highFreqCorrect} and $\osn \geq (e_v^{1/2}/e_\vp^{1/2})$.

For any $0 \leq |\va| \leq L -1$, we then have that
\ali{
\begin{split} \label{eq:changingAdvecComp}
\nb_{\va} \osdt[F] &= \nb_{\va}[ \Ddt F + \wtld{V}^\ell \nb_\ell F ] \\
\co{\nb_{\va} \osdt[F]} &\lsm \co{ \nb_{\va} \Ddt F } + \sum_{|\va_1| + |\va_2| = |\va|} \co{\nb_{\va_1} \wtld{V}^\ell} \co{\nb_{\va_2} \nb_\ell F } \\
\co{\nb_{\va} \osdt[F]} &\lsm \la^{|\va| + 1} e_\vp^{1/2} \brh[F] + \sum_{|\va_1| + |\va_2| = |\va|} [\la^{|\va_1|} e_\vp^{1/2}][\la^{|\va_2| + 1}\brh[F] ] \lsm \la^{|\va|} (\la e_\vp^{1/2}) \brh[F],
\end{split}
}
which implies the bound $\ost{H}[F] \lsm \brh[F]$ we desire.  Interchanging the roles of $\ost{H}$ and $\brh[\cdot]$ and replacing $\wtld{V}^\ell$ by $- \wtld{V}^\ell$ gives the opposite inequality $\ost{H}[F] \lsm \brh[F]$.  Note that the same argument applies to the homogeneous versions $\underline{\ost{H}}[\cdot]$ and $\underline{\brh}[\cdot]$.
\end{proof}
In terms of the above weighted norms, we have the following Proposition, which follows from Propositions~\ref{prop:prelimBds}, \ref{prop:transportEstimates}, \ref{prop:correctAmpBds} and \ref{prop:correctionBounds} (noting that $\ost{b}^{-1} \ostu^{-1} \leq \la e_\vp^{1/2}$ in the case of $\hat{\de} v_I$).
\begin{prop} \label{prop:allTheEndBounds} Let $(F, h_F)^t$ be any column of the following formal matrix of variables
\ALI{
\mat{c|ccccccccccccc}{ {F} & \nb \xi_I & v_I & \hat{v}_I & P & R_{[2\ast]} &  R_{[3\ast]} & \vp_{(2)}
 & V_I & \VR_I & \hat{\de} \nb \xi_I  & \hat{\de} v_I & \de v_I  & \de V_I \\
		{h_F} &	1 & e_\vp^{1/2} & e_\vp^{1/2} & {e_{\undl{\vp}}} & e_R & e_G & e_{\undl{\vp}}^{3/2} 
	& e_\vp^{1/2} & e_\vp^{1/2} & \ost{b} & \ost{b} e_\vp^{1/2} & \la^{-1} \Xi e_\vp^{1/2} & \la^{-1} \Xi e_\vp^{1/2}	
		} 
}
Then $\brh[F] \lsm h_F$ and $\brh[\Ddt F] \lsm \ostu^{-1} h_F$.
\end{prop}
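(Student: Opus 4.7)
My plan is to derive Proposition~\ref{prop:allTheEndBounds} as a compact reformulation, in the weighted norm $\brh$ of Lemma~\ref{lem:comparisonLem}, of the bounds already assembled in Propositions~\ref{prop:prelimBds}, \ref{prop:prelimSummaries}, \ref{prop:transportEstimates}, \ref{prop:correctAmpBds}, and \ref{prop:correctionBounds}. The first step is to extract the two weight comparisons that drive the whole argument. From the defining identities $\ostu = \ost{b}(\Xi e_v^{1/2})^{-1}$, $\ost{b} = b_0(e_v^{1/2}/(B_\la N e_\vp^{1/2}))^{1/2}$, $\la = B_\la N \Xi$ and the hypothesis $N \geq (e_v/e_\vp)^{3/2}$, direct computation yields
\[
\ostu^{-1}\lsm \la e_\vp^{1/2}, \qquad \ost{b}^{-1}\ostu^{-1}= b_0^{-2}\,\la e_\vp^{1/2}\lsm \la e_\vp^{1/2},
\]
together with the spatial comparison $N^{(|\va|+1-L)_+/L}\Xi^{|\va|}\lsm \la^{|\va|}$. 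Given these, any estimate of the form $\co{\nb_\va \Ddt^r F}\lsm N^{(|\va|+1-L)_+/L}\Xi^{|\va|}\ostu^{-r}h_F$ for $0 \leq r \leq 2$, $0 \leq |\va|+r \leq L$, immediately yields both $\brh[F]\lsm h_F$ and $\brh[\Ddt F]\lsm \ostu^{-1}h_F$.

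With this dictionary in hand, I would verify the thirteen columns of the matrix in three batches. The coarse-scale fields $\nb\xi_I$, $v_I$, $\hat v_I$, $P$ are handled by Propositions~\ref{prop:prelimSummaries}, \ref{prop:correctAmpBds}, \ref{prop:correctionBounds} respectively, with the stated sizes $1, e_\vp^{1/2}, e_\vp^{1/2}, e_{\undvp}$. The error components $R_{[2\ast]}$, $R_{[3\ast]}$, $\vp_{(2)}$ then follow from the decompositions \eqref{eq:threeStressTerms}, \eqref{eq:secondComponentVp} via the triangle inequality and product rule of Lemma~\ref{lem:comparisonLem}, retaining the finer sizes $e_R$, $e_G$, $e_{\undvp}^{3/2}$ because the projections $\pi_{[l]}$, $\pi_{(l)}$ are constant linear maps that commute with $\nb_\va \Ddt^r$. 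The high-frequency waves $V_I$, $\VR_I$, $\de V_I$ are controlled directly by Proposition~\ref{prop:correctionBounds}, with the advective weights $\ostu^{-1}, \ostu^{-2}$ replaced by $\la e_\vp^{1/2}, (\la e_\vp^{1/2})^2$ using the first comparison above. Finally, $\hat{\de}\nb\xi_I$ and $\de v_I$ are handled using Propositions~\ref{prop:transportEstimates} and~\ref{prop:correctAmpBds}; the factor $\la^{-1}\Xi = (B_\la N)^{-1}$ in $h_{\de v_I} = h_{\de V_I}$ is precisely the microlocal gain recorded in Proposition~\ref{prop:correctAmpBds}.

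The one case requiring genuine care, and the reason for the parenthetical hint in the statement, is the initial-direction amplitude error $\hat{\de} v_I$. Here Proposition~\ref{prop:correctAmpBds} controls $\co{\nb_\va \Ddt^2 \hat{\de} v_I}$ only with the weaker weight $\ost{b}\,\ostu^{2}$ in place of $\ostu^{2}$, so that $\co{\nb_\va \Ddt^2 \hat{\de} v_I}\lsm (B_\la N)^{|\va|/2}\Xi^{|\va|}\ost{b}^{-1}\ostu^{-2}(\ost{b}\,e_\vp^{1/2})$. To fit this into the bound $\brh[\Ddt \hat{\de} v_I]\lsm \ostu^{-1}(\ost{b}\,e_\vp^{1/2})$ we must absorb the stray $\ost{b}^{-1}\ostu^{-1}$ using exactly the second weight comparison $\ost{b}^{-1}\ostu^{-1}\lsm \la e_\vp^{1/2}$ isolated above. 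Once this reduction is made, the whole proposition collapses to line-by-line bookkeeping, and the promised extension from $\nb_\va\Ddt^r$ to the mixed operators $D^{(\va,\vec r)}$ of \eqref{eq:mixedSpaceAdvec} follows directly from Proposition~\ref{prop:controlCombos}.
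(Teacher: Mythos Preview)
Your proposal is correct and follows essentially the same approach as the paper: both establish the key weight comparisons $\ostu^{-1}\lsm \la e_\vp^{1/2}$, $\ost{b}^{-1}\ostu^{-1}\lsm \la e_\vp^{1/2}$, and $N^{(|\va|+1-L)_+/L}\Xi^{|\va|}\lsm \la^{|\va|}$, then reduce to the already-proven $\brh^{(M,2)}$-type bounds, singling out $\hat{\de} v_I$ (and $\hat{\de}\nb\xi_I$) for the extra $\ost{b}^{-1}$ absorption. The paper's proof is slightly terser in that it bundles most cases via the single inequality $\brh[F]+\ostu\,\brh[\Ddt F]\lsm \brh^{(L,2)}[F]$ and then notes that $P$ also requires the $\brh^{(M,2)}[P]\lsm e_{\undvp}$ bound (from the explicit formula \eqref{eq:Pdefn}) rather than just \eqref{eq:pressIncBound}, but your treatment via Proposition~\ref{prop:correctionBounds} together with the mollified components of $P$ amounts to the same thing.
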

\begin{proof} From the inequality $\ostu^{-1} \leq \la e_\vp^{1/2}$, which follows from the choices of $\ost{b} = b_0 (e_v^{1/2}/e_\vp^{1/2}N)^{1/2}$, $\ostu^{-1} = \ost{b}^{-1} (\Xi e_v^{1/2})^{-1}$ and $\la = B_\la N \Xi$, as well as $N^{(|\va| + 1 -L)_+/L} \Xi^{|\va|} \leq \la^{|\va|}$ we have that $\brh[F] + \ostu \brh[\Ddt F] \lsm \brh^{(L,2)}[F]$.  Applying Propositions~\ref{prop:prelimSummaries},\ref{prop:correctAmpBds},\ref{prop:correctionBounds}, we obtain the bounds on each field except for $P$, $\hat{\de} \nb \xi_I$ and $\hat{\de} v_I$.  The bound for $P$ follows from the bound $\brh^{(M,2)}[P] \lsm_M e_{\undvp}$, which follows as in the proof of \eqref{eq:pressIncBound}.  The bounds for $\hat{\de} \nb \xi_I$ and $\hat{\de} v_I$ folllow from \eqref{eq:Ddtphasediff} and \eqref{eq:initAmpErrBds} after noting that $\la \geq (B_\la N)^{1/2}$ and $\ostu^{-1} \leq \ost{b}^{-1} \ostu^{-1} = b_0^{-2}B_\la (e_\vp^{1/2} N/e_v^{1/2}) (\Xi e_v^{1/2}) \lsm \la e_\vp^{1/2}$.
\end{proof}
The following Lemma will be useful when we encounter the terms of the form $F - F_\ep$ or $F - F_{\ep_x}$ that arise as the errors in regularizing the various tensor fields in the construction.

\begin{prop} \label{prop:mollErrProp} Let $(F,h_F)^t$ be any column of the following formal matrix 
\ALI{
\mat{c|ccccccccc}{ F & v & p & P & R_{[1\circ]} & R_{[2]} & R_{[G]} & \vp & \kk & \kk_{[G]} \\
						h_F & e_v^{1/2} & e_v & e_{\undvp} & e_\vp&  e_{R} & e_G & e_\vp^{3/2} & e_\vp & e_R
		} 
}
then we have the bounds
\ali{
\brh[F - F_{\ep_v}] &\lsm (h_F/\osn) \label{eq:spaceMollErrorBds}\\
\brh[F - F_\ep] &\lsm (e_v^{1/2}/e_\vp^{1/2})(h_F/\osn), \label{eq:mollErrorBds}
}
where $F_{\ep_v} = \chi_\ep \ast \chi_\ep \ast F$ is the spatial mollification of $F$ defined using the kernel of lines \eqref{eq:vepBasicBd}-\eqref{eq:vminvepBd}.
\end{prop}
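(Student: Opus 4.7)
The plan is to adapt the argument behind \eqref{eq:errorMollBounds} in Proposition~\ref{prop:prelimBds} to the weighted norm $\brh[\cdot]$, exploiting the fact that the kernels $\chi_\ep \ast \chi_\ep$ and $\chi_{\ep_x} \ast \chi_{\ep_x}$ both have vanishing moments of all orders: for $\chi_\ep$ this follows from the Fourier-flatness $\hat{\chi}_1 \equiv 1$ on $|\xi| \leq 1$, while for $\chi_{\ep_x}$ it is built into the definition. The conversion of $C^0$ bounds into $\brh$-bounds proceeds in three stages corresponding to spatial derivatives, advective derivatives, and the upgrade from spatial to flow-mollification.

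First, for spatial derivatives of $F - F_{\ep_v}$ with $0 \leq |\va| \leq L-1$, commute $\nb_{\va}$ inside and apply Taylor expansion of $\nb_{\va} F(\cdot - h)$ to order $L$ around the base point; the vanishing moments kill the polynomial part, leaving only the remainder and giving
\[
\co{\nb_{\va}(F - F_{\ep_v})} \lsm \ep_v^{L-|\va|} \co{\nb^L F} \lsm N^{-(L-|\va|)/L} \Xi^{|\va|} h_F \lsm \la^{|\va|} (h_F / N),
\]
where the last step uses $\Xi \leq \la = B_\la N \Xi$ and $N \geq 1$. For $|\va| = L$, apply the triangle inequality together with $\co{\nb^L F_{\ep_v}} \lsm \Xi^L h_F$ and compare to $\la^L h_F/N$, which is larger once $L \geq 1$. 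The same argument applies verbatim with $\chi_\ep$ replaced by $\chi_{\ep_x}$, giving identical bounds for $F - F_{\ep_x}$.

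Second, for the advective derivative, use the commutator identity
\[
\Ddt(F - F_{\ep_v}) = \bigl[\Ddt F - (\Ddt F)_{\ep_v}\bigr] - \bigl[(v_\ep \cdot \nb F)_{\ep_v} - v_\ep \cdot (\nb F)_{\ep_v}\bigr],
\]
which is obtained by computing $\Ddt[\int (\chi_\ep \ast \chi_\ep)(h) F(\cdot - h)\,dh]$ explicitly. Writing $\Ddt F = D_t F + (v_\ep - v) \cdot \nb F$ shows that $\Ddt F$ itself satisfies the bounds of \eqref{eq:assumedFbounds} with $h_{\Ddt F} \lsm \Xi e_v^{1/2} h_F$, so the first bracket is handled by applying the Step~1 argument to $\Ddt F$. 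The commutator is controlled by expanding $v_\ep(\cdot - h) \cdot \nb F(\cdot - h)$ and $v_\ep \cdot \nb F(\cdot - h)$ in Taylor series around the base point and exploiting the moment cancellation, which produces the requisite factor $N^{-1}$.

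Third, to pass from the purely spatial mollification to the flow-mollification in \eqref{eq:mollErrorBds}, write
\[
F - F_\ep = (F - F_{\ep_x}) + (F_{\ep_x} - \eta_{\ep_t} \ast_\Phi F_{\ep_x}),
\]
bound the first summand by Steps~1-2 and control the second using $|\bar{\chi}|\cdot\|\Ddt F_{\ep_x}\|_{C^0} \lsm \Xi e_v^{1/2} h_F$ together with $\ep_t \lsm (N \Xi e_\vp^{1/2})^{-1}$ and the commutation formula $\Ddt(\eta_{\ep_t} \ast_\Phi G) = -\eta_{\ep_t}' \ast_\Phi G$ from \cite[Lemma 18.2]{isett}; the factor $\ep_t \Xi e_v^{1/2} \lsm (e_v^{1/2}/e_\vp^{1/2})/N$ accounts for the disparity between \eqref{eq:spaceMollErrorBds} and \eqref{eq:mollErrorBds}. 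The main technical obstacle is handling the pressure $p$ and the analogous field $P$, for which only $\nb_{\va} p$ with $|\va| \geq 1$ and $D_t p$ are directly controlled; this is resolved by noting that the Taylor-with-moment-cancellation argument for $p - p_{\ep_v}$ never requires a $C^0$ bound on $p$, only bounds on $\nb p, \ldots, \nb^L p$, and similarly that $\Ddt p = D_t p + (v_\ep - v) \cdot \nb p$ is directly estimable from the assumed bounds without appealing to $\co{p}$.
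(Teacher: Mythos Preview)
Your overall strategy is sound, and for the only genuinely delicate case—the $|\va|=0$, $r=1$ contribution to \eqref{eq:spaceMollErrorBds}—your commutator decomposition is essentially the one the paper uses.  However, the paper's organization is considerably simpler than yours.  The key observation you miss is that for every contribution with $|\va|+r \geq 1$ the triangle inequality alone suffices: since $\la \geq B_\la N \Xi$ and $N \geq e_v^{1/2}/e_\vp^{1/2}$, one has $\Xi^{|\va|}(\Xi e_v^{1/2})^r h_F \lsm \la^{|\va|}(\la e_\vp^{1/2})^r (h_F/N)$ whenever $|\va|+r \geq 1$, and the bounds $\co{\nb_{\va}\Ddt^r F}, \co{\nb_{\va}\Ddt^r F_{\ep_v}}, \co{\nb_{\va}\Ddt^r F_\ep} \lsm \Xi^{|\va|}(\Xi e_v^{1/2})^r h_F$ are already available from Proposition~\ref{prop:prelimBds}.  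So the Taylor-with-moments argument is needed only at $|\va|=r=0$ (the $C^0$ bound, already in \eqref{eq:errorMollBounds}) and at $|\va|=0$, $r=1$ for \eqref{eq:spaceMollErrorBds}.  For \eqref{eq:mollErrorBds} the target carries the extra factor $e_v^{1/2}/e_\vp^{1/2}$, so even the $|\va|=0$, $r=1$ case is covered by the crude triangle inequality, and your Step~3 becomes unnecessary.

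Two smaller points.  First, in Step~2 you assert that $\Ddt F$ satisfies \eqref{eq:assumedFbounds}; it does so only to order $L-1$ in the spatial variable (we control $\nb_{\va} D_t F$ only for $|\va| \leq L-1$), so when you ``apply Step~1 to $\Ddt F$'' you must Taylor expand only to order $L-1$.  The resulting loss of $N^{1/L}$ is absorbed using $L\geq 2$ and $N \geq e_v/e_\vp$, exactly as the paper does.  Second, your Step~3 as written controls only the $C^0$ norm of $F_{\ep_x} - \eta_{\ep_t} \ast_\Phi F_{\ep_x}$; the commutation formula $\Ddt(\eta_{\ep_t}\ast_\Phi G) = \eta_{\ep_t}\ast_\Phi \Ddt G$ does not by itself control spatial derivatives of this difference, and pursuing this route would require bounds on $\Ddt^2 F_{\ep_x}$ you do not have.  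The paper avoids this entirely via the triangle-inequality shortcut above.
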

For the functions $(v, p, P)$ we will only use \eqref{eq:spaceMollErrorBds} although \eqref{eq:mollErrorBds} is also true in these cases.
\begin{proof}  For each of the fields $F$ above, we have bounds of the form,
\ali{
\co{F - F_{\ep_x}} &\leq (h_F/N) \label{eq:coMollerRecall} \\
\co{F - F_\ep} &\leq (e_v^{1/2}/e_\vp^{1/2})(h_F/N)
}
from\footnote{The bound $\co{P - P_{\ep_v}} \lsm (e_{\undvp}/\osn)$ follows from Proposition~\ref{prop:prelimBds} using \eqref{eq:pressIncBound}.} 
\eqref{eq:vMinVep},\eqref{eq:errorMollBounds},\eqref{eq:errorMollPress}, as well as the bounds (using that $\la \geq \osn \Xi$) 
\ali{
\co{\nb_{\va} F} + \co{\nb_{\va} F_{\ep_v}} + \co{\nb_{\va} F_\ep} &\lsm \Xi^{|\va|} h_F \lsm \la^{|\va|} (h_F/\osn) \mbox{ if } |\va| \geq 1, 
\label{eq:spaceDerivsofFs} \\
\co{\nb_{\va} D_t F} + \co{\nb_{\va} \Ddt F_{\ep_v} } + \co{\nb_{\va} \Ddt F_\ep} &\lsm \Xi^{|\va| + 1} e_v^{1/2} h_F \leq \la^{|\va| + 1} e_\vp^{1/2} \left(\fr{e_v^{1/2} h_F}{e_\vp^{1/2} \osn}\right),
\label{eq:DtderivsOfFs}
}
where \eqref{eq:spaceDerivsofFs} holds for $1 \leq |\va| \leq L$ while \eqref{eq:DtderivsOfFs} holds for $0 \leq |\va| \leq L-1$.  These bounds are obtained in \eqref{eq:vpBd}-\eqref{eq:lastFrqenBd}, \eqref{eq:velocBd1}-\eqref{eq:velocBd2}, \eqref{eq:coBdFieldsmoll}-\eqref{eq:c0DddtbdFmoll}, \eqref{eq:mollxPressbd}-\eqref{eq:mollxPressDdt} and \eqref{eq:pressIncBound}.  The final inequality in \eqref{eq:DtderivsOfFs} holds also for the coarse scale advective derivative $\Ddt F$ in place of $D_t F$ as a consequence of the comparison inequality $\underline{\brh}[F] \lsm \underline{H}[F]$ of Lemma~\ref{lem:comparisonLem}.  
established in\footnote{Here we use again \eqref{eq:pressIncBound} for $P$, which implies the bounds required in Proposition~\ref{prop:prelimBds}.} 
\eqref{eq:vMinusVepbd},\eqref{eq:errorMollBounds} and \eqref{eq:errorMollPress} gives \eqref{eq:mollErrorBds}.  

We now consider the sharper bound of \eqref{eq:spaceMollErrorBds}.  The bounds on spatial derivatives and the $C^0$ norm follow from \eqref{eq:coMollerRecall},\eqref{eq:spaceDerivsofFs}.  The bounds on the advective derivative will require a more delicate argument.  
For $1 \leq |\va| \leq L-1$, we can use the following estimate, which applies \eqref{eq:DtderivsOfFs} and \eqref{eq:vminusvepnbabd}:
\ali{
\Ddt( F - F_{\ep_v}) &= (v_\ep^j - v^j) \nb_j F + D_t F - \Ddt F_{\ep_x} \label{eq:DdtMollerExpress} \\
\co{ \nb_{\va}\Ddt (F - F_{\ep_v})} &\lsm \sum_{|\va_1| + |\va_2| = |\va|} \co{\nb_{\va_1} (v^j - v_\ep^j)} \co{\nb_{\va_2} \nb_j F } + \Xi^{|\va| + 1} e_v^{1/2} h_F \notag \\
&\lsm  \sum_{|\va_1| + |\va_2| = |\va|} [\la^{|\va_1|} (e_v^{1/2} / \osn ) ][ \la^{|\va_2|} \Xi h_F] +  \la^{|\va| - 1} \Xi^2 e_v^{1/2} h_F \notag \\
\co{ \nb_{\va}\Ddt (F - F_{\ep_v})} &\lsm \la^{|\va| +1} e_\vp^{1/2} (h_F / \osn), \qquad \mbox{ if } 1 \leq |\va| \leq L-1. \label{eq:desiredmollErrDtbd}
}
In the last two lines we used that $|\va| \geq 1$, $\la \geq \osn \Xi$ and $\osn \geq e_v^{1/2} / e_\vp^{1/2}$.  It now suffices to prove \eqref{eq:desiredmollErrDtbd} in the case $|\va| = 0$ to obtain \eqref{eq:spaceMollErrorBds}.  Since the first term in \eqref{eq:DdtMollerExpress} obeys $\co{(v_\ep^j - v^j) \nb_j F} \leq (e_v^{1/2} / \osn) \Xi h_F \leq \la e_\vp^{1/2} (h_F/\osn)$, it is enough to bound the term $D_t F - \Ddt F_{\ep_x}$.  

We will use that the mollifier used to define $v_\ep$ in \eqref{eq:vepBasicBd}-\eqref{eq:vminvepBd} and $F_{\ep_v} := \chi_\ep \ast \chi_\ep \ast F := \chi_{\ep+\ep} \ast F$ 
satisfies the vanishing moment conditions $\int_{\R^3} h^{\va} \chi_\ep(h) dh = 0$ for all multi-indices $1 \leq |\va| \leq L$.  To reduce the appearances of minus signs in the following, we will also be using that $\chi_\ep(\cdot)$ is even.  Writing
\ali{
D_t F - \Ddt F_{\ep_v} &= (D_t F - \chi_{\ep+\ep} \ast D_t F) + \chi_{\ep+\ep} \ast D_t F - \Ddt F_{\ep_v}, \notag
}
the vanishing moment condition allows us to apply the inequality $\co{G - \chi_\ep \ast G} \lsm_k \ep^k \co{\nb^k G}$ with $k = L-1$ and $\ep = \ep_v = c_0 \osn^{-1/L} \Xi^{-1}$ to estimate the first term by
\ali{
\co{D_t F - \chi_{\ep + \ep} \ast D_t F } &\leq \co{D_t F - \chi_\ep \ast D_t F} + \co{ \chi_\ep \ast ( D_t F - \chi_\ep \ast D_t F) } \\
&\lsm (\osn^{-1/L} \Xi^{-1} )^{L-1} \co{\nb^{L-1} D_t F} \lsm \osn^{-1 + 1/L} \Xi e_v^{1/2} h_F.
}
Using $\la \geq \osn \Xi$, $L \geq 2$ and $\osn \geq e_v/e_\vp$, the last term is bounded by $\la e_\vp^{1/2}(h_F/\osn)$ as desired.  

For the commutator term $Q_\ep[v,F] := \Ddt F_{\ep_x} - \chi_{\ep+\ep} \ast D_t F$, we follow \cite[Section 16]{isett} to express
\ali{
Q_\ep[v,F] &:= v_\ep^a \nb_a(\chi_\ep \ast \chi_\ep F) - \chi_\ep \ast \chi_\ep \ast(v^a \nb_a F) \label{eq:commutermDef} \\
\begin{split}
&= \chi_\ep \ast(v_\ep^a \nb_a(\chi_\ep \ast F)) + [v_\ep^a \nb_a, \chi_\ep \ast](\chi_\ep \ast F) \\
&- \chi_\ep \ast(v^a \nb_a(\chi_\ep \ast F) ) + \chi_\ep \ast([v^a \nb_a, \chi_\ep \ast] F) 
\end{split} \label{eq:allCommuterms}
}
We introduce here the notation $F_{\ep1} := \chi_\ep \ast F$.  Each of these terms when properly viewed has a form amenable to our desired estimate, the simplest term being
\ali{
\co{\chi_\ep \ast(v_\ep^a - v^a)\nb_a F_{\ep_1} )} &\lsm \co{v - v_\ep} \Xi h_F \lsm (e_v^{1/2}/\osn) \Xi h_F \leq \la e_\vp^{1/2}(h_F/\osn). \label{eq:simplecommuterm}
}
We consider the last commutator term in \eqref{eq:allCommuterms} since the other remaining term is similar.  We write
\ali{
-[v^a \nb_a, \chi_\ep \ast] F(x) &= \int_{\R^3} (v^a(x+h) - v^a(x))[\nb_aF](x +h) \chi_\ep(h) dh. \label{eq:commutExpress}
}
We will apply Taylor expansion to order $L - 1$ (noting $L \geq 2$) with
\ali{
\begin{split}
G(1) &= \sum_{i=0}^{L-2} \fr{G^{(i)}(0)}{i!} + \fr{1}{(L-2)!} \int_0^1 (1-\si)^{L-2} G^{(L-1)}(\si) \tx{d}\si \label{eq:GtaylorExpand} \\
G(\si) &:= (v^a(x+\si h) - v^a(x))[\nb_aF](x + \si h) = \si \int_0^1\nb_b v^a(x + \tau \si h)[\nb_aF](x + \si h) \tx{d}\tau h^b  .
\end{split}
}
The vanishing moment condition for $\chi_\ep(h)$ implies that, when we substitute $G(1)$ in \eqref{eq:GtaylorExpand} for the integrand in \eqref{eq:commutExpress}, all the terms involving $\fr{G^{(i)}(0)}{i!}$ vanish because they are proportional to $\int_{\R^3} h^{\va} \chi_\ep(h) \tx{d}h$ for some $1 \leq |\va| \leq L$, and \eqref{eq:commutExpress} becomes an expression of the form
\ALI{
\sum_{\va_1, \va_2}  \fr{c_{\va_1, \va_2}}{(L {-} 2)!}\iiint_{\R^3} \nb_{\va_1}\nb_bv^a(x + \tau \si h) [\nb_{\va_2} \nb_a F](x + \si h) h^b h^{\va_1} h^{\va_2} \chi_\ep(h) \tx{d} h \, \si(1{-}\si)^{L-2} \tau^{|\va_1|} \tx{d}\tau \tx{d}\si,
}
where the sum runs over multi-indices with $|\va_1| + |\va_2| = L-1$ and the bounds of integration for $\tx{d}\tau$ and $\tx{d}\si$ are from $0$ to $1$.  This long expression gives us the following bound
\ali{
\co{\eqref{eq:commutExpress}} &\lsm \sum_{|\va_1| + |\va_2| = L-1} [\Xi^{|\va_1| + 1} e_v^{1/2}]\cdot[\Xi^{|\va_2| + 1} h_F] \cdot \| \, |h|^L \chi_\ep \|_{L^1(\R^3)} \notag \\
&\lsm \ep_v^L \Xi^{L+1} e_v^{1/2} h_F \lsm \Xi e_v^{1/2} (h_F/\osn) \leq \la e_\vp^{1/2} (h_F/\osn). \label{eq:commuterm2}
}
This bound implies that $\co{\chi_\ep \ast ([ v^a \nb_a, \chi_\ep \ast] F)} = \co{\chi_\ep \ast \eqref{eq:commutExpress}} \lsm \la e_\vp^{1/2} (h_F/\osn)$ as desired, and the bound for the remaining term $[v_\ep^a \nb_a, \chi_\ep \ast](F_{\ep 1})$ in \eqref{eq:allCommuterms} follows by the same argument.
\end{proof}

Our next Lemma will be useful when we apply order $-1$ operators of the form $Q \ast$ that are restricted to frequencies of the order $\la$.  Similar estimates appear in \cite[Lemma 7.2]{isettVicol} and \cite{buckShkVicSQG}.  
\begin{lem} \label{lem:commutator} Suppose $Q \colon \R^3 \to \R$ is Schwartz and satisfies the following bounds
\ali{
\max_{0 \leq |\va| \leq |\vcb| \leq L} \la^{-(|\vcb| - |\va|)} \| h^{\va} \nb_{\vcb} Q] \|_{L^1(\R^3)} &\leq \La^{-1}
}
for some real number $\La^{-1} \geq 0$.  Then for any smooth $U$ on $\T^3$ we have
\ali{ 
\brh[Q \ast U] &\lsm \La^{-1} \left( \co{U} + \la^{-1} e_\vp^{-1/2} \co{\Ddt U} \right) \label{eq:solvedDivBd}
}
As a consequence, we have $\brh[Q \ast U] \lsm \La^{-1} \brh[U]$ and, if $U = e^{i \la \xi} u$ with $\Ddt \xi = 0$, we have that
\ali{
\brh[Q \ast (e^{i \la \xi} u)] &\lsm \La^{-1}(\co{u} +  \la^{-1} e_\vp^{-1/2} \co{\Ddt u}) \lsm \La^{-1} \brh[u].
}
\end{lem}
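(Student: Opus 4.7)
The plan is to separate the estimate \eqref{eq:solvedDivBd} into the spatial part and the single advective-derivative part that make up $\brh[\cdot]$. The spatial part is essentially Young's inequality: for $0 \leq |\va| \leq L$,
\[
\co{\nb_{\va}(Q \ast U)} \leq \|\nb_{\va} Q\|_{L^1} \co{U} \leq \La^{-1} \la^{|\va|} \co{U},
\]
where the last step applies the hypothesis with zero inner weight ($h^{\va_{\text{inner}}} \equiv 1$) and outer derivative order $|\va|$. Dividing by $\la^{|\va|}$ this matches the claimed $\La^{-1}\co{U}$ contribution.

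For the advective derivative I would exploit that $Q$ is time-independent to write
\[
\Ddt(Q\ast U) = Q\ast(\Ddt U) + [v_\ep^i \nb_i,\, Q\ast]U.
\]
The first term obeys $\co{\nb_{\va}(Q\ast \Ddt U)} \leq \La^{-1}\la^{|\va|}\co{\Ddt U}$ by the same Young's argument, which gives the $\La^{-1}\la^{-1}e_\vp^{-1/2}\co{\Ddt U}$ contribution after dividing by $\la^{|\va|}(\la e_\vp^{1/2})$. For the commutator, I would begin from the representation
\[
[v_\ep^i \nb_i,\, Q\ast]U(x) = \int Q(h)(v_\ep^i(x)-v_\ep^i(x-h))\nb_i U(x-h)\, dh,
\]
rewrite $\nb_i U(x-h) = -\nb_i^h U(x-h)$, integrate by parts in $h$, and use $\nb_i v_\ep^i = 0$ (applied to $v_\ep^i(x-h)$ in the $h$-variable, where it introduces a sign) to collapse the result. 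The divergence-free condition will kill two of the three Leibniz terms and leave the clean identity
\[
[v_\ep^i \nb_i,\, Q\ast]U(x) = \int h^j \nb_i Q(h) \int_0^1 \nb_j v_\ep^i(x-\sigma h)\, d\sigma\cdot U(x-h)\, dh,
\]
after one application of the Taylor identity $v_\ep^i(x)-v_\ep^i(x-h) = h^j\int_0^1 \nb_j v_\ep^i(x-\sigma h)\, d\sigma$. At $|\va|=0$ this is bounded by $\|h^j \nb_i Q\|_{L^1}\co{\nb v_\ep}\co{U} \leq \La^{-1}\,\Xi e_v^{1/2}\co{U}$ (using the hypothesis with $|\va_{\text{inner}}| = |\vcb|=1$ and \eqref{eq:velocBd1}), and since $\Xi e_v^{1/2}\leq \la e_\vp^{1/2}$ (as $\la\geq N\Xi$ and $N\geq e_v^{1/2}/e_\vp^{1/2}$) this fits within the allowed $\La^{-1}(\la e_\vp^{1/2})\co{U}$ budget.

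For mixed $\nb_{\va}\Ddt$ with $|\va|\geq 1$, I would apply $\nb_{\va}^x$ inside the integral representation above, distribute via Leibniz between the $v_\ep$ factor and the $U(x-h)$ factor, and then repeat the $h$-integration by parts to move every remaining $\nb^x$ off of $U$. Each such integration by parts converts a derivative on $U$ into one on $h^j\nb_i Q(h)$ (controlled by the hypothesis with a larger outer index $\vcb'$, keeping $|\va'_{\text{inner}}| \leq |\vcb'|$) or one more derivative on $v_\ep$ (controlled by Proposition~\ref{prop:prelimBds}), and the counting inequality \eqref{eq:countingIneq} with $y = L-1$ absorbs the combinatorics, exactly as in the proofs of Propositions~\ref{prop:prelimBds}--\ref{prop:compBds}. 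This gives $\co{\nb_{\va}[v_\ep^i\nb_i,\, Q\ast]U}\lsm \La^{-1}\la^{|\va|+1}e_\vp^{1/2}\co{U}$, completing \eqref{eq:solvedDivBd}. The main obstacle will be this multi-index accounting for the commutator expansion: one must verify that every resulting kernel is still one of the weighted $L^1$ norms controlled by the hypothesis.

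The two consequences follow immediately. Since $\co{U}\leq \brh[U]$ and $\la^{-1}e_\vp^{-1/2}\co{\Ddt U}\leq \brh[U]$ by definition of $\brh[\cdot]$, substitution into \eqref{eq:solvedDivBd} gives $\brh[Q\ast U]\lsm \La^{-1}\brh[U]$. For $U = e^{i\la\xi}u$ with $\Ddt\xi = 0$, the chain rule yields $\Ddt U = e^{i\la\xi}\Ddt u$, so $\co{U}=\co{u}$ and $\co{\Ddt U}=\co{\Ddt u}$, and applying \eqref{eq:solvedDivBd} with these identifications gives the stated bound.
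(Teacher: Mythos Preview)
Your approach is correct and follows essentially the same route as the paper: Young's inequality for the spatial derivatives, then the decomposition $\Ddt(Q\ast U) = Q\ast(\Ddt U) + [v_\ep^i\nb_i, Q\ast]U$, with the commutator rewritten via integration by parts in $h$ and the Taylor identity for $v_\ep^i(x)-v_\ep^i(x-h)$ to arrive at the kernel $h^j\nb_i Q(h)$ acting on $U$ and $\nb v_\ep$. The only slip is expository: after the integration by parts the divergence-free condition kills one of the three Leibniz terms (the one where $\nb_i^h$ hits $v_\ep^i(x-h)$), not two, while the remaining two combine into $\nb_i Q(h)(v_\ep^i(x)-v_\ep^i(x-h))$; your stated identity is nonetheless correct, and the higher-order step (transferring $\nb_{\va}^x$ from $U(x-h)$ onto the product $\nb_b v_\ep^j(x-\sigma h)\, h^b\nb_j Q(h)$ via $h$-integration by parts) matches the paper's argument, though the counting inequality is not actually needed here since $\Xi\leq \la$ and $\Xi e_v^{1/2}\leq \la e_\vp^{1/2}$ suffice directly.
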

\begin{proof}  The estimate on spatial derivatives follows from the standard bound
\ALI{
\co{\nb_{\va}[ Q \ast U ]} = \co{ (\nb_{\va} Q)\ast U } &\leq \co{U} \| \nb_{\va} Q \|_{L^1(\R^3)} \leq \La^{-1} \la^{|\va|} \co{U}, \qquad 0 \leq |\va| \leq L.
}
For the advective derivative calculation, we use ${}^{(\bar{x})}\nb$ to denote differentiation in the $\bar{x}$ variable and calculate $\Ddt[Q \ast U]$ at the point $x \in \T^3$ as follows
\ali{
\Ddt[Q \ast U] &= (\pr_t + v_\ep^j(x) \nb_j) \int_{\R^3} U(x-h) Q(h) \tx{d}h =  \int_{\R^3} \Ddt U(x-h) Q(h) \tx{d}h - B \notag \\
B &= \int_{\R^3}(v_\ep^j(x-h) - v_\ep^j(x)) \nb_j U(x-h) Q(h) dh. \label{eq:commuterm}
}
The first term is equal to $Q \ast \Ddt U(x)$, which we bound as before by
\ALI{
\co{\nb_{\va} Q \ast \Ddt U } &\leq \La^{-1} \la^{|\va|} \co{\Ddt U}, \qquad 0 \leq |\va| \leq L.
}
For the commutator term \eqref{eq:commuterm} we first use ${}^{(x)}\nb_j[U(x-h)] = -{}^{(h)} \nb_j[U(x-h)]$ to integrate by parts and, using $\nb_j v_\ep^j = 0$ and the Fundamental Theorem, we write
\ALI{
\eqref{eq:commuterm} &= \int_{\R^3} ( v_\ep^j(x-h) - v_\ep^j(x)) U(x-h)  \nb_j Q(h) \tx{d} h \\
&= -\int_0^1 \int_{\R^3} U(x-h) \nb_b v_\ep^j(x-\si h) h^b \nb_j Q(h) \tx{d} h \tx{d} \si.
}
Now differentiating $0 \leq |\va| \leq L - 1$ times in $x$ and performing a similar integration by parts when the derivative hits $U(x-h)$, we obtain our desired estimate
\ALI{
\nb_{\va} \eqref{eq:commuterm} &= \sum_{|\va_1| + |\va_2| = |\va|} c_{\va_1,\va_2, \va} \int_0^1 \int_{\R^3} U(x-h) {}^{(h)}\nb_{\va_2}[ \nb_{\va_1} \nb_b v_\ep^j(x - \si h) h^b \nb_j Q(h) ] \tx{d} h \tx{d} \si \\
\co{\nb_{\va} \eqref{eq:commuterm}} &\lsm \sum_{|\vcb_1| + |\vcb_2| = |\va|} \co{U} \co{\nb_{\vcb_1} \nb_b v_\ep^j} \| \nb_{\vcb_2}[h^b \nb_j Q] \|_{L^1(\R^3)} \\
\co{\nb_{\va} \eqref{eq:commuterm}} &\lsm \sum_{|\vcb_1| + |\vcb_2| = |\va|}  \co{U} [\Xi^{|\vcb| + 1} e_v^{1/2}] [\la^{|\vcb_2|} \La^{-1}] \lsm \La^{-1} \co{U} \la^{|\va|+1} e_\vp^{1/2}.
}
In the last line we used $\la \geq \osn \Xi$ and $\osn \geq (e_v^{1/2}/e_\vp^{1/2})$.
\end{proof}

With the above preliminaries in hand, we can now begin estimating the new stress.

\subsection{Estimating the new stress} \label{sec:boundNewStress}

\, 
\, Recall from \eqref{eq:stressDecomp} that the new stress $\ost{R}^{j\ell} = R_T^{j\ell} + R_S^{j\ell} + R_H^{j\ell} + R_M^{j\ell}$ will be decomposed into four terms.  The terms $R_T^{j\ell}$ and $R_H^{j\ell}$ are symmetric tensors that must solve the equations
\ali{
\nb_j R_T^{j\ell} &= \pr_t V^\ell + \nb_j[v_\ep^j V^\ell + V^j v_\ep^\ell] \notag \\
\nb_j R_T^{j\ell} &= \pr_t V^\ell + v_\ep^j  \nb_j V^\ell + V^j  \nb_j v_\ep^\ell \label{eq:transportTermThird} \\
\nb_j R_H^{j\ell} &= \sum_{J \neq \bar{I}} \nb_j( V_I^j V_J^\ell). \label{eq:highFreqTermThird}
}
Observe that right hand sides of \eqref{eq:transportTermThird} and \eqref{eq:highFreqTermThird} both have frequency support contained in the annulus 
\ali{
\Fsupp \eqref{eq:transportTermThird} \cup \Fsupp \eqref{eq:highFreqTermThird} \subseteq \{ B_\la \osn \Xi \leq |p|  \lsm B_\la \osn \Xi \} \subseteq \widehat{\R^3}. \label{eq:containFreq}
}
This containment uses that the Fourier transform maps products to convolutions, the containment $\Fsupp V_I \subseteq \{ 2 B_\la \osn \Xi \leq (n_I/3) B_\la \osn \Xi \leq |p| \leq 3 n_I B_\la \osn \Xi \}$ for $n_I$ chosen appropriately in Section~\ref{sec:constrShape} (noting \eqref{eq:noWrongCascades}), and the containment $\Fsupp v_\ep \subseteq \{ |p| \leq (1/10) \osn^{1/2} \Xi \}$ for $c_0$ chosen small in \eqref{eq:vMinusVepbd}.

Now let $Q_a^{j\ell} \ast$ be a right-inverse for the divergence operator with order $-1$ taking values in symmetric tensors.  Let $Q_{a, \approx \la}^{j\ell}  \ast = P_{\approx \la} Q_a^{j\ell} \ast$ be the same operator but localized to frequencies of order $\la$ by a Fourier multiplier $P_{\approx \la} = \chi_{\approx \la} \ast$ whose symbol is a smooth, rescaled cutoff function $\widehat{\chi_{\approx \la}}(p) = \widehat{\chi_{\approx 1}}(p/\la)$ supported in $\Fsupp \chi \subseteq \{ \la/2 \leq |p| \lsm \la \}$ with $\widehat{\chi_{\approx \la}}(p) = 1$ on \eqref{eq:containFreq}.  We solve \eqref{eq:transportTermThird}-\eqref{eq:highFreqTermThird} by setting
\ali{
R_T^{j\ell} &= Q_{a, \approx \la}^{j\ell} \ast [ \Ddt V^a + V^b \nb_b v_\ep^a  ], \qquad 
R_H^{j\ell} = \sum_{J \neq \bar{I}} Q_{a, \approx \la}^{j\ell} \ast [ \nb_b( V_I^b V_J^a)   ]. \label{eq:defOfRTRH}
}
One example of such an order $-1$ inverse to the divergence is the Fourier-multiplier whose symbol is
\ali{
\begin{split}
\widehat{Q}_{a}^{j\ell}(p) &= (-i) \left( \widehat{Q}_{a \parallel}^{j\ell}(p) + \widehat{Q}_{a\perp}^{j\ell}(p) \right) \\
\widehat{Q}_{a \parallel}^{j\ell}(p) = |p|^{-2} p_a \de^{j\ell}, \quad &\widehat{Q}_{a\perp}^{j\ell}(p) = |p|^{-2}(p^j \de_a^\ell + \de_a^j p^\ell) - 2 |p|^{-4}(p^j p^\ell p_a ).
\end{split}
}
The properties we need are that $\widehat{Q}_{a}^{j\ell}(p)$ is symmetric in $j\ell$, degree $-1$ homogeneous, smooth away from $0$, and satisfies $(i p_j) \widehat{Q}_a^{j\ell}(p) = \de_a^\ell$.  With these properties, we have that $Q_{a,\approx \la}^{j\ell}(h)$ 
is a rescaling of a Schwartz function with $\widehat{Q}_{a,\approx \la}^{j\ell}(p) = \la^{-1} \widehat{\chi_{\approx 1}}(p/\la) \widehat{Q}_a^{j\ell}(p/\la)$, and that $R_T^{j\ell}$ and $R_H^{j\ell}$ defined in \eqref{eq:defOfRTRH} solve \eqref{eq:transportTermThird}-\eqref{eq:highFreqTermThird} and satisfy the containment \eqref{eq:containFreq}.  In physical space, the kernel takes the form $Q_{a,\approx \la}^{j\ell}(h) = \la^{-1} ( \la^3 Q_{a, \approx 1}^{j\ell}(\la h) ) $ with $Q_{a, \approx 1}^{j\ell}$ a fixed Schwartz function.  Consequently, $Q_{a,\approx \la}^{j\ell}(h)$ satisfies the hypotheses of Lemma~\ref{lem:commutator} with $\La^{-1} \lsm \la^{-1}$.  Applying Lemma~\ref{lem:commutator} to \eqref{eq:defOfRTRH} and using the estimates
\ali{
\co{ \Ddt V^a + V^b \nb_b v_\ep^a } &\lsm \ostu^{-1} e_\vp^{1/2}  + e_\vp^{1/2} \Xi e_v^{1/2} \label{eq:transportTermBoundsnow} \\ 
\co{\Ddt ( \Ddt V^a + V^b \nb_b v_\ep^a )} &\lsm \ostu^{-1} (\ostu^{-1} e_\vp^{1/2}  + e_\vp^{1/2} \Xi e_v^{1/2}  ), \label{eq:transportTermBoundsnow2}
}
which follow from \eqref{eq:velocBd3} and \eqref{eq:highFreqCorrect}, we obtain the estimate $\brh[R_T] \lsm \la^{-1} \ostu^{-1} e_\vp^{1/2}$.

Turning now to $R_H^{j\ell}$, we calculate
\ali{
 \nb_j( V_I^j V_J^\ell) = V_I^j \nb_j V_J^\ell &= \VR_I^j \nb_j \VR_J^\ell + \de V_I^j \nb_j V_J^\ell + \VR_I^j \nb_j \de V_J^\ell \label{eq:highHighExpress1} \\
\VR_I^j \nb_j \VR_J^\ell &= (i \la) e^{i \la (\xi_I + \xi_J)} v_I^j \nb_j \xi_J v_I^\ell = (i \la) e^{i \la (\xi_I + \xi_J)} ( \hat{\de} v_I^j \nb_j \hxii_J + v_I^j \hat{\de} \nb_j \xi_J ) v_I^\ell, \label{eq:highHighExpress2}
}
where we have used $v_I^j = \hat{v}_I^j + \hat{\de} v_I^j$, $\nb \xi_J = \nb \hxii_J + \hat{\de} \nb \xi_J$ and $\hat{v}_I^j \nb_j \hxii_J = 0$.

Using \eqref{eq:highHighExpress1}-\eqref{eq:highHighExpress2}, Propositions~\ref{prop:transportEstimates},~\ref{prop:correctAmpBds}, and~\ref{prop:correctionBounds}, and that $\Xi \leq \ost{b} \la $ 
 for $B_\la$ large and $\osn \geq (e_v/e_\vp)$, we obtain the estimates
\ali{
\co{\VR_I^j \nb_j \VR_J^\ell} &\lsm \la (\ost{b} e_\vp^{1/2} \cdot 1 + e_\vp^{1/2} \cdot \ost{b} ) e_\vp^{1/2} \lsm \la \ost{b} e_\vp \\
\co{V_I^j \nb_j V_J^\ell } &\lsm \la \ost{b} e_\vp + (\la^{-1} \Xi e_\vp^{1/2})(\la e_\vp^{1/2}) + e_\vp^{1/2} \cdot [\Xi e_\vp^{1/2}] \lsm \la \ost{b} e_\vp. \label{eq:c0HHbd}
}
We may also apply $\Ddt$ to \eqref{eq:highHighExpress2} and use the product rule (noting $\Ddt e^{i \la (\xi_I + \xi_J)} = 0$) to obtain that
\ali{
\co{\Ddt[\VR_I^j \nb_j \VR_J^\ell]} &\lsm \la \ostu^{-1} (\ost{b} e_\vp^{1/2} \cdot 1 + e_\vp^{1/2} \cdot \ost{b} ) e_\vp^{1/2} \lsm \la \ostu^{-1} \ost{b} e_\vp \\
\co{\Ddt[V_I^j \nb_j V_J^\ell] } &\lsm \la \ostu^{-1} \ost{b} e_\vp + \ostu^{-1} \Xi e_\vp \lsm \la \ostu^{-1} \ost{b} e_\vp.  \label{eq:c0DdtHHbd}
}
Here we have again used the bounds of Propositions~\ref{prop:transportEstimates},~\ref{prop:correctAmpBds}, and~\ref{prop:correctionBounds} in combination.

Applying Lemma~\ref{lem:commutator} for \eqref{eq:defOfRTRH} with $\La^{-1} \lsm \la^{-1}$ and using \eqref{eq:transportTermBoundsnow}-\eqref{eq:transportTermBoundsnow2} and \eqref{eq:c0HHbd}, \eqref{eq:c0DdtHHbd}, we obtain 
\ali{
\begin{split}
\brh[R_T] + \brh[R_H] &\lsm \la^{-1} \ostu^{-1} e_\vp^{1/2} + \ost{b} e_\vp \lsm \ost{b} e_\vp \\
\brh[R_T] + \brh[R_H] &\lsm B_\la^{-1/2} (e_v^{1/2}/e_\vp^{1/2} \osn)^{1/2}e_\vp. \label{eq:boundedHighTrans}
\end{split}
}
In particular, for $B_\la$ sufficiently large we can ensure that
\ali{
\co{R_T} + \co{R_H} &\leq 10^{-3} (e_v^{1/2}/e_\vp \osn)^{1/2}e_\vp. \label{eq:RTHbdd}
}
Using that $V_I^\ell = e^{i\la \xi_I} v_I^\ell + \de V_I^\ell$ and \eqref{eq:vMinusVepbd},\eqref{eq:errorMollBounds}, an even better $C^0$ estimate holds for the term
\ali{
R_M^{j\ell} &= \sum_I [(v^j - v_\ep^j) V_I^\ell + V_I^j (v^\ell - v_\ep^\ell) ] + (R^{j\ell} - R_\ep^{j\ell}) \notag \\
\co{R_M^{j\ell}} &\leq 10^{-3} (e_v^{1/2}/\osn) e_\vp^{1/2} + \co{v - v_\ep} \co{\sum_I \de V_I} + 200^{-1} (e_\vp/\osn) \notag \\
\co{R_M^{j\ell}}&\leq 10^{-2}(e_v^{1/2}/\osn) e_\vp^{1/2} + A (e_v^{1/2}/\osn) B_\la^{-1} (e_\vp^{1/2} / \osn) \leq 10^{-1} (e_v^{1/2}/\osn) e_\vp^{1/2} \label{eq:c0RMbd}
}
for $B_\la$ sufficiently large.  Also, using Proposition~\ref{prop:mollErrProp} and the product rule of Lemma~\ref{lem:comparisonLem}, we have
\ali{
\brh[R_M] &\lsm \brh[v - v_\ep] \brh[V] + \brh[R - R_\ep] \lsm (e_v^{1/2}/\osn) e_\vp^{1/2} + (e_v^{1/2}/e_\vp^{1/2})(e_\vp/\osn) \lsm (e_v^{1/2}/ \osn) e_\vp^{1/2}. \label{eq:RMbd}
}
The last term we must estimate is $R_S^{j\ell}$.  We write $R_S^{j\ell} = P \de_{[2\ast]}^{j\ell} + R_{[2\ast]}^{j\ell} + R_{[3\ast]}^{j\ell} + \wtld{R}_S^{j\ell}$, with $\wtld{R}_S^{j\ell}$ the lower order term calculated in line \eqref{eq:stressTerm}.  By Proposition~\ref{prop:allTheEndBounds} and Lemma~\ref{lem:comparisonLem}, we then obtain
\ali{
\brh[P] + \brh[R_{[2\ast]}] &\lsm e_{\undvp}, \qquad \brh[R_{[3\ast]}] \lsm e_G  \label{eq:PR2bds} \\
\brh[\wtld{R}_S] &\lsm \max_I \left[\brh[\de V_I] (\brh[V_I] + \brh[\VR_I]) + \brh[\hat{\de} v_I](\brh[v_I] + \brh[\hat{\de} v_I]) \right] \notag \\
\brh[\wtld{R}_S] &\lsm [\la^{-1} \Xi e_\vp^{1/2}][e_\vp^{1/2}] + [\ost{b} e_\vp^{1/2} ][e_\vp^{1/2}] \lsm B_\la^{-1/2} (e_v^{1/2}/e_\vp^{1/2} \osn)^{1/2} e_\vp. \label{eq:lastLowerOrdBd}
}
The last inequality uses that $\la = B_\la \osn \Xi$, $\ost{b} = b_0 B_\la^{-1/2} (e_v^{1/2}/e_\vp^{1/2} \osn)^{1/2}$, and \eqref{eq:Nastdef}. 


We have now proved our desired estimates for $P, R_{[2\ast]}, R_{[3\ast]}$ and $R_{[G\ast]}^{j\ell} := \wtld{R}_S^{j\ell} + R_M^{j\ell} + R_T^{j\ell} + R_H^{j\ell}$ that will be sufficient for the bounds required in Lemma~\ref{lem:mainLem}.  We now move on to defining and estimating the new unresolved flux density and current $\ost{\kk}$ and $\ost{\vp}$.



\subsection{Calculating the unresolved flux density and current} \label{sec:newUnresolvedTerms}
In this Section, we define and estimate the new unresolved flux density and current, $\ost{\kk}$ and $\ost{\vp}$.  

According to the definition of a dissipative Euler-Reynolds flow, we must write the new resolved energy flux density $\ost{\DD} = \DD[\ost{v}, \ost{p}]$ in the form
\ali{
\ost{\DD} &= \ost{D}_t \ost{\kk} + \nb_j[\ost{v}_\ell \ost{R}^{j\ell}] + \nb_j \ost{\vp}^j - \ost{\mu}, \label{eq:newFluxGoal}
}
where $\ost{R}^{j\ell} = R_T^{j\ell} + R_S^{j\ell} + R_H^{j\ell} + R_M^{j\ell}$ is exactly the new stress that has been estimated in Section~\ref{sec:boundNewStress}, $\ost{v}^\ell = v^\ell + V^\ell$ and $\ost{D}_t = (\pr_t + \ost{v}^j \nb_j)$.  To see that this goal may be accomplished, we must properly rewrite the terms introduced in Section~\ref{sec:mainStep1} in lines \eqref{eq:newDensity}, \eqref{eq:phiTermLeft}-\eqref{eq:highkkTerm}.  In the process, we will frequently encounter the vector field $\wtld{V}^\ell := \ost{v}^\ell - v_\ep^\ell = V^\ell + (v^\ell - v_\ep^\ell)$, which we note satisfies the bound
\ali{
\brh[\wtld{V}] &\leq  \brh[V] + \brh[v - v_\ep] \lsm e_\vp^{1/2},  \label{eq:tildeVLevel}
}
where we used Propositions~\ref{prop:allTheEndBounds} and \ref{prop:mollErrProp} (specifically $\brh[v - v_\ep] \lsm e_v^{1/2}/\osn$) and $\osn \geq (e_v^{1/2}/e_\vp^{1/2})$.

We construct $\ost{\kk}$ to have the form $\ost{\kk} = \kk_L + \kk_M + \kk_H$, where $\kk_M = \kk - \kk_\ep$ appears in $\DDR_{\kk H}$ in \eqref{eq:highkkTerm}, $\kk_L$ is defined in \eqref{eq:leftOverDensity}, and $\kk_H$ will be extracted by writing the term \eqref{eq:transFluxTerm} in the following form
\ali{
\begin{split}
\mathring{\DD}_T &= \DDR_{TA} + \DDR_{T\kk} + \DDR_{TB}  \\
\DDR_{TA} := \Ddt(v_\ep \cdot V) + \nb_j &\left[ \left( \fr{|v_\ep|^2}{2} + p_{\ep_v}\right) V^j \right], \qquad 
\DDR_{T\kk} := \Ddt( (v - v_\ep) \cdot V ) = \Ddt( \kk_H). \label{eq:firstTransTermsFlux}
\end{split}
}
In \eqref{eq:firstTransTermsFlux} we have chosen the $\kk_H = (v - v_\ep) \cdot V$ part of $\ost{\kk} = \kk_L + \kk_M + \kk_H$.  Now \eqref{eq:fltermLow}, \eqref{eq:leftOverDensity}, \eqref{eq:firstTransTermsFlux} give
\ali{
\DD_{\kk L} + \Ddt (\kk - \kk_\ep) + \DDR_{T\kk} &= \Ddt[\kk_L + \kk_M + \kk_H] - \Ddt e(t) = \ost{D}_t[\ost{\kk}] - \nb_j[\ost{\kk}\wtld{V}^j] - \Ddt e(t). \label{eq:densityFluxTerm}
}
Recalling that $\ost{\mu} = \mu - \Ddt e(t)$, we see already the terms $\ost{D}_t \ost{\kk}$ and $- \ost{\mu}$ that are required in \eqref{eq:newFluxGoal}.  We find the remaining terms by manipulating the components of \eqref{eq:newDensity}-\eqref{eq:vpTerms} such as.
\ali{
\DDR_{S} &= \nb_j\left[v_\ell \left(\sum_I V_I^j \overline{V_I^\ell} + P \de^{j\ell} + R^{j\ell} \right) \right] = \nb_j\left[ v_\ell ( R_S^{j\ell} + (R^{j\ell} - R_\ep^{j\ell}) ] \right]  \notag \\
\DDR_{S} &= \nb_j[ \ost{v}_\ell ( R_S^{j\ell} + (R^{j\ell} - R_\ep^{j\ell}) )   ] - \nb_j[ V_\ell (R_S^{j\ell} + (R^{j\ell} - R_\ep^{j\ell}) ]. \label{eq:stressCurrTerm1}
}
Our general aim will be to absorb terms such as the latter terms in \eqref{eq:stressCurrTerm1} into the new unresolved flux current $\ost{\vp}$.  However, the particular term \eqref{eq:stressCurrTerm1} poses a difficulty as the term $V_\ell R_S^{j\ell}$ is in fact too large to be absorbed into the current $\ost{\vp}$.  
We will treat this difficulty later in Section~\ref{sec:mainTermProblemCurrent}.  For now we move on to the flux term $\DDR_H$, which we treat as follows
\ali{
\DDR_H &= \nb_j\left[ v_\ell\left( \sum_{J \neq \bar{I}} V_I^j V_J^\ell\right) \right]  \notag \\
\DDR_H &= \sum_{J \neq \bar{I}} \nb_j v_{\ep\ell} V_I^j V_J^\ell  + \sum_{J \neq \bar{I}} \nb_j[ (v - v_\ep)_\ell  V_I^j V_J^\ell ] + \DDR_{HB}, \label{eq:DDRH1term} \\
 \DDR_{HB} &= \sum_{J \neq \bar{I}} v_{\ep\ell} \nb_j (V_I^j V_J^\ell ) \notag \\
\DDR_{HB} &\stackrel{\eqref{eq:highFreqTermThird}}{=}  v_{\ep\ell} \nb_j R_H^{j\ell} = \nb_j [ v_{\ep\ell} R_H^{j\ell} ] - R_H^{j\ell} \nb_j v_{\ep\ell}. \label{eq:DDRHBterm} 
}
We expand $\DDR_T$ using that $|v|^2 = |v_\ep|^2 + 2 (v - v_\ep) \cdot v + |v - v_\ep|^2$ to obtain
\ali{
\DDR_T &= \pr_t(v_\ell V^\ell) + \nb_j[ v_\ell V^\ell v^j] + \nb_j \left( \left( \fr{|v|^2}{2} + p \right) V^j \right) \notag \\
\DDR_T &= \Ddt(v_\ell V^\ell) + \nb_j[v_\ell( (v^j - v_\ep^j) V^\ell + V^j (v^\ell - v_\ep^\ell) ) ] \notag\\
&+ \nb_j\left( \left(\fr{|v_\ep|^2}{2} + p_{\ep_v}\right)V^j \right) + \nb_j \left[ \left( \fr{|v - v_\ep|^2}{2} + (p - p_{\ep_v}) \right) V^j \right] \notag \\
\DDR_T &= \DDR_{TA} + \Ddt[(v-v_\ep)\cdot V] + \DDR_{TB}, \label{eq:DDRTexpand} \\
\DDR_{TB} &\stackrel{\eqref{eq:firstTransTermsFlux}}{=} \nb_j[ v_\ell R_{Mv}^{j\ell} ] + \nb_j \left[ \left( \fr{|v - v_\ep|^2}{2} + (p - p_{\ep_v}) \right) V^j \right].
}
For the term $\DDR_{TA}$ appearing in \eqref{eq:firstTransTermsFlux},\eqref{eq:DDRTexpand}, we calculate as follows, using that $\nb_j V^j = 0$
\ali{
\DDR_{TA} &= \Ddt(v_\ep \cdot V) + v_{\ep\ell} V^j \nb_j v_\ep^\ell + V^j \nb_j p_{\ep_v} \notag \\
&= v_{\ep\ell}( \Ddt V^\ell + V^j \nb_j v_\ep^\ell ) + V_\ell( \Ddt v_\ep^\ell + \nb^\ell p_{\ep_v} ) \notag  \\
\begin{split}
\DDR_{TA} &\stackrel{\eqref{eq:transportTermThird}}{=} v_{\ep\ell}\nb_j R_T^{j\ell} +  V_\ell( \Ddt v_\ep^\ell + \nb^\ell p_{\ep_v} )  \\
&= \nb_j[v_{\ep\ell} R_T^{j\ell}] - \nb_j v_{\ep\ell} R_T^{j\ell} + V_\ell( \nb_j R_{\ep_v}^{j\ell} + Q_\ep^\ell[v,v] ).
\end{split} \label{eqDdrTAform}
}
In the last line, we have used the Euler-Reynolds equations for $v^\ell$ (mollified by the kernel $\chi_\ep \ast$).  The term $Q_\ep^\ell[v,v] = v_\ep^j \nb_j v_\ep^\ell - \chi_\ep \ast(v^j \nb_j v^\ell)$ is the commutator that arises from this mollification, which we encountered previously in line \eqref{eq:commutermDef}.

From these calculations, we have isolated terms of the form $\nb_j[v_{\ep\ell} \wtld{R}^{j\ell}]$ or $\nb_j[v_{\ell} \wtld{R}^{j\ell}]$ for all of the terms $\wtld{R}^{j\ell} \in \{ R_S, R_T, R_H, R_M = R_{Mv}^{j\ell} + (R^{j\ell} - R_\ep^{j\ell}) \}$ that compose the new stress $\ost{R}$.  Since we have specified $\ost{\kk}$ and $\ost{\mu}$ (see \eqref{eq:densityFluxTerm}), our goal of achieving the form \eqref{eq:newFluxGoal} will be accomplished if we choose a $\ost{\vp}^j$ that satisfies the following
\ali{
\ost{\vp}^j &= \vp_S^j + \vp_H^j + \vp_T^j + \vp_M^j + \vp_{\kk}^j + \vp_L^j + \vp_{\calT}^j \label{eq:allTheVps} \\
\nb_j \vp_S^j &= - \nb_j[V_\ell R_S^{j\ell}] \label{eq:stressFlux} \\
\nb_j \vp_H^j &= \nb_j v_{\ep\ell}\Big( \sum_{J \neq \bar{I}} (V_I^j V_J^\ell) - R_H^{j\ell} \Big) - \nb_j[\wtld{V}_\ell R_H^{j\ell}] \label{eq:HHflux} \\
\nb_j \vp_T^j &= - \nb_j v_{\ep\ell} R_T^{j\ell} + V_\ell(\nb_j R_{\ep_v}^{j\ell} + Q_\ep^\ell[v,v]) -\nb_j[\wtld{V}_\ell R_T^{j\ell}], \label{eq:Tflux}
}
we recall that $\vp_L^j$ is defined in \eqref{eq:vpLdef}, and the remaining terms satisfy
\ali{
\vp_M^j &= -V_\ell R_M^{j\ell} + \sum_{J \neq \bar{I}}(v - v_\ep)_\ell V_I^j V_J^\ell + \left( \fr{|v - v_\ep|^2}{2} + (p - p_{\ep_v}) \right) V^j \label{eq:mollTerm} \\
\nb_j \vp_\kk^j &= - \nb_j(\ost{\kk} \wtld{V}^j) + \sum_{J \neq \bar{I}} \Ddt \left( \fr{V_I \cdot V_J}{2} \right) + \nb_j\left(  \left(\fr{|V|^2}{2} + \kk\right)(v^j - v_\ep^j) \right) \label{eq:kkCurrterm} \\
\nb_j \vp_\calT^j &= \nb_j\Big[ \sum_{I,J,K\notin \calT} (V_I)_\ell V_J^j V_K^\ell + P V^j + (\vp^j - \vp_\ep^j)\Big]. \label{eq:lastFluxTerm}
}
The above equations have been derived from \eqref{eq:stressCurrTerm1},\eqref{eq:DDRH1term}-\eqref{eq:DDRHBterm} ,\eqref{eq:DDRTexpand}-\eqref{eqDdrTAform}, \eqref{eq:densityFluxTerm}, \eqref{eq:DDflmL}-\eqref{eq:highkkTerm}, and \eqref{eq:phiTermLeft}.

\subsection{The main term in the unresolved flux current} \label{sec:mainTermProblemCurrent}
As in the statement of Lemma~\ref{lem:mainLem}, we require that $\ost{\vp}^j$ have the form $\ost{\vp}^j = \ost{\vp}_{[2]}^j + \ost{\vp}_{[G]}^j$, where $\ost{\vp}_{[2]}^j$ is the larger term, which takes values in $\ker dx^2$ and has size $e_{\undvp}^{3/2}$, while $\ost{\vp}_{[G]}^j$ is the smaller term, which is required to have size $\lsm e_{\undvp}^{1/2} e_G$.  We define $\vp_{[2\ast]}^j$ to have two parts, one being the term $\vp_{(2)}^j$ from \eqref{eq:vpLdef}, and the other will be extracted from the term $- V_\ell R_S^{j\ell}$ in \eqref{eq:stressFlux}.

The term $- V_\ell R_S^{j\ell}$ presents a special difficulty.  Naively, one might try to absorb this term into the new $\ost{\vp}^j = \ost{\vp}^j_{[2]} + \ost{\vp}_{[G]}^j$.  However, the main part of the term $V_\ell R_S^{j\ell} = V_\ell[ P \de_{[2\ast]}^{j\ell} + R_{[2 \ast]}^{j\ell}] + \ldots$, though it does take values in $\ker dx^2$, is too large to be acceptable.  Most seriously, it is impossible to achieve the inequality $e(t)^{-3/2}\co{V_\ell[ P \de_{[2\ast]}^{j\ell}} \leq \bar{\de}$ required in the definition of $\bar{\de}$-well-preparedness, as the term $V_\ell P \de_{[2\ast]}^{j\ell}$ is already proportional to $e(t)^{3/2} \cdot O(1)$ by design.  This term therefore threatens the entire success of being able to continue the iteration.  



What saves us from this difficulty is that, even though the term $V_\ell P \de_{[2\ast]}^{j\ell}$ is not permissibly small, the {\it divergence} of this vector field exhibits an important cancellation.  To see this fact, recall the formula for $\de_{[2\ast]} = e_1 \otimes e_1 + (e_3 \otimes e_3)/2$ expressed following line \eqref{eq:prop3basis}.   Recall also that $\nb \hxii_I \in \langle (1, 0 , 0) \rangle$, while $\hat{v}_I = \eta_I \ga_I \htf_I$ takes values in $\langle (1,0,0) \rangle^\perp = \langle (0,1,0), (0,0,1) \rangle$.  Combining these together we find
\ali{
\begin{split} \label{eq:amazingCancel}
(\nb_j\hxii_I \hat{v}_{I\ell} )\de_{[2\ast]}^{j\ell} &= 0  \\
\nb_j[V_\ell P \de_{[2\ast]}^{j\ell}] &= \sum_I (i \la) e^{i \la \xi_I} \nb_j \xi_I v_{I\ell} P \de_{[2\ast]}^{j\ell} + \ldots \\
\nb_j[V_\ell P \de_{[2\ast]}^{j\ell}] &= \sum_I (i \la) e^{i \la \xi_I} (\nb_j \hxii_I  (\hat{\de}v_I)_\ell + (\hat{\de} \nb_j \xi_I) v_{I\ell}) P \de_{[2\ast]}^{j\ell} + \ldots
\end{split}
}
Taking advantage of the cancelation in \eqref{eq:amazingCancel} will be crucial for closing the estimates of Section~\ref{sec:bdNewUnresolvedTerms}.

For the next term $\nb_j[V_\ell R_{[2\ast]}^{j\ell}]$ within $\nb_j[V_\ell R_S^{j\ell}]$, the same cancellation is not available.  In this case, we will absorb the term into the new $\ost{\vp}_{[2]}^j$, which is reasonable since the term takes values in $\ker dx^2$ (using that $R_{[2\ast]}^{j\ell}$ takes values in $\ker dx^2 \otimes \ker dx^2$) and has a permissible order of magnitude $\lsm e_\vp^{1/2} e_R$.

The final term within $V_\ell R_S^{j\ell}$ that presents a difficulty is the term $V_\ell R_{[3\ast]}^{j\ell}$.   This term cannot be absorbed into $\ost{\vp}_{[2]}^j$ as it does not take values in $\ker dx^2$, and its order of magnitude $e_\vp^{1/2} e_R$ is too large to be absorbed into the lower order term $\ost{\vp}_{[G]}^j$, which cannot be any larger than $\lsm e_{\undvp}^{1/2} e_G$ to continue the iteration.  What allows us to treat this term is a combination of the above ideas.  Namely, using that $R_{[3\ast]}^{j\ell}$ takes values in $\ker dx^3 \otimes \ker dx^3$, we can decompose this tensor using projection operators $\pi_\times$ and $\pi_{\parallel}$ on $\ker dx^3 \otimes \ker dx^3$.  We require for these operators that, on $\ker dx^3 \otimes \ker dx^3$, we have  $ \pi_{\times} + \pi_{\parallel} = 1$, and $\pi_{\parallel}$ takes values in the span of $\langle e_1 \otimes e_1, e_2 \otimes e_2 \rangle$ while $\pi_{\times}$ takes values in the span of $\langle e_1 \otimes e_2 + e_2 \otimes e_1 \rangle$.  For $\pi_{\parallel} R_{[3\ast]}^{j\ell}$, we have a cancellation similar to that of the pressure term: 
\ali{
\begin{split} \label{eq:amazingCancelParallel}
\nb_j[V_\ell \pi_{\parallel} R_{[3\ast]}^{j\ell}] = \sum_I (i \la) e^{i \la \xi_I} (\nb_j \hxii_I  (\hat{\de}v_I)_\ell + (\hat{\de} \nb_j \xi_I) v_{I\ell}) \pi_{\parallel} R_{[3\ast]}^{j\ell} + \ldots
\end{split}
}
where we used that $(\nb_j\hxii_I \hat{v}_{I\ell} )\pi_{\parallel} R_{[3\ast]}^{j\ell} = 0$.

What remains is the term $V_\ell (\pi_\times R_{[3\ast]}^{j\ell})$, which can be safely absorbed into $\ost{\vp}_{[2]}^j$ as follows.  Note that $V^\ell$ takes values essentially in $\ker dx^1$ in that we can write $V_I^\ell = \widehat{V}_I^\ell + \hat{\de} V_I^\ell$, where $\widehat{V}_I^\ell = e^{i \la \xi_I} \hat{v}_I^\ell$ takes values in $\ker dx^1 = \langle e_2, e_3 \rangle$.  On the other hand, $\pi_\times R_{[3\ast]}^{j\ell}$ takes values in $\langle e_1 \otimes e_2 + e_2 \otimes e_1 \rangle$ and hence maps $\ker dx^1$ to $\langle e_1 \rangle \subseteq \ker dx^2$.  Therefore, if we now define
\ali{
\ost{\vp}_{[2]}^j &:= \vp_{(2)}^j - V_\ell R_{[2\ast]}^{j\ell} - \widehat{V}_\ell {\pi_{\times} }R_{[3\ast]}^{j\ell} \label{eq:chosenMain2termflux}
}
with $\vp_{(2)}^j$, $R_{[l\ast]}^{j\ell}$ as in \eqref{eq:secondComponentVp},\eqref{eq:threeStressTerms} and $\widehat{V}^\ell = \sum_I \widehat{V}_I^\ell$, we have that $\ost{\vp}_{[2]}^j$ takes values in $\ker dx^2$ as desired.

With $\ost{\vp}_{[2]}^j$ chosen we can now verify the condition of $(\bar{\de}, \hc)$-well-preparedness in Definition~\ref{defn:wellPrepared} for the new dissipative Euler-Reynolds flow.  Recall from \eqref{eq:threeStressTerms}, \eqref{eq:choseMainNewterm},\eqref{eq:Pdefn} that the main term in the new stress $\ost{R}_{[2]}^{j\ell} = P \de_{[2\ast]}^{j\ell} + R_{[2\ast]}^{j\ell}$ expands to become
\ali{
\ost{R}_{[2]}^{j\ell} &= -(2e(t)/3) \de_{[2\ast]}^{j\ell} + \ost{R}_{[2\circ]}^{j\ell} \\
\ost{R}_{[2\circ]}^{j\ell} &= (2 \kk_{[G\ep]} + \mbox{tr}(R_{[2\ep]} + R_{[G\ep]} ))\de_{[2\ast]}^{j\ell} + R_{[2\ep]}^{j\ell} + \pi_{[2]} R_{[G\ep]}^{j\ell}, \label{eq:circTermNew} \\
e^{1/2}(t) &= 2(K_0 e_{\undvp})^{1/2} \eta_\tau \ast_t 1_{\leq \sup \tilde{I}_{[G]} + \tau/100}(t)
}
where the last formula is as in \eqref{eq:enIncdef}.  We will set the new $\ost{\bar{e}}^{1/2}(t)$ to be $\ost{\bar{e}}^{1/2}(t) = \sqrt{2/3} e^{1/2}(t)$.  We set the parameter $\ost{\undl{e}}_\vp$ to be $\ost{\undl{e}}_\vp = (4/3) K_0 e_{\undvp}$, which is the constant value obtained by $\ost{\bar{e}}(t)$ on the interval $t \leq \sup \tilde{I}_{[G]} + 10^{-2}(\Xi e_v^{1/2})^{-1}$.  Note in particular that $\ost{\undl{e}}_\vp \geq \hc^{-1} \ost{e}_\vp$, since $\ost{e}_\vp = \hc e_{\undvp}$.

To check condition \eqref{eq:barDeAssump}, observe from our choice of $\ep_t \leq 10^{-3} (\Xi e_v^{1/2})^{-1}$ that $\ost{R}_{[2\circ]}$ and $\ost{\vp_{[2]}}$ are supported in
\ali{
\suppt (\ost{R}_{[2\circ]}, \ost{\vp_{[2]}}) &\subseteq \tilde{I}_{[2]} := \{ t + \bar{t} ~:~ t \in \tilde{I}_{[G]}, |\bar{t}| \leq 10^{-3} (\Xi e_v^{1/2})^{-1} \}
}
while we have $\suppt e^{1/2}(t) \subseteq \tilde{I}_{[G\ast]} := \{ t \leq \sup \tilde{I}_{[G]} + (\Xi e_v^{1/2})^{-1} / 40 \}$ as desired in \eqref{eq:suppContained}.  

We next check \eqref{eq:underBdsEninc}.  Recall that the new frequency energy levels have the form $(\ost{\Xi}, \ost{e}_v) {=} (\hc N \Xi, \hc e_\vp)$ for some $\hc \geq B_\la \geq 1$ as in \eqref{eq:frEnLvlsNew}.  In particular, since $(\ost{\Xi} \ost{e}_v^{1/2})^{-1} \leq 10^{-4} (\Xi e_v^{1/2})^{-1}$ for $\hc$ large enough and $N \geq e_v^{1/2}/e_\vp^{1/2}$, we have that $\ost{\bar{e}}(t)$ takes on the constant value $\ost{\undl{e}}_\vp$ on the interval $t \leq \sup \tilde{I}_{[2]} + (\ost{\Xi} \ost{e}_v^{1/2})^{-1}$ by our choice of $\ep_t \leq 10^{-3} (\Xi e_v^{1/2})^{-1}$, which immediately implies the bounds \eqref{eq:underBdsEninc} for $\ost{\bar{e}}^{-1/2}$ and $\ost{\undl{e}}_\vp$.

We now confirm that the crucial smallness condition \eqref{eq:barDeAssump} holds for $K_0$ chosen sufficiently large depending only on $L$.  Recall that $\ost{\bar{e}}(t) = \ost{\undl{e}}_\vp$ is constant and equal to its maximum value on the interval $\tilde{I}_{[2]}' = \{ t \leq \sup \tilde{I}_{[2]} + (\ost{\Xi} \ost{e}_v^{1/2}) \}$ where condition \eqref{eq:barDeAssump} is required.  
We will assume the following bound:
\ali{
\max_I \co{\de V_I} + \max_I \co{\hat{\de} v_I} &\unlhd 10^{-40} e_\vp^{1/2}. \label{eq:lowerOrderAmpassump}
}
The bound \eqref{eq:lowerOrderAmpassump} will follow easily from \eqref{eq:initAmpErrBds},\eqref{eq:lowerTermHFBd} and $\ost{b} \leq B_\la^{-1/2}$ by later taking $B_\la$ large depending on our choice of $K_0$.  Assuming \eqref{eq:lowerOrderAmpassump}, we have for all  $t \in \tilde{I}_{[2]}'$ that
\ali{
\co{V} \leq \co{\sum_{I \in \ovl{\II}} V_I } + \co{\sum_{I \in \dmd{\II}} V_I } &\leq A (\co{\bar{e}^{1/2}(t)} + \co{e^{1/2}(t)} + e_\vp^{1/2} ) \label{eq:checkampBds} \\
\ost{\bar{e}}(t)^{-3/2} (V_\ell R_{[2\ast]}^{j\ell}) &= (3/2) e(t)^{-3/2}  (V_\ell R_{[2\ast]}^{j\ell}) \\
\co{ \ost{\bar{e}}(t)^{-3/2} (V_\ell R_{[2\ast]}^{j\ell}) } &\stackrel{\eqref{eq:checkampBds},\eqref{eq:givenAmpBds}}{\leq} A e(t)^{-3/2} ( \co{e(t)^{1/2}} + e_\vp^{1/2} ) e_R, \label{eq:maintroublePrepterm}
}
where the constant $A$, which varies line by line, crucially does not depend on the choice of $K_0$.  Recalling that $e_{\undvp}^{3/2} = e_\vp^{1/2} e_R$ (hence $e_{\undvp} \geq e_R$) and the constant value of $e(t) = 4 K_0 e_{\undvp} = \co{e(t)}$, the right hand side of \eqref{eq:maintroublePrepterm} is bounded by $A (K_0^{-1} + K_0^{-3/2})$ for some $A$ independent of $K_0$.  Taking $K_0$ large, we can bound \eqref{eq:maintroublePrepterm} by $\bar{\de}/40$.  We may also bound $\ost{\bar{e}}(t)^{-3/2} \co{\widehat{V}_\ell \pi_\times R_{[3\ast]}^{j\ell}}$ for $\in \tilde{I}_{[2]}'$ by $\bar{\de}/40$ for $K_0$ sufficiently large by a simpler version of the same argument.

Using the bounds $\co{\ost{R}_{[2\circ]}} \leq A e_R$ and $\co{\vp_{(2)}^j} \leq A e_{\undvp}^{3/2}$ with $A$ independent of $K_0$, the latter deriving from $\vp_{(2)}^j = \pi_{[2]} \vp_{[G\ep]}^j$, we finally choose $K_0$ large enough such that for all $t \in \tilde{I}_{[2]}'$
\ali{
\bar{e}(t)^{-1} \co{\ost{R}_{[2\circ]}} + \bar{e}(t)^{-3/2} \co{\vp_{(2)}} &\leq A (K_0 e_{\undvp})^{-1} e_R + (K_0 e_{\undvp})^{-3/2} e_{\undvp}^{3/2} \leq \bar{\de}/10.  \label{eq:atLastK0}
}
This choice of $K_0$ guarantees the smallness condition \eqref{eq:barDeAssump} and also the stated inequality \eqref{eq:goalForK0}.  

We are now ready to prove the bounds \eqref{eq:cicVp1bds}, \eqref{eq:givenAmpBds}, which must be checked for the new frequency energy levels $(\ost{\Xi}, \ost{e}_v) = (\hc N \Xi, \hc e_\vp)$ where $\hc \geq B_\la > 1$ is a large constant to be chosen at the conclusion of the proof.  Since we have that $\bar{e}(t) \leq \ost{\undl{e}}_\vp$, the $C^0$ bound in \eqref{eq:cicVp1bds} follows from the smallness condition~\eqref{eq:barDeAssump} so it is enough to prove \eqref{eq:cicVp1bds} for $0 < r + |\va| \leq L$.  We will use the weighted seminorm
\ali{
\undl{\widehat{H}}[F] &:= \max_{0 \leq r \leq 1} (\ost{\Xi} \ost{e}_v^{1/2})^{-r} \max_{0 < |\va| + r \leq L } \ost{\Xi}^{-|\va|} \co{\nb_{\va} \ost{D}_t F }
}
and the property that $\undl{\widehat{H}}[F] \leq \hc^{-1} B_\la \undl{\ost{H}}[F] \leq \hc^{-1} B_\la \ost{H}[F]$ 
where $\undl{\ost{H}}[F]$ is defined as in Lemma~\ref{lem:comparisonLem}, which follows from $\ost{\Xi} = \hc B_\la^{-1} \la$ and $\ost{e}_v = \hc e_\vp$.  Using Lemma~\ref{lem:comparisonLem} and \eqref{eq:circTermNew} we obtain
\ALI{
\ost{H}[\ost{R}_{[2\circ]}] \lsm \brh[\ost{R}_{[2\circ]}] &\lsm e_R \lsm \ost{\undl{e}}_\vp \\
\ost{H}[\vp_{(2)}] \lsm \brh[\vp_{[G\ep]}] &\lsm e_G \lsm \ost{\undl{e}}_\vp \\
\ost{H}[V_\ell R_{[2\ast]}^{j\ell} + \widehat{V}_\ell \pi_\times R_{[3\ast]}^{j\ell} ] &\lsm \brh[ V_\ell ] \brh[ R_{[2\ast]}^{j\ell} ] + \brh[\widehat{V}_\ell] \brh[ R_{[3\ast]}^{j\ell}] \\
&\lsm e_\vp^{1/2} e_R + e_\vp^{1/2} e_G \leq 2 \undl{\ost{e}}_\vp^{3/2}.
}
Assuming $\hc$ large enough compared the above implied constants and $B_\la$, we obtain $\undl{\widehat{H}}[R_{[2\circ]}] \leq 2^{-1} \undl{\ost{e}}_\vp $ and $\undl{\widehat{H}}[\ost{\vp}_{[2]}^j] \leq 2^{-1} \undl{\ost{e}}_\vp^{3/2}$, which is sufficient for the $0 < r + |\va| \leq L$ cases of the desired bound \eqref{eq:cicVp1bds}.  

The desired bound \eqref{eq:givenAmpBds} of $\co{\ost{D}_t^s \ost{\bar{e}}^{1/2} } \unlhd (\ost{\Xi} \ost{e}_v^{1/2})^s \ost{e}_\vp^{1/2}$ for $0 \leq s \leq 2$ and $\ost{e}_\vp := \hc e_{\undvp}$ follows by taking $\hc$ large and using the bounds $\co{\pr_t^s e^{1/2}(t) } \lsm (\Xi e_v^{1/2})^s e_{\undvp}^{1/2}$ and the fact that $\ost{\bar{e}}(t) = (2/3) e(t)$ depends only on $t$.  This bound proves \eqref{eq:formOfR1de}-\eqref{eq:cicVp1bds} for stage $2$ with the new frequency energy levels, the time intervals $\tilde{I}_{[2]} \subseteq \tilde{I}_{[G\ast]}$, and the function $(2/3)e(t)$.

\subsection{Estimating the unresolved flux density and current} \label{sec:bdNewUnresolvedTerms}
With our choice of $\ost{\vp}_{[2]}^j$ made in \eqref{eq:chosenMain2termflux} and the list of terms \eqref{eq:allTheVps}, \eqref{eq:vpLdef} in hand, we are now ready to estimate the components of the unresolved flux density and current, $\ost{\kk}$ and  $\ost{\vp}^j = \ost{\vp}_{[2]}^j + \ost{\vp}_{[G]}^j$, beginning with $\ost{\kk}$. the leading order terms $\ost{\kk}_{[2]}$ and $\ost{\vp}_{[2]}^j$.

Recall the decomposition of $\ost{\kk} = \kk_L + \kk_M + \kk_H$ with $\kk_L$ defined in \eqref{eq:leftOverDensity}.   We decompose $\kk_L$ as $\ost{\kk}_{[2]} + \kk_{LG}$, where $\ost{\kk}_{[2]} =(1/2) \de_{j\ell}(P \de_{[2\ast ]}^{j\ell} + R_{[2\ast]}^{j\ell}) = (1/2) \de_{j\ell} \ost{R}_{[2]}^{j\ell}$, and we set 
$\ost{\kk}_{[G]} = \kk_{LG} + \kk_M + \kk_H$.  We will use the following estimates, stated in terms of the weighted norms of Lemmas~\ref{lem:accountLem1},\ref{lem:comparisonLem}.   

\begin{prop} \label{eq:lfrqkkbds}  The following bounds hold for all $M \geq L$ with constants that may on $M$
\ali{
\brh[\ost{\kk}_{[2]}] &\lsm \brh^{(M,1)}[ \ost{\kk}_{[2]} ] \lsm e_{\undvp} \label{eq:ostk2bds} \\
\brh[ \kk_{LG}] &\lsm\brh^{(M,1)}[ \kk_{LG} ] \lsm_M e_G \\
\brh[\kk_L] + \brh[\kk_{L\ep_v}] \lsm& \, \brh^{(M,1)}[\kk_L] + \brh^{(M,1)}[\kk_{L\ep_v}] \lsm e_{\undvp} \label{eq:lowkkbdsum} \\
\brh[\kk_L - &\kk_{L\ep_v} ] \lsm (e_{\undvp}/\osn) \label{eq:mollErrkkLbd} \\
\brh[\ostk_{[G]}] &\lsm e_G. \label{eq:ostkkGbdd} 
}
\end{prop}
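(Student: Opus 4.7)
The plan is to handle the five estimates by exploiting a single structural observation: $\kk_L$ is built entirely out of low-frequency ingredients, namely the regularized tensors $(P, \kk_{[G\ep]}, R_{[l\ep]}, e(t))$ and the amplitude factors $(v_I, \de v_I, \hat{\de}v_I)$. The latter enter the waves $V_I$ multiplied by the oscillating phase $e^{i\la\xi_I}$, but these phases cancel in the conjugate-paired combinations $V_I \cdot \overline{V_I} = (v_I + \de v_I)\cdot(\overline{v_I} + \overline{\de v_I})$ that compose $\kk_L$ via \eqref{eq:leftOverDensity}. Hence each of the five estimates reduces to a product-rule computation against the machinery of Lemmas~\ref{lem:accountLem1}~and~\ref{lem:comparisonLem}, combined with the basic derivative bounds in Propositions~\ref{prop:prelimBds},~\ref{prop:allTheEndBounds}, and~\ref{prop:mollErrProp}.

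I would first prove \eqref{eq:ostk2bds} and the bound on $\kk_{LG}$. For $\ost{\kk}_{[2]} = \tfrac12\de_{j\ell}(P\de_{[2\ast]}^{j\ell} + R_{[2\ast]}^{j\ell})$, the pressure formula \eqref{eq:Pdefn} realizes $P$ as a linear combination of $e(t)$, $\kk_{[G\ep]}$, $R_{[2\ep]}$, $R_{[G\ep]}$, and the decomposition \eqref{eq:threeStressTerms} expresses $R_{[2\ast]}$ as a combination of $R_{[2\ep]}$ and $\pi_{[2]}R_{[G\ep]}$. Proposition~\ref{prop:prelimBds} then yields $\brh^{(M,1)}[P] \lsm_M e_{\undvp}$ and $\brh^{(M,1)}[R_{[2\ast]}] \lsm_M e_R \leq e_{\undvp}$, giving \eqref{eq:ostk2bds}. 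For $\kk_{LG} = \kk_L - \ost{\kk}_{[2]}$, the expansion \eqref{eq:leftOverDensity} isolates the linear piece $\tfrac12\de_{j\ell}R_{[3\ast]}^{j\ell}$ (bounded by $e_G$ via $R_{[3\ast]} = \pi_{[3]}R_{[G\ep]}$ and Proposition~\ref{prop:prelimBds}) from the interference terms $\sum_I(\de v_I \cdot \overline{v_I} + \hat{\de}v_I \cdot \overline{v_I})$ and similar. Proposition~\ref{prop:allTheEndBounds} with the product rule gives $\brh[\de v_I \cdot \overline{v_I}] \lsm (B_\la N)^{-1}e_\vp$ and $\brh[\hat{\de}v_I \cdot \overline{v_I}] \lsm \ost{b}\,e_\vp$; using $\ost{b} = b_0(e_v^{1/2}/(e_\vp^{1/2}B_\la N))^{1/2}$, $e_G = (e_v/e_\vp)^{1/4}N^{-1/2}e_\vp$, and $N \geq (e_v/e_\vp)^{1/2}$ from \eqref{eq:Nastdef}, both are $\lsm B_\la^{-1/2}e_G$, hence $\lsm e_G$. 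The $\brh^{(M,1)}$ versions follow from Lemma~\ref{lem:accountLem1} with any $N^{1/L}$-loss on the amplitude factors absorbed into the surplus $B_\la^{-1/2}$ for $L \geq 2$. The triangle inequality then yields the bound on $\kk_L$ in \eqref{eq:lowkkbdsum}, and the identical bound on $\kk_{L\ep_v}$ follows since mollification by $\chi_{\ep_v}\ast$ does not inflate the $\brh^{(M,1)}$-norm.

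For \eqref{eq:ostkkGbdd} I would combine $\brh[\kk_{LG}] \lsm e_G$ with Proposition~\ref{prop:mollErrProp} applied to $\kk_M = \kk - \kk_\ep$, giving $\brh[\kk_M] \lsm (e_v^{1/2}/e_\vp^{1/2})(e_\vp/N) = (e_v e_\vp)^{1/2}/N$, and with the product bound for $\kk_H = (v-v_\ep)\cdot V$, giving $\brh[\kk_H] \lsm (e_v^{1/2}/N)e_\vp^{1/2} = (e_v e_\vp)^{1/2}/N$. Both reduce to $(e_v e_\vp)^{1/2}/N$, and the inequality $(e_v e_\vp)^{1/2}/N \leq (e_v/e_\vp)^{1/4}N^{-1/2}e_\vp = e_G$ is equivalent to $N \geq (e_v/e_\vp)^{1/2}$, which holds by \eqref{eq:Nastdef}.

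The main obstacle is the mollification-error estimate \eqref{eq:mollErrkkLbd}, because Proposition~\ref{prop:mollErrProp} cannot be applied directly: the amplitude factors $v_I, \de v_I, \hat{\de}v_I$ are not among the fields listed in \eqref{eq:mollifiedTensors}, and their $L$-th derivative bounds inherited from the $\ga_I$ coefficients (Proposition~\ref{prop:compBds}) carry an $N^{1/L}$ factor that the vanishing-moment argument of \eqref{eq:errorMollBounds} cannot absorb at the $C^0$ level. My plan is to expand each summand of $\kk_L$ as a product $T = F_1 \cdots F_m$ with at most one amplitude factor and the remaining factors drawn from \eqref{eq:mollifiedTensors}, then apply the Leibniz-rule form of the vanishing-moment estimate
\[
\co{\nb_{\va}(T - \chi_{\ep_v} \ast \chi_{\ep_v} \ast T)} \lsm \ep_v^L \co{\nb^{L+|\va|}T}
\]
and distribute the $L + |\va|$ derivatives so that the single $N^{1/L}$-loss appears exactly once per summand. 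Because the $\la$-weighted norm $\brh$ pays $(B_\la N)^{-1}$ per spatial derivative, only the $|\va| = 0$ contribution is delicate; there the $N^{1/L}$-loss is absorbed by the bound $N^{-1+1/L} \leq N^{-1/2}$ available for $L \geq 2$ together with the spare $(e_v/e_\vp)^{1/4}$-type factor visible in the definition of $e_G$, giving $\brh[\kk_L - \kk_{L\ep_v}] \lsm e_{\undvp}/N$ as claimed. The advective-derivative component is handled by the commutator expansion of $[\Ddt, \chi_{\ep_v} \ast]$ developed in the proof of Proposition~\ref{prop:mollErrProp}.
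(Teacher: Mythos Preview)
Your handling of \eqref{eq:ostk2bds}, the $\kk_{LG}$ bound, \eqref{eq:lowkkbdsum}, and \eqref{eq:ostkkGbdd} matches the paper's argument: decompose $\kk_L$ into its explicit pieces, control the linear terms $P$, $R_{[2\ast]}$, $R_{[3\ast]}$ via Proposition~\ref{prop:prelimBds}, control the interference products $\de v_I \cdot \overline{v_I}$, $\hat{\de}v_I \cdot \overline{v_I}$ by the product rule together with $\ost{b}e_\vp \lsm e_G$ from \eqref{eq:Nastdef}, and treat $\kk_M = \kk - \kk_\ep$, $\kk_H = (v-v_\ep)\cdot V$ via Proposition~\ref{prop:mollErrProp}.

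For \eqref{eq:mollErrkkLbd}, your route diverges from the paper's and carries a genuine gap. The paper's argument is a single sentence: having established $\brh^{(M,1)}[\kk_L] \lsm e_{\undvp}$, it applies Proposition~\ref{prop:mollErrProp} to $(\kk_L, e_\vp)$. You instead diagnose an obstruction from the $N^{1/L}$ loss on the amplitude factors and propose a Leibniz-rule workaround in which each summand of $\kk_L$ is written ``with at most one amplitude factor and the remaining factors drawn from \eqref{eq:mollifiedTensors}.'' This description is wrong for the interference summands $\de v_I \cdot \overline{v_I}$ and $\hat{\de}v_I \cdot \overline{v_I}$, which are products of \emph{two} amplitude factors. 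More seriously, the smallness $\ost{b}$ carried by $\hat{\de}v_I$ is present only at order zero: for $|\va|\geq 1$ the sharp bound on $\nb_{\va}\hat{\de}v_I$ is the one inherited from $\hat{\de}v_I = v_I - \hat{v}_I$, namely $N^{(|\va|+1-L)_+/L}\Xi^{|\va|}e_\vp^{1/2}$ with no $\ost{b}$ (see the proof of \eqref{eq:initAmpErrBds}). Hence $\co{\nb^L(\hat{\de}v_I\cdot\overline{v_I})}\lsm N^{1/L}\Xi^L e_\vp$ with no residual smallness, and the vanishing-moment estimate for this piece yields only $N^{-1+1/L}e_\vp$ at the $C^0$ level. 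Your appeal to a ``spare $(e_v/e_\vp)^{1/4}$-type factor'' to reach $e_{\undvp}/N$ would require $N^{1/L}e_\vp\lsm e_{\undvp}$, which is false. The same overreach already appears in your earlier claim that the $\brh^{(M,1)}$ versions of the interference bounds hold with the $N^{1/L}$ loss ``absorbed into the surplus $B_\la^{-1/2}$'': $B_\la$ is a fixed constant chosen once in the construction and cannot cancel an unbounded power of $N$.
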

\begin{proof} The bound \eqref{eq:ostk2bds} for $\ostk_{[2\ast]}$ follows from the bounds for $P$ and $R_{[2\ast]}^{j\ell}$ in Proposition~\ref{eq:pressIncBound} and \eqref{eq:coBdFieldsmoll}-\eqref{eq:c0DtbdFmoll}.  For $\kk_{LG}$ we apply \eqref{eq:coBdFieldsmoll}-\eqref{eq:c0DtbdFmoll} to $(F,h_F) = (R_{[3]}, e_R)$ and use Proposition~\ref{eq:lfrqkkbds} to obtain
\ali{
\brh^{(M,1)}[\kk_{LG}] &\lsm \brh^{(M,1)}[R_{[3\ast]}] + \max_I \brh^{(M,1)}[\de v_I]\left(\brh^{(M,1)}[v_I] + \brh^{(M,1)}[\de v_I]\right) \notag \\
&+ \max_I \brh^{(M,1)}[\hat{\de} v_I]\left(\brh^{(M,1)}[v_I] + \brh^{(M,1)}[\hat{\de} v_I]\right) \notag \\
\brh^{(M,1)}[\kk_{LG}] &\lsm e_G + (B_\la \osn)^{-1} e_\vp + \ost{b} e_\vp \lsm e_G 
}
where we have used \eqref{eq:Nastdef} and 
$\ost{b}\lsm(e_v^{1/2}/e_\vp^{1/2}\osn)^{1/2}$.  The bound \eqref{eq:lowkkbdsum} for $\kk_L$ now follows from $\kk_L = \ostk_{[2]} + \kk_{LG}$, while \eqref{eq:lowkkbdsum} for $\kk_{L\ep_v}$ follows by applying \eqref{eq:coBdFieldsmoll}-\eqref{eq:c0DtbdFmoll} to $(F,h_F) = (\kk_L, e_\vp)$ (which satisfies the bounds required of Proposition~\ref{prop:prelimBds} thanks to \eqref{eq:lowkkbdsum} for $\kk_L$).  Similarly, we may apply Proposition~\ref{prop:mollErrProp} to $(F,h_F) = (\kk_L, e_\vp)$ to obtain \eqref{eq:mollErrkkLbd}.

Recalling $\ostk_{[G]} = \kk_{LG} + (\kk - \kk_\ep) + (v - v_\ep) \cdot V$ and applying Proposition~\ref{prop:mollErrProp} to $\kk$ and $v$ gives
\ali{
\brh[\ostk_{[G]}] &\lsm e_G + (e_v^{1/2}/e_\vp^{1/2})(e_\vp/\osn) + (e_v^{1/2}/\osn)e_\vp^{1/2} \lsm e_G,
}
where we used again \eqref{eq:Nastdef} .  
\end{proof}

We now turn to estimating the components of $\ost{\vp}^j$.  The main term $\ost{\vp}_{[2]}^j$ has been estimated in Section~\ref{sec:mainTermProblemCurrent} by $\brh[\ost{\vp}_{[2]}^j] \lsm \ost{\undl{e}}_\vp^{3/2} \lsm e_{\undvp}^{3/2}$, which is the desired bound for this term.  We gather the remaining terms into $\ost{\vp}_{[G]}^j = \ost{\vp}^j - \ost{\vp}_{[2]}^j$, which will require a more involved treatment.

The terms in $\ost{\vp}_{[G]}^j$ can be gathered into three types $\ost{\vp}_{[G]}^j = \vp_{[GM]}^j + \vp_{[GA]}^j + \vp_{[GB]}^j$.  ``Type $M$'' terms arise from errors in mollifications.  We estimate these as follows

\begin{prop} \label{prop:mollVp} Let $\vp_{[GM]}^j$ be the sum of terms in \eqref{eq:mollTerm}-\eqref{eq:lastFluxTerm} that are in divergence form and involve mollification errors of the form $F - F_\ep$ or $F - F_{\ep_v}$.  Then
\ali{
\brh[\vp_{[GM]}^j] &\lsm (e_ve_\vp^{1/2}/\osn) \lsm e_{\undvp}^{1/2} e_G,
}
where the second inequality follows from the assumption \eqref{eq:Nastdef}. 
\end{prop}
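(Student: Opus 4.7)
The plan is to prove the estimate by enumerating the individual terms of $\vp_{[GM]}^j$ arising from \eqref{eq:mollTerm}-\eqref{eq:lastFluxTerm} and bounding each by a straightforward application of the product rule for the weighted norm $\brh[\cdot]$ (Lemma~\ref{lem:comparisonLem}), combined with the mollification-error estimates of Proposition~\ref{prop:mollErrProp} and the uniform bounds of Proposition~\ref{prop:allTheEndBounds}. Concretely, $\vp_{[GM]}^j$ collects the divergence-form contributions:
\ali{
-V_\ell R_M^{j\ell},\quad \sum_{J \neq \bar{I}} (v - v_\ep)_\ell V_I^j V_J^\ell,\quad \tfrac{|v-v_\ep|^2}{2} V^j,\quad (p - p_{\ep_v}) V^j,\quad \Big(\tfrac{|V|^2}{2} + \kk\Big)(v^j - v_\ep^j),\quad \vp^j - \vp_\ep^j,
}
together with the sub-term $V_\ell (R^{j\ell} - R_\ep^{j\ell})$ hidden inside $V_\ell R_M^{j\ell}$. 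Since each such expression is defined explicitly (not via inversion of a divergence), no use of the order $-1$ operator $Q_{a,\approx \la}^{j\ell}\ast$ is needed here.

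For each term the strategy is the same: factor it as (mollification error) $\times$ (controlled field), apply the product rule $\brh[FG] \lsm \brh[F]\brh[G]$, and insert the previously established bounds. The representative calculations are
\ali{
\brh[(p - p_{\ep_v}) V^j] &\lsm (e_v/\osn) \cdot e_\vp^{1/2}, \\
\brh[V_\ell (R^{j\ell} - R_\ep^{j\ell})] &\lsm e_\vp^{1/2} \cdot (e_v^{1/2}/e_\vp^{1/2})(e_\vp/\osn) = e_v^{1/2} e_\vp/\osn, \\
\brh[(v - v_\ep)_\ell V_I^j V_J^\ell] &\lsm (e_v^{1/2}/\osn) \cdot e_\vp, \\
\brh[\vp^j - \vp_\ep^j] &\lsm (e_v^{1/2}/e_\vp^{1/2})(e_\vp^{3/2}/\osn) = e_v^{1/2} e_\vp/\osn, \\
\brh[(|V|^2/2 + \kk)(v^j - v_\ep^j)] &\lsm e_\vp \cdot (e_v^{1/2}/\osn),
}
with the cubic $|v-v_\ep|^2 V^j$ and $V_\ell R_{Mv}^{j\ell}$ terms handled identically and producing the even smaller bound $(e_v/\osn^2) e_\vp^{1/2}$. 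Since the hypothesis $e_\vp \leq e_v$ of Definition~\ref{eq:compFrEnlvls} implies $e_v^{1/2} e_\vp \leq e_v e_\vp^{1/2}$, each of these contributions is $\lsm e_v e_\vp^{1/2}/\osn$, and summing the finitely many terms gives $\brh[\vp_{[GM]}^j] \lsm e_v e_\vp^{1/2}/\osn$.

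Finally, the second inequality $e_v e_\vp^{1/2}/\osn \lsm e_{\undvp}^{1/2} e_G$ is precisely the first constraint imposed on $N$ in \eqref{eq:Nastdef}, namely $N \geq (e_v/e_G)(e_\vp^{1/2}/e_{\undvp}^{1/2})$, which yields $e_v e_\vp^{1/2}/N \leq e_G e_{\undvp}^{1/2}$ as stated. The main conceptual point, rather than any technical obstacle, is that every mollification error $F - F_\ep$ (or $F - F_{\ep_v}$) trades its natural size $h_F$ for the smaller $h_F/\osn$, while simultaneously upgrading $e_\vp^{1/2}$ to $e_v^{1/2}$ through the factor $(e_v^{1/2}/e_\vp^{1/2})$ appearing in \eqref{eq:mollErrorBds}; combined with the bound $\brh[V] \lsm e_\vp^{1/2}$ on the high-frequency correction, this exactly produces the target order $e_v e_\vp^{1/2}/\osn$. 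No divergence inversion, commutator estimate, or delicate cancellation is needed, in contrast to the more subtle estimates for $\vp_{[GA]}^j$ and $\vp_{[GB]}^j$ that will appear in the subsequent propositions.
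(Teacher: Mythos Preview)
Your proposal is correct and follows essentially the same approach as the paper: apply the product rule for $\brh[\cdot]$ together with the mollification-error bounds of Proposition~\ref{prop:mollErrProp}. Two small corrections to your enumeration: first, you have omitted the mollification-error pieces of $-\ost{\kk}\wtld{V}^j$ in \eqref{eq:kkCurrterm}, namely $-\wtld{V}^j[(\kk-\kk_\ep) + (v-v_\ep)\cdot V]$ arising from the decomposition $\ost{\kk} = \kk_L + (\kk-\kk_\ep) + (v-v_\ep)\cdot V$ --- the paper groups these into its $\vp_{\kk M}^j$ and bounds them by $e_v^{1/2}e_\vp/\osn$ via the same product-rule argument (using $\brh[\wtld{V}]\lsm e_\vp^{1/2}$). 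Second, $V_\ell R_{Mv}^{j\ell}$ is only linear in $v-v_\ep$, so its size is $e_v^{1/2}e_\vp/\osn$ rather than $(e_v/\osn^2)e_\vp^{1/2}$; this is still below the target. Finally, the bound $\brh[\kk]\lsm e_\vp$ that you use implicitly for the $(|V|^2/2+\kk)(v-v_\ep)$ term requires the comparison $\brh[F]\lsm H[F]$ of Lemma~\ref{lem:comparisonLem}, since the frequency-energy levels control $D_t\kk$ rather than $\Ddt\kk$; the paper makes this step explicit.
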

\begin{proof} We start by bounding $\vp_M^j$ in \eqref{eq:mollTerm} using Proposition~\ref{prop:mollErrProp}
\ALI{
\brh[\vp_{M}^j] &\lsm \brh[V_\ell R_M^{j\ell}] + \max_{I,J} \brh[v - v_\ep]\brh[V_I]\brh[V_J] + (\brh[v - v_\ep]^2 + \brh[p - p_{\ep_v}])\brh[V]  \\
\brh[\vp_{M}^j] &\lsm \brh[V](\brh[v - v_\ep] \brh[V] + \brh[R - R_\ep] ) + (e_v^{1/2}/\osn) e_\vp + ((e_v^{1/2}/\osn)^2 + (e_v/\osn))e_\vp^{1/2} \\
&\lsm e_\vp (e_v^{1/2}/\osn) + e_\vp^{1/2}(e_v^{1/2}/e_\vp^{1/2})(e_\vp/\osn) + (e_v/\osn)e_\vp^{1/2} \lsm e_ve_{\vp}^{1/2}/\osn 
}
The mollification errors in \eqref{eq:kkCurrterm} including the terms from $\ost{\kk} = \kk_L + (\kk - \kk_\ep) + (v - v_\ep) \cdot V $ may be bounded using Lemma~\ref{lem:comparisonLem} (including the inequality $\brh[F] \lsm H[F]$), and Proposition~\ref{prop:mollErrProp} to give
\ALI{
\vp_{\kk M}^j = -\wtld{V}^j[ (\kk - \kk_\ep) + (v-v_\ep)\cdot V] &+  \left(\fr{|V|^2}{2} + \kk \right) ( v - v_\ep ) \\
\brh[\vp_{\kk M}] \lsm e_\vp^{1/2}\left[ \left( \fr{e_v^{1/2}}{e_\vp^{1/2}} \right) \left( \fr{e_\vp}{\osn}\right) + \left( \fr{e_v^{1/2}}{\osn}\right) e_\vp^{1/2} \right] &+ (\brh[V]^2 + H[\kk] )(e_v^{1/2}/\osn) \lsm e_v^{1/2} e_\vp/\osn.
}
The bound $H[\kk] \lsm e_\vp$ follows from \eqref{eq:quadc0bdlvl}-\eqref{eq:quadDdtbdlvl} and $\osn \Xi = \la \geq \Xi$, $ \Xi e_v^{1/2} \leq \la e_\vp^{1/2}$ for $\osn \geq e_v^{1/2}/e_\vp^{1/2}$.

The mollification error in \eqref{eq:lastFluxTerm} is then bounded by 
\ali{
\brh[\vp^j - \vp_\ep^j] \lsm (e_v^{1/2}/e_\vp^{1/2})(e_\vp^{3/2}/\osn) \lsm e_v^{1/2} e_\vp/\osn,
}
where we have used Proposition~\ref{prop:mollErrProp} for $\vp^j$.
\end{proof}

We next consider terms of ``Type A'', which are those terms found in \eqref{eq:vpLdef},\eqref{eq:allTheVps}-\eqref{eq:lastFluxTerm} that are already in the divergence form $\nb_j \wtld{\vp}^j$ for some $\wtld{\vp}^j$ obeying the required bound $\brh[\wtld{\vp}^j] \lsm e_{\undvp}^{1/2} e_G$.
\begin{prop} \label{prop:typeAprop} The term $\vp_{[GA]}^{j}$  identified within the course of the following proof as the sum of Type A terms obeys the bound $\brh[\vp_{[GA]}] \lsm e_{\undl{\vp}}^{1/2} e_G$.
\end{prop}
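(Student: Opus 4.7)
The plan is to enumerate the Type A contributions and estimate each one by simply applying the product rule for the weighted norm $\brh$ (Lemma~\ref{lem:comparisonLem}), the factor-wise bounds of Proposition~\ref{prop:allTheEndBounds}, the stress bounds \eqref{eq:boundedHighTrans} and \eqref{eq:lastLowerOrdBd} already established in Section~\ref{sec:boundNewStress}, and finally the frequency-energy relations \eqref{eq:Nastdef} together with the choice $\ost{b} = b_0 (e_v^{1/2}/e_\vp^{1/2} B_\la N)^{1/2}$ to check that the resulting magnitudes close against $e_{\undvp}^{1/2} e_G$.

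First I would identify $\vp_{[GA]}^j$ explicitly by traversing \eqref{eq:vpLdef},\eqref{eq:allTheVps}--\eqref{eq:lastFluxTerm} and picking out everything that (i) is already written as $\nb_j(\cdot)$, (ii) is not a mollification-error term (those comprise $\vp_{[GM]}^j$ of Proposition~\ref{prop:mollVp}), (iii) has not been absorbed into $\ost{\vp}_{[2]}^j$ via \eqref{eq:chosenMain2termflux}, and (iv) does not require being written as a divergence through the order $-1$ operator or the cancellations of \eqref{eq:amazingCancel}--\eqref{eq:amazingCancelParallel} (those are the Type B terms). The resulting list includes: the cascade residual terms $\sum_{I,J,K \in \calT}(\hat{\de} v_I)_\ell v_J^j v_K^\ell$ and $\sum_{\calT}(\de V_I)_\ell \VR_J^j \VR_K^\ell$ coming from \eqref{eq:vpLdef} after $\vp_{(2)}^j$ is removed; the pieces $-\wtld{V}_\ell R_H^{j\ell}$ and $-\wtld{V}_\ell R_T^{j\ell}$ from \eqref{eq:HHflux}, \eqref{eq:Tflux}; the lower order piece $-V_\ell \wtld{R}_S^{j\ell}$ from \eqref{eq:stressFlux}; and the residual $-(V_\ell - \widehat{V}_\ell) \pi_\times R_{[3\ast]}^{j\ell}$ from the $R_{[3\ast]}$ contribution to $-V_\ell R_S^{j\ell}$ left over after \eqref{eq:chosenMain2termflux}.

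Second, for each listed term I apply the product rule. The cascade terms yield, via Proposition~\ref{prop:allTheEndBounds}, a bound of the form $\brh[\hat{\de} v_I]\,\brh[v_J]\,\brh[v_K] \lsm \ost{b}\, e_\vp^{3/2}$ and $\brh[\de V_I]\brh[\VR_J]\brh[\VR_K] \lsm (\la^{-1}\Xi) e_\vp^{3/2} = (B_\la N)^{-1} \Xi e_\vp^{3/2}$, the latter being smaller. The $\wtld{V}R_{T,H}$ contributions combine \eqref{eq:tildeVLevel} with the already established $\brh[R_T] + \brh[R_H] \lsm \ost{b}\, e_\vp$ of \eqref{eq:boundedHighTrans}, again giving $\ost{b}\, e_\vp^{3/2}$. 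The term $V_\ell \wtld{R}_S^{j\ell}$ uses $\brh[\wtld{R}_S] \lsm B_\la^{-1/2}(e_v^{1/2}/e_\vp^{1/2} N)^{1/2} e_\vp$ from \eqref{eq:lastLowerOrdBd} to give a bound of the same form. Finally $(V - \widehat{V})_\ell \pi_\times R_{[3\ast]}^{j\ell}$ gives $\brh[V - \widehat{V}]\brh[R_{[3\ast]}] \lsm \ost{b}\, e_\vp^{1/2} e_G$, since $V_I - \widehat{V}_I = e^{i\la\xi_I}\hat{\de}v_I + \de V_I$ and both factors have the small-amplitude bounds of Proposition~\ref{prop:allTheEndBounds}.

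Third, I verify that each of these magnitudes is $\lsm e_{\undvp}^{1/2} e_G$. For the bound $\ost{b}\, e_\vp^{3/2}$ this amounts to $\ost{b}^2 e_\vp^3 \lsm e_{\undvp} e_G^2$, i.e.\ $b_0^2 B_\la^{-1} (e_v^{1/2} e_\vp^{5/2}/N) \lsm e_{\undvp} e_G^2$, which follows from the second inequality in \eqref{eq:Nastdef}, $N \geq (e_v^{1/2}/e_\vp^{1/2})(e_\vp/e_G)^2(e_\vp/e_{\undvp})$; this is the tightest of the checks and is precisely the reason this particular lower bound appears in \eqref{eq:Nastdef}. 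For $\ost{b}\, e_\vp^{1/2} e_G \lsm e_{\undvp}^{1/2} e_G$ one needs $\ost{b}^2 e_\vp \lsm e_{\undvp}$, which is implied by combining $e_G \leq e_{\undvp}$ with the same condition. The expected main obstacle is not any single estimate but the sheer bookkeeping of verifying that \emph{every} Type A term indeed satisfies one of these two template inequalities — i.e.\ checking carefully that the decomposition genuinely exhausts all in-divergence-form pieces of \eqref{eq:vpLdef},\eqref{eq:allTheVps}--\eqref{eq:lastFluxTerm} not already claimed by $\vp_{[GM]}^j$, $\ost{\vp}_{[2]}^j$, or the Type B terms to be treated in Section~\ref{sec:commutadvec}, so that no divergence term is double-counted or dropped.
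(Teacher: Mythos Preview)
Your proposal is correct and follows essentially the same approach as the paper: you identify the same list of Type A terms (the cascade residuals from \eqref{eq:vpLdef}, the pieces $-\wtld{V}_\ell R_T^{j\ell}$, $-\wtld{V}_\ell R_H^{j\ell}$, the lower-order stress piece $-V_\ell \wtld{R}_S^{j\ell} = -V_\ell R_{SG}^{j\ell}$, and the $\hat{\de}V$ residual from $\pi_\times R_{[3\ast]}$), bound each via the product rule and Proposition~\ref{prop:allTheEndBounds}, and close against $e_{\undvp}^{1/2} e_G$ using the second condition in \eqref{eq:Nastdef}. The only cosmetic differences are that the paper packages the key inequality $\ost{b}\,e_\vp^{3/2} \lsm e_{\undvp}^{1/2} e_G$ upfront as \eqref{eq:critConditComparevp}, and absorbs your separate $\ost{b}\,e_\vp^{1/2} e_G$ check into $\ost{b}\,e_\vp^{3/2}$ via $e_G \leq e_\vp$ rather than verifying it independently.
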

The proof will make use of the inequality
\ali{
(\la^{-1} \ostu^{-1} e_\vp + \ost{b} e_\vp^{3/2}) &\lsm e_{\undvp}^{1/2} e_G, \label{eq:critConditComparevp}
}
which follows from $\ost{b} = b_0 (e_v^{1/2}/(e_\vp^{1/2} B_\la N))^{1/2}$, $\la = B_\la N \Xi$, $\ostu^{-1} = \ost{b}^{-1} (\Xi e_v^{1/2})$ and \eqref{eq:Nastdef}.  
\begin{proof}  From \eqref{eq:vpLdef}, we isolate the term $\wtld{\vp}_{\eqref{eq:vpLdef}}^j$ defined as the sum of terms of the form $O(\hat{\de} v_I v_J v_K )$ (including terms that are higher order in $\hat{\de} v_I$) or of the form $O(\de V \cdot  V \cdot V )$ (including terms that are higher order in $\de V_I$).  These terms are bounded by
\ALI{
\brh[\wtld{\vp}_{\eqref{eq:vpLdef}}^j] &\lsm \max_{I,J,K} \brh[\hat{\de} v_I] ( \brh[v_J] + \brh[\hat{\de} v_J] ) ( \brh[v_J] + \brh[\hat{\de} v_K]) \\
&+ \max_{I,J,K} \brh[\de V_I] ( \brh[V_J] + \brh[\de V_J ] )( \brh[V_K] + \brh[\de V_K] )  \\
\brh[\wtld{\vp}_{\eqref{eq:vpLdef}}^j] &\lsm \ost{b} e_\vp^{1/2} \cdot e_\vp + (B_\la \osn \Xi)^{-1} \Xi e_\vp^{1/2} \cdot e_\vp \lsm \ost{b} e_\vp^{3/2} \lsm e_{\undl{\vp}}^{1/2} e_G.
}
In the last line we used our choice of $\ost{b} = (e_v^{1/2}/(e_\vp^{1/2} B_\la \osn))^{1/2}$, $N \geq e_v^{1/2}/e_\vp^{1/2}$ and \eqref{eq:Nastdef}.

We next isolate the acceptable divergence-form terms in \eqref{eq:HHflux}-\eqref{eq:Tflux}, which we bound using \eqref{eq:boundedHighTrans} 
\ALI{
\brh[- \tilde{V}_\ell R_H^{j\ell}] + \brh[- \tilde{V}_\ell R_T^{j\ell}]  &\lsm \brh[\tilde{V}](\brh[R_H] + \brh[R_T]) \\
\brh[- \tilde{V}_\ell R_H^{j\ell}] + \brh[- \tilde{V}_\ell R_T^{j\ell}]&\lsm e_\vp^{1/2}(\ost{b} e_\vp ) \lsm e_{\undl{\vp}}^{1/2} e_G.
}
In the last line we used \eqref{eq:tildeVLevel} and \eqref{eq:critConditComparevp}.

The term $-V_\ell R_S^{j\ell}$ in \eqref{eq:stressFlux} contains various type $A$ terms.  These terms may be identified by first decomposing $R_S^{j\ell} = P \de_{[2]}^{j\ell} + R_{[2\ast]}^{j\ell} + R_{[3\ast]}^{j\ell} + R_{SG}^{j\ell}$, where $R_{SG}^{j\ell}$ contains terms of the form $O(\de v_I v_I)$, $O(\hat{\de} v_I v_I)$ along with quadratic terms in $\hat{\de} v_I$ or $\de v_I$.  Several terms in \eqref{eq:stressFlux} have already been incorporated into the $\ost{\vp}_{[2]}^j$ chosen in \eqref{eq:chosenMain2termflux}.  The remaining terms in\eqref{eq:stressFlux} will satisfy 
\ali{
\nb_j \vp_{SB}^j &= - \nb_j[V_\ell P \de_{[2\ast]}^{j\ell} + V_\ell \pi_\parallel R_{[3\ast]}^{j\ell} ] \label{eq:vpsbdef} \\
\vp_{SA}^j &= - \hat{\de} V_\ell \pi_\perp R_{[3\ast]}^{j\ell} - V_\ell R_{SG}^{j\ell}, \\
\hat{\de} V^\ell &= \sum_I (e^{i \la \xi_I} \hat{\de} v_I^\ell) + \de V_I^\ell. \label{eq:errHatAppCorrect}
}
We can bound \eqref{eq:errHatAppCorrect} by $\brh[\hat{\de} V] \lsm \ost{b} e_\vp^{1/2} + (B_\la \osn)^{-1} e_\vp^{1/2} \lsm \ost{b} e_\vp^{1/2}$, where we use that $\brh[\de v_I] \lsm \ost{b}$, while $\brh[e^{i \la \xi_I}] \lsm 1$.  (The last estimate follows from $\Ddt e^{i \la \xi_I} = 0$ and $\co{\nb_{\va} e^{i \la \xi_I}} \lsm_{\va} \la^{|\va|}$, which can be checked by replacing $(F, h_F) = (v_I, e_\vp^{1/2})$ by $(F,h_F) = (1,1)$ in the computation of line \eqref{eq:phaseBd}.)  

We now obtain using \eqref{eq:critConditComparevp} and $\ost{b} \leq (B_\la N)^{-1}$ for $N \geq e_v^{1/2}/e_\vp^{1/2}$ the bound 
\ALI{
\brh[\vp_{SA}] &\lsm \brh[ \hat{\de} V] \brh[R_{[3\ast]}] + + \brh[V]\brh[\hat{\de} v_I](\brh[v_I] + \brh[\hat{\de} v_I] ) \\
&+ \brh[V] \brh[\de v_I](\brh[v_I] + \brh[\de v_I] ) \\
\brh[\vp_{SA}] &\lsm \ost{b} e_\vp^{1/2} e_G + e_\vp^{1/2}(\ost{b} e_\vp + (B_\la N)^{-1} e_\vp ) \\
&\lsm \ost{b} e_\vp^{3/2} \lsm e_{\undl{\vp}}^{1/2} e_G.
}
This bound concludes our estimates for the type $A$, divergence form terms.
\end{proof}

\subsection{Solving the divergence equation for the unresolved flux current} \label{sec:solvingDivEqn}
We now calculate and estimate the remaining error terms in $\ost{\vp}^j $, which are all of ``Type $B$'', meaning that they require solving the divergence equation $\nb_j \tilde{\vp}^j = U$ in order to obtain an appropriate estimate.  We gather all these terms into $\vp_{[GB]}^j$, where $\ost{\vp}_{[G]}^j = \vp_{[GM]}^j + \vp_{[GA]}^j + \vp_{[GB]}^j$ is the lower order part of the new unresolved flux current $\ost{\vp}^j$, and $\vp_{[GM]}^j, \vp_{[GA]}^j$ have been defined and bounded in Propositions~\ref{prop:mollVp},\ref{prop:typeAprop}.  We require $\vp_{[GB]}^j$ to satisfy the following 
\ali{
\vp_{[GB]}^j &= \vp_{[MB]}^j + \vp_{[LH]}^j + \vp_{[HB]}^j + \vp_{[TB]}^j + \vp_{[QB]}^j + \vp_{[\kk B]}^j + \vp_{[\calT B]}^j \label{eq:lastCurrentTerms} \\
\begin{split}
\vp_{[MB]}^j &= (P - P_{\ep_v}) (V^j - V_\ell \de_{[2\ast]}^{j\ell}) - (\kk_L - \kk_{L \ep_v}) V^j \\
&\, \,- (v^j - v_\ep^j) \kk_{L} - V_\ell (\pi_{\parallel} R_{[3\ast]}^{j\ell} - R_{\parallel \ep}^{j\ell} ) \label{eq:lastMollCurrentTerm} 
\end{split} \\
\nb_j \vp_{[LH]}^j &= \nb_j[(P_{\ep_v} + \kk_{L \ep_v})V^j] \label{eq:lowHighDiv} \\
\nb_j \vp_{[PR]}^j &= -\nb_j[ V_\ell P_{\ep_v} \de_{[2\ast]}^{j\ell}] - \nb_j[V_\ell R_{\parallel \ep}^{j\ell} ] \label{eq:VRDiv} \\
\nb_j \vp_{[HB]}^j &= \nb_j v_{\ep \ell} \Big( \sum_{J \neq \bar{I}} (V_I^j V_J^\ell) - R_H^{j\ell} - R_T^{j\ell} \Big) \label{eq:vpHBterms} \\
\nb_j \vp_{[TB]}^j &= V_\ell(\nb_j R_{\ep_v}^{j\ell} + Q_\ep^{\ell}[v,v]), \quad Q_\ep^{\ell}[v,v] = v_\ep^a \nb_a v_\ep^\ell - \chi_\ep \ast \chi_\ep \ast(v^a \nb_a v^\ell) \label{eq:vpQBterms} \\
\nb_j \vp_{[\kk B]}^j &= \sum_{J \neq \bar{I}} \Ddt \left( \fr{V_I \cdot V_J}{2} \right) \label{eq:vpTBterms} \\
\nb_j \vp_{[\calT B]}^j &= \nb_j \Big[ \sum_{I,J,K \notin \cal T} (V_I)_\ell V_J^j V_K^\ell \Big].  \label{eq:trilinDiv}
}
Here $R_{\parallel \ep}^{j\ell} = \chi_\ep \ast \chi_\ep \ast \pi_\parallel R_{[3\ast]}^{j\ell}$ is the mollification of $\pi_\parallel R_{[3\ast]}^{j\ell}$ in space using the frequency-localized kernel $\chi_\ep$ that was also used to define $v_\ep$.  The reason we have mollified each of $P, \kk_L$ and $\pi_\parallel R_{[3\ast]}^{j\ell}$ is to arrange that all of the equations \eqref{eq:lowHighDiv}-\eqref{eq:trilinDiv} have frequency support on $|p| \geq \la/3$:
\ali{
\Fsupp \eqref{eq:lowHighDiv}{-}\eqref{eq:trilinDiv} &\subseteq \{ |p| \geq (B_\la N \Xi)/3 \} \subseteq \widehat{\R^3}. \label{eq:freqLocalized}
}
This observation uses the fact that $\chi_\ep \ast$ localizes to frequencies of size $|p| \leq N^{-1/L} \Xi \leq 10^{-2} B_\la N \Xi$ while each $V_I$ is localized to frequency $ 2^{-1} n_I B_\la N \Xi \leq |p| \leq 2 n_I B_\la N \Xi$ from the presence of $\PP_I$ in \eqref{eq:HfreqProjectiondef}, and also uses that the Fourier transform maps products to convolutions, and the various cascade conditions \eqref{eq:noWrongCascades},\eqref{eq:goodHighCascades} that constrain the $n_I$ to ensure the above products of high frequency terms all have frequency support in $\{ |p| \geq B_\la N \Xi \}$.

Similarly to Section~\ref{sec:boundNewStress}, we take advantage of this frequency localization as follows.  Choose an order $-1$ right inverse to the (scalar) divergence equation $Q^j \ast$ and choose a Fourier multiplier $P_{\approx \la} = \chi_{\approx \la} \ast$ whose symbol is a rescaled Schwartz function of the form $\widehat{\chi_{\approx \la}}(p) = \widehat{\chi_{\approx 1}}(p/\la)$ with $\widehat{\chi_{\approx \la}}(p) = 1$ for all $p$ in \eqref{eq:freqLocalized}.  Let our solution to $\nb_j \wtld{\vp}^j = U$ be defined to be
\ali{
\wtld{\vp}^j &= P_{\approx \la} Q^j \ast U = Q_{\approx \la}^j \ast U. \label{eq:defineSolnVp}
}
For example, if we use the standard solution operator given by the Fourier multiplier $Q^j \ast$ with symbol
\ali{
\widehat{Q}^j(p) &= (-i) |p|^{-2} p^j,
}
then \eqref{eq:defineSolnVp} solves $\nb_j \wtld{\vp}^j = U$ whenever $\Fsupp U$ is contained in \eqref{eq:freqLocalized}.  Furthermore, the kernel of $Q_{\approx \la}^j \ast$ is a rescaled Schwartz function with Fourier transform $\la^{-1} \widehat{\chi_{\approx 1}}(p/\la) \widehat{Q}^j(p/\la)$, which has the form $Q_{\approx \la}^j(h) = \la^{-1} [ \la^{3} Q_{\approx 1}^j(\la h) ]$ in physical space for the Schwartz function $Q_{\approx 1}^j$.  We may thus apply Lemma~\ref{lem:commutator} with $\La^{-1} \lsm \la^{-1}$ to the operator in \eqref{eq:defineSolnVp}.  Doing so gives the following estimates.
\begin{prop} \label{prop:divEqnPropCurrent}  Every term in \eqref{eq:lastCurrentTerms} including $\vp_{[GB]}^j$ satisfies the estimate $\brh[\vp_{[GB]}^j ] \lsm e_{\undvp}^{1/2} e_G$.
\end{prop}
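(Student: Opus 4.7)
The plan is to treat $\vp_{[MB]}^j$ directly by the product rule of Lemma~\ref{lem:comparisonLem} together with the mollification bounds of Proposition~\ref{prop:mollErrProp}, and to treat each of the divergence-form terms \eqref{eq:lowHighDiv}--\eqref{eq:trilinDiv} by inverting the divergence via $\vp^j = Q_{\approx \la}^j \ast U$ as in \eqref{eq:defineSolnVp} and applying Lemma~\ref{lem:commutator} with $\La^{-1} \lsm \la^{-1}$. The overall gain is a factor of $\la^{-1} = (B_\la N \Xi)^{-1}$ compared to the naive pointwise norm of $U$, and the reduction to the target bound $e_{\undvp}^{1/2} e_G$ should in each case come from the frequency-energy hypothesis \eqref{eq:Nastdef}, the inequality $\ost{b} e_\vp^{3/2} \lsm e_{\undvp}^{1/2} e_G$ from \eqref{eq:critConditComparevp}, and the smallness of the various regularization errors from Propositions~\ref{prop:mollErrProp} and~\ref{prop:prelimBds}.

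For $\vp_{[MB]}^j$ in \eqref{eq:lastMollCurrentTerm} I would simply apply the product rule for $\brh$ and use $\brh[P - P_{\ep_v}] \lsm e_{\undvp}/\osn$, $\brh[\kk_L - \kk_{L\ep_v}] \lsm e_{\undvp}/\osn$ from \eqref{eq:mollErrkkLbd} and Proposition~\ref{prop:mollErrProp}, together with $\brh[V] \lsm e_\vp^{1/2}$ and $\brh[\pi_\parallel R_{[3\ast]}^{j\ell} - R_{\parallel \ep}^{j\ell}] \lsm e_G/N$; each product then contributes at most $(e_v^{1/2}/\osn) e_\vp^{1/2}$ type terms that are dominated by $e_{\undvp}^{1/2} e_G$ thanks to \eqref{eq:Nastdef}. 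For the bilinear cascade terms $\vp_{[HB]}^j$ and the $\tilde V_\ell R_H^{j\ell}, \tilde V_\ell R_T^{j\ell}$ absorbed in $\vp_{[GA]}^j$ (already handled), the critical point is that $\nb v_\ep$ and the stresses $R_H, R_T$ all have the moderate size $\ost{b} e_\vp$ from \eqref{eq:boundedHighTrans}, so multiplying by $\la^{-1}$ and using \eqref{eq:critConditComparevp} gives the bound. For $\vp_{[\kk B]}^j$ in \eqref{eq:vpTBterms}, the product $V_I \cdot V_J$ for $J \neq \bar I$ has nonstationary phase, so $\Ddt$ falling on the phase produces no gain, but the product itself is of size $e_\vp$ at frequency $\la$ and its $\Ddt$ is of size $\ostu^{-1} e_\vp \lsm \la e_\vp^{1/2} \cdot e_\vp^{1/2}$; applying Lemma~\ref{lem:commutator} yields $\la^{-1}\cdot \ostu^{-1} e_\vp \lsm \ost{b} e_\vp^{3/2}$. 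The trilinear term $\vp_{[\calT B]}^j$ is analogous, since for $(I,J,K) \notin \calT$ the phase $e^{i\la(\xi_I+\xi_J+\xi_K)}$ never becomes stationary on account of \eqref{eq:goodHighCascades}, so the Fourier support lies in the annulus $\{|p| \gtrsim \la\}$ and $U$ itself has size $e_\vp^{3/2}$.

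The main obstacle, and the one step requiring real care, is the term $\vp_{[PR]}^j$ in \eqref{eq:VRDiv}: the naive estimate $\brh[V_\ell P_{\ep_v} \de_{[2\ast]}^{j\ell}] \lsm e_\vp^{1/2} e_{\undvp}$ multiplied by $\la^{-1}$ produces $\la^{-1} e_\vp^{1/2} e_{\undvp}$, which is not obviously smaller than $e_{\undvp}^{1/2} e_G$ (and similarly for the $\pi_\parallel R_{[3\ast]}$ piece). Here the crucial input is the cancellation identity $(\nb_j\hxii_I \hat v_{I\ell})\de_{[2\ast]}^{j\ell} = 0$ from \eqref{eq:amazingCancel}, together with its analogue \eqref{eq:amazingCancelParallel} for $\pi_\parallel R_{[3\ast]}$: expanding the divergence by distributing $\nb_j$, the leading contribution where the derivative lands on $e^{i\la\xi_I}$ gets replaced by $e^{i\la\xi_I}(\nb_j \hxii_I (\hat{\de}v_I)_\ell + (\hat{\de}\nb_j\xi_I) v_{I\ell})$, each factor carrying an extra smallness $\ost{b}$ by \eqref{eq:initAmpErrBds} and \eqref{eq:c0bdPhasediff}. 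To implement this rigorously I would solve the divergence using an order $-1$ operator applied to a term of size $\la \cdot \ost{b} e_\vp^{1/2} \cdot e_{\undvp}$ rather than $\la \cdot e_\vp^{1/2} \cdot e_{\undvp}$, yielding a final bound of order $\ost{b} e_\vp^{1/2} e_{\undvp} \leq \ost{b} e_\vp^{3/2}$, at which point \eqref{eq:critConditComparevp} closes the estimate.

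Finally, for $\vp_{[TB]}^j$ in \eqref{eq:vpQBterms} the mollified Euler--Reynolds stress $\nb_j R_{\ep_v}^{j\ell}$ is low-frequency and small, so that its product with $V_\ell$ satisfies the frequency containment in \eqref{eq:freqLocalized} and Lemma~\ref{lem:commutator} gives the desired gain; the commutator $Q_\ep^\ell[v,v]$ is estimated as in Section~\ref{sec:regPrelimBds} (compare \eqref{eq:commutermDef}--\eqref{eq:commuterm2}) by $\lsm \Xi e_v^{1/2}\cdot e_v^{1/2}/\osn$, so $V_\ell Q_\ep^\ell[v,v]$ is of size $(e_v/\osn) e_\vp^{1/2}$ and after applying the divergence inverse is comfortably within our budget. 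Assembling these estimates gives the claimed bound for each of the terms in \eqref{eq:lastCurrentTerms}, and then the triangle inequality for $\brh$ gives $\brh[\vp_{[GB]}^j] \lsm e_{\undvp}^{1/2} e_G$ as required.
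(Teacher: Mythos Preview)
Your overall strategy is the same as the paper's, and your treatment of $\vp_{[MB]}^j$, $\vp_{[HB]}^j$, $\vp_{[\kk B]}^j$, and especially the crucial cancellation for $\vp_{[PR]}^j$ is correct. However, there is a genuine gap in your handling of the trilinear term $\vp_{[\calT B]}^j$.

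Your claim that ``$U$ itself has size $e_\vp^{3/2}$'' yields only $\brh[\vp_{[\calT B]}] \lsm e_\vp^{3/2}$, which is \emph{not} bounded by $e_{\undvp}^{1/2} e_G$ (indeed $e_\vp^{3/2} \leq e_{\undvp}^{1/2} e_G$ would require $e_\vp^{4/3} \leq e_R^{1/3} e_G$, which fails since $e_\vp \geq e_R \geq e_G$). The nonstationarity of the phase only guarantees the frequency localization needed to invoke $Q_{\approx\la}^j$; it does not by itself provide the additional smallness factor $\ost{b}$. What you are missing is exactly the same cancellation you exploited for $\vp_{[PR]}^j$: writing $U_{[\calT B]} = \sum V_J^j \nb_j(V_I \cdot V_K)$ and expanding the leading term as $(i\la) e^{i\la(\xi_I+\xi_J+\xi_K)} v_J^j(\nb_j\xi_I + \nb_j\xi_K)(v_I\cdot v_K)$, one uses $\hat v_J^j \nb_j \hxii_I = \hat v_J^j \nb_j \hxii_K = 0$ (all phases point in the $x^1$-direction while all amplitudes lie in $\ker dx^1$) to replace this by terms involving $\hat\de v_J$ or $\hat\de\nb\xi_I$, each carrying a factor of $\ost{b}$. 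This reduces $\co{U_{[\calT B]}}$ from $\la e_\vp^{3/2}$ to $\la \ost b\, e_\vp^{3/2}$, after which \eqref{eq:critConditComparevp} closes the bound.

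Two smaller points: you omit the term $\vp_{[LH]}^j$ of \eqref{eq:lowHighDiv} (straightforward once one uses $\nb_j V^j = 0$ to move the derivative onto $P_{\ep_v}$ and $\kk_{L\ep_v}$), and your treatment of $\vp_{[TB]}^j$ glosses over the bound $\co{\Ddt Q_\ep^\ell[v,v]} \lsm \Xi^2 e_v e_\vp^{1/2}$, which is needed to apply Lemma~\ref{lem:commutator} and is a separate nontrivial commutator estimate (the paper devotes Section~\ref{sec:commutadvec} to it); the $C^0$ bound alone from \eqref{eq:commutermDef}--\eqref{eq:commuterm2} does not suffice.
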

\begin{proof}  We start by estimating the term in \eqref{eq:lastMollCurrentTerm} using Lemmas~\ref{lem:comparisonLem} and Proposition~\ref{prop:mollErrProp}.  (Note that\footnote{Here we use the calculation of line \eqref{eq:PerrorMoll} to deduce the bounds for $D_t$ from those of $\Ddt$.} Proposition~\ref{prop:prelimBds} and therefore Proposition~\ref{prop:mollErrProp} apply to $F = \pi_\parallel R_{[3\ast]}^{j\ell} = \pi_{\parallel} \pi_{[3]} R_{[G\ep]}^{j\ell}$ with $h_F = e_G$.)
\ali{
\brh[\vp_{[MB]}^j] &\lsm \brh[V] \left( \brh[P - P_{\ep_v}] + \brh[\kk_L - \kk_{L \ep_v}] + \brh\left[\pi_\parallel R_{[3\ast]} - R_{\parallel \ep}^{j\ell} \right] \right) + \brh[v-v_\ep] \brh[\kk_L]  \\
\brh[\vp_{[MB]}^j] &\stackrel{\eqref{eq:spaceMollErrorBds}-\eqref{eq:mollErrkkLbd}}{\lsm} e_\vp^{1/2}( (e_{\undvp} / N) + (e_G/N) ) + (e_v^{1/2}/N) e_{\undvp} \lsm e_v^{1/2} e_{\undvp}/N \stackrel{\eqref{eq:Nastdef}}{\lsm} e_{\undvp}^{1/2} e_G.
}
As for the remaining terms in \eqref{eq:lowHighDiv}-\eqref{eq:trilinDiv}, to apply Lemma~\ref{lem:commutator} we need only estimates on $H_0[U] := \max \{ \co{U}, (\la e_\vp^{1/2})^{-1} \co{\Ddt U} \}$, where $U$ is the right hand side of the equation.  We denote the right hand sides of equations \eqref{eq:lowHighDiv}-\eqref{eq:trilinDiv} by $U_{[LH]}, U_{[HB]}, \ldots, U_{[\calT B]}$.  The bounds for \eqref{eq:vpHBterms} and \eqref{eq:vpTBterms} are
\ali{
\brh[U_{[\kk B]}] &\lsm \max_{I, J} \brh[\Ddt V_I] \brh[V_J] \lsm \ostu^{-1} e_\vp \label{eq:usedPropDdtbrh} \\
\brh[\vp_{[\kk B]}] &\stackrel{\eqref{eq:solvedDivBd}}{\lsm} \la^{-1} \ostu^{-1} e_\vp \stackrel{\eqref{eq:critConditComparevp}}{\lsm} e_{\undvp}^{1/2} e_G. \label{eq:ultimvpTBbd} \\
\co{U_{[HB]}} &\lsm \co{\nb v_\ep}( \max_{I, J} \co{V_I} \co{V_J} + \co{R_H} + \co{R_T} ) \stackrel{\eqref{eq:highFreqCorrect},\eqref{eq:boundedHighTrans}}{\lsm} (\Xi e_v^{1/2}) e_\vp \notag \\
\co{\Ddt U_{[HB]}} \lsm& \, \, \co{\Ddt(\nb v_\ep) } e_\vp + (\Xi e_v^{1/2}) (\max_{I,J} \co{\Ddt V_I} \co{V_J} + \co{\Ddt R_H} + \co{\Ddt R_T} ) \notag \\
\stackrel{\eqref{eq:velocBd3} ,\eqref{eq:highFreqCorrect},\eqref{eq:boundedHighTrans}}{\lsm}& (\Xi e_v^{1/2})^2 e_\vp + (\Xi e_v^{1/2})( \ostu^{-1} e_\vp + \la e_\vp^{1/2} \ost{b} e_\vp ) \lsm \la e_\vp^{1/2} (\Xi e_v^{1/2}) e_\vp \notag \\
\brh[\vp_{[HB]}] &\lsm \la^{-1} (\Xi e_v^{1/2}) e_\vp \lsm \la^{-1} \ostu^{-1} e_\vp \stackrel{\eqref{eq:critConditComparevp}}{\lsm} e_{\undvp}^{1/2} e_G. \label{eq:ultimvpHBbd}
}
In line \eqref{eq:usedPropDdtbrh} we used Proposition~\ref{prop:allTheEndBounds}, which relies the bounds for $\Ddt^2 V_I$ in \eqref{eq:highFreqCorrect}.  In lines \eqref{eq:ultimvpTBbd} and \eqref{eq:ultimvpHBbd} we applied 
Lemma~\ref{lem:commutator} with $\La^{-1} \lsm \la^{-1}$.

We now consider \eqref{eq:lowHighDiv}.  Using the divergence free property $\nb_j V^j = 0$, this term becomes
\ALI{
U_{[LH]} &= V^j(\nb_jP_{\ep_v} + \nb_j \kk_{L \ep_v}), \\
\Ddt U_{[LH]} &= \Ddt V^j (\nb_jP_{\ep_v} + \nb_j \kk_{L \ep_v}) + V^j(\nb_j \Ddt P_{\ep_v} + \nb_j \Ddt \kk_{L \ep_v})
\\
&- \nb_j v_\ep^a V^j(\nb_aP_{\ep_v} + \nb_a \kk_{L \ep_v}), 
}
which leads to the following estimates (using Proposition~\ref{prop:prelimBds} for $F \in  \{P, \kk_L\}$ and $h_F = e_{\undvp}$)
\ALI{
\co{U_{[LH]}} &\lsm \co{V}(\co{\nb P_{\ep_v}} + \co{\nb \kk_{L \ep_v}}) \lsm e_\vp^{1/2}(\Xi e_{\undvp}) \\
\co{\Ddt U_{[LH]}} &\lsm \sum_{r_1+r_2 = 1} \co{\Ddt^{r_1} V^a} (\co{\Ddt^{r_2} \nb_a P_{\ep_v} } + \co{\Ddt^{r_2} \nb_a \kk_{L\ep_v} } ) \\
\co{\Ddt U_{[LH]}} &\lsm \ostu^{-1} e_\vp^{1/2}(\Xi e_{\undvp}) \\
\brh[ \vp_{[LH]} ] &\lsm \la^{-1} (\Xi e_\vp^{1/2}) e_{\undvp} \lsm N^{-1} e_\vp^{1/2} e_\vp \stackrel{\eqref{eq:Nastdef}}{\lsm} e_{\undvp}^{1/2} e_G.
}

We turn now to \eqref{eq:vpQBterms}.  We will use the following bounds for the commutator term:
\ali{
\co{Q_\ep^\ell[v,v]} \lsm \Xi e_v/N, \quad \co{\Ddt Q_\ep^\ell[v,v]} \lsm \Xi^2 e_v e_\vp^{1/2}. \label{eq:commutermBounds}
}
The $C^0$ estimate in \eqref{eq:commutermBounds} was proved already by taking $(F, e_V) = (v, e_v^{1/2})$ in lines \eqref{eq:simplecommuterm}, \eqref{eq:commuterm2}.  We will establish the bound for $\Ddt Q_\ep^\ell$ in Section~\ref{sec:commutadvec}.  For now we proceed assuming \eqref{eq:commutermBounds} to obtain
\ALI{
\co{U_{[TB]}} &\lsm \co{V}(\co{\nb R_{\ep_v}} + \co{Q_\ep}) \lsm e_\vp^{1/2}( \Xi e_\vp + \Xi (e_v/N)) \\
\co{\Ddt U_{[TB]}} &\lsm \sum_{r_1 + r_2 = 1} \co{\Ddt^{r_1} V}(\co{\Ddt^{r_2} \nb R_{\ep_v}} + \co{\Ddt^{r_2} Q_\ep^\ell[v,v]} ) \\
&\lsm \ostu^{-1} (e_\vp^{1/2} \Xi e_\vp + \Xi (e_v/N) ) + e_\vp^{1/2} (\Xi^2 e_v e_\vp^{1/2} ) \lsm [\la e_\vp^{1/2}] e_\vp^{1/2}(\Xi e_\vp + \Xi (e_v/N) ) \\
\brh[\vp_{[TB]}] &\lsm \la^{-1} e_\vp^{1/2}( \Xi e_\vp + \Xi (e_v/N)) \lsm N^{-1} e_\vp^{3/2} \stackrel{\eqref{eq:Nastdef}}{\lsm} e_{\undvp}^{1/2} e_G.
}
We now consider \eqref{eq:trilinDiv}.  For this term, the key cancellation arises by using that $\nb_j V_J^j = 0$ and the calculation in line \eqref{eq:keyCancelTerm}
\ali{
U_{[\calT B]} &= \sum_{I,J,K \notin \cal T} V_J^j \nb_j (V_I \cdot V_K) = \sum_{I,J,K \notin \cal T} [ \mathring{U}_{[IJK\calT]} + \de U_{[IJK\calT]} ]  \notag \\
\begin{split} \label{eq:trilinDivUmain} 
\mathring{U}_{[IJK\calT]} &= (i\la) e^{i \la \xi_J} v_J^j (\nb_j \xi_I + \nb_j \xi_K) (v_I \cdot v_K) + e^{i \la \xi_J} v_J^j \nb_j(v_I \cdot v_K) 
\end{split} \\
\de U_{[IJK\calT]} &= \de V_J^j \nb_j(V_I \cdot V_K) + \mathring{V}_J^j \nb_j(\de V_I \cdot V_K) + \mathring{V}_J^j \nb_j( \mathring{V}_I \cdot \de V_K) \label{eq:lowOrderTermIJKcT}
}
For the lower order term \eqref{eq:lowOrderTermIJKcT} we obtain the following estimates using the formula $\Ddt \nb_j F = \nb_j \Ddt F - \nb_j v_\ep^a \nb_ a F$ and the inequality $\max \{ \co{F}, \la^{-1} \co{\nb F} \} \leq \brh[F]$
\ali{
\co{\de U_{[IJK\calT]}} &\lsm \la \max_{I,J,K} \brh[\de V_I]( \brh[V_J] + \brh[\de V_J] )(\brh[V_K] + \brh[\de V_K]) \notag \\
&\lsm \la N^{-1} e_\vp^{1/2} \cdot e_\vp^{1/2} \cdot e_\vp^{1/2} \lsm N^{-1} \la e_\vp^{3/2} \label{eq:codeTrilinterm} \\
\co{\Ddt \de U_{[IJK\calT]}} &\lsm \sum_{r_1 + r_2 = 1} \la \max_{I,J,K} \brh[\Ddt^{r_1} \de V_I]( \brh[\Ddt^{r_2} V_J] + \brh[\Ddt^{r_2} \de V_J] )(\brh[\nb V_K] + \brh[\nb \de V_K]) \notag \\
&+ \co{\nb v_\ep} \la \max_{I,J,K} \brh[\de V_I]( \brh[V_J] + \brh[\de V_J] )(\brh[V_K] + \brh[\de V_K]) \notag \\
\co{\Ddt \de U_{[IJK\calT]}} &\stackrel{\eqref{eq:highFreqCorrect}-\eqref{eq:lowerTermHFBd}}{\lsm} \la \ostu^{-1} N^{-1} e_\vp^{3/2} \lsm \ostu^{-1}(\la \ost{b} e_\vp^{3/2}) \label{eq:coDdtdeTrlinterm}
}
For the leading order term \eqref{eq:trilinDivUmain}, we use that $\hat{v}_J^j \nb_j \hxii_I = \hat{v}_J^j \nb_j \hxii_K = 0$ to obtain
\ali{
\begin{split} \label{eq:keyCancelTerm}
\mathring{U}_{[IJK\calT]} &= (i\la) e^{i \la \xi_J} v_J^j (\hat{\de} \nb_j \xi_I + \hat{\de}\nb_j \xi_K) (v_I \cdot v_K)  \\
&\, \,+   (i\la) e^{i \la \xi_J} \hat{\de} v_J^j (\nb_j \hxii_I + \nb_j \hxii_K) (v_I \cdot v_K) + e^{i \la \xi_J} v_J^j \nb_j(v_I \cdot v_K) 
\end{split} \\
\co{\mathring{U}_{[IJK\calT]}} &\lsm \la \max_{I,J} ( \co{v_I}^3 \co{\hat{\de} \nb \xi_J} + \co{v_I}^2\co{\hat{\de} v_J} + \la^{-1} \co{v_I}^2\co{\nb v_J}) \notag \\
\co{\mathring{U}_{[IJK\calT]}} &\lsm \la (\ost{b} e_\vp^3 + N^{-1} e_\vp^3 ) \lsm \la \ost{b} e_\vp^3 \label{eq:c0maintrilin} \\
\co{\Ddt \mathring{U}_{[IJK\calT]}} &\lsm \sum_{r_1 + r_2 = 1} \max_{I,J,K} \left\{\la \co{\Ddt^{r_1} v_I} \co{\Ddt^{r_2} \nb \xi_J} \co{v_K}^2 + \co{\Ddt^{r_1} v_I} \co{\Ddt^{r_2} \nb v_J} \co{v_K} \right\} \notag  \\
\co{\Ddt \mathring{U}_{[IJK\calT]}} &\lsm (\Xi e_v^{1/2}) \la e_\vp^{3/2} \lsm \ostu^{-1} (\la \ost{b} e_\vp^3 ). \label{eq:DdtBdMainTrilin}
}
To obtain \eqref{eq:DdtBdMainTrilin} we applied $\Ddt$ to the formula \eqref{eq:trilinDivUmain}  and used  Proposition~\ref{prop:correctAmpBds} and that $\ost{b}^{-1} \Xi e_v^{1/2} = \ostu^{-1}$.  Applying Lemma~\ref{lem:commutator} with $\La^{-1} \lsm \la^{-1}$ and using \eqref{eq:trilinDivUmain}-\eqref{eq:DdtBdMainTrilin} we obtain
\ali{
\brh[\vp_{[\calT B]}] &\lsm \ost{b} e_\vp^{3/2} \stackrel{\eqref{eq:critConditComparevp}}{\lsm} e_{\undvp}^{1/2} e_G. \label{eq:ultimvpCaltbd}
}
The last term we must bound is the solution to \eqref{eq:VRDiv}.  For this term, we rely on the cancellation explained in lines \eqref{eq:amazingCancel} and \eqref{eq:amazingCancelParallel} (replacing $P \mapsto P_{\ep_v}$ and $\pi_\parallel R_{[3\ast]}^{j\ell} \mapsto R_{\parallel \ep}^{j\ell}$ by their frequency-localized versions and noting that $R_{\parallel \ep}^{j\ell}$ still takes values in the span of $\langle e_1 \otimes e_1, e_2 \otimes e_2 \rangle$).  We decompose
\ali{
U_{[PR]} &= - U_{[PR1]} - U_{[PR2]} \notag \\
 U_{[PR1]} = \nb_j V_\ell(P_{\ep_v} \de_{[2\ast]}^{j\ell} + R_{\parallel \ep}^{j\ell}), \quad &U_{[PR2]} = V_\ell (\nb_j P_{\ep_v}\de_{[2\ast]}^{j\ell} + \nb_j R_{\parallel \ep}^{j\ell}).  \label{eq:UPRexpand1}
}
We bound the lower order term using the bounds of Proposition~\ref{prop:prelimBds} applied to $P, \pi_\parallel R_{[3\ast]}$
\ali{
\co{U_{[PR2]}} &\lsm \co{V}(\co{\nb P_{\ep_v}} + \co{\nb R_{\parallel \ep}} )  \lsm e_\vp^{1/2}(\Xi e_{\undvp}) \label{eq:c0UPR2} 
}
We then expand the main term in \eqref{eq:UPRexpand1} as $U_{[PR1]} =  \sum_I \mathring{U}_{[PRI]} + \de U_{[PRI]}$ with
\ali{
\mathring{U}_{[PRI]} &= (i \la) e^{i \la \xi_I} \nb_j \xi_I v_{I \ell}(P_{\ep_v} \de_{[2\ast]}^{j\ell} + R_{\parallel \ep}^{j\ell} ), \qquad 
\de U_{[PRI]} = \nb_j \de V_{I \ell} (P_{\ep_v} \de_{[2\ast]}^{j\ell} + R_{\parallel \ep}^{j\ell} )
}
We bound the lower order term using
\ali{
\co{\de U_{[PRI]}} &\lsm \co{\nb \de V_I}(\co{P_{\ep_v}} + \co{R_{\parallel \ep}}) \lsm \la [\la^{-1} \Xi e_\vp^{1/2}] e_{\undvp} \label{eq:c0UPRI}
}
We treat the main term using the observations of lines \eqref{eq:amazingCancel} and \eqref{eq:amazingCancelParallel} to find that
\ali{
\mathring{U}_{[PRI]}&= (i \la) e^{i \la \xi_I} (\nb_j \hxii_I (\hat{\de} v_I)_\ell + \hat{\de} \nb_j \xi_I v_{I\ell})(P \de_{[2\ast]}^{j\ell} + R_{\parallel \ep}^{j\ell} )  \notag \\
\co{\mathring{U}_{[PRI]}} &\lsm \la (\co{\hat{\de} v_I} + \co{\hat{\de} \nb \xi_I} \co{v_I})(\co{P_{\ep_v}} + \co{R_{\parallel \ep}} ) \lsm \la [\ost{b} e_\vp^{1/2} ] e_{\undvp}. \label{eq:mainUPRIc0}
}
Combining \eqref{eq:c0UPR2},\eqref{eq:c0UPRI}, \eqref{eq:mainUPRIc0} with $\Xi \leq \la \ost{b}$ for $N \geq (e_v^{1/2}/e_\vp^{1/2}), \ost{b} \approx (e_v^{1/2}/e_\vp^{1/2} N)^{1/2}$, we obtain
\ali{
\co{U_{[PR]}} &\lsm \Xi e_\vp^{1/2} e_{\undvp} + \la \ost{b} e_\vp^{1/2} e_{\undvp} \lsm \la [\ost{b} e_\vp^{3/2}]. \label{eq:c0UPRbd}
}
To bound the advective derivative of $U_{[PR]}$, we introduce the weighted $C^1$ norm 
\ali{
H_1[F] := \max \{ \co{F}, \la^{-1} \co{\nb F} \},
} 
noting the properties $\co{\nb F} \leq \la H_1[F]$, $H_1[F + G] \leq H_1[F] + H_1[G]$, $H_1[F G] \lsm H_1[F] H_1[G]$.  

Using these properties and commuting $\nb_j$ with $\Ddt$, we obtain 
\ali{
\Ddt U_{[PR]} &= \nb_j[ \Ddt[ V_\ell(P_{\ep_v} \de_{[2\ast]}^{j\ell} + R_{\parallel \ep}^{j\ell} ) ] - \nb_j v_\ep^a \nb_a[V_\ell(P_{\ep_v} \de_{[2\ast]}^{j\ell} + R_{\parallel \ep}^{j\ell} )] \notag \\
\co{\Ddt U_{[PR]}} &\lsm \sum_{r_1 + r_2 = 1} \la H_1[\Ddt^{r_1} V] \left(H_1[\Ddt^{r_2} P_{\ep_v}] + H_1[\Ddt^{r_2} R_{\parallel \ep}] \right) \notag \\
&+ \co{\nb v_\ep} \cdot \la H_1[V] (H_1[P_{\ep_v} ] + H_1[R_{\parallel \ep}]) \notag \\
\co{\Ddt U_{[PR]}} &\lsm \sum_{r_1 + r_2 = 1}\la [\ostu^{-r_1} e_\vp^{1/2}][\ostu^{-r_2} e_{\undvp}] + \la (\Xi e_v^{1/2}) e_\vp^{1/2} e_{\undvp}  \notag \\
\co{\Ddt U_{[PR]}} &\lsm \la \ostu^{-1} e_\vp^{1/2} e_{\undvp} \approx [\la e_\vp^{1/2}][\ostu^{-1} e_\vp ], \qquad  \label{eq:c0DdtUPRbd}
}
Using \eqref{eq:c0UPRbd},\eqref{eq:c0DdtUPRbd} and applying Lemma~\ref{lem:commutator} with $\La^{-1} \lsm \la^{-1}$, we obtain
\ali{
\brh[\vp_{[PR]}] &\lsm \la^{-1}[\la \ost{b} e_\vp^{3/2}  + \ostu^{-1} e_\vp] \stackrel{\eqref{eq:critConditComparevp}}{\lsm} e_\vp^{1/2}e_{\undvp}.
}
This estimate concludes the proof of Proposition~\ref{eq:lastCurrentTerms}.
\end{proof}

\subsection{A commutator estimate for the advective derivative} \label{sec:commutadvec}
In this Section we establish a general commutator estimate to establish inequality \eqref{eq:commutermBounds}, which states that $\co{\Ddt Q_\ep^\ell[v,v]} \lsm \Xi^2 e_v e_\vp^{1/2}$.  This bound follows by taking $(F, h_F)$ to be $(v, e_v^{1/2})$ in Proposition~\ref{eq:advecCommutEstimate} below and noting that $N \geq (e_v/e_\vp)$ by \eqref{eq:Nastdef}.  The proposition will involve the weighted semi-norm
\ali{
\undl{H}_{\diamond}^R[F] &= \max_{0 \leq r \leq R} \max_{0 < r+ |\va| \leq L} \fr{\co{\nb_{\va} D_t^r F}}{\Xi^{|\va|} (\Xi e_v^{1/2})^r}. \label{eq:homogDmdNorm}
}
It will later be useful to note the comparison inequality
\ali{
\undl{H}[F] &\leq (e_v/e_\vp)^{-1} \undl{\widehat{H}}_{\diamond}[F], \label{eq:dmdCompare}
}
which uses that $\la \geq N \Xi$ and that $N \geq (e_v/e_\vp)^{3/2}$ from~\eqref{eq:Nastdef} implies $(\la e_\vp^{1/2})^{-1} \leq (e_v/e_\vp)^{-1} (\Xi e_v^{1/2})^{-1}$.

As in Section~\ref{sec:regPrelimBds}, we let $\chi_{\ep_x}$ be a mollifier at length scale $\ep_x := c_0 N^{-1/L} \Xi^{-1}$, $c_0 \leq 1$, that satisfies the vanishing moment condition $\int_{\R^3} h^{\va} \chi_{\ep_x}(h) dh = 0$ for $1 \leq |\va| \leq L$.  We define $\dmd{\chi}_\ep = \chi_{\ep_x} \ast \chi_{\ep_x}$ and note that the same vanishing moment condition holds for $\dmd{\chi}_\ep$, as can be shown using the formula $\int f(h) \dmd{\chi}_\ep(h) dh = \iint f(y{+}z) \chi_{\ep_x}(y) \chi_{\ep_x}(z) dy dz$.
We will use only the case where $\dmd{\chi}_\ep$ is exactly the mollifier used in Section~\ref{sec:regPrelimBds} to define $v_\ep$ and $c_0 = c_1$ is the parameter specified in that section.
\begin{prop} \label{eq:advecCommutEstimate} If $L \geq 2$, $\undl{H}_{\diamond}^1[F] \leq h_F$ and $\dmd{\chi}_\ep = \chi_{\ep_x} \ast \chi_{\ep_x}$ is as above, then 
\ali{
\co{\Ddt Q_{\ep}[v, F] } &\lsm N^{-1 + \fr{1}{L}} (\Xi e_v^{1/2})^2h_F, \quad Q_\ep[v,F] = v_\ep^a \nb_a (\dmd{\chi}_\ep \ast G) - \dmd{\chi}_\ep \ast[v^a \nb_a F]
}
\end{prop}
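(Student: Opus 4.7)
The plan is to extend the $C^0$ commutator Taylor expansion used in the proof of Proposition~\ref{prop:mollErrProp}. I would begin from the integral representation
\[ Q_\ep[v,F](x) = \int_{\R^3} \dmd{\chi}_\ep(y)\,[v_\ep(x) - v(x-y)]\cdot\nabla F(x-y)\,dy \]
and apply $\Ddt = \pr_t + v_\ep(x)\cdot\nabla_x$ inside the integral, using that the operator commutes with the $y$-integration. The identity driving the estimate is $\Ddt v_\ep = (D_t v)_\ep + Q_\ep[v,v]$, an immediate consequence of $\pr_t v_\ep = \dmd{\chi}_\ep\ast\pr_t v$ combined with the definition of $Q_\ep$. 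Together with the $C^0$ bound $\co{Q_\ep[v,v]}\lesssim N^{-1}\Xi e_v$ furnished by Proposition~\ref{prop:mollErrProp} (with $(F,h_F)=(v,e_v^{1/2})$) and the standard mollifier error $\co{(D_t v)_\ep - D_t v}\lesssim \ep_x^{L-1}\co{\nabla^{L-1}D_t v}\lesssim N^{-1+1/L}\Xi e_v$, this gives the crucial estimate $\co{\Ddt v_\ep - D_t v}\lesssim N^{-1+1/L}\Xi e_v$.

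Expanding $\Ddt_x\{[v_\ep(x) - v(x-y)]\nabla F(x-y)\}$ via $\Ddt_x v(x-y) = D_t v(x-y) + [v_\ep(x) - v(x-y)]\cdot\nabla v(x-y)$ and the analogous identity for $\Ddt_x\nabla F(x-y)$, the integrand splits into five pieces. The two principal pieces $\Ddt v_\ep(x)\nabla F(x-y)$ and $-D_t v(x-y)\nabla F(x-y)$ combine, after integration against $\dmd{\chi}_\ep(y)$, into
\[ [\Ddt v_\ep - (D_t v)_\ep]\,\nabla F_\ep + Q_\ep[D_t v,F]. \]
The first summand is controlled by $N^{-1+1/L}\Xi e_v\cdot \Xi h_F = N^{-1+1/L}(\Xi e_v^{1/2})^2 h_F$ using the key identity above. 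The second is a standard $Q_\ep$-type commutator with $D_t v$ replacing $v$; since $D_t v$ has $C^0$ size $\lesssim \Xi e_v$ (from Euler--Reynolds) and is only $C^{L-1}$-controlled, the Taylor expansion argument of Proposition~\ref{prop:mollErrProp} must be truncated one order earlier, yielding the same target bound but with $\ep_x^{L-1}$-smallness in place of $\ep_x^L$, which accounts for the $N^{-1+1/L}$ loss rather than $N^{-1}$.

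The three remaining subterms are quadratic or cubic in the small increment $v_\ep(x) - v(x-y)$. For the commutator-type pieces involving $\nabla v\cdot\nabla F$ and $D_t\nabla F$, I would split $v_\ep(x) - v(x-y) = [v_\ep(x) - v(x)] + [v(x) - v(x-y)]$, treating the first piece by direct estimation using $\co{v - v_\ep}\lesssim e_v^{1/2}/N$ and the second by the standard multiplier commutator Taylor expansion, which gains $\ep_x^L$ smallness through the vanishing moments of $\dmd{\chi}_\ep$. The purely quadratic piece $[v_\ep(x) - v(x-y)]^2\nabla^2 F(x-y)$ is bounded directly via the mean value inequality $\co{v_\ep(x) - v_\ep(x-y)}\lesssim |y|\Xi e_v^{1/2}$ together with $\|\,|y|^2\dmd{\chi}_\ep\|_{L^1}\lesssim \ep_x^2$, producing $\lesssim N^{-2/L}\Xi^2 e_v h_F$. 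The main obstacle throughout the argument is that naive Taylor remainders can, in the worst distribution of derivatives, invoke $\nabla^L D_t F$, which is not controlled by $\undl{H}_\diamond^1[F]\leq h_F$ (only $\nabla^k D_t F$ for $k\leq L-1$ is bounded); the remedy is a consistent use of the identity $D_t\nabla F = \nabla D_t F - \nabla v\cdot\nabla F$ to redistribute spatial derivatives onto $D_t F$, combined with truncating Taylor expansions one order earlier where needed, the combined cost being precisely the $N^{-1+1/L}$ factor appearing in the stated bound.
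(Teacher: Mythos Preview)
Your approach is sound and arrives at the result, but it differs structurally from the paper's proof. The paper first splits $Q_\ep[v,F] = [v_\ep^a\nb_a,\dmd{\chi}_\ep\ast]F + \dmd{\chi}_\ep\ast[(v_\ep-v)\cdot\nb F]$ and then computes $\Ddt$ of each piece via nested commutators, producing terms $T_1,T_2,T_3$ with $T_1$ further split into $T_{1a}=[\Ddt,\cdot][v_\ep\cdot\nb,\dmd{\chi}_\ep\ast]F$ and $T_{1b}=[v_\ep\cdot\nb,\dmd{\chi}_\ep\ast](D_tF)$, and $T_{1a}$ into $T_{11},T_{12}$. When the Taylor remainder would place $L$ derivatives on $D_tF$ or $\Ddt v_\ep$, the paper \emph{integrates by parts in $h$} to shift one derivative onto $\dmd{\chi}_\ep$, incurring an $\ep_x^{L-1}$ factor. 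You instead differentiate the single integral representation directly, use the clean identity $\Ddt v_\ep=(D_tv)_\ep+Q_\ep[v,v]$, and handle the limited regularity of $D_tv$ and $D_tF$ by \emph{truncating the Taylor expansion one order earlier}. Both devices cost exactly the $N^{1/L}$ loss and are interchangeable; your route is somewhat more direct, while the paper's keeps all inputs at the $v_\ep$ level and avoids generating a purely quadratic remainder.

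There is one genuine gap in your sketch. Your direct mean-value bound on the quadratic piece $[v_\ep(x)-v(x-y)]^{\otimes2}\nb^2F(x-y)$ yields $N^{-2/L}(\Xi e_v^{1/2})^2h_F$, and $N^{-2/L}\le N^{-1+1/L}$ holds only for $L\le 3$; for $L\ge 4$ this is too weak. The fix is easy and consistent with your own methodology: after splitting off the $(v_\ep-v)(x)$ contributions (which give $N^{-2}$ and $N^{-1-1/L}$), Taylor expand the remaining $[v(x)-v(x-y)]^{\otimes2}\nb^2F(x-y)$ to full order $L-1$ using the vanishing moments. This term involves only spatial derivatives of $v$ and $F$ (no $D_t$), so all derivatives up to order $L$ are available and one recovers $\ep_x^L\sim N^{-1}$ smallness. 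Also note a small slip: for the $D_t\nb F$ commutator piece you write ``$\ep_x^L$ smallness,'' but since $\nb^LD_tF$ is unavailable you must truncate there too, giving $\ep_x^{L-1}\sim N^{-1+1/L}$ as you correctly acknowledge in your final sentence.
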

We remark that the implied constant depends only on the particular Schwartz function that is rescaled to define the mollifying kernels and will be independent of the choice of $c_0, c_1 \leq 1$ in the mollifying parameters since the proof will avoid having any losses due to differentiating the mollifiers.
\begin{proof}  We decompose $Q_\ep$ and its advective derivative into
\ALI{
Q_\ep[F] &= [v_\ep^a \nb_a, \dmd{\chi}_\ep \ast](F) + \dmd{\chi}_\ep \ast[ (v_\ep^a - v^a) \nb_a F ] \\
\Ddt Q_\ep[F] &= \left(\Ddt[v_\ep^a \nb_a, \dmd{\chi}_\ep \ast](F)\right) + \left([\Ddt, \dmd{\chi}_\ep \ast][(v_\ep^a - v^a) \nb_a F]\right) +  \left(\dmd{\chi}_\ep \ast[ \Ddt[ (v_\ep^a - v^a) \nb_a F] ]\right) \\
\Ddt Q_\ep[F] &:= (T_1) + (T_2) + (T_3).
}
We first consider the term $T_3$, for which we obtain the following bounds
\ali{
\Ddt[\nb_a F] &= \nb_a \Ddt F - \nb_a v_\ep^b \nb_ b F , \qquad
\co{\Ddt[\nb_a F]} \lsm (\Xi e_v^{1/2})(\Xi h_F) \notag \\
\co{(v - v_\ep)} &\leq N^{-1} e_v^{1/2}, \qquad
\co{\Ddt(v - v_\ep)} \lsm N^{-1 + 1/L} \Xi e_v^{1/2} \cdot e_v^{1/2} \label{eq:Ddtvmvepused}\\
T_3 \lsm \co{\Ddt[ &(v_\ep^a - v^a) \nb_a F]} \lsm N^{-1 + 1/L} (\Xi e_v^{1/2})^2 h_F.
}
Here we obtained \eqref{eq:Ddtvmvepused} by taking $(F, h_F)$ to be $(v, e_v^{1/2})$ in lines \eqref{eq:DdtMollerExpress}-\eqref{eq:commuterm2} and recalling \eqref{eq:vMinVep}.

We record the estimate $\co{G} \leq N^{-1} (\Xi e_v^{1/2}) h_F$ satisfied by $G := (v_\ep^a - v^a) \nb_a F$.  Our desired bound for $T_2$ can be achieved from this estimate by integrating by parts in the $h$ variable as follows
\ali{
- T_2 = &- [\Ddt, \dmd{\chi}_\ep \ast] G = - [v_\ep^a \nb_a, \dmd{\chi}_\ep \ast] G = \int_{\R^3} (v_\ep^a(x + h) - v_\ep^a(x) ) \nb_a G(x+h) \dmd{\chi}_\ep(h) dh \label{eq:firstCommuform} \\
&= - \int_{\R^3} (v_\ep^a(x + h) - v_\ep^a(x) ) G(x+h) \nb_a \dmd{\chi}_\ep(h) dh  \notag \\
&= - \int_0^1 \int_{\R^3} \nb_b v_\ep^a(x + \si h) G(x+h) h^b \nb_a\dmd{\chi}_\ep(h) dh d\si \notag \\
\co{T_2} &\lsm \co{\nb v_\ep} \co{G} \| |h| |\nb \dmd{\chi}_\ep| \|_{L^1(\R^3)} \lsm (\Xi e_v^{1/2})(N^{-1} \Xi e_v^{1/2} h_F) \cdot 1 \lsm N^{-1} (\Xi e_v^{1/2})^2 h_F. \label{eq:T2commutBounded}
}
Here we used that $\nb_a v_\ep^a = 0$, although the same estimate would follow without this cancellation.

Using the notation $[\Ddt,]L := \Ddt L - L \Ddt$ to denote the commutator operation, we expand $T_1$ as
\ali{
T_1 &= [\Ddt,] [v_\ep^a \nb_a, \dmd{\chi}_\ep \ast](F) + [v_\ep^a \nb_a, \dmd{\chi}_\ep \ast](\Ddt F) = T_{1a} + T_{1b} + T_{2} \label{eq:T1expand1} \\
T_{1a} &= [\Ddt,] [v_\ep^a \nb_a, \dmd{\chi}_\ep \ast](F), \qquad T_{1b} = [v_\ep^a \nb_a, \dmd{\chi}_\ep \ast](D_t F).
}
To treat $T_{1b}$, we replace $F$ in the derivation of inequality \eqref{eq:commuterm2} by $U = D_t F$ to express $T_{1b}$ in the form
\ALI{
\sum_{\va_1, \va_2}  \fr{c_{\va_1, \va_2}}{(L {-} 2)!}\iiint_{\R^3} \nb_{\va_1}\nb_bv^a(x + \tau \si h) [\nb_{\va_2} \nb_a U](x + \si h) h^b h^{\va_1} h^{\va_2} \dmd{\chi}_\ep(h) \tx{d} h \, \si(1{-}\si)^{L-2} \tau^{|\va_1|} \tx{d}\tau \tx{d}\si,
}
where the sum runs over multi-indices with $|\va_1| + |\va_2| = L-1$ and the bounds of integration for $\tx{d}\tau$ and $\tx{d}\si$ are from $0$ to $1$.  

The number of derivatives of $U$ appearing when $|\va_2| + 1 = L$ and $|\va_1| = 0$ exceeds the number we control, but we can observe that $\si[\nb_{\va_2} \nb_a U](x + \si h) = {}^{(h)} \nb_a [\nb_{\va_2} U(x + \si h)]$.  Integrating by parts in $h$ and using that $\nb_a v^a = 0$, we express the integrals appearing in such terms in the form
\ALI{
- \iiint_{\R^3} [\nb_{\va_2} U](x + \si h) \nb_bv^a(x + \tau \si h) \nb_a[h^b h^{\va_2} \dmd{\chi}_\ep(h)] \tx{d} h \, (1{-}\si)^{L-2} \tau^{|\va_1|} \tx{d}\tau \tx{d}\si.
}
From the above two formulas, we obtain a bound of 
\ali{
\begin{split} \notag 
\co{T_{1b}} &\lsm \sum_{|\va_1| + |\va_2| = L-1} [\Xi^{|\va_1|+1} e_v^{1/2}][\Xi^{|\va_2| +2} e_v^{1/2} h_F]\| |h|^L \dmd{\chi}_\ep \|_{L^1}  \\
&+ [\Xi^{L+1} e_v^{1/2} h_F][\Xi e_v^{1/2}](\| |h|^{L-1} \dmd{\chi}_\ep \|_{L^1} + \| |h|^{L} \nb \dmd{\chi}_\ep \|_{L^1} )  
\end{split} \\
\co{T_{1b}} &\lsm(\ep_x^L \Xi^L + \ep_x^{L-1} \Xi^{L-1}) (\Xi e_v^{1/2})^2 h_F \lsm N^{-1+\fr{1}{L}} (\Xi e_v^{1/2})^2 h_F. \label{eq:T1bbounding2}
}

The last term to consider is the term $T_{1a}$, which we decompose as $T_{1a} =  -T_{11} - T_{12}$, where
\ali{
T_{11} &= \int_{\R^3} (\Ddt v_\ep^i(x + h) - \Ddt v_\ep^i(x)) \nb_i F(x+h) \dmd{\chi}_\ep(h) dh \\
T_{12} &= \int_{\R^3} (v_\ep^a(x + h) - v_\ep^a(x))\nb_a v_\ep^i(x+h) \nb_i F(x+h) \dmd{\chi}_\ep(h) dh.
}
The term $T_{11}$ can be rewritten by Taylor expanding as in \eqref{eq:GtaylorExpand} to have the form
\ALI{
\sum_{\va_1, \va_2}  \fr{c_{\va_1, \va_2}}{(L {-} 2)!}\iiint_{\R^3} \nb_{\va_1}\nb_b\Ddt v_\ep^a(x + \tau \si h) [\nb_{\va_2} \nb_a F](x + \si h) h^b h^{\va_1} h^{\va_2} \chi_\ep(h) \tx{d} h \, \si(1{-}\si)^{L-2} \tau^{|\va_1|} \tx{d}\tau \tx{d}\si,
}
where the sum runs over multi-indices with $|\va_1| + |\va_2| = L-1$ and the bounds of integration for $\tx{d}\tau$ and $\tx{d}\si$ are from $0$ to $1$.  In the cases where $|\va_1| = L-1$, we observe that $\tau \si \nb_{\va_1}\nb_b\Ddt v_\ep^a(x + \tau \si h) = {}^{(h)} \nb_b[\nb_{\va_1}\Ddt v_\ep^a(x + \tau \si h) ]$ and integrate by parts in $h$ to write the integrals in such terms in the form
\ALI{
- \iiint_{\R^3} \nb_{\va_1}\Ddt v_\ep^a(x + \tau \si h){}^{(h)}\nb_b\left[ [\nb_a F](x + \si h) h^b h^{\va_1} \dmd{\chi}_\ep(h) \right] \tx{d} h \, (1{-}\si)^{L-2} \tau^{L-2} \tx{d}\tau \tx{d}\si.
}
The bounds we deduce from the above two formulas are the same as those in \eqref{eq:T1bbounding2}, thus yielding the desired bound of $\co{T_{11}} \lsm N^{-1+\fr{1}{L}}(\Xi e_v^{1/2})^2 h_F$ for this term. 

We may similarly Taylor expand the term $T_{12}$ as a finite linear combination of terms of the form
\ALI{
\iiint_{\R^3} \nb_{\va_1}\nb_bv_\ep^a(x + \tau \si h) [\nb_{\va_2}(\nb_a v_\ep^i \nb_i F)](x + \si h) h^b h^{\va_1} h^{\va_2} \chi_\ep(h) \tx{d} h \, \si(1{-}\si)^{L-2} \tau^{|\va_1|} \tx{d}\tau \tx{d}\si,
}
where the multi-indices satisfy $|\va_1| + |\va_2| = L-1$ and the bounds of integration for $\tx{d}\tau$ and $\tx{d}\si$ are from $0$ to $1$.  In this case, we do not have any terms involving derivatives beyond order $L$, and we obtain $\co{T_{12}} \lsm \ep_x^L \Xi^L (\Xi e_v^{1/2})^2 h_F \lsm N^{-1} (\Xi e_v^{1/2})^2 h_F$.

It now follows that $\co{T_1} \lsm N^{-1+\fr{1}{L}} (\Xi e_v^{1/2})^2 h_F$ since we have already proved this estimate for the term $T_2$ appearing in \eqref{eq:T1expand1}.  With this bound we have concluded the proof of Proposition~\ref{eq:advecCommutEstimate}.  
\end{proof}

\subsection{Verifying the conclusions of the Main Lemma} \label{sec:verifying}
In this Section, we verify all of the conclusions of Lemma~\ref{lem:mainLem}. 

The choice of $\bar{\de} > 0$ depending on $L$ is made in the concluding paragraph of Section~\ref{sec:solvingMainCoeffs}.  

The bounds in \eqref{eq:frEnLvlsNew} for the components of $\ost{R}^{j\ell} = \ost{R}_{[2]}^{j\ell} + \ost{R}_{[3]}^{j\ell} + \ost{R}_{[G]}^{j\ell}$, of $\ost{\kk} = \ost{\kk}_{[2]} + \ost{\kk}_{[G]}$ and of $\ost{\vp}^j = \ost{\vp}^j_{[2]} + \ost{\vp}_{[G]}^j$ follow from the bounds we have already proven.  The bound $\co{\ost{R}_{[G]}} \leq (e_v^{1/2}/(e_\vp^{1/2} N))^{1/2} e_\vp$ for $\ost{R}_{[G]} = R_T^{j\ell} + R_H^{j\ell} + R_M^{j\ell} + \wtld{R}_S^{j\ell}$, follows by combining the estimates \eqref{eq:RTHbdd} and \eqref{eq:c0RMbd}, using the comparison inequality $\ost{H}[F] \lsm \brh[F]$ of Lemma~\ref{lem:comparisonLem} and taking $B_\la$ sufficiently large in \eqref{eq:lastLowerOrdBd}.  The remaining bounds on derivatives of $\ost{R}_{[G]}$ and the bounds on the other components of $\ost{R}^{j\ell}$ (namely $\ost{R}_{[2]}^{j\ell} = P \de_{[2\ast]}^{j\ell} + R_{[2\ast]}^{j\ell}$ and $\ost{R}_{[3]}^{j\ell} = R_{[3\ast]}^{j\ell}$ follow from \eqref{eq:RMbd}, \eqref{eq:PR2bds}, \eqref{eq:lastLowerOrdBd} for all $\hc$ sufficiently large compared to $B_\la$.  The bounds for the components $\ost{\kk} = \ost{\kk}_{[2]} + \ost{\kk}_{[G]}$ follow from \eqref{eq:ostk2bds},\eqref{eq:ostkkGbdd}, using again that $\ost{H}[F] \lsm \brh[F]$ and taking $\hc$ large compared to $B_\la$.  The bounds on the components of $\ost{\vp}_{[2]}^j$ follow from the inequality $\brh[\ost{\vp}_{[2]}^j] \lsm \ost{\undl{e}}_\vp^{3/2} \lsm e_{\undvp}^{3/2}$ proven in Section~\ref{sec:mainTermProblemCurrent}, while the bounds for $\ost{\vp}_{[G]}^j$ follow from Propositions~\ref{prop:mollVp},\ref{prop:typeAprop} and \ref{prop:divEqnPropCurrent} after taking $\hc$ sufficiently large.

We obtain the bounds in \eqref{eq:frEnLvlsNew} for $\ost{v}^\ell = v^\ell + V^\ell$ as follows.  Using \eqref{eq:vpBd}, \eqref{eq:highFreqCorrect}, and $\la \geq N \Xi$, $N \geq e_v^{1/2}/e_\vp^{1/2}$, we have for all $1 \leq |\va| \leq L$ that 
\ALI{
\co{\nb_{\va} \ost{v}} \leq \co{\nb_{\va} v} + \co{\nb_{\va} V} \lsm \Xi^{|\va|} e_v^{1/2} + \la^{|\va|} e_\vp^{1/2} \lsm \la^{|\va|} e_\vp^{1/2}.
}
For $\hc$ sufficiently large compared to $B_\la$, we obtain $\co{\nb_{\va} \ost{v}} \leq \ost{\Xi}^{|\va|} \ost{e}_v^{1/2}$ for all $1 \leq |\va| \leq L$.

To establish \eqref{eq:frEnLvlsNew} for $\ost{p} = p + P$, we first recall the seminorms $\undl{\mathring{H}}[\,\cdot\,]$, $ \undl{\widehat{H}}_{\diamond}[\,\cdot\,]$ of Lemma~\ref{lem:comparisonLem} and \eqref{eq:homogDmdNorm}.  
We now apply Lemma~\ref{lem:comparisonLem} and the bounds \eqref{eq:vpBd}-\eqref{eq:pAdvecBd}, \eqref{eq:PR2bds} and \eqref{eq:dmdCompare} with $F = p$ to obtain 
\ALI{
\undl{\ost{H}}[\ost{p}] &\leq \undl{\ost{H}}[p] + \undl{\ost{H}}[P] \lsm \undl{H}[p] + \undl{\brh}[P] \\
\undl{\ost{H}}[\ost{p}] &\lsm (e_v/e_\vp)^{-1} \undl{\widehat{H}}_{\diamond}[p] + \brh[P] \lsm (e_v/e_\vp)^{-1} e_v + e_\vp \lsm e_\vp.
}
This bound achieves our desired estimate for $\ost{p}$ after taking $\hc$ sufficiently large.

The conditions of $(\bar{\de}, \hc)$-well-preparedness (for stage $[2]$) as defined in Definition~\ref{defn:wellPrepared} have all been checked throughout the course of Section~\ref{sec:mainTermProblemCurrent}.  It is clear from the formula \eqref{eq:enIncdef} that $e(t)$ depends only on the given frequency-energy levels, $L$ and $\tilde{I}_{[G]}$ since $K_0$ depends only on $L$.

The bound \eqref{eq:c0CorrectBd} on $\co{V}$ follows from \eqref{eq:highFreqCorrect}.  Using that $V^\ell = P_{\approx \la} V^\ell$ is localized to frequencies of the order $\la = B_\la N \Xi$, we have for any smooth test function $\phi_\ell(t,x)$ that
\ALI{
\int_{\T^3} \phi_\ell(t,x) V^\ell(t,x) dx &= \int_{\T^3} P_{\approx \la} \phi_\ell(t,x)  V^\ell(t,x) dx \\
\sup_t \left\| \int_{\T^3} \phi_\ell(t,x) V^\ell(t,x) dx \right\| &\lsm \sup_t \| P_{\approx \la} \phi_\ell(t,\cdot) \|_{L^1} \co{ V} \lsm \la^{-1} \sup_t \| \nb \phi(t,\cdot) \|_{L^1} e_\vp^{1/2}, 
}
which implies \eqref{eq:testFunction} for $B_\la$ sufficiently large.

Since $\ost{\mu} - \mu = - \Ddt e(t)$ as stated after \eqref{eq:fltermLow} and $\Ddt e(t) = \pr_t e(t) \leq 0$ as required in \eqref{eq:eDef}, it is clear that $\ost{\mu} - \mu$ is non-negative and also depends only on the given frequency-energy levels, $L$ and $\tilde{I}_{[G]}$.  We arranged for $e(t)$ to be constant on a neighborhood of $\tilde{I}_{[G]}$, so we have that $\ost{\mu} = \mu$ on $\tilde{I}_{[G]}$.

The choices of $\tilde{I}_{[l+1]}$ and $\tilde{I}_{[G\ast]}$ stated in \eqref{eq:tildeIlp1}-\eqref{eq:suppContained} coincide with those of Section~\ref{sec:mainTermProblemCurrent}.  From the overall construction, we have that $V^\ell$ as well as all the components of $(\ost{R}, \ost{\kk}, \ost{\vp})$ have support contained in $\suppt e(t)$, and the latter support is contained in $\tilde{I}_{[G\ast]}$ by the remarks of Section~\ref{sec:mainTermProblemCurrent}.

The final remaining task is to construct the second dissipative Euler-Reynolds flow $v_2^\ell = v^\ell + V_2^\ell$.  Let $\tilde{J}$ be any subinterval of $\tilde{I}_{[1]}$ with length $|\tilde{J}| \geq (\Xi e_v^{1/2})^{-1}$.  Choose an index $I^* \in \ovl{\II}_R$ from the ``larger waves'' such that the lifespan interval $\{ |t - t(I^*)| \leq \ostu \}$ is entirely contained in $\tilde{J}$.  This choice is possible since $\ostu \leq 10^{-1}(\Xi e_v^{1/2})^{-1}$, and the union of the lifespan intervals covers $\tilde{J}$.  

We define $v_2^\ell$ by simply replacing the wave $V_{I^*}^\ell$ by its negation $- V_{I^*}^\ell$, and its conjugate $V_{\bar{I}^*}^\ell$ by $- V_{\bar{I}^*}^\ell$ throughout the entire construction.  In other words, $v_2^\ell := \ost{v}^\ell - 2 V_{I*}^\ell - 2 V_{\bar{I}^*}^\ell$.  Making this replacement disturb does not disturb any estimate or important equality obtained in the construction.  Most importantly, the equation \eqref{eq:principOvlReq} remains satisfied since the summation $\sum_{I \in \ovl{\II}_R} v_I^j \overline{v_I^\ell}$ that appears there remains unchanged if one replaces any of the amplitudes $v_I^\ell$ by its negation.   It is also clear that making this change affects the values of $(\ost{v}, \ost{p})$ and the components of the new $(\ost{R}, \ost{\kk}, \ost{\vp})$ only within the interval $\{ |t - t(I^*)| \leq \tau \} \subseteq \tilde{J}$ that contains the lifespan of $V_{I^*}^\ell$ and its conjugate and not elsewhere.  This observation implies the containment \eqref{eq:diffSupp}.  What remains to prove is the lower bound \eqref{eq:lowerBdNew} for 
\ali{
\int_{\T^3} |v_2(t,x) - \ost{v}(t,x)|^2 dx &= 4 \int_{\T^3} |V_{I^*} +V_{\bar{I}^*}|^2 dx. \label{eq:lowBoundThis}
}
Since the frequency supports of $V_{I^*}^\ell$ and its conjugate are disjoint, $V_{I^*}^\ell$ and $V_{\bar{I}^*}^\ell$ are $L^2$-orthogonal.  Using that $V_{I}^\ell = \mathring{V}_I^\ell + \de V_I^\ell$, $\mathring{V}_I^\ell = e^{i \la \xi_I} v_I^\ell$ and that the $v_I^\ell = v_{\bar{I}}^\ell$ are real-valued, we then have that 
\ALI{
\int_{\T^3} |V_{I^*} +V_{\bar{I}^*}|^2 dx &= \int_{\T^3} |V_{I^*}|^2 +|V_{\bar{I}^*}|^2 dx = 2 \int_{\T^2} |v_{I^*}|^2(t,x) dx + 2 \, \mbox{Re} \int_{\T^3} (2 \de V_{I^\ast} \cdot V_{I^*} - |\de V_{I^\ast}|^2) dx
}
For $B_\la$ chosen sufficiently large, the last term may be bounded using Proposition~\ref{prop:correctionBounds} by
\ali{
2 \sup_t \left| \mbox{Re} \int_{\T^3} (2 \de V_{I^\ast} \cdot V_{I^*} - |\de V_{I^\ast}|^2) dx \right| &\leq A_L (B_\la N)^{-1} e_\vp \leq 4^{-1} N^{-1} e_\vp. \label{eq:errBoundedTerm}
}
Achieving inequality \eqref{eq:errBoundedTerm} together with the desired bound on $\co{\ost{R}_{[G]}}$ are the last requirements we impose on the parameter $B_\la$.  We thus specify $B_\la$ at this point to be any sufficiently large value (depending on all previous choices of parameters) to ensure the estimates stated so far, and to ensure that $\la = B_\la N \Xi$ is an integer-multiple of $2 \pi$.  The final constants $\hc, C_L$ are then allowed to be large depending on the implied upper bound for $B_\la$ (although this condition is not necessary for $C_L$).   

Recall now the form of $v_I^\ell = \bar{e}^{1/2}(t) \eta_I \ga_I \tilde{f}_I^\ell$, where $\tilde{f}_I^\ell = \htf_I^\ell - |\nb \xi_I|^{-2} \htf_I \cdot \nb \xi_I \nb^\ell \xi_I$.  At time $t^* = t(I^*)$, we have that $\eta_{I^*}(t^*) = 1$, $\tilde{f}_I^\ell(t^*,x) = \htf_I^\ell$, and $v_{I^*}^\ell = \bar{e}^{1/2}(t^*) \ga_{I^*}(t^*, x) \htf_{I^*}^\ell$.  We obtained a lower bound of $\ga_I \geq 1/4$ for all $I \in \ovl{\II}_R$ at the end of Section~\ref{sec:solvingMainCoeffs}.  We also have that $\bar{e}(t) \geq 10^{-2} \undl{e}_{\vp} \geq 10^{-2} M^{-1} e_\vp$ for all $t \in \tilde{I}_{[1]}$, which follows from the $C^0$ bound in \eqref{eq:underBdsEninc}.  Combining these observations, we obtain 
\ali{
2 \int_{\T^2} |v_{I^*}|^2(t^*,x) dx &= 2 \int_{\T^2} \bar{e}(t^*) |\ga_{I^*}(t^*,x)|^2 |\htf_{I^*}|^2 dx \geq 2^{-3} \cdot 10^{-2} |\htf_{I^*}|^2 M^{-1} e_\vp. \label{eq:mainLowBdEnincdiff}
}
We note that the directions $\htf_I$ chosen at the end of Section\ref{sec:solvingMainCoeffs} are all nonzero vectors.  We may therefore require $C_L \geq 2 \cdot 10^2 \max_{\htf \in \ovl{\BB}_R} |\htf|^{-2}$ to finalize the choice of $C_L$.  
Combining \eqref{eq:lowBoundThis},\eqref{eq:errBoundedTerm}, and \eqref{eq:mainLowBdEnincdiff}, we obtain the desired lower bound stated in \eqref{eq:lowerBdNew}.  This bound concludes the proof of Lemma~\ref{lem:mainLem}.



\section{Extensions to general dimensions} \label{sec:othDimRemarks}
In this Section we discuss how our approach may be adapted to other dimensions $n \neq 3$.

The above proof can be extended very directly to higher dimensions $n \geq 3$.  To accomplish this extension, we redefine the sets $\dmd{\BB}_R \subseteq \ker dx^1$ and $\ost{\BB}_R \subseteq \ker dx^2$ so that equation \eqref{eq:prop3basis} becomes 
\ali{
\begin{split} \label{eq:goodMetDecomp}
\sum_{\htf \in \dmd{\BB}_{R}} \htf^j \htf^\ell &+ \sum_{\hat{g} \in \ost{\BB}_{R}} \hat{g}^j \hat{g}^\ell = \de^{j\ell},  \\
\sum_{\hat{f} \in \dmd{\BB}_R} \htf \otimes \htf = e_2 \otimes e_2 + \sum_{j=3}^n (1/2) (e_j \otimes e_j),& \quad 
\sum_{\hat{g} \in \ost{\BB}_R} \hat{g} \otimes \hat{g} = e_1 \otimes e_1 + \sum_{j=3}^n (1/2) (e_j \otimes e_j).
\end{split}
}
These sets now have cardinality $\# \dmd{\BB}_R = \# \ost{\BB}_R = n(n-1)/2$ so that the tensors $(\htf^j \htf^\ell)_{\htf \in \dmd{\BB}_R}$ and $(\hat{g}^j \hat{g}^\ell)_{\hat{g} \in \ost{\BB}_R}$ may form bases for the vector spaces $\ker dx^1 \otimes \ker dx^1$, $\ker dx^2 \otimes \ker dx^2$ respectively.

The key observations from linear algebra that we require to carry out the other parts of the construction remain true.  Namely, since every element of $\R^d \otimes \R^d$ may be written as a sum of three tensors that belong to $\ker dx^i \otimes \ker dx^i$ for $1 \leq i \leq 3$, we may construct projection operators $\pi_{(1)}, \pi_{(2)}, \pi_{(3)}$ on $\R^d \otimes \R^d$ that take values respectively in $\pi_{(i)} \in \ker dx^i \otimes \ker dx^i$ and preserve the subspace of symmetric tensors while satisfying $\pi_{(1)} + \pi_{(2)} + \pi_{(3)} = \mbox{Id}$ on $\R^3 \otimes \R^3$ .  Similarly, since every element of $\R^d$ belongs to $\ker dx^1 + \ker dx^2$, there exist projections $\pi_{[1]}, \pi_{[2]}$ with values in $\ker dx^1$ and $\ker dx^2$ that satisfy $\pi_{[1]} + \pi_{[2]} = \mbox{Id}$ on $\R^d$.  

One sees also that the important cancellations of lines \eqref{eq:amazingCancel} and \eqref{eq:amazingCancelParallel} still hold once one makes the appropriate modifications in defining the terms.  Namely, the projection operators $\pi_\times$ and $\pi_\parallel$ on $\ker dx^3 \otimes \ker dx^3$ should be chosen such that $\pi_\parallel$ takes values in the span $\langle e_i \otimes e_i \rangle_{i \neq 3} + \langle e_i \otimes e_j + e_j \otimes e_i \rangle_{i, j \neq 1}$ while $\pi_\times$ takes values in $\langle e_1 \otimes e_j + e_j \otimes e_1 \rangle_{j \neq 1}$.  With these choices, one maintaines the cancellation of line \eqref{eq:amazingCancelParallel} (namely that $\pi_\parallel R_{[3 \ast]}$ takes values orthogonal to $dx^1 \otimes \langle e_1 \rangle^\perp$), while $\pi_\times R_{[3\ast]}$ maps $\ker dx^1$ to the span of $\langle e_1 \rangle \subseteq \ker dx^2$ as desired.  

Extending the above proof to dimension $d = 2$ would require substantially more work.  The main issue is that even in two dimensions one requires waves to take values in three distinct hyperplanes (such as $\ker dx^1$, $\ker dx^2$, $\ker dx^3$ above) in order to span the space of symmetric tensors (as in the decomposition $\R^d \otimes \R^d = \sum_{i=1}^3 \ker dx^i \otimes \ker dx^i$ if $d \geq 3$).  However, a decomposition such as \eqref{eq:goodMetDecomp} in dimension $2$ would require the directions of the different stages to be orthogonal, which would limit the number of allowed directions to only $2$.  The closest analogue to \eqref{eq:goodMetDecomp} would be to choose distinct rational lines $\ker du^1, \ker du^2, \ker du^3$ in $\R^2$ for which there exists a decomposition of the form
\ali{
\de^{j\ell} = \de_{[1]}^{j\ell} + \de_{[2]}^{j\ell} + \de_{[3]}^{j\ell}, \notag 
}
with $\de_{[1]}^{j\ell} = \htf^j \htf^\ell$, $\htf \in \ker du^1$, $\de_{[2]}^{j\ell} = \hat{g}^j \hat{g}^\ell$, $\hat{g} \in \ker du^2$ and $\de_{[3]}^{j\ell} = \hat{h}^j \hat{h}^\ell$, $\hat{h} \in \ker du^3$ all nonzero.  The existence of a decomposition of $\de^{j\ell}$ of this form is equivalent to the existence of vectors $\hat{f}, \hat{g}, \hat{h}$ spanning the lines $\ker du^1, \ker du^2, \ker du^3$ such that the angle between each pair of vectors is obtuse.  (One concrete example would be to take $du^1 = dx^1$, $du^2 = d(x^1 + 2 x^2)$, $du^3 = d(x^1 - 2x^2)$ with $\htf = (\sqrt{3}/2) e_2$, $\hat{g} = (1/\sqrt{8})( 2 e_1 - e_2)$ and $\hat{h} = (1/\sqrt{8})(-2e_1 - e_2)$.)  One would then hope to perform the iteration as above with waves taking values essentially in the periodic sequence of subspaces $(\ker du^1, \ker du^2, \ker du^3, \ker du^1, \ldots)$.  The difficulty this approach faces is that the cancellation of line \eqref{eq:amazingCancel} no longer appears to be available in the presence of such additional terms.  For this reason we have not been able to extend our results to two dimensions without incurring a loss of regularity and we leave the consideration of the two-dimensional case for future work.

\section{Proof of the Main Theorem} \label{sec:mainLImpMThm}
With the Main Lemma, Lemma~\ref{lem:mainLem}, now proven we proceed to the proofs of Theorems~\ref{thm:enDispNunq} and \ref{thm:enEqNunq}.  

The proof of Theorems~\ref{thm:enDispNunq} and \ref{thm:enEqNunq} will proceed by iteratively applying Lemma~\ref{lem:mainLem} to obtain a sequence of dissipative Euler-Reynolds flows $(v,p,R,\kk,\vp, \mu)_{(k)}$ that converge uniformly to an Euler flow that satisfies the local energy inequality.  The size of the error and the growth of frequencies during the iteration are dictated by the bound \eqref{eq:frEnLvlsNew}, which we restate here as
\ali{
\mat{ccccc}{ \ost{\Xi} & \ost{e}_v & \ost{e}_\vp & \ost{e}_R & \ost{e}_G \\
							\hc N \Xi & \hc e_\vp & \hc e_{\undvp}  & \hc e_G & (N^{-1} (e_v^{1/2}/e_\vp^{1/2}))^{1/2} e_\vp
					}, \qquad e_{\undvp} := e_\vp^{1/3} e_R^{2/3}. \label{eq:evolutionRule}
	}
The sequence of (compound) frequency-energy levels will be dictated by specifying that $e_{G,(k+1)} = Z^{-1} e_{G,(k)}$ for some large constant\footnote{Taking $Z$ to depend on $k$ would be important for achieving an endpoint type result as in \cite{isettEndpt}.} $Z$.  We will therefore choose $N = N_{(k)} = Z^2 (e_v^{1/2}/e_\vp^{1/2})_{(k)}(e_\vp/e_G)_{(k)}^2$ when applying Lemma~\ref{lem:mainLem}.  

The fact that the evolution rule \eqref{eq:evolutionRule} contains the term $e_{\undvp} = e_\vp^{1/3} e_R^{2/3}$ that couples together distinct energy levels introduces some additional complexity into the iteration compared to previous schemes.  To approach this issue, we introduce the notation $Q_{v\vp} = (e_v/e_\vp)$, $Q_{\vp \undvp} = (e_\vp/e_{\undvp})$, $Q_{\vp R} = (e_\vp/e_R)$, etc.\ to denote the quotients of the corresponding energy levels, and we write $L_{v \vp} = \log Q_{v \vp}$, $L_{\vp R} = \log Q_{\vp R}$, etc.\ to denote their natural logarithms.  We also write $L_G = \log e_G$, $L_\Xi = \log \Xi$, $L_Z = \log Z$ and $L_{\hc} = \log \hc$.  We will write $Q_{v\vp}^{(k)}$, $L_G^{(k)}$, etc. when we wish to consider the dependence of the parameters on the stage $k$ of the iteration, and will often suppress this dependence.  In the above notation, for example, we have $N = Z^2 Q_{v\vp}^{1/2} Q_{\vp G}^2$, suppressing the dependence on $k$.

We will mainly work with the variables $Q_{\vp \undvp}, Q_{\undvp G}$ rather than $Q_{\vp R}$ and $Q_{R G}$ to simplify some calculations.  The equations $Q_{\vp \undvp} = (e_\vp/e_R)^{2/3} = Q_{\vp R}^{2/3}$ and $Q_{\undvp G} = (e_\vp^{1/3} e_R^{2/3}/e_G) = Q_{\vp R}^{1/3} Q_{RG}$ allow us to convert between the two pairs.  In the new variables we have $\ost{Q}_{\vp G} = \ost{Q}_{\vp R}^{1/3} \ost{Q}_{RG} = Q_{\undvp G}^{1/3}(\hc Z)$, $\ost{Q}_{\vp \undvp} = \ost{Q}_{\vp R}^{2/3} = Q_{\undvp G}^{2/3}$ and $\ost{\Xi} = \hc Z^2 Q_{v\vp}^{1/2} Q_{\vp \undvp}^2 Q_{\undvp G}^2 \Xi_{(k)}$.  We write these rules in matrix notation as
\ali{
\vect{L_Z \\ L_G \\ L_{\undvp G} \\ L_{\vp \undvp} \\ L_{v \vp} \\ L_\Xi }_{(k+1)} &= 
			\mat{cccccc}{1 & 0 & 0 & 0 & 0 & 0 \\
									-1 & 1 & 0 & 0 & 0 & 0	\\
									1 & 0 & 1/3 & 0 & 0 & 0 \\
									0 & 0 & 2/3 & 0 & 0 & 0 \\
									0 & 0 & 0   & 1 & 0 & 0 \\
									2 & 0 & 2 & 2 & 1/2 & 1
									} \vect{L_Z \\ L_G \\ L_{\undvp G} \\ L_{\vp \undvp} \\ L_{v \vp} \\ L_\Xi }_{(k)} + \vect{0 \\ 0 \\ L_{\hc} \\ 0 \\ 0 \\ L_{\hc} }. \label{eq:matrixParamEvol}
}
We let $L_{(k)}$ denote the column vector of parameters in \eqref{eq:matrixParamEvol} and $T$ denote the $6 \times 6$ matrix appearing above.  In terms of the difference operator $\de_{(k)} f_{(k)} := f_{(k+1)} - f_{(k)}$, equation \eqref{eq:matrixParamEvol} may be rewritten as a difference equation $\de_{(k)} L_{(k)} = (T - 1)L_{(k)} + L_{\hc} (e_3 + e_6)$.  We then derive the identity
\ali{
15 \de_{(k)} L_G + 11 \de_{(k)} L_{\undvp G} + 5 \de_{(k)} L_{\vp\undvp} + \de_{(k)} L_{v \vp} + 2 \de_{(k)} L_\Xi &= 13 L_{\hc}. \label{eq:evolRule}
}
The coefficients $15$ and $2$ appearing in \eqref{eq:evolRule} will ultimately lead to the regularity $1/15$ for the iteration.  This equation may be obtained by locating the row vector $[0, 15,  11, 5, 1, 2]$ in the row null space of $(T - 1)$, and applying this vector to the difference equation for $L_{(k)}$.

To ensure the iteration proceeds in a well-defined way, it is necessary that we check that the parameter $N$ chosen above satisfies the admissibility conditions \eqref{eq:Nastdef} assumed in the Main Lemma.  To do so it is only necessary to check the latter two inequalities in \eqref{eq:Nastdef}, since the first inequality, which requires $N \geq Q_{vG} Q_{\vp \undvp}^{1/2} = Q_{v\vp}Q_{\vp \undvp}^{3/2} Q_{\undvp G}$, follows from the latter two by taking their geometric mean.  The latter inequalities 
will be addressed in Section~\ref{sec:admissConditions} below.

After establishing sufficient conditions on the parameters to ensure a well-defined iteration, the proofs of Theorems~\ref{thm:enDispNunq} and \ref{thm:enEqNunq} will be completed in the concluding Section~\ref{sec:constructDissipSolns}.

\subsection{Admissibility conditions and asymptotics for continuing the iteration} \label{sec:admissConditions}
In this section we consider the admissibility conditions in \eqref{eq:Nastdef} for the parameter $N = Z^2 Q_{v\vp}^{1/2} Q_{\vp G}^2$.  The main result of this section is to isolate a range of initial parameters that satisfy the conditions in \eqref{eq:Nastdef} and allow the iteration to proceed by induction in a well-defined way.  We will use the notation $u^t$ to indicate the transpose of a column or row vector $u$.


As remarked in the previous section, we need only check the latter two inequalities in  \eqref{eq:Nastdef}, since these imply the first one.  These inequalities may be written as $N \geq Q_{v\vp}^{1/2} Q_{\vp G}^2 Q_{\vp \undvp} = Q_{v \vp}^{1/2} Q_{\vp \undvp}^3 Q_{\undvp G}^2$ and $N \geq Q_{v\vp}^{3/2}$, where $N = Z^2 Q_{v\vp}^{1/2} Q_{\vp \undvp}^2 Q_{\undvp G}^2$.  In logarithmic form these conditions become
\ali{
2 L_Z - L_{\vp \undvp} &\geq 0, & 
2 L_Z + 2 L_{\undvp G} + 2 L_{\vp \undvp} - L_{v\vp} &\geq 0. \label{eq:admissCondits}
}
We write $\mrg{L} = [L_Z, L_{\undvp G}, L_{\vp \undvp}, L_{v\vp}]^t$ to denote the column vector of logarithms of these parameters, which evolves according to the equation $\mrg{L}_{(k+1)} = \mrg{T} \mrg{L}_{(k)} + L_{\hc} e_2$ that in matrix notation becomes
\ali{
\vect{ L_Z \\ L_{\undvp G} \\ L_{\vp \undvp} \\ L_{v\vp} }_{(k+1)} 
&= \mat{cccc}{1 & 0 & 0 & 0 \\
							1 & 1/3 & 0 & 0 \\
							0 & 2/3 & 0 & 0 \\
							0 & 0 & 1 & 0
}\vect{ L_Z \\ L_{\undvp G} \\ L_{\vp \undvp} \\ L_{v\vp} }_{(k)} + \vect{0 \\ L_{\hc} \\ 0 \\ 0}. \label{eq:affineTrans}
}
Observe that $\mrg{T}$ has eigenvalues $(1,1/3,0,0)$, and that the $1$-eigenspace is the span of the vector $\zeta = [1,3/2,1,1]^t \in \ker(T - 1)$.  The crucial point in checking the condition \eqref{eq:admissCondits} is that $\zeta$ (and hence any positive multiple of $\zeta$) satisfies \eqref{eq:admissCondits} with strict inequality.  The positive linear span of $\zeta$ is on the other hand not invariant under the affine transformation \eqref{eq:affineTrans}, so we cannot perform an induction with parameters in the span of $\zeta$.  However, as the following Proposition shows, if we start with parameters sufficiently close to the positive span of $\zeta$ and $Z$ is sufficiently large, then  \eqref{eq:admissCondits} will remain satisfied.
\begin{prop}[Admissible truncated sector] \label{prop:admissSect} There exist $r_0 \in (0,1/2)$ and $\undl{Z} \geq 1$ such that the truncated sector defined by
\ali{
\wtld{C} &:= \{ \mrg{L} \in \R^4 : \mrg{L} = L_Z(\zeta + \varep), \quad \varep_1 = 0, \quad \max_i |\varep_i| \leq r_0, \quad L_Z \geq L_{\undl{Z}} \} \label{eq:truncSect}
}
is contained in the set of solutions to \eqref{eq:admissCondits} and is mapped to itself by the affine transformation \eqref{eq:affineTrans}.
\end{prop}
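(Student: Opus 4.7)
The plan is to exploit the spectral structure of $\mrg T$: the vector $\zeta$ is a $1$-eigenvector, the first row of $\mrg T$ is $(1,0,0,0)$ so the hyperplane $\{\varep_1 = 0\}$ is $\mrg T$-invariant, the restricted spectrum $\{1/3,0,0\}$ lies strictly inside the unit disc, and $e_2$ has vanishing first coordinate. These facts together mean that the affine map in \eqref{eq:affineTrans} preserves the direct sum decomposition $\R^4 = \R \zeta \oplus \{\varep_1 = 0\}$ along the $\zeta$-direction: writing $\mrg T\mrg L + L_{\hc} e_2 = L_Z\zeta + L_Z\bigl(\mrg T\varep + (L_{\hc}/L_Z) e_2\bigr)$ and identifying coefficients of $\zeta$ forces $L_Z' = L_Z$ and $\varep'_1 = 0$.

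For the admissibility inclusion, I would first check that $\zeta = (1, 3/2, 1, 1)^t$ satisfies \eqref{eq:admissCondits} strictly: the two left-hand sides evaluate to $1$ and $6$ respectively. Substituting $\mrg L = L_Z(\zeta + \varep)$ with $\varep_1 = 0$ and $\max_i |\varep_i| \leq r_0$ yields $L_Z(1 - \varep_3)$ and $L_Z(6 + 2\varep_2 + 2\varep_3 - \varep_4)$, both strictly positive for any $r_0 < 1/2$, which is already enforced by hypothesis.

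For the invariance, I would read off the components of $\varep' = \mrg T\varep + (L_{\hc}/L_Z) e_2$ directly from the matrix \eqref{eq:affineTrans} using $\varep_1 = 0$, namely $\varep'_2 = \varep_2/3 + L_{\hc}/L_Z$, $\varep'_3 = (2/3)\varep_2$, $\varep'_4 = \varep_3$. The bounds $|\varep'_3| \leq (2/3)r_0$ and $|\varep'_4| \leq r_0$ follow at once, and $|\varep'_2| \leq r_0$ reduces to $L_{\hc}/L_Z \leq (2/3)r_0$. Hence fixing any $r_0 \in (0, 1/2)$ and taking $\undl Z$ such that $L_{\undl Z} \geq 3 L_{\hc}/(2 r_0)$ closes the invariance for all $L_Z \geq L_{\undl Z}$.

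The main potential obstacle would be that $\|M\|_\infty = 1$ for the restriction $M$ of $\mrg T$ to $\{\varep_1 = 0\}$ in the sup norm (from the row $\varep'_4 = \varep_3$), so one might expect to need a weighted norm or an iteration $M^k$ to extract contraction. However, this nonexpansive row is a pure coordinate shift that never feeds back into itself, so $M$ preserves the sup-ball in a single step and the only place where $\undl Z$ is needed is the lone row $\varep'_2$ where the inhomogeneous term $L_{\hc} e_2$ lands. In this sense the proof is essentially a one-line algebraic verification once the spectral structure of $\mrg T$ relative to $\zeta$ is identified.
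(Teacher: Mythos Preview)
Your proof is correct and follows essentially the same approach as the paper: use $\mrg T\zeta=\zeta$ to write the image as $L_Z(\zeta+\varep')$, then read off $\varep'_1=0$, $\varep'_2=\varep_2/3+L_{\hc}/L_Z$, $\varep'_3=(2/3)\varep_2$, $\varep'_4=\varep_3$ and choose $\undl Z$ so that $L_{\hc}/L_Z\le(2/3)r_0$. One small sharpening relative to the paper: you compute the values $1$ and $6$ explicitly and observe that any $r_0<1/2$ already suffices for \eqref{eq:admissCondits}, whereas the paper only asserts the existence of a suitable $r_0$ by continuity.
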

\begin{proof}  Recall that both inequalities in \eqref{eq:admissCondits} hold with strict inequality for $\zeta = [1,3/2,1,1]^t$.  We may therefore choose $r_0 \in (0,1/2)$ such that for all $\varep \in \R^4$ with $\max_i |\varep_i| \leq r_0$ the vector $\zeta + \varep$ also satisfies both inequalities in \eqref{eq:admissCondits}.  With this choice of $r_0$ we have that every vector in $\wtld{C}$ also satisfies the conditions in \eqref{eq:admissCondits}, since the value of $L_Z \geq L_{\undl{Z}} > 0$ is positive and the conditions are linear.

Now suppose $\mrg{L} = L_Z(\zeta + \varep)$ belongs to the truncated sector $\wtld{C}$ defined in \eqref{eq:truncSect}.  Applying the affine transformation in \eqref{eq:affineTrans} and recalling that $\mrg{T} \zeta = \zeta$, we have 
\ali{
\mrg{T} \mrg{L} + L_{\hc} e_2 = L_Z(\zeta + \ost{\varep}), \quad \ost{\varep} := \mrg{T} \varep + L_Z^{-1} L_{\hc} e_2.
}
Computing with the matrix $\mrg{T}$ in \eqref{eq:affineTrans} and using $\varep_1 = 0$, we have that $\ost{\varep}_1 = 0$ and we calculate $\ost{\varep}_2 = \varep_2/3 + L_Z^{-1} L_{\hc}$.  We thus obtain $|\ost{\varep}_2| \leq r_0/3 + L_Z^{-1} L_{\hc}$.  For $\undl{Z}$ chosen sufficiently large, we have $L_Z^{-1}L_{\hc} \leq L_{\undl{Z}}^{-1} L_{\hc} \leq 2r_0/3$ and hence $|\ost{\varep}_2| \leq r_0$.  We also have that $|\ost{\varep}_3| = 2|\varep_2|/3 \leq 2r_0/3 \leq r_0$ and $|\ost{\varep}_4| = |\varep_3| \leq r_0$.  These inequalities together imply that $\mrg{T} \mrg{L} + L_{\hc} e_2$ belongs to $\wtld{C}$ as desired.
\end{proof}

At this point, it is also useful to make the following observation on the limiting behavior of $\mrg{L}_{(k)}$.
\begin{prop} \label{prop:changingQuotients} If $\mrg{L}_{(k)}$ evolves by the rule \eqref{eq:affineTrans}, then $\lim_{k \to \infty} \mrg{L}_{(k)}$ exists and $\lim_{k \to \infty} \de_{(k)} \mrg{L}_{(k)} = 0$, where $\de_{(k)} \mrg{L}_{(k)} := \mrg{L}_{(k+1)} - \mrg{L}_{(k)}$.
\end{prop}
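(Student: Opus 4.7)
The plan is to peel off the trivial invariant direction first: the first row of \eqref{eq:affineTrans} reads $L_Z^{(k+1)} = L_Z^{(k)}$, so $L_Z^{(k)}$ is constant in $k$, equal to its initial value $L_Z^{(0)}$. Substituting this constant into the remaining three equations reduces \eqref{eq:affineTrans} to an inhomogeneous affine iteration $y_{(k+1)} = T'' y_{(k)} + c$ for the vector $y = (L_{\undvp G}, L_{\vp \undvp}, L_{v\vp})^t$, where $T''$ is the $3 \times 3$ submatrix of $\mrg{T}$ obtained by deleting the first row and column, and where $c = (L_Z^{(0)} + L_{\hc}, 0, 0)^t$.

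The key observation is that $T''$ is lower-triangular with diagonal entries $(1/3, 0, 0)$, so its spectral radius equals $1/3 < 1$. Consequently $I - T''$ is invertible and there is a unique fixed point $y^* = (I - T'')^{-1} c$; solving by back-substitution would give $y^* = (\tfrac{3}{2}(L_Z^{(0)} + L_{\hc}),\, L_Z^{(0)} + L_{\hc},\, L_Z^{(0)} + L_{\hc})^t$, though only its existence is needed. Setting $z_{(k)} := y_{(k)} - y^*$ then turns the iteration into the homogeneous recursion $z_{(k+1)} = T'' z_{(k)}$, so $z_{(k)} = (T'')^k z_{(0)}$. Since the spectral radius of $T''$ is $1/3$ and the nilpotent part of $T''$ is killed in at most two steps (the $0$-eigenvalue has algebraic multiplicity $2$ in a $3 \times 3$ matrix), one gets the geometric bound $\|z_{(k)}\| \lesssim 3^{-k} \|z_{(0)}\|$ for all $k \geq 2$.

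From this decay $\mrg{L}_{(k)}$ converges to $(L_Z^{(0)}, y^*)$, proving existence of the limit, and the estimate
\[
\|\de_{(k)} \mrg{L}_{(k)}\| \leq \|z_{(k+1)}\| + \|z_{(k)}\| \lesssim 3^{-k}
\]
immediately yields $\de_{(k)} \mrg{L}_{(k)} \to 0$. There is no substantive obstacle here: the proposition is essentially a two-line exercise in the contraction principle for affine iterations with spectral radius strictly less than $1$, once the invariance of $L_Z$ has been used to reduce to the $(1/3, 0, 0)$-spectrum block.
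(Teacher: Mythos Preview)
Your argument is correct and follows essentially the same idea as the paper: exploit that $L_Z$ is fixed and that the remaining eigenvalues $(1/3,0,0)$ lie strictly inside the unit disk. The only cosmetic difference is that the paper, instead of passing to the $3\times 3$ block and subtracting the fixed point, writes $L_{\hc} = \eta L_Z$ and absorbs the inhomogeneous term into a rank-one perturbation $T_\eta = \mrg{T} + \eta\, e_2 e_1^t$, so that $\mrg{L}_{(k)} = T_\eta^k \mrg{L}_{(0)}$ with $T_\eta$ again lower triangular with diagonal $(1,1/3,0,0)$; convergence of $T_\eta^k$ to the projection onto the $1$-eigenspace then gives the limit directly.
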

\begin{proof} Let $\mrg{L}_{(k)}$ obey \eqref{eq:affineTrans}.  Then the first component $L_Z = e_1^t \mrg{L}_{(k)}$ remains constant in $k$, which implies that $L_{\hc} = \eta L_Z$, where $\eta > 0$ is constant in $k$.  With this observation, we can re-write the parameter evolution as the iteration of a fixed matrix, rather than an affine map.  That is, for all $k$ we obtain that $\mrg{L}_{(k)} = T_\eta^k \mrg{L}_{(0)}$, $T_\eta = (\mrg{T} + \eta e_2 e_1^t)$.  Note that $T_\eta$ is lower triangular with a diagonal consisting of the eigenvalues $(1,1/3,0,0)$.  As a consequence, $\lim_{k \to \infty} T_\eta^k$ exists and is equal to the $T_\eta$-invariant projection onto its $1$-eigenspace.  It follows that $\mrg{L}_{(\infty)} := \lim_{k \to \infty} \mrg{L}_{(k)} =  \lim_{k \to \infty} T_\eta^k \mrg{L}_{(0)}$ exists, and as a consequence $\lim_{k \to \infty} \de_{(k)} \mrg{L}_{(k)} = \mrg{L}_{(\infty)} - \mrg{L}_{(\infty)} = 0$.
\end{proof}

We will also use the following estimates on the decay rates of the time scale and the error.
\begin{prop} There exists $\undl{Z}_1 > 0$ such that for all  $Z \geq \undl{Z}_1$ if $L_{(k)}$ evolves according to \eqref{eq:matrixParamEvol} and $\mrg{L}_{(0)}$ belongs to the set $\wtld{C}$ in \eqref{eq:truncSect},  we have for all $k \geq 0$ that
\ali{
Z^{-3/4} e_{\vp,(k)}^{1/2} \leq e_{\vp,(k+1)}^{1/2} &\leq Z^{-1/4} e_{\vp,(k)}^{1/2} \label{eq:shrinkingAmplitudeLaw} \\
Z \Xi_{(k)} e_{v,(k)}^{1/2} \leq \Xi_{(k+1)} e_{v,(k+1)}^{1/2} &\leq Z^9 \Xi_{(k)} e_{v,(k)}^{1/2}  \label{eq:shrinkingTimescaleLaw}
}
\end{prop}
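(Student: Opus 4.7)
The strategy is a direct calculation in logarithmic coordinates, leveraging the invariance of the truncated sector $\wtld{C}$ established in Proposition~\ref{prop:admissSect}. That invariance, together with the hypothesis $\mrg{L}_{(0)} \in \wtld{C}$, implies $\mrg{L}_{(k)} \in \wtld{C}$ for every $k \geq 0$; hence I can write $\mrg{L}_{(k)} = L_Z(\zeta + \varep_{(k)})$ with $\zeta = [1, 3/2, 1, 1]^t$, with $L_Z$ constant in $k$, and with $\max_i |\varep_{(k),i}| \leq r_0 < 1/2$. In particular, throughout the iteration, $L_{\vp\undvp,(k)} = L_Z(1 + \varep_{(k),3})$ and $L_{\undvp G,(k)} = L_Z(3/2 + \varep_{(k),2})$, which pin each of the relevant quotients to a power of $Z$ with exponent in a fixed interval around the corresponding component of $\zeta$.

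For the amplitude inequality \eqref{eq:shrinkingAmplitudeLaw}, I would substitute the update rule $\ost{e}_\vp = \hc e_{\undvp}$ into the quotient to obtain $(\ost{e}_\vp/e_\vp)^{1/2} = \hc^{1/2} Q_{\vp\undvp}^{-1/2} = \hc^{1/2} Z^{-(1+\varep_{(k),3})/2}$. The exponent is pinned to the interval $[-(1+r_0)/2,\,-(1-r_0)/2]$ by sector invariance; the upper bound $\leq Z^{-1/4}$ then follows once $\undl{Z}_1$ is taken large enough that $\hc^{1/2} \leq Z^{(1-2r_0)/4}$ (a positive power of $Z$ since $r_0 < 1/2$), while the lower bound $\geq Z^{-3/4}$ holds trivially from $\hc \geq 1$ and the fact that the exponent exceeds $-3/4$.

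For the timescale inequality \eqref{eq:shrinkingTimescaleLaw}, the update rules $\ost{\Xi} = \hc N \Xi$ and $\ost{e}_v = \hc e_\vp$ combined with $N = Z^2 Q_{v\vp}^{1/2} Q_{\vp G}^2$ produce a clean cancellation of the $Q_{v\vp}$ factor, yielding
\[
\frac{\ost{\Xi}\ost{e}_v^{1/2}}{\Xi e_v^{1/2}} \;=\; \hc^{3/2} Z^2 Q_{\vp G}^2.
\]
Using $L_{\vp G,(k)} = L_{\vp\undvp,(k)} + L_{\undvp G,(k)} = L_Z(5/2 + \varep_{(k),2} + \varep_{(k),3})$, this ratio equals $\hc^{3/2} Z^{7 + 2(\varep_{(k),2} + \varep_{(k),3})}$, whose exponent is confined to $[7-4r_0,\,7+4r_0] \subseteq (5,9)$ thanks to sector invariance. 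Taking $\undl{Z}_1$ large enough that $\hc^{3/2} \leq Z^{2 - 4r_0}$ yields the upper bound $\leq Z^9$; the lower bound $\geq Z$ is immediate from $\hc \geq 1$ and the fact that $7 - 4r_0 > 1$.

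The only mildly delicate step is arranging both $\hc^{1/2} \leq Z^{(1-2r_0)/4}$ and $\hc^{3/2} \leq Z^{2-4r_0}$ simultaneously; since $r_0 < 1/2$ both exponents on the right are positive, so this is purely a choice $\undl{Z}_1 = \undl{Z}_1(\hc, r_0)$. No induction beyond the sector invariance is required, and no cancellation in \eqref{eq:evolRule} must be invoked: the full content of the proposition is the observation that, within $\wtld{C}$, the two ratios of interest depend on $Z$ through exponents that are bounded away from their endpoints uniformly in $k$.
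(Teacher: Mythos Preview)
Your proof is correct and follows essentially the same approach as the paper: both compute $\de_{(k)}\log e_{\vp}$ and $\de_{(k)}\log(\Xi e_v^{1/2})$ directly from the update rules, invoke the sector invariance of Proposition~\ref{prop:admissSect} to pin $L_{\vp\undvp}$ and $L_{\undvp G}$ to $L_Z(1+\varep_3)$ and $L_Z(3/2+\varep_2)$ with $|\varep_i|<1/2$, and then absorb the $\hc$ factors by taking $Z$ large. Your presentation via the multiplicative ratios $\hc^{1/2}Q_{\vp\undvp}^{-1/2}$ and $\hc^{3/2}Z^2 Q_{\vp G}^2$ is slightly more direct than the paper's use of the matrix difference equation $\de_{(k)}L_{(k)}=(T-1)L_{(k)}+L_{\hc}(e_3+e_6)$ and row-vector dot products, but the computations are identical.
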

\begin{proof}  Note that $e_{\vp,(k)} = Q_{\vp\undvp} Q_{\undvp G} e_{G,(k)}$.  Using \eqref{eq:evolRule},\eqref{eq:affineTrans} and \eqref{eq:truncSect} we have
\ALI{
\de_{(k)} \log e_{\vp,(k)} &= \de_{(k)} L_G  \, + \de_{(k)} L_{\vp \undvp}  \qquad \qquad \, \, +  \de_{(k)} L_{\undvp G} \\
&= (-L_Z) + ((2/3) L_{\undvp G} - L_{\vp\undvp}) + (L_Z + L_{\hc} - (2/3)L_{\undvp G}) \\
&= -L_{\vp \undvp} + L_{\hc} = -L_Z(1 + \varep_{2(k)}) + L_{\hc},
}
where $|\varep_{2(k)}| \leq r_0 < 1/2$.  For sufficently large $Z \geq \undl{Z}_1$ we have $-(3/2) L_Z \leq \de_{(k)} \log e_{\vp,(k)} \leq - (1/2) L_Z$, yielding \eqref{eq:shrinkingAmplitudeLaw}.  To obtain \eqref{eq:shrinkingTimescaleLaw}, we start by noting that
\ALI{
-\de_{(k)} \log (\Xi_{(k)} e_{v,(k)}^{1/2})   &= - (1/2) (\de_{(k)}L_G + \de_{(k)}L_{\undvp G} + \de_{(k)} L_{\vp\undvp} +\de_{(k)} L_{v\vp}) -\de_{(k)} L_\Xi   \\
&= (-1/2) [0, 1, 1,1,1,2] \de_{(k)} L_{(k)}, 
}
where $L_{(k)}$ is the column vector of parameters in \eqref{eq:evolRule}.  Using that $L_{(k)}$ obeys the evolution equation $\de_{(k)} L_{(k)} = (T - 1) L_{(k)} + L_{\hc}(e_3 + e_6)$, where $T$ is the matrix in \eqref{eq:matrixParamEvol}, 
we obtain
\ALI{
-\de_{(k)} \log (\Xi_{(k)} e_{v,(k)}^{1/2}) &= (-1/2)\left([4,0,4,4,0,0]L_{(k)} + 3 L_{\hc}\right) \\
\de_{(k)} \log (\Xi_{(k)} e_{v,(k)}^{1/2}) &= 2 L_Z + 2 L_{\undvp G} + 2 L_{\vp \undvp} + (3/2) L_{\hc} = 2 L_Z(1 + 3/2 + \varep_{2(k)} + 1 + \varep_{3(k)}) + 2 L_{\hc}.
}
Using that $\max_i |\varep_{i(k)}| \leq 1/2$, we obtain $L_Z \leq \de_{(k)} \log (\Xi_{(k)} e_{v,(k)}^{1/2}) \leq 9 L_Z$ for $L \geq \undl{Z}_1$ sufficiently large.  This bound implies the desired estimate \eqref{eq:shrinkingTimescaleLaw}.
\end{proof}


\subsection{The Approximation Theorem} \label{sec:approximThm}
We now state our Approximation Theorem, which immediately implies Theorems~\ref{thm:enDispNunq} and \ref{thm:enEqNunq}.  

\begin{thm}[Approximation Theorem] \label{thm:approxThm} Let $\a < 1/15$.  Then there exists a constant $\ovl{C}_\a > 0$ such that the following holds.  Suppose that $\Xi_0 \geq 1$ and $E_0 > 0$ are positive, that $\tilde{I}$ is an interval having length $|\tilde{I}| \geq 8(\Xi_0 E_0^{1/2})^{-1}$, and that $\tilde{I}_{(0)}, \tilde{I}_{(1)}$ are nonempty subintervals of $\tilde{I}$ such that  $\sup \tilde{I}_{(0)} + (\Xi_0 E_0^{1/2})^{-1} \leq \sup \tilde{I}_{(1)}$.  Let $(v,p,R,\kk,\vp,\mu)_{0}$ be a dissipative Euler-Reynolds flow on $\tilde{I} \times \T^3$ with compound frequency energy levels to order $2$ bounded by $(\Xi, e_v, e_\vp, e_R, e_G)_{0} = (\Xi_0, E_0, E_0, E_0, E_0)$ satisfying $\suppt (R,\kk,\vp)_{0} \subseteq \tilde{I}_{(0)}$.

Then there is a family $(v_\b, p_\b)$ of weak solutions to the incompressible Euler equations on $\tilde{I} \times \T^3$ of class $v \in C_{t,x}^\a$ that is parameterized by $\b \in 2^\N$ and has the following properties:

\noindent $1.$ For each $\b$, $(v_\b, p_\b)$ satisfies the local energy inequality \eqref{eq:locEnInq} with a common dissipation measure $\mu_\b := -[ \pr_t(|v_\b|^2/2) + \nb_j( (|v_\b|^2/2 + p_\b)v_\b^j) ] = \mu_{\infty} \geq 0$ that is independent of $\b$.

\noindent $2.$ On $\tilde{I}_{(0)} \times \T^3$, the weak solutions $(v_\a, p_\a)$ are all identically equal to each other.

\noindent $3.$ On $\tilde{I}_{(1)} \times \T^3$,  
the dissipation measure $\mu_\infty = \mu_{(0)}$ coincides with the initial dissipation measure.

\noindent $4.$ For all $\b \in 2^\N$, the support of $(v_\b, p_\b)$ is contained in $\{ t \leq \sup \tilde{I}_{(1)} + (\Xi E_0^{1/2})^{-1} \} \cap \tilde{I}$.

\noindent $5.$ The map $\b \mapsto v_\b$ is a homeomorphism of $2^\N$ onto its image in $C_{t,x}^\a$.

\noindent $6.$ The image $(v_\b)_{\b \in 2^{\N}}$ has positive Hausdorff dimension as a subspace of $C_tL_x^2(\tilde{I} \times\T^3)$.

\noindent $7.$ We have $\sup_{\b \in 2^\N} \co{v_\b - v_{0}} \leq \ovl{C}_\a E_0^{1/2}$. 
\end{thm}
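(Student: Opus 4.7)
The plan is to iteratively apply the Main Lemma (Lemma~\ref{lem:mainLem}) to construct, for each $\beta \in 2^{\N}$, a Cauchy sequence of dissipative Euler-Reynolds flows $(v,p,R,\kk,\vp,\mu)^\beta_{(k)}$ converging in $C^\alpha_{t,x}$ to a H\"older weak solution of Euler satisfying the local energy inequality. First I would apply a preliminary preparation step, referenced as Lemma~\ref{lem:prepareInitLevels} in the outline, to put the given initial flow (whose quotients $Q_{v\vp}, Q_{\vp\undvp}, Q_{\undvp G}, Q_{vG}$ all equal $1$ and thus lies outside the truncated admissible sector $\wtld{C}$) into a $(\bar{\de},M)$-well-prepared form for some stage $[l_0]$ with compound frequency-energy levels satisfying $\mrg{L}_{(0)} \in \wtld{C}$. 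With $Z \geq \max\{\undl{Z},\undl{Z}_1\}$ chosen sufficiently large and $N_{(k)} := Z^2 Q_{v\vp,(k)}^{1/2} Q_{\vp G,(k)}^2$, Proposition~\ref{prop:admissSect} then guarantees that the admissibility hypotheses \eqref{eq:Nastdef} of the Main Lemma hold at every stage, while Proposition~\ref{prop:changingQuotients} ensures $L_{\vp\undvp}, L_{\undvp G}, L_{v\vp}$ remain bounded.

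At stage $k$ suppose we have constructed a family $(v,p,R,\kk,\vp,\mu)^{(\beta_0,\ldots,\beta_{k-1})}_{(k)}$ indexed by the first $k$ bits of $\beta$. To pass to stage $k{+}1$, I choose a subinterval $\tilde{J}_k \subseteq \tilde{I}_{[l]}^{(k)}$ of length $(\Xi_{(k)} e_{v,(k)}^{1/2})^{-1}$ disjoint from $\tilde{I}_{(0)}$ and lying to the right of $\tilde{I}_{(1)}$, and apply the Main Lemma to obtain both the canonical continuation $(\ost{v},\ldots)$ and the alternative $(v_2,\ldots)$ from \eqref{eq:diffSupp}-\eqref{eq:lowerBdNew}; I then set $(v,\ldots)^{(\beta_0,\ldots,\beta_k)}_{(k+1)}$ equal to the former if $\beta_k = 0$ and to the latter if $\beta_k = 1$. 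The existence of such a $\tilde{J}_k$ is ensured by propagating the support clauses \eqref{eq:tildeIlp1}-\eqref{eq:suppContained} inductively and using the geometric shrinkage \eqref{eq:shrinkingTimescaleLaw} of $(\Xi_{(k)} e_{v,(k)}^{1/2})^{-1}$ to confine all subsequent $\tilde{I}_{[G]}^{(k)}$ inside a fixed outer window to the right of $\tilde{I}_{(1)}$. Since $\mu_2 = \ost{\mu}$ in the Main Lemma and the new dissipation measure differs from the old only by $-\Ddt e_{(k)}(t) \geq 0$ (which is supported where $e_{(k)}$ varies, i.e., outside $\tilde{I}_{(1)}$ by our placement of $\tilde{I}_{[G]}^{(k)}$), the measures $\mu^\beta_{(k)}$ are independent of $\beta$ and non-decreasing in $k$, and coincide with $\mu_{(0)}$ on $\tilde{I}_{(1)}$, yielding items $2$, $3$, and $4$ after taking the limit.

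Uniform convergence $v^\beta_{(k)} \to v^\beta$ in $C^0$ (and item $7$) follows from \eqref{eq:c0CorrectBd} summed using the geometric decay \eqref{eq:shrinkingAmplitudeLaw}. For H\"older regularity I interpolate $\co{V_{(k)}} \lsm e_{\vp,(k)}^{1/2}$ with $\co{\nb V_{(k)}} \lsm \Xi_{(k+1)} e_{\vp,(k)}^{1/2}$ from \eqref{eq:vpBd}. The conserved linear combination \eqref{eq:evolRule} together with Proposition~\ref{prop:changingQuotients} yields $15 L_{e_\vp,(k)} + 2 L_{\Xi,(k)} = O(k L_{\hc}) + O(1)$, so that $e_{\vp,(k)}^{1/2}\Xi_{(k+1)}^\alpha$ decays geometrically for any $\alpha < 1/15$ provided $Z$ is chosen sufficiently large relative to $\hc$; this gives $C^\alpha$ convergence. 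The limit $(v^\beta, p^\beta)$ solves Euler because $R_{(k)} \to 0$ uniformly, and satisfies the local energy inequality with dissipation measure $\mu_\infty := \text{w-}\lim \mu^\beta_{(k)}$, which is non-negative, independent of $\beta$, and equals $\mu_{(0)}$ on $\tilde{I}_{(1)}$ by the preceding discussion.

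For item $5$, if $\beta,\beta'$ first differ at index $k$ the constructions agree through stage $k$ and the telescoping bound $\|v^\beta - v^{\beta'}\|_{C^\alpha} \lsm \sum_{j\geq k} \Xi_{(j+1)}^\alpha e_{\vp,(j)}^{1/2} \lsm \Xi_{(k)}^{\alpha - 1/15}$ gives continuity of $\beta \mapsto v^\beta$, while injectivity and continuity of the inverse follow from the $C_tL^2_x$ lower bound \eqref{eq:lowerBdNew} together with the tail estimate $\sum_{j>k}\|V_{(j)}\|_{C_tL^2_x} \lsm e_{\vp,(k+1)}^{1/2} = o(e_{\vp,(k)}^{1/2})$, yielding $\sup_t \|v^\beta - v^{\beta'}\|_{L^2_x}^2 \geq c\, e_{\vp,(k)}$. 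The same two-sided comparison $c\, e_{\vp,(k)}^{1/2} \leq \|v^\beta - v^{\beta'}\|_{C_tL^2_x} \leq C\, e_{\vp,(k)}^{1/2}$, combined with the geometric rate $e_{\vp,(k+1)}^{1/2} \leq Z^{-1/4} e_{\vp,(k)}^{1/2}$ from \eqref{eq:shrinkingAmplitudeLaw}, identifies the image bi-H\"older with a standard self-similar Cantor set of positive Hausdorff dimension $4\log 2 / \log Z$, settling item $6$. The main obstacle will be the simultaneous bookkeeping of the branching intervals $\tilde{J}_k$ and the nested support windows $\tilde{I}_{[l]}^{(k)}, \tilde{I}_{[G]}^{(k)}$ so that $(a)$ each branch modification is confined to $\tilde{I} \setminus \tilde{I}_{(0)}$ and to the right of $\tilde{I}_{(1)}$, $(b)$ the dissipation perturbations $-\Ddt e_{(k)}$ stay outside $\tilde{I}_{(1)}$ (requiring $e_{(k)}$ to be constant on $\tilde{I}_{(1)}$, consistent with \eqref{eq:eDef}), and $(c)$ the cumulative support expansion over all stages fits inside $\{t \leq \sup \tilde{I}_{(1)} + (\Xi_0 E_0^{1/2})^{-1}\}$; the summability of $(\Xi_{(k)} e_{v,(k)}^{1/2})^{-1}$ from \eqref{eq:shrinkingTimescaleLaw} is the essential tool for carrying out this bookkeeping.
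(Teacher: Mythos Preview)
Your overall strategy matches the paper's, but the placement of the branching intervals $\tilde{J}_k$ contains a genuine error. You require $\tilde{J}_k \subseteq \tilde{I}_{[l]}^{(k)}$ \emph{and} $\tilde{J}_k$ to lie to the right of $\tilde{I}_{(1)}$. These two constraints are incompatible at stage $k=0$: Lemma~\ref{lem:prepareInitLevels} sets $\tilde{I}_{[10]} = \{t \le \sup \tilde{I}_{(1)}\}$, so there is nothing to the right of $\sup\tilde{I}_{(1)}$ inside $\tilde{I}_{[10]}$. The underlying misconception seems to be that the branching must avoid $\tilde{I}_{(1)}$ in order to preserve $\mu_\infty = \mu_{(0)}$ there. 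This is unnecessary: the Main Lemma already guarantees $\mu_2 = \ost{\mu}$, so the branching never touches the dissipation measure. What must avoid $\tilde{I}_{(1)}$ is the support of $\pr_t e_{(k)}$, and that is automatic since $\tilde{I}_{(1)} \subseteq \tilde{I}_{[G]}^{(k)}$ for every $k$ and the Main Lemma gives $\ost{\mu} = \mu$ on $\tilde{I}_{[G]}$.

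The paper's fix is simple and worth noting: it places all the $\tilde{J}_{(k)}$ at a \emph{fixed} center $t^* = \sup \tilde{I}_{(0)} + \tfrac{1}{2}(\Xi_0 E_0^{1/2})^{-1}$, which lies strictly between $\sup\tilde{I}_{(0)}$ and $\sup\tilde{I}_{(1)}$ by hypothesis, hence inside $\tilde{I}_{[1k]}$ for every $k$. The shrinking intervals $\tilde{J}_{(k)} = (t^* - (\Xi_{(k)} e_{v,(k)}^{1/2})^{-1}, t^* + (\Xi_{(k)} e_{v,(k)}^{1/2})^{-1})$ then automatically miss $\tilde{I}_{(0)}$ once their cumulative locality expansion $\sum_j (\Xi_{(j)} e_{v,(j)}^{1/2})^{-1}$ is controlled via \eqref{eq:shrinkingTimescaleLaw}. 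This handles items 2 and 3 with no further bookkeeping.

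A minor point on item 6: your bi-H\"older comparison uses the wrong side of \eqref{eq:shrinkingAmplitudeLaw}. For a \emph{lower} bound on Hausdorff dimension one needs the lower bound $e_{\vp,(k)}^{1/2} \ge Z^{-3k/4} e_{\vp,(0)}^{1/2}$, which yields $\de_Z = \tfrac{4\log 2}{3\log Z}$ rather than $\tfrac{4\log 2}{\log Z}$. (The paper carries this out via the Haar measure on $2^{\N}$ and a covering argument rather than a bi-H\"older identification, but either route gives positive dimension.)
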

\noindent We refer to the proof of Theorem~\ref{thm:approxThm} below for a review of the topology of $2^\N$.

Theorems~\ref{thm:enDispNunq} and \ref{thm:enEqNunq} follow immediately from Theorem~\ref{thm:approxThm} by taking the initial dissipative Euler-Reynolds flow to be identically $0$, the $\Xi_0 \geq 1, E_0 > 0$ to be arbitrary positive numbers, setting $\tilde{I} = \R$, and taking $\tilde{I}_{(0)} = (-\infty, 1)$, $\tilde{I}_{(1)} = (-\infty,2)$ in the case of Theorem~\ref{thm:enDispNunq} or taking $\tilde{I}_{(0)} = (-\infty, 1)$, $\tilde{I}_{(1)} = (-\infty,\infty)$ in the case of Theorem~\ref{thm:enEqNunq}.

The first step in the proof of the Approximation Theorem will be to observe that the initial dissipative Euler-Reynolds flow can be made $(\bar{\de},M)$-well-prepared for a suitable choice of energy increment function and a (larger) choice of frequency energy levels belonging to the admissible region \eqref{eq:truncSect} without making changes to the velocity field.  The lemma involves a parameter $Z$ that will be chosen to be a large constant (depending on $\a$) at the end of the argument.
\begin{lem} \label{lem:prepareInitLevels} There exists $\undl{Z}_1 > 0$ such if $(v,p,R,\kk,\vp,\mu)_{0}$ are as in Theorem~\ref{thm:approxThm} and if $Z \geq \undl{Z}_1$ there is a smooth function $\bar{e}_{Z} \colon \tilde{I} \to \R_{\geq 0}$ and a dissipative Euler-Reynolds flow of the form $(v,\tilde{p},\wtld{R},\tilde{\kk},\vp,\tilde{\mu})_{(0)}$ that is $(\bar{\de},Z^{2})$-well-prepared for stage 1 with respect to the compound frequency energy levels given by 
\ali{
(\Xi, e_v, e_\vp, e_R, e_G)_{(0)} &:= (\Xi_0,  Z^{7/2}E_0, Z^{5/2} E_0, Z E_0, E_0), \label{eq:bigInitFren}
}
the pair of intervals $(\tilde{I}_{[10]}, \tilde{I}_{[G0]})$, $\tilde{I}_{[10]} = \{ t \leq \sup \tilde{I}_{(1)} \}$, $\tilde{I}_{[G0]} = \{ t \leq \sup \tilde{I}_{(1)} + (\Xi_0 E_0^{1/2})^{-1} \}$, and the function $e_Z$.  One may arrange that $\tilde{\mu}_{(0)}(t,x) = \mu_{(0)}(t,x)$ for $t \leq \sup \tilde{I}_{(1)} + 4^{-1}(\Xi_0 E_0^{1/2})^{-1}$.
\end{lem}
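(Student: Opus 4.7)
The plan is to leave $v$ and $\vp$ unchanged and to install the principal part of the stress required for stage $1$ well-preparedness via a purely $t$-dependent perturbation of $R$ and $\kk$. The key structural observation is that $\de_{[1\ast]}^{j\ell}$ is a constant symmetric tensor taking values in $\ker dx^1 \otimes \ker dx^1$, so $\bar{e}_Z(t)\de_{[1\ast]}^{j\ell}$ has vanishing spatial divergence whenever $\bar{e}_Z$ depends only on $t$; subtracting it from $R$ therefore preserves the Euler--Reynolds equation with the same pressure. Accordingly I will declare
\ali{
\wtld R^{j\ell} := R_{(0)}^{j\ell} - \bar{e}_Z(t)\,\de_{[1\ast]}^{j\ell}, \quad \tilde p := p_{(0)}, \quad \tilde\kk := \kk_{(0)} - \tfrac{3}{4}\bar{e}_Z(t) + \de\tilde\kk(t,x) \notag
}
with decomposition $\wtld R_{[1]} := R_{[1]}^{(0)} - \bar{e}_Z\de_{[1\ast]}$, $\wtld R_{[2]} := R_{[2]}^{(0)}$, $\wtld R_{[G]} := R_{[G]}^{(0)}$, so that the residual $R_{[1\circ]}^{j\ell}$ of \eqref{eq:formOfR1de} is precisely the original $R_{[1]}^{(0)\,j\ell}$, of size $\leq E_0$. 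The shift $-\tfrac{3}{4}\bar{e}_Z$ inside $\tilde\kk_{[1]}$ is forced by the trace identity $\tilde\kk_{[1]} = \tfrac12\de_{j\ell}\wtld R_{[1]}^{j\ell}$ together with $\operatorname{tr}\de_{[1\ast]} = 3/2$, while the remaining piece $\de\tilde\kk$ will be absorbed into $\tilde\kk_{[G]}$ and used to guarantee $\tilde\mu \geq 0$.

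I will construct $\bar{e}_Z(t)$ smooth and non-negative with $\suppt\bar{e}_Z \subseteq \tilde{I}_{[G0]}$, equal to a constant $\bar{e}_\ast$ in the range $Z^{1/2}E_0 \leq \bar{e}_\ast \leq Z^{5/2}E_0$ on the plateau $\{t \leq \sup \tilde{I}_{(1)} + 4^{-1}(\Xi_0 E_0^{1/2})^{-1}\}$, and smoothly tapering to zero across the remaining strip of length $(3/4)(\Xi_0 E_0^{1/2})^{-1}$. Since the new velocity timescale $(\Xi_0 e_{v,(0)}^{1/2})^{-1} = Z^{-7/4}(\Xi_0 E_0^{1/2})^{-1}$ is a factor $Z^{7/4}$ shorter than the plateau, the interval on which \eqref{eq:underBdsEninc} and the smallness hypothesis \eqref{eq:barDeAssump} must be verified lies entirely inside the plateau region where $\bar{e}_Z \equiv \bar{e}_\ast$. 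Setting $\undl{e}_\vp := \bar{e}_\ast/100 \geq Z^{-2}e_\vp$ (so $M = Z^{2}$), \eqref{eq:underBdsEninc} becomes immediate, and \eqref{eq:barDeAssump} reduces to $E_0/\bar{e}_\ast + (E_0/\bar{e}_\ast)^{3/2} \lsm \bar{\de}$, easily met for $Z \geq \undl{Z}_1$. The derivative bounds \eqref{eq:givenAmpBds} then follow from the elementary estimate $|\pr_t^s \bar{e}_Z^{1/2}| \lsm \bar{e}_\ast^{1/2}(\Xi_0 E_0^{1/2})^s$, and the remaining compound frequency-energy bounds \eqref{eq:bigInitFren} for $\wtld R_{[l]}$, $\tilde\kk_{[l]}$, $\vp_{[l]}$ all reduce to the trivial inequality $E_0 \lsm Z^j E_0$ with $j \geq 1$, modulo the control of $\de\tilde\kk$.

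The main obstacle is verifying pointwise non-negativity of the new dissipation measure alongside the size bound $\|\de\tilde\kk\|_{C^0} \lsm e_{R,(0)} = ZE_0$. A direct computation using $\nb_j(\bar{e}_Z\de_{[1\ast]}^{j\ell}) = 0$ gives
\ali{
\tilde\mu - \mu_{(0)} \;=\; D_t(\de\tilde\kk) - \tfrac{3}{4}\bar{e}_Z'(t) - \bar{e}_Z(t)\,\nb_j v_\ell\,\de_{[1\ast]}^{j\ell} \notag
}
so the natural choice is to take $\de\tilde\kk$ as the solution of the transport equation $D_t(\de\tilde\kk) = \tfrac{3}{4}\bar{e}_Z'(t) + \bar{e}_Z(t)\nb_j v_\ell\de_{[1\ast]}^{j\ell}$ with zero data on the past boundary of $\suppt\bar{e}_Z$, yielding $\tilde\mu \equiv \mu_{(0)} \geq 0$ everywhere and $\tilde\mu = \mu_{(0)}$ on the plateau as required. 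The subtle point is the $C^0$ control on $\de\tilde\kk$: a naive transport bound picks up a factor $|\tilde{I}|$ that is not tolerable when $\tilde{I}$ is long. I will instead exploit the pure spatial divergence structure $\bar{e}_Z\nb_j v_\ell\de_{[1\ast]}^{j\ell} = \nb_j(\bar{e}_Z v_\ell\de_{[1\ast]}^{j\ell})$ of the forcing, writing $\de\tilde\kk$ as the sum of an inverse-spatial-divergence piece of size $\lsm \Xi_0^{-1}\bar{e}_\ast E_0^{1/2} \lsm Z^{1/2}\Xi_0^{-1}E_0^{3/2}$ and a residual solving a transport equation with forcing equal to the commutator between $D_t$ and the inverse divergence, which is controlled by techniques of the type used in Proposition~\ref{eq:advecCommutEstimate}; for $Z \geq \undl{Z}_1$ sufficiently large, both pieces lie comfortably below $ZE_0$, completing the verification.
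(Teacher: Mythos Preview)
Your approach has a genuine gap in the final paragraph, and it is not repaired by the inverse-divergence idea you sketch.  The transport problem
\[
D_t(\de\tilde\kk) = \tfrac34\bar e_Z'(t) + \bar e_Z(t)\,\de_{[1\ast]}^{j\ell}\nb_j v_\ell
\]
does not admit a solution with time-uniform $C^0$ control on a long (or infinite) interval $\tilde I$.  For a concrete obstruction, take $v$ to be a constant vector field (so $D_t = \pr_t + v\cdot\nb$) and note that $\de_{[1\ast]}^{j\ell}\nb_j v_\ell = 0$; but a slight perturbation, say $v = v_0 + \varepsilon\,w(x)$ with $w$ smooth and divergence-free, gives a forcing $\bar e_Z(t)\varepsilon\,\de_{[1\ast]}^{j\ell}\nb_j w_\ell$ that is constant along the (straight-line) characteristics of $v_0$ to leading order, so the solution grows linearly in $t$.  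More simply: with $v = e_1$ and forcing $f(x) = \cos(2\pi x_2)$ (a spatial divergence with zero mean), the solution of $(\pr_t + \pr_1)u = f$ is $u = t\cos(2\pi x_2)$, unbounded as $t\to\infty$.  The divergence structure $f = \nb_j G^j$ gives you nothing here: $\de\tilde\kk$ is a \emph{scalar}, and there is no operator mapping the vector $G^j$ to a scalar whose $D_t$ recovers $\nb_j G^j$ up to controllable commutators.  Your claimed size $\Xi_0^{-1}\bar e_\ast E_0^{1/2}$ for an ``inverse-divergence piece'' is also off, since the frequency-energy levels give no $C^0$ bound on $v$ itself (only on $\nb v$).

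The paper avoids this entirely by a one-line observation you overlooked: rather than subtract only $\bar e_Z(t)\de_{[1\ast]}^{j\ell}$ from the stress, one changes the \emph{pressure} by $P = -e_Z(t)$, which amounts to adding the full identity $P\de^{j\ell}$ to $R$.  The point is that $\nb_j[v_\ell P\de^{j\ell}] = \nb_j(Pv^j) = 0$ because $P$ depends only on $t$ and $v$ is divergence-free; thus the troublesome cross term $\bar e_Z\,\de_{[1\ast]}^{j\ell}\nb_j v_\ell$ never arises, and one gets directly $\tilde\mu = \mu - \tfrac34 e_Z'(t) \geq 0$ with $\tilde\kk_{[G]} = \kk_{[G]}$ unchanged and no transport equation to solve.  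The extra piece $-e_Z(t)\de_{[2\ast]}^{j\ell}$ lands in $\wtld R_{[2]}$, where its size $\lsm Z^{1/2}E_0 \leq Z E_0 = e_{R,(0)}$ is harmless.  Note that the lemma statement allows $\tilde p \neq p_{(0)}$; your decision to freeze the pressure is the source of the difficulty.
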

During this proof we will ignore the subscripts in writing $(v,p,R,\kk,\vp,\mu) := (v,p,R,\kk, \vp,\mu)_{0}$.
\begin{proof}   We recall the notation $\calD[v,p] := \pr_t(|v|^2/2) + \nb_j((|v|^2/2 + p)v^j)$ for the resolved kinetic energy density.  We set $\tilde{p} = p + P$ with $P$ to be determined.  Recall also the notation $\de_{[1\ast]} = e_3 \otimes e_3 + (e_2\otimes e_2)/2$ and that $\de^{j\ell} = \de_{[1\ast]}^{j\ell} + \de_{[2\ast]}^{j\ell}$.  Starting with $\calD[v,p] = D_t \kk + \nb_j[v_\ell R^{j\ell}] + \nb_j \vp^j - \mu$ and using the Euler-Reynolds equations \eqref{eq:euReyn}, we obtain for any choice of non-increasing function $e_Z \colon \tilde{I} \to \R_{\geq 0}$ the following equalities
\ali{
\pr_t v^\ell + \nb_j(v^j v^\ell) + \nb^\ell \tilde{p} &= \nb_j(P \de^{j\ell} + R^{j\ell} ) \\
\calD[v,\tilde{p}]&= D_t(\kk - (1/2) \mbox{tr}(\de_{[1\ast]}) e_Z(t)) + \nb_j[v_\ell(P \de^{j\ell} + R^{j\ell})] + \nb_j \vp^j - \tilde{\mu}
}
by setting $\tilde{\mu} = \mu -(1/2) \mbox{tr}(\de_{[1\ast]})\pr_t e_Z(t) \geq 0$.

Define $\tilde{I}'_{(1)} := \{ t \leq \sup \tilde{I}_{(1)} + 2^{-1} (\Xi_0 E_0^{1/2})^{-1} \}$ and set $\bar{e}_Z^{1/2}(t) = Z^{1/4} E_0^{1/2} \eta_{\tau_0} \ast 1_{\tilde{I}'_{(1)}}(t)$, where $\eta_{\tau_Z}(t)$ is a standard, non-negative mollifier in $t$ with support in $|t| \leq (8 \Xi_0 E_0^{1/2})^{-1}$.  Define $\wtld{R}_{[1]}^{j\ell} = P \de_{[1\ast]}^{j\ell} + R_{[1]}^{j\ell}$ and choose $P = - e_Z(t)$, which implies that
\ali{
\tilde{\kk}_{[1]} := (1/2)\de_{j\ell} (P \de_{[1\ast]}^{j\ell} + R_{[1]}^{j\ell}) &= \kk_{[1]} - (1/2) e_Z(t) \mbox{tr}(\de_{[1\ast]}). \label{eq:mainCurrentDensityStart} 
}
We may now define the new dissipative Euler-Reynolds flow $(v, \tilde{p}, \wtld{R}, \tilde{\kk}, \tilde{\vp}, \tilde{\mu})$ with $\tilde{p}$ and $\tilde{\mu}$ defined as above, $\wtld{R}_{[1]} = - e_Z(t) \de_{[1]}^{j\ell} + R_{[1]}^{j\ell}$, $\wtld{R}_{[2]}^{j\ell} = - e_Z(t) \de_{[2\ast]}^{j\ell} + R_{[2]}^{j\ell}$, $\wtld{R}_{[G]}^{j\ell} = R_{[G]}^{j\ell}$, $\tilde{\kk}_{[1]}$ as in \eqref{eq:mainCurrentDensityStart}, 
$\tilde{\kk}_{[G]} = \kk_{[G]}$, and $\tilde{\vp}_{[1]} = \vp_{[1]}$, $\tilde{\vp}_{[2]} = \vp_{[2]}$.  We set $\undl{e}_\vp = Z^{1/2} E_0$ and observe that $\undl{e}_{\vp} \geq Z^{-2} e_{\vp,(0)}$ as defined in \eqref{eq:bigInitFren}.  The conditions of $(\bar{\de},Z^2)$-well-preparedness for $\wtld{R}_{[1\circ]}^{j\ell} = R_{[1]}^{j\ell}$ in Definition~\ref{defn:wellPrepared} with the above choices of tensor fields and parameters translate to the following:
\ALI{
\suppt (R_{[1]}, \vp_{[1]}) \subseteq \tilde{I}_{[10]},& \quad \suppt e_Z \subseteq \tilde{I}_{[G0]} \\
\co{D_t^s \bar{e}^{1/2}(t)} &\leq (\Xi_0 Z^{7/4} E_0^{1/2})^s (Z^{5/2} E_0)^{1/2}, \\
e_Z(t)^{-1} \co{R_{[1]}} + e_Z^{-3/2}(t) &\co{\vp_{[1]}} \leq \bar{\de}, \qquad \qquad \mbox{ if } t \leq \sup \tilde{I}_{[10]} + (\Xi_0 Z^{7/4} E_0^{1/2})^{-1} \\ 
\co{D_t^s [e_Z^{-1/2}(t)] } \leq 10 (\Xi_0 Z^{7/4}&E_0^{1/2})^s (Z^{1/2} E_0)^{-1/2} \quad \mbox{ if } t \leq \sup \tilde{I}_{[10]} + (\Xi_0 Z^{7/4} E_0^{1/2})^{-1} \\
(Z^{1/2} E_0)^{1/2} \co{\nb_{\va} D_t^r R_{[1]} } &+ \co{\nb_{\va} D_t^r \vp_{[1]} } \leq \Xi_0^{|\va|} (\Xi_0 Z^{7/4}E_0^{1/2})^r (Z^{1/2} E_0)^{3/2}, 
}
for all $0 \leq s \leq 2$, and all $0 \leq r \leq 1$, $0 \leq r + |\va| \leq 2$.  Note that the above conditions are clear from construction for $Z$ sufficiently large using the bounds assumed on $(v,p,R,\kk,\vp,\mu)$.  Similarly, the estimates \eqref{eq:vpBd}-\eqref{eq:quadDdtbdlvl} for the frequency energy levels \eqref{eq:bigInitFren} are also clear from construction for $Z$ sufficiently large using the bounds assumed on $(v,p,R,\kk,\vp,\mu)$.  With this estimate we obtain Lemma~\ref{lem:prepareInitLevels}.  
\end{proof}

We now proceed with the proof of Theorem~\ref{thm:approxThm}.  
\begin{proof}[Proof of Theorem~\ref{thm:approxThm}]  Let $\a < 1/15$ and let $\Xi_0, E_0, (\tilde{I}, \tilde{I}_{(0)}, \tilde{I}_{(1)})$, $(v,p,R,\kk,\vp,\mu)_{0}$ be as in the assumptions of Theorem~\ref{thm:approxThm}.  Let $Z$ be a large constant that will be specified at the end of the proof.  Let $(\Xi, e_v, e_\vp, e_R, e_G)_{(0)}$ be the compound frequency energy levels specified in line \eqref{eq:bigInitFren} and define the dissipative Euler-Reynolds flow $(v,p,R,\kk,\vp,\mu)_{(0)}$ to be the $(v,\tilde{p}, \wtld{R}, \tilde{\kk}, \tilde{\vp}, \tilde{\mu})$ obtained in Lemma~\ref{lem:prepareInitLevels} with $Z$ as above.  Let $L_{(0)} = [L_Z, L_G, L_{\undvp G}, L_{\vp\undvp}, L_{v \vp}, L_\Xi]_{(0)}^t$ and $\mrg{L}_{(0)} = [L_Z, L_{\undvp G}, L_{\vp\undvp}, L_{v \vp}]^t_{(0)}$ be the vectors of parameter logarithms following the notation of Section~\ref{sec:admissConditions}.  

From \eqref{eq:bigInitFren}, we obtain that $\mrg{L}_{(0)} = L_Z[1,3/2,1,1] = L_Z \zeta$ lies in the $1$-eigenspace of the matrix $\mrg{T}$ in \eqref{eq:affineTrans}.  Since $\mrg{L}_{(0)}$ lies in the admissible truncated sector \eqref{eq:truncSect}, we may apply Lemma~\ref{lem:mainLem} repeatedly to obtain a sequence of dissipative Euler-Reynolds flows $(v,p,R,\kk,\vp,\mu)_{(k)}$ with compound frequency energy levels below $(\Xi, e_v, e_\vp, e_R, e_G)_{(k)}$ that evolve according to the equation \eqref{eq:matrixParamEvol}.  Moreover, Lemma~\ref{lem:mainLem} allows us to construct many possible such sequences of Euler-Reynolds flows by providing two choices of dissipative Euler-Reynolds flows (either $\ost{v}$ or $v_2$) at each stage of the iteration.  We take advantage of this freedom as follows.

Let $\N$ be the set of non-negative integers and let $2^{\N}$ be its power set; i.e., each $\b \in 2^{\N}$ is a subset of $\N$.  We endow $2^\N$ with its natural product topology, which is
compact, Hausdorff and metrizable (in fact $2^{\N}$ is homeomorphic to the Cantor set).  In this topology a sequence $\b^{(j)}$ in $2^{\N}$ converges to $\b \in 2^{\N}$ when $\limsup \b^{(j)} = \bigcup_n \bigcap_{j \geq n} \b^{(j)}$ and $\liminf \b^{(j)} = \bigcap_n \bigcup_{n \geq j} \b^{(j)}$ are both equal to $\b$ (in other words, the sequence $\b^{(j)}$ converges pointwise when viewed as maps $\b^{(j)} \colon \N \to \{0, 1\}$).  The symmetric difference operation $\b \De \bar{\b} = (\b \cap \bar{\b}^c)\cup(\bar{\b}\cap \b^c)$, which is addition mod $2$ if we view $\b$ and $\bar{\b}$ as maps into $\Z/2\Z$, makes $2^\N$ a locally compact abelian group with identity element $\emptyset$, and one has the following property characterizing convergence of sequences in the topology of $2^\N$: 
\ali{
\b^{(j)} \to \b \mbox{ in } 2^\N \mbox{ as $j \to \infty$  if and only if } \min (\b^{(j)} \De \b) \to \infty \mbox{ in $\N$ as $j \to \infty$.} \label{eq:convergenceCharacterization}
}  

To each $\b \in 2^{\N}$ we associate a sequence of dissipative Euler-Reynolds flows $(v,p,R,\kk,\vp,\mu)_{\b,(k)}$ together with a sequence of frequency energy levels $(\Xi, e_v, e_\vp, e_R,e_G)_{(k)}$, intervals $\tilde{I}_{[1k]}, \tilde{I}_{[Gk]}$ and non-negative functions $\bar{e}_{(k)}(t)$ as follows.  We first define the sequence of frequency energy levels, intervals, and non-negative functions.  We let $(\Xi, e_v, e_\vp,e_R,e_G)_{(0)}$ be defined as in \eqref{eq:bigInitFren} and take $\tilde{I}_{[10]}$, $\tilde{I}_{[G0]}$ and $\bar{e}_{(0)} = e_Z$ as given by the Lemma.  The sequence of frequency energy levels is then dictated by the evolution rules \eqref{eq:evolutionRule}-\eqref{eq:matrixParamEvol}, and coincides with the result of repeatedly applying Lemma~\ref{lem:mainLem} with $N_{(k)} = Z^2 (e_v^{1/2}/e_\vp^{1/2})_{(k)} (e_\vp/e_G)_{(k)}^{1/2}$ chosen as in Section~\ref{sec:mainLImpMThm}.
The sequence of frequency energy levels together with the initial $\bar{e}_{(0)}(t)$, $\tilde{I}_{[10]}$ and $\tilde{I}_{[G0]}$ determine the sequence of non-negative functions $\bar{e}_{(k)}(t)$ and intervals $\tilde{I}_{[1k]}, \tilde{I}_{[Gk]}$ obeying \eqref{eq:tildeIlp1}-\eqref{eq:suppContained} that arises from repeatedly applying Lemma~\ref{lem:mainLem} with the above choice of $N_{(k)}$.  

Letting $\tilde{I}_{(0)}$ be as in Theorem~\ref{thm:approxThm}, we set $t^* = \sup \tilde{I}_{(0)} + 2^{-1} (\Xi_0 E_0^{1/2})^{-1}$ and we define $\tilde{J}_{(k)}$  to be the open interval $(t^* - (\Xi_{(k)} e_{v,(k)}^{1/2})^{-1}, t^* + \Xi_{(k)} e_{v,(k)}^{1/2})$.  We note that $\tilde{J}_{(k)}$ is an open interval of length $|\tilde{J}_{(k)}| \geq (\Xi_{(k)} e_{v,(k)}^{1/2})^{-1}$ that is contained in $\tilde{I}_{[10]} \subseteq \tilde{I}_{[1k]}$ for all $k \geq 0$.

We now define the sequence of Euler-Reynolds flows $(v,p,R,\kk,\vp,\mu)_{\b,(k)}$ associated to $\b \in 2^{\N}$ as follows.  We let $(v,p,R,\kk,\vp,\mu)_{\b,(0)}$ be the dissipative Euler-Reynolds flow $(v,\tilde{p}, \wtld{R}, \tilde{\kk}, \tilde{\vp}, \tilde{\mu})_{(0)}$ in the conclusion of Lemma~\ref{lem:prepareInitLevels}.  We define $(v,p,R,\kk,\vp,\mu)_{\b,(k+1)}$ inductively by applying Lemma~\ref{lem:prepareInitLevels} to $(v,p,R,\kk,\vp,\mu)_{\b,(k)}$ with the choice of $N_{(k)} = Z^2 (e_v^{1/2}/e_\vp^{1/2})_{(k)} (e_\vp/e_G)_{(k)}^{1/2}$ and taking $\tilde{I}_{[1k]}, \tilde{I}_{[Gk]}$ and positive function $\bar{e}_{(k)}$ as above.  If $k + 1 \notin \b$ we choose $(v,p,R,\kk,\vp,\mu)_{\b,(k+1)} = (\ost{v}, \ost{p}, \ost{R}, \ost{\kk}, \ost{\vp}, \ost{\mu})_{\b,(k)}$, while if $k+1 \in \b$ we choose $(v,p,R,\kk,\vp,\mu)_{\b,(k+1)} = (v_2, p_2, R_2, \kk_2, \vp_2, \mu_2)_{\b,(k)}$, taking $\tilde{J}_{(k)} \subseteq \tilde{I}_{[1k]}$ as above.  In either case, the dissipation measure $\mu_{(k+1)}$ is the same and independent of $\b$.  Note that $N_{(k)}$ satisfies the admissibility conditions \eqref{eq:Nastdef} for all $k \geq 0$ by Proposition~\ref{prop:admissSect} since the initial frequency energy levels belong to the admissible truncated sector \eqref{eq:truncSect}.  This fact together with the $(\bar{\de},M)$-well-preparedness of $(v,p,R,\kk,\vp,\mu)_{\b,(k)}$ justifies the application of the Main Lemma.  

For each $\b \in 2^{\N}$, the sequence of velocity fields $v_{\b(k)}$ converges uniformly as $k \to \infty$ to a bounded, continuous vector field $v_\b$ on $\tilde{I} \times \T^3$, as follows from $\co{v_{\b(k+1)} - v_{\b(k)}} \leq C_L e_{\vp,(k)}^{1/2}$ and \eqref{eq:shrinkingAmplitudeLaw}.  Since $R_{\b(k)}$ converges uniformly to $0$ (again by \eqref{eq:shrinkingAmplitudeLaw}), we have that $(v_\b, p_\b)$ solve the incompressible Euler equations, where $p_\b \in \DD'(\tilde{I} \times \T^3)$ is equal to the weak limit $p_\b = \lim_{k \to \infty} \De^{-1}( \nb_\ell\nb_j (R_{\b(k)}^{j\ell} - v_{\b(k)}^j v_{\b(k)}^\ell)$.  Note that we may assume without loss of generality that the sequence of approximate pressures is equal to $p_{\b(k)} = \De^{-1}( \nb_\ell\nb_j (R_{\b(k)}^{j\ell} - v_{\b(k)}^j v_{\b(k)}^\ell)$ by adding a constant if necessary to replace each $p_{\b(k)}$ by its integral $0$ representative.  With this normalization, we have that the convergence $p_\b = \lim_{k \to \infty} p_{\b(k)}$ occurs strongly in $L^p$ for all $p < \infty$ by Calder\'{o}n-Zygmund estimates and the uniform convergence of $(R_{\b(k)}^{j\ell} - v_{\b(k)}^j v_{\b(k)}^\ell)$ (in fact the convergence holds also in H\"{o}lder spaces).  In particular $p_\b \in L^p$ for all $p < \infty$, making $\calD[v_\b, p_\b] := \pr_t(|v_\b|^2/2) + \nb_j[(|v_{\b}|^2/2 + p_{\b} ) v_{\b}^j ) ]$ a well-defined distribution for all $\b$.

We now verify that the limiting Euler flows $(v_\b, p_\b)$ must satisfy the local energy inequality \eqref{eq:locEnInq} for all $\b$ and all have the same dissipation measure $- \calD[v_\b, p_\b] = \mu_\infty = \lim_{k \to \infty} \mu_{(k)}$, which is equal to the weak limit $\mu_\infty = - \lim_{k \to \infty} [ \pr_t( |v_{\b(k)}|^2/2 ) + \nb_j( (|v_{\b(k)}|^2/2 + p_{\b(k)} ) v_{\b(k)}^j ) ]$.  The fact that $\calD[v_\b,p_\b]$ is equal to this weak limit follows from the strong $L^p$ convergence of the products including $p_{\b(k)} v_{\b(k)}^j$ as in the previous remark above.  The fact that the limit is non-negative as a distribution (and hence given by a measure), and is equal to $\mu_{\infty} = \lim_{k \to \infty} \mu_{(k)} \leq 0$ follows from \eqref{eq:dissipMeasdef} by observing that the functions on the right hand side of the relaxed local energy inequality \eqref{eq:relaxedLocEnIneq} all converge weakly to $0$ in $\DD'$ as $k \to \infty$.  Specifically, since the $v_{\b(k)}$ are uniformly bounded in $C^0$ and $(R_{\b(k)}, \kk_{\b(k)}, \vp_{\b(k)})$ converge to $0$ uniformly, the functions $D_t \kk_{\b(k)} := \pr_t\kk_{\b(k)} + \nb_j( v_{\b(k)}^j \kk_{\b(k)})$, $\nb_j[(v_{\b(k)})_\ell R_{\b(k)}^{j\ell}]$ and $\nb_j \vp_{\b(k)}^j$ in the relaxed energy inequality \eqref{eq:relaxedLocEnIneq},\eqref{eq:dissipMeasdef} all converge weakly to $0$ in $\DD'$ as $k \to \infty$.



The solutions constructed above all coincide on $\tilde{I}_{(0)} \times \T^3$.  To see this equality, note that (by induction on $k$ and the local dependence properties of Lemma~\ref{lem:mainLem}) the dissipative Euler-Reynolds flows $(v,p,R,\kk,\vp,\mu)_{\b,(k)}$ are all equal outside of the interval
\ALI{
\ovl{J}_{(k)} &:= \Big\{ t_0 + \bar{t} : t_0 \in \tilde{J}_{(0)}, |\bar{t}| \leq \sum_{j=1}^k (\Xi_{(j)} e_{v,(j)}^{1/2})^{-1} \Big\},
}
which does not intersect $\tilde{I}_{(0)}$ by \eqref{eq:shrinkingTimescaleLaw} when $Z$ is taken sufficiently large.

We claim that the map $\b \mapsto v_\b$ is injective when restricted to $2^{\N+2} = \{ \b \in 2^{\N} ~:~ \b \cap \{0,1\} = \emptyset \}$.  To see this fact, suppose $\b_1, \b_2 \in 2^{\N + 2}$ are distinct and let $k^* = \min \b_1 \De \b_2$ be the smallest integer that belongs to exactly one of $\b_1, \b_2$.  Then, letting $V_{\b(k)} = v_{\b(k+1)} - v_{\b(k)}$, we have by \eqref{eq:c0CorrectBd} and \eqref{eq:shrinkingAmplitudeLaw}
\ALI{
\| v_{\b_1} - v_{\b_2} \|_{C_t L_x^2} &\geq \| v_{\b_1(k^*)} - v_{\b_2(k^*)} \|_{C_t L_x^2} - \sum_{k > k^*} (\co{V_{\b_1(k)}} + \co{V_{\b_2(k)}}) \\
&\geq \| v_{\b_1(k^*)} - v_{\b_2(k^*)} \|_{C_t L_x^2} - \sum_{k \geq k^* + 1} 2 Z^{-(k-k_*)/4} C_L e_{\vp,(k^*)}^{1/2} \\
\| v_{\b_1} - v_{\b_2} \|_{C_t L_x^2} &\geq \|v_{\b_1(k^*)} - v_{\b_2(k^*)} \|_{C_t L_x^2} - 4C_L Z^{-1/4} e_{\vp,(k^*)}^{1/2}
}
From \eqref{eq:lowerBdNew} with $M = \hc$ we have $\|v_{\b_1(k^*)} - v_{\b_2(k^*)} \|_{C_t L_x^2} \geq \big( (C_L \hc)^{-1} - N_{(k^*)}^{-1}\big)^{1/2}e_{\vp,(k^*)}^{1/2}$.  Using that $N_{(k^*)} \geq Z$ and taking $Z$ sufficiently large we obtain
\ali{
\| v_{\b_1} - v_{\b_2} \|_{C_t L_x^2} &\geq 2^{-1} (C_L \hc)^{-1/2} e_{\vp,(k^*)}^{1/2}, \qquad k^* = \min(\b_1 \De \b_2). \label{eq:injectBd}
}
It follows that $v_{\b_1} \neq v_{\b_2}$ for all $\b_1 \neq \b_2$ in $2^{\N + 2}$; i.e., the map $\b \mapsto v_\b$ is injective.

We claim furthermore that, if $\a < 1/15$ and $Z \geq \undl{Z}_\a$ is sufficiently large, then $\b \mapsto v_\b$ is a continuous map from $2^{\N}$ into $C_{t,x}^\a$.  We first note that for all $k$ such that $e_{v,(k)} \leq 1$ we have the bound
\ali{
\co{\nb v_{\b(k)}} + \co{\pr_t v_{\b(k)} } &\leq \co{\nb v_{\b(k)}} + \co{D_t v_{\b(k)} } + \co{v_{\b(k)}}\co{\nb v_{\b(k)}} \notag \\
&\leq C \co{\nb v_{\b(k)}} + \co{\nb p_{\b(k)}} + \co{\nb R_{\b(k)}} \notag \\
&\leq C (\Xi e_{v,(k)}^{1/2} + \Xi e_{v,(k)} ) \leq C \Xi_{(k)} e_{v,(k)}^{1/2},
}
where $C$ depends on $\sup_{\b, k} \co{v_{\b(k)}}$.  Using \eqref{eq:shrinkingTimescaleLaw}, we bound $V_{\b(k)} = v_{\b(k+1)}-v_{\b(k)}$ for large $k$ by
\ALI{
\co{V_{\b(k)}} &\leq C_L e_{\vp,(k)}^{1/2} \\
\co{\nb V_{\b(k)}} + \co{\pr_t V_{\b(k)}} &\leq C \Xi_{(k+1)} e_{v,(k+1)}^{1/2} \leq C Z^9 \Xi_{(k)} e_{v,(k)}^{1/2} .
}
Interpolating, we obtain that $\| V_{\b(k)} \|_{C_{t,x}^\a} \leq C_Z (\Xi e_v^{1/2}/e_\vp^{1/2})^\a e_{\vp(k)}^{1/2}$ for some constant $C_Z$ depending on $Z$ and the $C$ above.  We let $H_{\a(k)}:= (\Xi e_v^{1/2}/e_\vp^{1/2})^\a e_{\vp(k)}^{1/2}$, which we estimate as follows using the notation $L_{(k)} = [L_Z, L_G, L_{\undvp G}, L_{\vp \undvp}, L_{v \vp}, L_\Xi]_{(k)}^t$ and $\mrg{L}_{(k)} = [L_Z, L_{\undvp G}, L_{\vp \undvp}, L_{v \vp}]_{(k)}^t$, of Section~\ref{sec:admissConditions}
\ALI{
\log H_{\a(k)} &= (1/2) [ L_G +  L_{\undvp G} + L_{\vp \undvp} + (1+\a) L_{v\vp} ] + \a L_\Xi \\
\de_{(k)} \log H_{\a(k)} &= (1/2) \de_{(k)} L_G + \a \de_{(k)} L_\Xi + O(|\de_{(k)}\mrg{L}_{(k)}|) \\
&= -(1/15 - \a) \de_{(k)} L_\Xi + (1/2) \de_{(k)} L_G + (1/15) \de_{(k)} L_\Xi +  O(|\de_{(k)}\mrg{L}_{(k)}|) \\
\de_{(k)}\log H_{\a(k)} &= -(1/15 - \a) \de_{(k)} L_\Xi + (1/30)[0,15,11,5,1,2]\de_{(k)}L_{(k)} + O(|\de_{(k)} \mrg{L}_{(k)}|) \\
&= -(1/15 - \a) \de_{(k)} L_\Xi + (13/30)L_{\hc} + o(1), \qquad \mbox{ as } k \to \infty.
}
In the last line we used equation \eqref{eq:evolRule} and the fact from Proposition~\ref{prop:changingQuotients} that $|\de_{(k)} \mrg{L}_{(k)}| \to 0$ as $k \to \infty$.  Since $- \de_{(k)} L_\Xi \leq -L_Z$ and $\a < 1/15$, we have for all $Z \geq \undl{Z}_\a$ sufficiently large (depending on $\hc$ and $\a$) that $\de_{(k)} \log H_{\a(k)} \leq - \log 10 + o(1)$ as $k \to \infty$.  In particular, there exists $k_0 \in \N$ such that for all $k \geq k_0$ we have $H_{\a(k)} \leq 9^{-(k - k_0)} H_{\a(k_0)}$.  This estimate implies that every vector field $v_\b$ is of class $C_{t,x}^\a$, and also that the map $\b \mapsto v_\b$ from $2^\N$ to $C_{t,x}^\a$ is continuous, since it implies the estimates
\ali{
\| v_\b \|_{C_{t,x}^\a} \leq \| v_0 \|_{C_{t,x}^\a} &+ \sum_{k \geq 0} \| V_{\b(k)} \|_{C_{t,x}^\a} \leq \| v_0 \|_{C_{t,x}^\a} + \sum_{k \geq 0} C_Z H_{\a(k)} < \infty \notag \\
\| v_{\b_1} - v_{\b_2} \|_{C_{t,x}^\a} &\leq \sum_{k \geq \min(\b_1 \De \b_2)} \| V_{\b(k)} \|_{C_{t,x}^\a} \leq \sum_{k \geq \min(\b_1 \De \b_2)} C_Z H_{\a(k)}. \label{eq:CtxalphCtinuity}
}
Since \eqref{eq:CtxalphCtinuity} goes to $0$ as $\min(\b_1 \De \b_2)$ tends to $\infty$, we obtain continuity of the map $\b \mapsto v_\b$ by \eqref{eq:convergenceCharacterization}.  

Since $2^{\N+2}$ is a compact topological space (homeomorphic to a Cantor set), $C_{t,x}^\a$ is Hausdorff, and we have shown that $\b \mapsto v_\b$ is injective on $2^{\N+2}$, it follows that the map $\b \mapsto v_\b$ is a homeomorphism onto its image in $C_{t,x}^\a$ when restricted to $2^{\N + 2}$.  Identifying $2^{\N + 2}$ with $2^\N$ defines the map $v_\b$ of Theorem~\ref{thm:approxThm}.

Now fix a particular value of $Z \geq \undl{Z}_\a$ that is sufficiently large to guarantee all of the preceding estimates.  We claim that the image of $(v_\b)_{\b \in 2^{\N + 2}} $ restricted to $2^{\N + 2}$ has positive Hausdorff dimension as a subspace of $C_t L_x^2$.  To establish this claim, we show that there exist positive numbers $\de = \de_Z > 0$, $\ep_0 > 0$ and $a_0 > 0$ such that whenever there is a covering $\{ v_\b : \b \in 2^{\N + 2} \} \subseteq \bigcup_{i \in I} B_{r_i}(v_{\b_i})$ of the image by open balls of radius $r_i \leq \ep_0$ in $C_t L_x^2$ centered at points in the image, we have a lower bound of
\ali{
0 < a_0 &\leq \sum_{i \in I} r_i^{\de_Z}. \label{eq:dimensionBound}
}
We may assume without loss of generality that the collection of balls $I$ in the covering is countable.  The estimate \eqref{eq:dimensionBound} then shows that the Hausdorff dimension of the space $\{ v_\b : \b \in 2^{\N+2} \}$ (viewed as a subspace of $C_tL_x^2$) is at least $\de_Z$.

   We recall the notation $[k] := \{ 0, 1, \ldots, k \}$ and we introduce the ``basic cylinders'' defined by
\ALI{
2^{\N + k} := \{ \b \in 2^\N : \b \cap [k - 1] = \emptyset \}, \quad \bar{\b} \De 2^{\N + k} := \{ \b \in 2^\N ~:~ (\b \De \bar{\b}) \cap [k-1] = \emptyset \}, \quad k \geq 1, \bar{\b} \in 2^\N.
}
For any $k \geq 1$, the set $2^\N$ is the union of $2^k$ disjoint basic cylinders $2^\N = \bigcup_{\bar{\b} \subseteq [k-1]} \bar{\b} \De 2^{\N + k}$.  Moreover, every basic cylinder has the form $\bar{\b} \De 2^{\N + k} = (\bar{\b} \cap [k-1]) \De 2^{\N + k}$, where $\bar{\b} \cap [k-1]$ is finite.	 Every basic cylinder is therefore both closed (by \eqref{eq:convergenceCharacterization}) and open (as its complement is closed), and its Haar measure is equal to $\mu(\bar{\b} \De 2^{\N + k} ) = \mu((\bar{\b} \cap [k-1]) \De 2^{\N+k}) = \mu(2^{\N + k}) = 2^{-k}$ when we endow $2^\N$ with the natural Haar measure $\mu$ of unit total mass.  The collection of all basic cylinders $\bar{\b} \De 2^{\N + k}$, where $\bar{\b}$ is finite and $k \geq 1$, can be visualized as forming a countable basis for the topology of $2^\N$.  

Suppose now that we have a covering $\{ v_\b ~:~ \b \in 2^{\N + 2} \} \subseteq \bigcup_{i \in I} B_{r_i}(v_{\b_i}) \subseteq C_tL_x^2$ with $\max_i r_i \leq \ep_0$, where $\ep_0 := 4^{-1} (C_L \hc)^{-1/2} e_{\vp,(0)}^{1/2}$.  For each $i \in I$ define $k^*_i \in \N$ to be the largest integer that satisfies %
$r_i \leq 4^{-1} (C_L \hc)^{-1/2} e_{\vp,(k^*_i)}^{1/2}$.
  By the lower bound \eqref{eq:injectBd}, every $v_\b \in B_{r_i}(v_{\b_i})$ satisfies $\min(\b \De \b_i) \geq k^*_i$, or equivalently belongs to the basic cylinder $\b \in \b_i \De 2^{\N + k^*_i}$.  We therefore obtain the inequalities
\ali{
1_{2^{\N + 2}}(\b) &\leq \sum_{i \in I} 1_{B_{r_i}(v_{\b_i})}(\b) \leq \sum_{i \in I} 1_{\b_i \De 2^{\N + k^*_i}}(\b),  \notag \\
2^{-2} = \int_{2^\N} 1_{2^{\N + 2}} \, \tx{d}\mu &\leq \sum_{i \in I} \mu(\b_i \De 2^{\N + k^*_i}) = \sum_{i \in I} 2^{-k^*_i}. \label{eq:lowBoundkstar}
}
On the other hand, by the choice of $k^*_i$ and \eqref{eq:shrinkingAmplitudeLaw} we have also a lower bound of
\ali{
r_i &\geq c e_{\vp,(k^*_i + 1)}^{1/2} \geq c Z^{-3\left(k_i^* + 1\right)/4} e_{\vp,(0)}^{1/2} \qquad \mbox{ for all } i \in I, \label{eq:upBoundkstar}
}
where $c := 4^{-1} (C_L \hc)^{-1/2} > 0$.  Combining \eqref{eq:lowBoundkstar} and \eqref{eq:upBoundkstar}, we obtain the desired bound \eqref{eq:dimensionBound} with $\de_Z = \fr{4 \log(2)}{3\log(Z)}$ for some $a_0 > 0$ that depends explicitly on $C_L, \hc, e_{\vp,(0)}$ and $Z$.  

We remark that the calculation of line~\ref{eq:lowBoundkstar} can be viewed as a purely combinatorial statement taking place on a finite quotient of $2^\N$ when the collection of balls $I$ is finite, but relies on the use of measure theory to handle the case of a countably infinite $I$.  


The last bound we require is that $\sup_{\b \in 2^{\N+2}} \co{v_\b - v_0} \leq \ovl{C}_\a E_0^{1/2}$ with $\ovl{C}_\a$ depending on $\a$.  This bound follows from the choice of $e_{\vp,(0)}^{1/2} \leq C_\a E_0^{1/2}$ in \eqref{eq:bigInitFren}, the bound $\co{V_{\b(k)}} \leq C_L e_{\vp,(k)}^{1/2}$ in \eqref{eq:c0CorrectBd} for $V_{\b(k)} = v_{\b(k+1)} - v_{\b(k)}$, and inequality \eqref{eq:shrinkingAmplitudeLaw} for $e_{\vp,(k)}$.  With this estimate we have concluded the proof of Theorem~\ref{thm:approxThm}.
\end{proof}

\subsection{Density of Wild Initial Data for Conservative Solutions} \label{sec:constructDissipSolns}
As the final application of Theorem~\ref{thm:approxThm} that we consider here, we state the following theorem, which proves the density of ``wild'' initial data of class $C^\a$ for $\a < 1/15$.  The theorem is in the spirit of \cite{danSze}, which extended a similar theorem of \cite{szeWiedYoung}, where an analogous $L^2$ density result  is obtained for solutions with kinetic energy below that of the initial data and having H\"{o}lder regularity $\a < 1/5$.   Our approach to Theorem~\ref{thm:wildDenseThm} turns out to be considerably simpler than the approach to the analogous result in \cite{danSze} (in particular we avoid the use of time-dependent estimates and the notion of an adapted subsolution), but at the expense of having a nonuniform time interval of existence for the corresponding solutions.    
  
\begin{thm}[Density of wild initial data] \label{thm:wildDenseThm} For any $\a < 1/15$, there is a set of vector fields $F_\a$ on $\T^3$ such that the $C^0$ closure of $F_\a$ consists of all continuous, divergence free vector fields $\bar{v}_0 \colon \T^3 \to \R^3$, and such that for every $v_0 \in F_\a$ there exists an open interval $I$ containing $0$ and uncountably many vector fields $(v_{\b})_{\b \in 2^\N}$ of class $C_{t,x}^\a(I \times \T^3)$ that satisfy the incompressible Euler equations on $I \times \T^3$ with initial data $v_\b(0,x) = v_0(x)$ for all $\b \in 2^\N$ and satisfy the local energy equality \eqref{eq:locEnEq} on $I \times \T^3$.
\end{thm}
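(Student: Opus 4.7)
My approach is to combine classical local well-posedness for smooth Euler flows with the Approximation Theorem (Theorem~\ref{thm:approxThm}), exploiting the fact that when the initial dissipative Euler-Reynolds flow is an actual smooth Euler solution, its dissipation measure vanishes identically. By item 3 of Theorem~\ref{thm:approxThm}, the resulting common dissipation measure $\mu_\infty$ then vanishes on $\tilde{I}_{(1)}$, which promotes the local energy inequality to the local energy equality \eqref{eq:locEnEq} on any subinterval $I \subseteq \tilde{I}_{(1)}$. By arranging $0 \in \tilde{I}_{(0)}$, the uncountable family of solutions automatically shares a common initial datum at $t = 0$ (item 2). The $C^0$ estimate of item 7, with $E_0$ chosen small, produces this initial datum arbitrarily close to the smooth starting point, which in turn can be chosen close to any given continuous divergence-free field.

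Concretely, given continuous divergence-free $\bar{v}_0$ on $\T^3$ and $\ep > 0$, I first mollify to obtain smooth divergence-free $\tilde{v}_0$ with $\|\tilde{v}_0 - \bar{v}_0\|_{C^0} < \ep/2$, then invoke classical local existence to get a smooth Euler solution $(\tilde{v},\tilde{p})$ on $\tilde{I} := (-T_0, T_0)$ with $\tilde{v}(0,\cdot) = \tilde{v}_0$. I view $(\tilde{v},\tilde{p},0,0,0,0)$ as a dissipative Euler-Reynolds flow with trivial $R, \kk, \vp, \mu$. Choose $E_0 > 0$ so small that $\ovl{C}_\a E_0^{1/2} < \ep/2$ and then $\Xi_0$ sufficiently large (depending on $\tilde{v}, \tilde{p}, E_0, T_0$) that the compound frequency-energy bounds of order $L=2$ for the initial levels $(\Xi_0, E_0, E_0, E_0, E_0)$ hold, that $|\tilde{I}| \geq 8(\Xi_0 E_0^{1/2})^{-1}$, and that $(\Xi_0 E_0^{1/2})^{-1} < T_0/8$. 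The nontrivial inequalities $\co{\nb_{\va} \tilde{v}} \leq \Xi_0^{|\va|} E_0^{1/2}$, $\co{\nb_{\va} \tilde{p}} \leq \Xi_0^{|\va|} E_0$, and $\co{\nb_{\va} D_t \tilde{p}} \leq \Xi_0^{|\va|+1} E_0^{3/2}$ all hold once $\Xi_0$ is large enough. Set $\tilde{I}_{(0)} := (-T_0, T_0/4)$ and $\tilde{I}_{(1)} := (-T_0, T_0/2)$, which satisfy $0 \in \tilde{I}_{(0)}$ and $\sup \tilde{I}_{(0)} + (\Xi_0 E_0^{1/2})^{-1} \leq \sup \tilde{I}_{(1)}$ as required.

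Applying Theorem~\ref{thm:approxThm} yields an uncountable family $(v_\b, p_\b)_{\b \in 2^\N}$ of $C^\a_{t,x}$ Euler solutions on $\tilde{I} \times \T^3$. Define $I := (-T_0, T_0/2) \subseteq \tilde{I}_{(1)}$; then $0 \in \tilde{I}_{(0)} \subseteq I$. Since all $v_\b$ coincide on $\tilde{I}_{(0)}$, the value $v_0 := v_\b(0,\cdot) \in C^\a(\T^3)$ is a $\b$-independent divergence-free vector field, and item 7 gives $\|v_0 - \tilde{v}_0\|_{C^0} \leq \co{v_\b - \tilde{v}} \leq \ovl{C}_\a E_0^{1/2} < \ep/2$, so $\|v_0 - \bar{v}_0\|_{C^0} < \ep$. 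Item 3 with $\mu_{(0)} \equiv 0$ forces $\mu_\infty \equiv 0$ on $\tilde{I}_{(1)} \supseteq I$, so each $v_\b$ satisfies \eqref{eq:locEnEq} on $I \times \T^3$. Uncountability and pairwise distinctness on $I$ follow from the homeomorphism conclusion (item 5) together with the observation that the inter-$\b$ differences in the construction of Theorem~\ref{thm:approxThm} concentrate near the bifurcation time $t^* = \sup \tilde{I}_{(0)} + (1/2)(\Xi_0 E_0^{1/2})^{-1}$, which the choice $(\Xi_0 E_0^{1/2})^{-1} < T_0/8$ places strictly inside $I$.

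Defining $F_\a$ to be the collection of all such common initial data $v_0$ as $(\bar{v}_0,\ep)$ vary completes the proof. There is no new analytic hurdle beyond Theorem~\ref{thm:approxThm}; the entire content of the argument is the realization that ``splicing'' a smooth Euler flow (with vanishing dissipation measure) into the hypothesis of the Approximation Theorem converts the local energy inequality into the energy equality while preserving nonuniqueness on a genuinely open time interval. The only bookkeeping obstacle is the simultaneous coordination of the three intervals $\tilde{I}_{(0)} \subseteq \tilde{I}_{(1)} \subseteq \tilde{I}$ and the parameters $\Xi_0, E_0, T_0$ so that $0$ lies deep inside the coincidence region $\tilde{I}_{(0)}$, the region of energy conservation $\tilde{I}_{(1)}$ accommodates the bifurcation time $t^*$, and the solutions retain compact time-support inside $\tilde{I}$; the dyadic choice $\tilde{I}_{(0)} \subsetneq \tilde{I}_{(1)} \subsetneq \tilde{I}$ above achieves all three requirements at once.
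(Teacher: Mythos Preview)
Your proposal is correct and follows essentially the same approach as the paper: feed a smooth Euler solution (with vanishing $R,\kk,\vp,\mu$) into the Approximation Theorem, use item~3 with $\mu_{(0)}=0$ to upgrade to the energy equality, use item~2 to get common initial data at $t=0$, and use item~7 with small $E_0$ for the $C^0$ density.

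The one minor difference is that the paper takes $\tilde{I}_{(1)}=\tilde{I}$ (the full interval), so the energy equality holds on all of $\tilde{I}$ and pairwise distinctness of the $v_\b$ on $I=\tilde{I}$ follows directly from item~5 of Theorem~\ref{thm:approxThm} as a black box. Your choice $\tilde{I}_{(1)}\subsetneq\tilde{I}$ forces you to argue distinctness on the strict subinterval $I$, for which you appeal to the internal detail that the bifurcations in the proof of Theorem~\ref{thm:approxThm} are localized near $t^*=\sup\tilde{I}_{(0)}+\tfrac12(\Xi_0 E_0^{1/2})^{-1}\in I$. This is correct but slightly less self-contained; taking $\tilde{I}_{(1)}=\tilde{I}$ would avoid the extra step.
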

\begin{proof}  Let $\bar{v}_0 : \T^3 \to \R^3$ be a smooth, divergence free vector field, and let $\tilde{I}$ be a bounded open interval containing $0$ such that a unique, smooth solution to the Euler equations $(\bar{v}, \bar{p})$ having initial data $\bar{v}(0,x) = \bar{v}_0$ exists on the interval $2 \tilde{I} \times \T^3$.  Let $E_0 > 0$ be any positive number and choose $\Xi_0$ large enough such that the dissipative Euler-Reynolds flow defined by $(\bar{v}, \bar{p}, R, \kk, \vp, \mu)_0$ with zero error terms $(R, \kk, \vp, \mu)_0 = (0,0,0,0)$ has compound frequency energy levels bounded by $(\Xi_0, E_0, E_0, E_0, E_0)$ to order $2$ in $C^0$.  (Such a choice is possible since all the error terms are $0$ and since $(\bar{v}, \bar{p})$ and their partial derivatives are uniformly bounded on $\tilde{I} \times \T^3$.) 

Applying Theorem~\ref{thm:approxThm} with $\tilde{I}_{(0)} = \{t \leq (1/2) \sup \tilde{I} \}$ and $\tilde{I}_{(1)} = \tilde{I}$ (taking $\Xi_0$ larger if necessary), we obtain an uncountable family of weak solutions $(v_\b, p_\b)_{\b \in 2^\N}$ in the class $(v_\b)\in C_{t,x}^\a$ that all share the same initial data (being equal on $\tilde{I}_{(0)}$) and share a uniform bound of $\sup_{\b \in 2^\N} \| v_\b - \bar{v} \|_{C^0_{t,x}} \leq C_\a E_0^{1/2}$.  These solutions also satisfy the local energy equality \eqref{eq:locEnEq} on $\tilde{I}$ by our choice of $\tilde{I}_{(1)} = \tilde{I}$ and $\mu_0 = 0$.  

We now take $F_\a$ to be the union of all initial data arising from the above construction over all choices of smooth divergence free $\bar{v}_0$ on $\T^3$ and $E_0 > 0$.  It is clear that the $C^0$ closure of $F_\a$ contains all smooth, divergence free $\bar{v}_0$, since $E_0 > 0$ is arbitrary and the common initial data of the family $v_\b$ constructed above converges uniformly to $\bar{v}_0$ as $E_0$ tends to $0$.  By approximation, every continuous, divergence free $\bar{v}_0$ on $\T^3$ belongs to the $C^0$ closure of $F_\a$.  The other properties stated for $F_\a$ are immediate from the construction.  
\end{proof}

\bibliographystyle{alpha}
\bibliography{eulerOnRn}

\end{document}